\newtheorem{thm}{Theorem}[section]
\newtheorem{lemma}{Lemma}[section]
\newtheorem{prop}{Proposition}[section]
\newtheorem{rmk}{Remark}[section]{}
\theoremstyle{definition}
\numberwithin{equation}{section}
\newcommand{\rr}{\mathbb R}
\newcommand{\al}{\alpha}
\newcommand{\de}{\delta}
\newcommand{\eps}{\epsilon}
\newcommand{\veps}{\varepsilon}
\newcommand{\ga}{\gamma}
\newcommand{\si}{\sigma}
\newcommand{\Om}{\Omega}
\newcommand{\om}{\omega}
\newcommand{\pl}{\partial}
\def\beq{\begin{equation}}
\def\beqs{\begin{equation*}}
\def\eeq{\end{equation}}
\def\eeqs{\end{equation*}}
\def\bal{\begin{aligned}}
\def\eal{\end{aligned}}
\def\baln{\begin{align}}
\def\ealn{\end{align}}
\def\bca{\begin{cases}}
\def\eca{\end{cases}}
\DeclareMathOperator{\diam}{diam}
\def\sideremark#1{\ifvmode\leavevmode\fi\vadjust{\vbox to0pt{\vss
 \hbox to 0pt{\hskip\hsize\hskip1em
 \vbox{\hsize3cm\tiny\raggedright\pretolerance10000
  \noindent #1\hfill}\hss}\vbox to8pt{\vfil}\vss}}}%
 \renewcommand{\(}{\left(}
\renewcommand{\)}{\right)}
\newcommand{\G}{\mathbf{G}}
\newcommand{\up}{u_p}
\newcommand{\Up}{\mathcal U_p}
\newcommand{\Rp}{\mathcal R_p}
\newcommand{\Udai}{U_{\alpha_i,\delta_i}}
\newcommand{\Udaj}{U_{\alpha_j,\delta_j}}
\newcommand{\Ud}{U_{\delta}}
\newcommand{\tvi}{v_{i}^{({w})}}
\newcommand{\tvj}{v_{j}^{({w})}}
\newcommand{\tUdai}{U_{i}^{({w})}}
\newcommand{\tUdaj}{U_{j}^{({w})}}
\newcommand{\taj}{\alpha_j^{({w})}}
\newcommand{\tai}{\alpha_i^{({w})}}
\newcommand{\ta}{\alpha^{({w})}}
\newcommand{\woai}{w_{\al_i}^0}
\newcommand{\wiai}{w_{\al_i}^1}
\newcommand{\woaj}{w_{\al_j}^0}
\newcommand{\wiaj}{w_{\al_j}^1}
\newcommand{\wlai}{w_{\al_i}^\ell}
\newcommand{\wlaj}{w_{\al_j}^\ell}
\newcommand{\Coai}{C_{\al_i}^0}
\newcommand{\Ciai}{C_{\al_i}^1}
\newcommand{\Coaj}{C_{\al_j}^0}
\newcommand{\Ciaj}{C_{\al_j}^1}
\newcommand{\Clai}{C_{\al_i}^\ell}
\newcommand{\Claj}{C_{\al_j}^\ell}
\newcommand{\wodai}{w_{\al_i,\delta_i}^0}
\newcommand{\wodaj}{w_{\al_j,\delta_j}^0}
\newcommand{\widai}{w_{\al_i,\delta_i}^1}
\newcommand{\widaj}{w_{\al_j,\delta_j}^1}
\newcommand{\wldai}{w_{\al_i,\delta_i}^\ell}
\newcommand{\foaj}{f_{\al_j}^0}
\newcommand{\fiai}{f_{\al_i}^1}
\newcommand{\dei}{\delta_i}
\newcommand{\dej}{\delta_j}
\newcommand{\dek}{\delta_k}
\newcommand{\ai}{\alpha_i}
\newcommand{\aj}{\alpha_j}
\newcommand{\ajj}{\alpha_{j+1}}
\newcommand{\ak}{\alpha_k}
\newcommand{\va}{v_\alpha}
\newcommand{\vai}{v_{\alpha_i}}
\newcommand{\vi}{v_{\al_i}}
\newcommand{\vj}{v_{\al_j}}
\newcommand{\ti}{\tau_i}
\newcommand{\tj}{\tau_j}
\newcommand{\Rj}{\mathcal R_j}
\newcommand{\cj}{c_j}
\newcommand{\gp}{\mathfrak g_p}
\newcommand{\gpp}{\mathfrak g_p'}
\newcommand{\sj}{s_j}
\newcommand{\Ai}{A_i}
\newcommand{\Aj}{A_j}
\newcommand{\Ajj}{A_{j+1}}
\newcommand{\Ak}{A_k}
\newcommand{\Bo}{E_1}
\newcommand{\Bj}{E_j}
\newcommand{\Bi}{E_i}
\newcommand{\Bk}{E_k}
\newcommand{\Eap}{E_a(p)}
\newcommand{\rj}{\rho_j}
\newcommand{\rjj}{\rho_{j+1}}
\newcommand{\rp}{\rho_p}
\newcommand{\tho}{\theta_0}
\newcommand{\phoa}{\varphi_{0,\alpha}}
\newcommand{\phiF}{\phi_F}
\newcommand{\Lp}{\mathcal L_p}
\newcommand{\Lpo}{\mathcal L_p^0}
\newcommand{\Lpone}{\mathcal L_p^1}
\newcommand{\Dai}{\mathcal D_{\alpha_i}}
\newcommand{\Dadi}{\mathcal D_{\alpha_i,\delta_i}}
\newcommand{\Daj}{\mathcal D_{\alpha_j}}
\newcommand{\Dadj}{\mathcal D_{\alpha_j,\delta_j}}
\newcommand{\Wp}{\mathcal W_p}
\newcommand{\Va}{\mathcal V_{\alpha}}
\newcommand{\Vai}{\mathcal V_{\alpha_i}}
\newcommand{\Vaj}{\mathcal V_{\alpha_j}}
\newcommand{\Vak}{\mathcal V_{\alpha_k}}
\newcommand{\zaiod}{z_{\ai,\dei}^0}
\newcommand{\zajo}{z_{\aj}^0}
\newcommand{\zajod}{z_{\aj,\dej}^0}
\newcommand{\phii}{\phi_n^i}
\newcommand{\phij}{\phi_n^j}
\newcommand{\phijo}{\phi^j}
\newcommand{\gi}{\gamma_i}
\newcommand{\gj}{\gamma_j}
\newcommand{\phin}{\phi_n}
\newcommand{\sis}{\sigma_{i,n}}
\newcommand{\sjs}{\sigma_{j,n}}
\newcommand{\bi}{b_i}
\newcommand{\bj}{b_j}
\newcommand{\bjj}{b_{j+1}}
\newcommand{\hn}{h_n}
\newcommand{\thn}{\widetilde h_n}
\newcommand{\pn}{p_n}
\newcommand{\Iaj}{\mathcal I_{\aj}}
\newcommand{\mVa}{\mathcal V_{\al}}
\newcommand{\tVa}{\widetilde{\mathcal V}_{\al}}
\newcommand{\Da}{\mathcal D_\al}
\newcommand{\tDa}{\widetilde{\mathcal D}_\al}
\newcommand{\Ia}{\mathcal I_\al}
\newcommand{\fa}{f_\al}
\newcommand{\za}{z_\al}
\newcommand{\Fa}{F_\al}
\newcommand{\trj}{\widetilde r_j}
\newcommand{\tRj}{\widetilde R_j}
\newcommand{\tri}{\widetilde r_i}
\newcommand{\trjj}{\widetilde r_{j+1}}
\newcommand{\tRi}{\widetilde R_i}
\newcommand{\tRk}{\widetilde R_k}
\newcommand{\Psuj}{\underline\Psi_j}
\newcommand{\Psoj}{\overline\Psi_j}
\newcommand{\Psui}{\underline\Psi_i}
\newcommand{\Psoi}{\overline\Psi_i}
\newcommand{\lamj}{\lambda_j}
\newcommand{\Lamj}{\Lambda_j}
\newcommand{\co}{c_0}
\newcommand{\uDj}{\underline D_j}
\newcommand{\oDj}{\overline D_j}
\newcommand{\uDi}{\underline D_i}
\newcommand{\oDi}{\overline D_i}
\newcommand{\Psij}{\Psi_j}
\newcommand{\psij}{\psi_j}
\newcommand{\tpsij}{\widetilde\psi_j}
\newcommand{\tpsijj}{\widetilde\psi_{j+1}}
\newcommand{\ovC}{\overline C}
\newcommand{\tAjj}{\widetilde{\mathcal A}_{j}}
\newcommand{\dejj}{\delta_{j+1}}
\newcommand{\vphio}{\varphi^0}
\newcommand{\vphii}{\varphi^1}
\newcommand{\ej}{\varepsilon_j}
\newcommand{\ejj}{\varepsilon_{j+1}}
\newcommand{\omp}{\omega_p}
\newcommand{\Np}{\mathcal N_p}
\newcommand{\phip}{\phi_p}
\newcommand{\Tp}{\mathcal T_p}
\newcommand{\gps}{\gp''}
\newcommand{\Fg}{\mathcal F_\gamma}
\newcommand{\xip}{\xi_p}
\newcommand{\txip}{\widetilde\xi_p}
\newcommand{\zep}{\zeta_p}
\newcommand{\tzep}{\widetilde\zeta_p}
\begin{document}
\title[New  sign-changing solutions ]{New  sign-changing solutions\\ for the 2D
Lane-Emden problem with large exponents}
\author[A.~Pistoia]{Angela Pistoia}
\address[A.~Pistoia] {Dipartimento SBAI, Sapienza Universit\`{a} di Roma, 
Via Antonio Scarpa 16, 00161 Rome, Italy. Phone: +39 06 4976 6677}
\author[T.~Ricciardi]{Tonia Ricciardi}
\address[T.~Ricciardi] {Dipartimento di Matematica e Applicazioni,
Universit\`{a} di Napoli Federico II, Via Cintia, Monte S.~Angelo, 80126 Napoli, Italy. Phone: +39 081 628802}
\email{tonia.ricciardi@unina.it}
\email{angela.pistoia@uniroma1.it}
\thanks{Work partially supported by the MUR-PRIN-20227HX33Z “Pattern formation in nonlinear phenomena” and the INdAM-GNAMPA group}
\begin{abstract}
We construct  a new family of sign-changing solutions  for a two-dimensional Lane-Emden problem
with large exponent whose shape resembles a {\em tower}  with alternating sign of  bubbles  solving different singular Liouville equations on the whole plane.
\end{abstract}
\keywords{}
\subjclass[2000]{}
\maketitle
\section{Introduction and statement of the main results}
\label{sec:intro}
We consider the classical Lane-Emden problem:
\beq
\label{eq:pb}
\left\{
\bal
-\Delta u=&|u|^{p-1}u&&\hbox{in\ }\Om \\
u=&0&&\hbox{on\ }\pl\Om,
\eal
\right.
\eeq
where $\Om\subset\rr^2$ is a smooth bounded domain and $p>1$.
\\
Standard variational tools ensure the existence of     infinitely many (possibly sign-changing) solutions of problem \eqref{eq:pb}.\\

In the last decade, a fruitful line of research has been the  study the asymptotic behaviour  of solutions to \eqref{eq:pb}
as $p$ approaches $+\infty$. 
In the pioneering works \cite{renwei1,renwei2}, Ren and Wei   study the profile
of the least energy solution $u_p$ (which is positive) and prove that it has a single point concentration and converges at zero locally uniformly outside the concentration point. Later, El Mehdi and Grossi \cite{elgro} and Adimurthi and Grossi \cite{adigro}   identify a limit problem by showing that 
a suitable scaling   converges to a radial solution of 
\begin{equation}\label{plim}
-\Delta U=e^U\ \hbox{in}\ \rr^2,\ \int\limits_{\rr^2}e^U=8\pi.\end{equation}
Concerning general positive solutions (i.e. not necessarily with least energy) the first asymptotic analysis was carried out by
De Marchis,  Ianni and Pacella \cite{dip} (see also \cite{deiapa2,degroiapa}). More recently, other contributions have been given by Kamburov and Sirakov \cite{kamsir} and
Thizy \cite{thi}. We can summarize the known results as follows: the set of positive solutions
is uniformly bounded in $p$, each positive solution 
 concentrates at a finite number $k$ of points $x_1,\dots,x_k$ in $\Omega$,  converges to zero locally uniformly outside the concentration set 
 and suitable scalings of it
 about each peak converge to the   bubble $U$, i.e  solution of the limit problem \eqref{plim}.
Moreover, the $k-$upla of peaks $ (x_1,\dots,x_k)$ is a critical point of the 
{\em Kirchhoff-Routh} $\Psi_k:{\mathcal D}\to\mathbb R$, with ${\mathcal D} =   \{ (x_1, \dots , x_k ) \in \Omega^k \, : \,
x_i \not =x_j  \}$ defined by
 \begin{equation}\label{kr}
\Psi_k(x_1,\dots,x_k):=\sum\limits_{i=1,\dots,k}\sigma_i^2H(x_i,x_i)+\sum\limits_{i,j=1,\dots,k\atop i\not=j}\sigma_i\sigma_jG(x_i,x_j). \end{equation}
with all the $\sigma_i$'s equal to $+1.$ 

Here $G(x,y)$ is  the Green's function
and   $H(x,y)$
its regular part. That is,
\beqs
\bca
-\Delta_xG(x,y)=\de_y&x,y,\in\Om,\ x\neq y\\
G(x,y)=0&x\in\partial\Om,\ y\in\Om
\eca
\eeqs
and
\beqs
G(x,y)=\frac{1}{2\pi}\ln\frac{1}{|x-y|}+H(x,y).
\eeqs
We denote by 
\beq
\label{def:Robin}
h(x)=H(x,x),\qquad x\in\Om,
\eeq 
the Robin function.  \\

The first contribution to the analysis of the concentration phenomenon
for sign-changing solutions of problem \eqref{eq:pb} as  $p\to+\infty$ is due to Grossi, Grumiau and Pacella in \cite{ggp2} who study  the profile and qualitative properties of the least energy nodal solution. In particular, the authors show that it  concentrates at two different points  $x_1$ and $x_2$ in $\Omega$ 
 and up to  suitable scalings around 
 each peak it converges to a positive and a negative   bubble $U$  (see \eqref{plim}).
Moreover, the pair of peaks $ (x_1,x_2)$ is a critical point of the 
{\em Kirchhoff-Routh}  function \eqref{kr} with $k=2$ and $\sigma_1=+1$, $\sigma_2=-1.$\\
Successively, the same  authors Grossi, Grumiau and Pacella in \cite{ggp}
 analyze the
asymptotic behavior of the least energy radial nodal solution  in the ball and prove that
its positive and negative parts  concentrate at the origin    and the limit profile looks like a tower of two bubbles given by a superposition of the   radial solution to \eqref{plim} and  the radial solution of the  {\em singular} Liouville problem in $\mathbb R^2$
\begin{equation}\label{plims}
-\Delta U=|x|^\beta e^U\ \hbox{in}\ \rr^2,\ \int\limits_{\rr^2}e^U=4\pi(\beta+2) \end{equation}
for a suitable non-integer and positive $\beta$.
Later, such a result  has been generalized   to other symmetric domains by De Marchis, Ianni and Pacella \cite{dip}.
Recently, Ianni and Saldana \cite{is} provide a complete and extremely accurate asymptotic analysis of the radial solutions of problem \eqref{eq:pb} when the domain is the unit ball.   
In particular, they prove that the radial solution with $m$ nodal lines looks like a superposition  (with alternating sign) of  the  radial  solution to \eqref{plim} and the radial solutions of $m$ different  Liouville problem \eqref{plims} with different non-integers and positive $\beta_1,\dots,\beta_m$.
 \\
 
 As far as it concerns the existence of positive and sign-changing solutions with $k$ concentration points, Esposito, Musso and Pistoia in \cite{emp1,emp2}
proved that any {\em non-degenerate} critical point $ (x^*_1,\dots,x^*_k)$ of the Kirchhoff-Routh function \eqref{kr} with $\sigma_i\in\{-1,+1\}$,
 generates a solutions with $k-$peaks
approaching the points $  x^*_1,\dots,x^*_k$  as $p$ is large enough being the peak $x^*_i$ positive or negative if $\sigma_i=+1$ or $\sigma_i=-1$, respectively.  
In this regard, it is useful to point out that Micheletti and Pistoia \cite{mipi} and Bartsch, Micheletti and Pistoia \cite{bmp} proved that for   generic domains $\Omega$ all the critical points of the  Kirchhoff-Routh function  are non-degenerate. The limit profile of the solutions built in \cite{emp1,emp2} resembles the sum of $k$  {\em bubbles} solutions to \eqref{plim} concentrated in  the different points $  x^*_i$. 
In particular,  in \cite{emp1} it is also shown that if the domain $\Omega$ is has a rich geometry (namely it is not contractible or it has a dumbell shape)  positive multipeak   solutions can be found. On the contrary if the domain $\Omega$ is convex, positive solutions with more than one peak do not exist as proved by Grossi and Takahashi in \cite{grota}. The scenery of sign-changing solutions is completely different. Indeed, 
in any domain $\Omega$ there exist  at least two pairs of solutions with one positive peak and one negative peak as proved  in \cite{emp2}. In particular, Bartsch, Pistoia and Weth \cite{bpw1,bpw2} prove that  if the domain is symmetric with respect to the  line $\mathbb R\times \{0\}$
 there exist  infinitely many sign-changing solutions whose peaks are  aligned on the symmetry line $\Omega\cap \mathbb R\times \{0\}$. We point out that in the case of the ball these solutions are not radially symmetric.
\\

 In the present paper, we will build a new type of sign-changing solutions in a general domain whose profile resembles the tower of bubbles as predicted in the radial case by \cite{is}.\\
For the sake of simplicity, we will assume that the domain $\Omega$ is symmetric with respect to the origin $0\in\Omega$, i.e. $x\in\Omega$ iff $-x\in\Omega.$ 
We aim to construct a sign-changing tower of peaks for \eqref{eq:pb} concentrating at $0\in\Om$, in the spirit of \cite{emp1, gropi}.
\\

In order to state our main result, we briefly recall some known definitions and facts.\\
In view of the standard expansion:
\beq
\label{eq:introtaylorzero}
\left(1+\frac{v}{p}\right)^p=e^v\left\{1+O\left(\frac{1}{p}\right)\right\}\qquad\qquad\hbox{as }p\to+\infty,
\eeq
for any fixed $v\in\rr$, the equation
$-\Delta v=v^p$ (corresponding to \eqref{eq:pb} for $u\ge0$) may be viewed as a perturbation of the Liouville equation $-\Delta v=e^v$.
Therefore, a family of solutions $u_p$ to \eqref{eq:pb}, with a peak at $0\in\Om$ as $p\to+\infty$,  is expected to exist in the form
\beq
\label{eq:zeroansatz}
u_p(x)=\tau PU_\delta(x)+\om_p,
\eeq
where $\tau,\de>0$ are parameters depending on $p$ with $\tau,\delta\to0$, $P$ denotes the standard projection on $H_0^1(\Om)$,
\[
U_\delta(x)=\ln\frac{8\de^2}{(\de^2+|x|^2)^2}
\] 
denotes the concentrating family of radial solutions to the Liouville equation \eqref{plim}
and $\omp$ is an error, see \eqref{eq:basicapprox} in the Appendix for details.
However, as observed in \cite{emp1, emp2}, where multiple isolated peak solutions 
to \eqref{eq:pb} are constructed, 
the first order expansion \eqref{eq:introtaylorzero} 
is not sufficiently accurate,
so that two higher order correction terms $w^0$ and $w^1$ must be included in ansatz~\eqref{eq:zeroansatz}.
More precisely, the \emph{third order} expansion of the nonlinearity in \eqref{eq:pb} is needed. For fixed $v,w^0,w^1\in\rr$ such
an expansion takes the form
\beq
\label{eq:introtaylor}
\left(1+\frac{v}{p}+\frac{w^0}{p^2}+\frac{w^1}{p^3}+o\left(\frac{1}{p^3}\right)\right)^p
=e^v\left\{1+\frac{w^0-\vphio(v)}{p}+\frac{w^1-\vphii(v,w^0)}{p^2}+o\left(\frac{1}{p^2}\right)\right\},
\eeq
where $\vphio,\vphii$ are the functions defined by
\beq
\label{def:phi}
\vphio(v):=\frac{v^2}{2},
\qquad
\vphii(v,w^0):=vw^0-\frac{v^3}{3}-\frac{(w^0)^2}{2}-\frac{v^4}{8}+\frac{v^2w^0}{2},
\eeq
see Lemma~\ref{lem:Taylor} in the Appendix for more details on this expansion.
Thus, ansatz~\eqref{eq:zeroansatz} is replaced by the following ansatz employed in \cite{emp1, emp2}: 
\beqs
u_p(x)=\tau P\left(U_\delta(x)+\frac{w^0(x/\de)}{p}+\frac{w^1(x/\de)}{p^2}\right)+\om_p,
\eeqs
where $w^0$, $w^1$ are the smooth radial solutions, whose existence is established in \cite{ChaeImanuvilov} (see also Lemma~\ref{lem:CI} in the Appendix), to the linearized equations
\beqs
\Delta w^0+e^vw^0=e^v\vphio(v)
\qquad\hbox{and}\qquad
\Delta w^1+e^vw^1=e^v\vphii(v,w^0)\qquad\hbox{on }\rr^2,
\eeqs
satisfying 
\beqs
w^\ell(y)=C^\ell\ln|y|+O\left(\frac{1}{|y|}\right)\qquad\hbox{as }|y|\to+\infty,\ \ell=0,1.
\eeqs
The first order approximation for the tower of peaks
is given by a superposition of a finite number of   bubbles $\Udai$, 
solutions to the singular Liouville problems
\beq
\label{eq:singLiou}
\left\{
\bal
&-\Delta U=|y|^{\ai-2}e^U&&\hbox{on\ }\rr^2\\
&\int_{\rr^2}|y|^{\ai-2}e^U\,dx=4\pi\ai,
\eal
\right.
\eeq
given by 
\beqs
\Udai(x)=\vi(\frac{x}{\dei})-\ai\ln\dei=\ln\frac{2\ai^2\dei^{\ai}}{(\dei^{\ai}+|x|^{\ai})^2},
\eeqs
where $\ai\ge2$, $\dei>0$ are suitably chosen and
\[
\vi(y)=\ln\frac{2\ai^2}{(1+|y|^{\ai})^2},\quad i=1,2,\ldots,k.
\]
The corresponding lower order corrections $\woai$, $\wiai$, are given by the radial solutions 
to the linearized equations
\beq
\label{def:wa}
\bal
&\Delta\woai+|y|^{\ai-2}e^{\vi}\woai=|y|^{\ai-2}e^{\vi}\,\vphio(\vi+\ln|y|^{\ai-2}),\qquad\hbox{on }\rr^2,\\
&\Delta\wiai+|y|^{\ai-2}e^{\vi}\wiai=|y|^{\ai-2}e^{\vi}\,\vphii(\vi+\ln|y|^{\ai-2},\woai)
\qquad\hbox{on }\rr^2,
\eal
\eeq
satisfying 
\beq
\label{bdrycond:wa}
\wlai(y)=\Clai\ln|y|+O(|y|^{-1})\qquad \hbox{as } |y|\to+\infty, \qquad\ell=0,1,
\eeq
whose existence is established in \cite{ChaeImanuvilov}. 
\par
With this notation our main result may be stated as follows.
\begin{thm}
\label{thm:main}
For any $k\in\mathbb N$ there exists a sufficiently large $p_0>0$ such that
for all $p\ge p_0$ there exists a sign-changing tower of peaks solution to
\eqref{eq:pb} of the form $\up=\Up+\om_p$,
\beqs
\Up(x)=\sum_{i=1}^k(-1)^{i-1}\ti P\left(\Udai(x)+\frac{\woai(\dei^{-1}x)}{p}+\frac{\wiai(\dei^{-1}x)}{p^2}\right),
\eeqs
where, for $i=1,2,\ldots,k$, there holds $\ti=O(p^{-1})$, $2=\al_1<\al_2<\ldots<\ai<\al_{i+1}\ldots<\ak$, 
$\dei=C_ie^{-b_ip}$ with $b_1>b_2>\ldots>b_k>0$ and $C_i>0$ and $\woai$, $\wiai$ are defined by 
\eqref{def:wa}.
\end{thm}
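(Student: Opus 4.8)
The plan is to carry out a finite-dimensional Lyapunov--Schmidt reduction in the spirit of \cite{emp1,emp2,gropi}; the new feature is that the $k$ bubbles are not separated in space but form a \emph{tower}, all concentrating at $0$ at scales $\dei=C_ie^{-\bi p}$ that are exponentially separated in $p$ since $b_1>b_2>\dots>b_k>0$. First I would fix the ansatz. Exploiting the assumed symmetry of $\Om$ with respect to $0$, I work throughout in the subspace $\mathcal H:=\{u\in H_0^1(\Om):u(-x)=u(x)\}$, in which the translation modes of the bubbles are absent and $0$ is automatically a critical point of the Robin function \eqref{def:Robin}. The exponents are taken with $\al_1=2$ and $2<\al_2<\dots<\ak$ non-integer, so that each singular Liouville problem \eqref{eq:singLiou} is non-degenerate in the radial class, while the remaining continuous parameters are the reduced heights $\mu_i:=p\ti$ and the reduced scales $\bi$, to be selected in a compact subset of $\{\mu_i>0\}\times\{b_1>\dots>b_k>0\}$. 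These cannot be prescribed independently: requiring the profile of the $i$-th projected bubble to match, at the much larger scale $\de_{i+1}$, the constant and the cumulative logarithmic tail seen by the $(i+1)$-st bubble --- the tail being what produces the weight $|y|^{\al_{i+1}-2}$ in \eqref{eq:singLiou} out of the inner bubbles --- yields a \emph{triangular system of matching identities} among $\al_{i+1}$, the $\mu_j$ and the $b_j$ with $j\le i+1$. The alternating signs $(-1)^{i-1}$ are exactly what makes this system solvable, and $h(0)$ enters through the Dirichlet projection $P$; after imposing the identities one is left with a $k$-dimensional admissible family.

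The next step is the error estimate. With $\mathcal E_p:=\Delta\Up+|\Up|^{p-1}\Up$, I would use the third-order expansion \eqref{eq:introtaylor}--\eqref{def:phi} of Lemma~\ref{lem:Taylor}: the corrections $\woai/p$ and $\wiai/p^2$ have been inserted into $\Up$ precisely so that the $O(1)$, $O(1/p)$ and $O(1/p^2)$ terms produced by expanding $|\Up|^{p-1}\Up$ near each bubble are cancelled by \eqref{def:wa}. In the norm $\|\cdot\|_*$ defined as the supremum of weighted $L^\infty$ norms (weight $(1+|y|)^{2+\sigma}$, $\sigma>0$ small) over the dyadic annuli $\{\de_{i+1}\lesssim|x|\lesssim\de_{i-1}\}$ adapted to each scale, glued together, one obtains $\|\mathcal E_p\|_*=O(p^{-2})$, the only losses coming from the slowly decaying tails $\Clai\ln|y|$ of \eqref{bdrycond:wa} and from the interactions between bubbles, which are exponentially small by scale separation. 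Then I would develop the linear theory for $\Lp\phi:=-\Delta\phi-p|\Up|^{p-1}\phi$ on the part of $\mathcal H$ that is $L^2$-orthogonal to the approximate kernel. In the class $\mathcal H$ the limiting problems retain only their dilation invariance, so the approximate kernel is exactly $k$-dimensional, spanned by the Dirichlet projections of the dilation modes of the $k$ bubbles (the required non-degeneracy coming from \cite{ChaeImanuvilov} for $i\ge2$ and from the symmetry restriction for $\al_1=2$). A blow-up/contradiction argument, carried out separately near each of the $k$ scales, then gives invertibility of $\Lp$ on the orthogonal complement with $\|\Lp^{-1}\|\le Cp$.

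From $\|\mathcal E_p\|_*=O(p^{-2})$ and this bound, a contraction mapping argument in a ball of radius $O(p^{-2+\epsilon})$ of the orthogonal complement of $\mathcal H$ produces, for all large $p$, a unique remainder $\omp=\omp(\mu,b)$ solving the \emph{auxiliary equation} (the projection of \eqref{eq:pb} onto that complement), smooth in $(\mu,b)$ and with $\|\omp\|\le Cp^{-2+\epsilon}$. It then remains to solve the \emph{bifurcation equation}, i.e.\ to cancel the $k$ components of $-\Delta\up-|\up|^{p-1}\up$ along the approximate kernel; by the standard variational identity this amounts to finding a critical point of the reduced functional $J_p(\mu,b):=E_p(\Up+\omp(\mu,b))$, where $E_p(u)=\tfrac{1}{2}\int_\Om|\nabla u|^2-\tfrac{1}{p+1}\int_\Om|u|^{p+1}$ is the energy of \eqref{eq:pb}. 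Expanding $J_p$ with the help of the error estimate, its leading part (after subtracting a constant and rescaling in $p$) is an explicit function of $(\mu,b)$ built from the logarithmic constants of the bubbles, the matching identities, and $h(0)$; I would show that it possesses a critical point that is stable under $C^1$-perturbations --- for instance a strict interior maximum in each $\mu_i$ together with a non-degenerate critical configuration of the $b_i$ inside $\{b_1>\dots>b_k>0\}$ --- which therefore persists for $J_p$. The corresponding $(\mu,b)$ gives a solution $\up=\Up+\omp$ of \eqref{eq:pb} of exactly the form stated in Theorem~\ref{thm:main}.

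I expect the main obstacle to be the linear theory in the tower regime. Unlike the spatially separated situation of \cite{emp1,emp2}, all concentration scales here accumulate at the single point $0$, so no single weighted norm resolves all $k$ bubbles simultaneously: one must work with the multi-scale norm $\|\cdot\|_*$ and carefully control the gluing errors between consecutive annuli, and the blow-up analysis behind $\|\Lp^{-1}\|\le Cp$ has to be run scale by scale. The delicate point is to exclude that $\Lp$ develops a near-kernel supported at an \emph{intermediate} scale $\de_{i+1}\ll|x|\ll\de_i$, where no limiting equation governs the behaviour; it is precisely here that the strict separation $\bi>b_{i+1}$ and the sharp logarithmic rates $\Clai$ of \eqref{bdrycond:wa} are used. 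A secondary, purely computational, difficulty is to propagate the third-order expansion \eqref{eq:introtaylor} consistently through all $k$ coupled bubbles, while keeping track of the alternating signs and of the interaction terms entering the reduced functional.
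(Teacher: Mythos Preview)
Your overall architecture (ansatz, error estimate, linear theory, nonlinear closure) is sound, but the actual route taken in the paper differs from yours in two essential respects, and your quantitative bookkeeping has a gap that would prevent the contraction from closing as stated.

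\textbf{No Lyapunov--Schmidt reduction; parameters are fixed algebraically, not variationally.} The paper does \emph{not} keep $(\mu,b)$ as free parameters to be adjusted by a reduced functional. Instead, the parameters $(\ai,\dei,\ti)$ are determined once and for all as the solution of an explicit nonlinear algebraic system (the paper's system~\eqref{eq:adtsys}), obtained by requiring the expansion of $\Up$ in each shrinking annulus $\Aj$ to match exactly the profile needed for the Taylor expansion of $\gp(\Up)$ to cancel against $\Delta\Up$ up to order $p^{-3}$. Solvability of this system is by the implicit function theorem around an explicit ``unperturbed'' limit system; the bounds $8j-6<\aj<8j-5$ (hence $\aj\notin\mathbb N$ for $j\ge2$) follow from a separate argument using the Lambert $W$ function. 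There is no bifurcation equation and no reduced energy: once the parameters are fixed, the problem is solved by a \emph{pure} contraction mapping in $C_0(\overline\Om)$.

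\textbf{The linearized operator is genuinely invertible on the symmetric space.} You correctly identify the radial dilation modes $\zajo$ as the dangerous directions, but you treat them as an approximate kernel to be projected out. The paper instead shows that $\Lp$ is invertible on the whole symmetric space, with $\|\phi\|_\infty\le Cp\|h\|_{\rp}$. The mechanism is that the potential $\Wp=\gpp(\Up)$ carries a $1/p$ correction $\Daj=\woaj-\Vaj-\tfrac12\Vaj^2$ beyond the limiting Liouville potential, and this correction couples the would-be kernel directions. Concretely, testing the equation against $P\zajod$ and $P\Udaj$ produces a $2k\times 2k$ linear system for the limiting coefficients $\gj$ and the ``mass integrals'' $\sjs$; the diagonal entries involve the integrals $\Iaj=\int e^{\Vaj}(\zajo)^2\Daj$, which the paper computes explicitly (Lemma~\ref{lemma:Ia}) and shows to be strictly negative, forcing the matrix to be invertible and hence all $\gj=0$. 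This replaces your projection step entirely. A barrier argument in the intermediate annuli then upgrades the ``no kernel'' information to the global $L^\infty$ bound.

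\textbf{Quantitative gap in your proposal.} With your stated error $\|\mathcal E_p\|_*=O(p^{-2})$ and linear bound $\|\Lp^{-1}\|\le Cp$, the first iterate already sits at $O(p^{-1})$, not in your ball of radius $O(p^{-2+\epsilon})$; and since $\Np(\phi)$ involves $\gps(\Up)\sim p^2\cdot(\text{mass})$, the contraction estimate does not close at that scale. The paper avoids this by pushing the error down to $\|\Rp\|_{\rp}\le C p^{-4}$: this is precisely why the parameters must satisfy the full system~\eqref{eq:adtsys} (so that the $O(1)$, $O(1/p)$ and $O(1/p^2)$ terms in the expansion of $\gp(\Up)$ all cancel), and why the annuli $\Aj$ are chosen with the specific exponents $\ej=\sj/(1+\sj)$ (so that $\Vaj\ge -p-C$ uniformly on $\Aj/\dej$, making the Taylor expansion valid on the whole annulus). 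With $p^{-4}$ error and the $Cp$ linear bound, the contraction runs in $\{\|\phi\|_\infty<\gamma p^{-3}\}$.

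In short: your matching/triangular-system intuition is correct and is exactly what the paper formalises, but the paper then \emph{solves} that system rather than leaving free parameters, proves the linearized operator is already invertible (no reduction needed), and pushes the error one order further than you claim, which is what makes the direct fixed-point argument work.
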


\begin{rmk}\rm 
We point out that by the building process we easily deduce that the solution $\up=\Up+\om_p$ has $k$ nodal regions  which shrink to the origin as $p\to\infty.$\\    It is also worthwhile noticing that the main order term of our solution, i.e. the sum of   bubbles $\sum _{i}(-1)^i\tau_iP\Udai$ with alternating sign,  coincide  in the radial case with the profile   described by
Ianni and Saldana in \cite[Theorem 2.5]{is}. 
An interesting problem would be to show that all the  solutions to  \eqref{eq:pb} having $k$ nodal regions shrinking to the origin as $p\to\infty$ look like the first order approximation term  $\Up$. It should be the first step to prove the local uniqueness of the nodal solution   in  the same spirit of  Grossi, Ianni, Luo and Yan \cite{GILY}. We observe that the  unicity of the radial solution with $k$ nodal regions has been proved by Kajikiya in \cite{ka} using ODE techniques. 
\end{rmk}

\begin{rmk}\rm
Our   result claims the existence of symmetric  sign-changing solutions which look like a tower of bubbles centered at the origin  being the domain $\Omega$ symmetric with respect to it. 
The symmetry assumptions   simplify  the computations a lot because the linearized operator is invertible in the space of symmetric functions  and the solution can be found merely using a  contraction mapping argument. In general
we strongly believe that it is possible to carry out the construction around at any non-degenerate critical point $x_0$ of the Robin's function \eqref{def:Robin}.   This   could be managed  introducing  new parameters which are
the centers of the  bubbles  close to $x_0$,  which at the prices of heavy technicality should allow to perform a classical Ljapunov-Schmidt reduction.

\end{rmk}

This article is organized as follows. 
 Section~\ref{sec:ansatz} is dedicated to the choice of the parameters $\aj,\dej,\tj$ appearing in Theorem~\ref{thm:main}. More precisely,
we first derive conditions for the parameters in the form of a nonlinear a system (see \eqref{eq:adtsys}) which ensure smallness of the \lq\lq error" $\mathcal R_p:=\Delta\Up+\gp(\Up)$ with respect to
a suitable $L^\infty$-weighted norm $\rp$. By rather involved \textit{ad hoc} arguments we prove solvability of the system and check that $\aj\not\in\mathbb N$ for $j\ge2$, 
which is essential for the invertibility of the linearized operator.  
In Section~\ref{sec:Rp} we carry out the details of the estimates of the error $\Rp$. To this end, we partition $\Om$ into \lq\lq shrinking annuli" $\Aj$ in the spirit of \cite{gropi},
although our choice of the $\Aj$'s is more delicate. 
Sections 4 to 7 are dedicated to the analysis of the linearized operator $\Lp=\Delta+\gp'(\Up)$. 
In particular, Sections~4--5 are concerned with estimations in shrinking rings, and include delicate \textit{ad hoc} arguments to compute the integrals involving radial eigenfunctions.
Section~7 extends the estimates by a barrier function in the spirit of \cite{DelPinoEspositoMusso2012}, with new ingredients. 
Finally, in Section~8 we conclude the proof of Theorem~\ref{thm:main}
by a fixed point argument. For the reader's convenience, we collect in the Appendix some technical estimates as well as some known results.
  
\subsection* {Acknowledgement} We would like to thank Luca Battaglia for his   contribution in the proof of Proposition \ref{prop:aj}.

\section{Ansatz for the $k$-peak tower and choice of parameters}
\label{sec:ansatz}
\par
In what follows, we denote by $\gp=\gp(t)$ the nonlinearity appearing in \eqref{eq:pb}, namely we set:
\beq
\label{def:gp}
\gp(t):=|t|^{p-1}t, \qquad t\in\rr.
\eeq

We recall from Section~\ref{sec:intro} that we seek solutions to \eqref{eq:pb} of the 
\lq\lq sign-changing tower of peaks" form:
\beq
\label{eq:ansatz}
\bal
&\up=\Up+\varphi_p\\
&\Up
=\sum_{i=1}^k\ti(-1)^{i-1}P\left(\Udai+\frac{\wodai}{p}+\frac{\widai}{p^2}\right),
\eal
\eeq
where for $\ai\ge2$, $\dei>0$, we have that
\beq
\label{def:singLiou}
\Udai(x)=\ln\frac{2\ai^2\dei^{\ai}}{(\dei^{\ai}+|x|^{\ai})^2},
\eeq
is the family of radial solutions to \eqref{eq:singLiou},
\beq
\wldai(x)=\wlai(\frac{x}{\dei}),\qquad\ell=0,1,\quad x\in\Om,
\eeq
where $\woai$, $\wiai$ are the correction 
terms defined in \eqref{def:wa}, see Lemma~\ref{lem:CI} for details.
\par

\subsection{Notation}
Henceforth, we denote by $C>0$ a general constant independent of $p$.
For any measurable set $E\subset\Om$ we denote by $\chi_E$ the characteristic function of $E$.
By uniform convergence in $E$ we understand uniform convergence in $\overline E$. 
For any two families of sets $A_p,B_p\subset\rr^2$ depending on $p$, by $A_p\cong B_p$
we understand that there exists a constant $K>0$ independent of $p$ such that
\beq
\label{def:AcongB}
K^{-1}B_p\subset A_p\subset KA_p,\qquad \mbox{for all }p\to\infty
\eeq
where $KB=\{Kx:x\in B\}$.
For $i=1,2,\ldots,k$ it will be convenient to set
\[
\vi(y):=U_{\ai,1}(y)=\ln\frac{2\ai^2}{(1+|y|^\ai)^2},\qquad y\in\rr^2,
\]
so that the scaling property may be written in the form
\beq
\label{eq:Udascaling}
\Udai(x)=\vi(\frac{x}{\dei})-\ai\ln\dei.
\eeq
It is convenient to set
\beq
\label{def:Va}
\Vai(y):=\vi(y)+(\ai-2)\ln|y|=\ln\frac{2\ai^2|y|^{\ai-2}}{(1+|y|^{\ai})^2},
\eeq
so that we may write \eqref{def:wa} in the form:
\beqs
\bal
&\Delta\woai+e^{\Vai(y)}\woai=e^{\Vai(y)}\varphi^0(\Vai)\qquad\hbox{on }\rr^2\\
&\Delta\wiai+e^{\Vai(y)}\wiai=e^{\Vai(y)}\varphi^1(\Vai,\woai)\qquad\hbox{on }\rr^2.
\eal
\eeqs
The \lq\lq mass" takes the form:
\beqs
|x|^{\ai-2}e^{\Udai(x)}=\frac{|y|^{\ai-2}e^{\vi(y)}}{\dei^2}=\frac{e^{\Vai(y)}}{\dei^2},
\qquad x=\dei y\in\Om.
\eeqs
We set
\beq
\label{def:wodai}
\wodai(x)=\woai(\frac{x}{\dei}),\qquad \widai(x)=\wiai(\frac{x}{\dei}),\qquad i=1,2,\ldots,k.
\eeq
It follows that $\wodai$, $\widai$, satisfy
\beq
\label{eq:wdea}
\bal
&\Delta\wodai+|x|^{\ai-2}e^{\Udai}\wodai
=|x|^{\ai-2}e^{\Udai}\vphio(\Vai(\frac{x}{\dei})),\qquad x\in\Om\\
&\Delta\widai+|x|^{\ai-2}e^{\Udai}\widai
=|x|^{\ai-2}e^{\Udai}\vphii(\Vai(\frac{x}{\dei}),\wodai(x)),\qquad x\in\Om.
\eal
\eeq
For later estimates it is essential to observe that $\Vai$ satisfies
\beq
\label{eq:Vaprop}
\ln\frac{|y|^{\ai-2}}{1+|y|^{2\ai}}-C\le\Vai(y)\le\ln\frac{|y|^{\ai-2}}{1+|y|^{2\ai}}+C
\eeq
for some $C>0$ independent of $y\in\rr^2$.
We observe that 
\beq
\label{eq:Uda}
\bal
-\Delta&\left(\Udai+\frac{\wodai}{p}+\frac{\widai}{p^2}\right)\\
&\qquad=|x|^{\ai-2}e^{\Udai}
\left(1+\frac{\wodai-\vphio(\Vai(\dei^{-1}x))}{p}+\frac{\widai-\vphii(\Vai(\dei^{-1}x),\wodai)}{p^2}\right).
\eal
\eeq
Occasionally within some proofs it will be convenient to denote by $\tUdai$, $\tvi$, $\tai$, 
$i=1,2,\ldots,k$,
the \lq\lq corrections" by the lower order terms $\wodai$, $\widai$, or by the related constants $\Coai$, $\Ciai$:
\beq
\label{def:tUdai}
\tUdai=\Udai+\frac{\wodai}{p}+\frac{\widai}{p^2},
\eeq
\beq
\label{def:tildev}
\tvi:=v_{\ai}+\frac{\woai}{p}+\frac{\wiai}{p^2}.
\eeq
and
\beq
\label{def:ta}
2\tai:=2\ai-\frac{\Coai}{p}-\frac{\Ciai}{p^2}.
\eeq

\subsection{The choice of parameters}
Our aim in this section is to 
choose $\al_i,\de_i,\ti$, so that the \lq\lq error" $\Rp$ defined by
\beq
\label{def:R}
\mathcal R_p:=\Delta\Up+\gp(\Up)
\eeq
vanishes with order $p^{-4}$ in a suitable weighted norm.
More precisely, in Proposition~\ref{prop:Rjest} we derive conditions for the parameters $\ai$, $\dei$, $\ti$, 
in the form of a nonlinear system, which imply \lq\lq smallness" of $\Rp$.
In Proposition~\ref{prop:solvability} we prove the solvability of the system.
Finally, in Proposition~\ref{prop:aj} we show that $\ai\not\in\mathbb N$ for $i\ge2$,
which is essential to prove invertibility of the linearized problem.
\par
We make the following assumptions on the parameters $\al_i=\al_i(p)$, $\de_i=\de_i(p)$, $\ti=\ti(p)>0$, $i=1,2,\ldots,k$: 
\begin{enumerate}
  \item[(A1)]
The singularity coefficients $\al_i$ increase as the index $i$ increases: 
\beqs
\label{ans:al}
2=\al_1<\al_2<\ldots<\al_i<\al_{i+1}<\ldots<\al_k\le C;
\eeqs
\item[(A2)]
the concentration parameters $\de_i$ vanish exponentially with respect to $p$,
with decreasing speed with respect to $i$:
\beqs
\label{ans:de}
\de_i=C_ie^{-b_ip},\quad i=1,2,\ldots,k;\quad \ C^{-1}\le b_k<b_{k-1}<\ldots<b_2<b_1\le C,
\eeqs
where $C^{-1}\le C_i=C_i(p)\le C$, $b_i=b_i(p)$, $i=1,2,\ldots,k$.
In particular,
\beq
\label{eq:deideii}
\de_i=o(\de_{i+1}),\quad i=1,2,\ldots,k-1;
\eeq
\item[(A3)]
the coefficients $\ti$ satisfy: 
\beqs
\label{ans:ti}
\ti=O(\frac{1}{p}), \qquad C^{-1}\le \frac{\ti}{\tj}\le C,\qquad i,j=1,2,\ldots,k.
\eeqs
\end{enumerate}
Ansatz~(A3) implies that $\ti P\Udai$ is $L^\infty$-bounded, consistently
with the estimates in \cite{renwei1, renwei2}. 
\par
\subsection*{A partition of $\Om$: the \lq\lq shrinking annuli" $\Aj$}
In order to estimate the error $\Rp$ defined in \eqref{def:R}, extending ideas in \cite{gropi}, we partition $\Om$ into suitable
sets $\Aj$, $j=1,2,\ldots,k$, such that the $j$-th bubble
$P\tUdaj$ is the dominant bubble in $\Aj$. Our choice of $\Aj$ is more delicate than in \cite{gropi},
and is optimal with respect to the property:
\beq
\label{est:Vaj}
\Vaj(y)\ge-p-C\ \hbox{uniformly for }y\in\frac{\Aj}{\dej},
\eeq
for some $C>0$ independent of $p\to+\infty$, see
Lemma~\ref{lem:ejineq}.
\par
For $j=1,\ldots,k-1$, let $0<\ej<1$ be defined by
\beq
\label{def:ejt}
\ej:=\frac{\tau_{j+1}}{\tj+\tau_{j+1}},
\eeq
where $\tj$ is the constant appearing in \eqref{eq:ansatz}.
We define
\beq
\label{def:Aj}
\bal
A_1:=&\{x\in\Om:\ 0\le|x|<\de_1^{\veps_1}\de_2^{1-\veps_1}\}\\
\Aj:=&\{x\in\Om:\ \de_{j-1}^{\veps_{j-1}}\de_{j}^{1-\veps_{j-1}}\le|x|<\de_j^{\ej}\de_{j+1}^{1-\ej}\},
\qquad j=2,\ldots,k-1\\
\Ak:=&\{x\in\Om:\ |x|\ge\de_{k-1}^{\veps_{k-1}}\de_k^{1-\veps_{k-1}}\}.
\eal
\eeq
We note that the set $A_1$ is a shrinking ball, the sets $\Aj$, $j=2,\ldots,k-1$ are shrinking annuli
and the set $A_k$ is an annulus invading $\Om$.
For later use we observe that
\beq
\bal
\label{def:Ajdej}
\frac{A_1}{\de_1}=&\left\{y\in\rr^2:\ \de_1 y\in\Om\ \hbox{and }
0\le|y|<\left(\frac{\de_{2}}{\de_1}\right)^{1-\varepsilon_1}\right\},\\
\frac{\Aj}{\dej}=&\left\{y\in\rr^2:\ \dej y\in\Om\ \hbox{and } \left(\frac{\de_{j-1}}{\dej}\right)^{\veps_{j-1}}
\le|y|<\left(\frac{\de_{j+1}}{\dej}\right)^{1-\ej}\right\},
\ j=2,\ldots,k-1,\\
\frac{\Ak}{\dek}=&\left\{y\in\rr^2:\ \de_k y\in\Om\ \hbox{and } |y|\ge\left(\frac{\de_{k-1}}{\dek}\right)^{\veps_{k-1}}
\right\}.
\eal
\eeq
In particular, the rescaled sets $\Aj/\dej$, $j=1,2,\ldots,k$, invade the whole space $\rr^2$ as $p\to+\infty$.
We also check that
by choice of $\aj$, $\dej$, there holds
\beq
\label{eq:Ajuseful}
\bal
&\frac{A_1}{\de_1}\cong\left\{y\in\rr^2:\ \de_1 y\in\Om\ \hbox{and }0\le|y|< e^{\frac{p}{4}}\right\},\\
&\frac{\Aj}{\dej}\cong\left\{y\in\rr^2:\ \dej y\in\Om\ \hbox{and }e^{-\frac{p}{\aj-2}}\le|y|<e^{\frac{p}{\aj+2}}\right\},\\
&\frac{\Ak}{\dek}\cong\left\{y\in\rr^2:\ \dek y\in\Om\ \hbox{and }|y|\ge e^{-\frac{p}{\ak-2}}\right\},
\eal
\eeq
where the relation $\cong$ is defined in Section~\ref{sec:intro}.
We note that the expansion rate of $A_1/\de_1$ in \eqref{eq:Ajuseful} is consistent with \cite{emp1}.
\subsection*{A family of level sets for $\Vaj$: the sets $\Bj$}
We shall  need the following subsets $\Bj\subset\Om$,
where the Taylor expansion as stated in Lemma~\ref{lem:Taylor} holds uniformly:
\beq
\label{def:Bj}
\bal
\Bj:=&\left\{x\in\Om:\ 1+\frac{\Udaj(x)+\ln|x|^{\aj-2}+\ln\dej^2}{p}>\frac{1}{2}\right\}
=\left\{x\in\Om:\ \Vaj(\frac{x}{\dej})>-\frac{p}{2}\right\}.
\eal
\eeq
In view of the form of $\Vaj$ as in \eqref{eq:Vaprop}, it is clear that the shrinking rate of $\Bj$ is given by
\beqs
\bal
&\Bo\stackrel{\sim}{=}\{x\in\Om:\ 0\le|x|<C\de_1e^{\frac{p}{8}}\},\\
&\Bj\stackrel{\sim}{=}\{x\in\Om:\ C^{-1}\dej e^{-\frac{p}{2(\aj-2)}}<|x|<C\dej e^{\frac{p}{2(\aj+2)}}\},
\quad j=2,\ldots,k,
\eal
\eeqs
see Lemma~\ref{lem:BsubsetA} below for the precise statement. 
In view of \eqref{eq:Ajuseful} we have
\beqs
\Bj\subset\Aj,\qquad j=1,2,\ldots,k.
\eeqs
\subsection*{The $j$-th error $\Rj$}
We recall from \eqref{def:R} that
\beqs
\mathcal R_p:=\Delta\Up+\gp(\Up).
\eeqs
For every $j=1,2,\ldots,k$ we define the \lq\lq$j$-th error''
\beq
\label{def:Rj}
\Rj:=\left[\tj(-1)^{j-1}\Delta\tUdaj+\gp(\Up)\right]\chi_{\Aj},
\qquad x\in\Om
\eeq
so that we may write 
\beq
\label{eq:Rsplit}
\Rp=\sum_{j=1}^k\Rj+\sum_{j=1}^k\chi_{\Aj}\sum_{\stackrel{i=1}{i\neq j}}^k(-1)^{i-1}\ti\Delta\tUdai.
\eeq
For $j=1,2,\ldots,k$, let
\beq
\label{def:cj}
c_j:=4\pi h(0)\sum_{i=1}^k(-1)^{i-j}\frac{\ti}{\tj}\tai
-\ln(2\al_j^2)
+\sum_{i>j}(-1)^{i-j}\frac{\ti}{\tj}\left(\frac{\woai(0)}{p}
+\frac{\wiai(0)}{p^2}\right),
\eeq
where $\tai$, $h$ are defined in \eqref{def:ta}, \eqref{def:Robin}, respectively.
Our first aim in this section is to establish the following.
\begin{prop}[Decay estimate for $\Rj$ under suitable assumptions on $\aj$, $\dej$, $\tj$]
\label{prop:Rjest}
Let $\Up$ be defined in \eqref{eq:ansatz}, where the parameters
$\al_i=\al_i(p)$, $\de_i=\de_i(p)$, $\ti=\ti(p)$, $i=1,2,\ldots,k$, are solutions to the system:
\beq
\label{eq:adtsys}
\left\{
\bal
&\al_1=2\\
&\al_j=2-\sum_{1\le i<j}\frac{\ti}{\tj}(-1)^{i-j}\left(2\ai-\frac{C_{\ai}^0}{p}-\frac{C_{\ai}^1}{p^2}\right),
&&j=2,\ldots,k\\
&-\left(\al_k-\frac{C_{\ak}^0}{p}-\frac{C_{\ak}^1}{p^2}+2\right)\ln\de_k+c_k=p\\
&-\left(\al_j-\frac{C_{\aj}^0}{p}-\frac{C_{\aj}^1}{p^2}+2\right)\ln\de_j\\
&\qquad-2\sum_{j<i\le k}\frac{\ti}{\tj}(-1)^{i-j}\left(\ai-\frac{C_{\ai}^0}{2p}-\frac{C_{\ai}^1}{2p^2}\right)\ln\de_i
+c_j=p,
&&j=1,2,\ldots,k-1\\
&p^p\tj^{p-1}\de_j^2=1,
&&j=1,2,\ldots,k
\eal
\right.
\eeq
and satisfy assumptions (A1)--(A2)--(A3),
and where the correction profiles $w_{\al_i}^0$, $w_{\al_i}^1$, $i=1,2,\ldots,k$
are defined by \eqref{def:wa}. 
Then, there holds the estimate
\beq
\label{eq:Rjest}
\bal
\Rj(x)=&|x|^{\aj-2}e^{U_{\aj,\dej}(x)}O\left(\frac{|(\aj-2)\ln\frac{|x|}{\dej}|^6+\ln^6(2+\frac{|x|}{\dej})}{p^4}\right)\\
=&\frac{|y|^{\aj-2}e^{\vj(y)}}{\dej^2}O\left(\frac{|(\aj-2)\ln|y||^6+\ln^6(2+|y|)}{p^4}\right),
\eal
\eeq
uniformly for $x=\dej y\in\Bj$.
\end{prop}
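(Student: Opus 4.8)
The plan is to estimate $\Rj$ on $\Bj$ by inserting the third-order Taylor expansion of the nonlinearity (Lemma~\ref{lem:Taylor}) into $\gp(\Up)$ and matching it, term by term, against $\tj(-1)^{j-1}\Delta\tUdaj=-\tj(-1)^{j-1}|x|^{\aj-2}e^{\Udaj}(1+p^{-1}(\wodaj-\vphio)+p^{-2}(\widaj-\vphii))$ coming from \eqref{eq:Uda}. First I would write, on $\Bj$, $\Up=\tj(-1)^{j-1}P\tUdaj+\sum_{i\neq j}\ti(-1)^{i-1}P\tUdai$, and observe that on $\Bj\subset\Aj$ the $j$-th bubble dominates: the off-diagonal projected bubbles $P\tUdai$ ($i\neq j$) are, by standard projection estimates (see the Appendix), essentially affine there, contributing $\tj(-1)^{j-1}\cdot$(constant depending on $h(0)$, $\dei$, $\woai(0)$, $\wiai(0)$) plus lower-order terms — this is precisely where the constant $c_j$ of \eqref{def:cj} is assembled. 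So $\gp(\Up)=\tj^{p}(-1)^{p(j-1)}|P\tUdaj+(\text{affine remainder})/\tj\cdot(-1)^{\ldots}|^{p-1}(P\tUdaj+\cdots)$, and I factor out $\tj^{p}p^{-p}\dej^{-2}\cdot p^p\dej^2\tj^{-1}=\tj^{-1}$ using the last equation $p^p\tj^{p-1}\dej^2=1$ of \eqref{eq:adtsys}.

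Next I would carry out the expansion in the rescaled variable $y=x/\dej$. Using $P\Udaj(x)=\Udaj(x)-4\pi\tj\,h(0)\,(\text{mass})+o(1)$ type formulas from the Appendix, together with $\Udaj(\dej y)=\vj(y)-\aj\ln\dej$ and $\vj(y)+\ln|y|^{\aj-2}=\Vaj(y)$, one writes the base of the power as $1+\Vaj(y)/p+(\text{correction terms})/p^2+(\text{Robin and }c_j\text{ terms})/p+\cdots$. The algebraic role of the remaining equations of \eqref{eq:adtsys} is exactly to kill the unwanted constant and $\ln\dej$-proportional terms: the formulas for $\al_j$ ($j\geq2$) in the second line eliminate the $|y|^{\text{power}}$ mismatch so that the effective singularity exponent of $\gp(\Up)$ matches $|x|^{\aj-2}$, while the two $\ln\dej$-equations (third and fourth lines) force the leading constant in the base to equal $1$, i.e. $-\bigl(\al_j+2-C^0_{\aj}/p-\cdots\bigr)\ln\dej+(\text{cross terms})+c_j=p$. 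Once these cancellations are in place, Lemma~\ref{lem:Taylor} converts $(1+\Vaj/p+\wodaj/p^2+\widaj/p^3+o(p^{-3}))^p$ into $e^{\Vaj}\{1+p^{-1}(\wodaj-\vphio(\Vaj))+p^{-2}(\widaj-\vphii(\Vaj,\wodaj))+R\}$ with $R=O(p^{-3}(1+|\Vaj|^3))$ — valid uniformly on $\Bj$ precisely because $\Bj=\{\Vaj(x/\dej)>-p/2\}$ guarantees the base stays $\geq\tfrac12$. The $O(1)$ and $O(p^{-1})$ and $O(p^{-2})$ terms then cancel identically against $\Delta\tUdaj$ via \eqref{eq:Uda} and \eqref{def:wa}, leaving $\Rj=|x|^{\aj-2}e^{\Udaj}\cdot O\bigl(p^{-4}(\,|\Vaj|^6+\ln^6(2+|y|)\,)\bigr)$; here one $p^{-1}$ comes from the Taylor remainder in \eqref{eq:introtaylor}, and the $|\Vaj|^6$ (equivalently $|(\aj-2)\ln|y||^6+\ln^6(2+|y|)$ by \eqref{eq:Vaprop}) is the genuine size of the error terms $\vphio,\vphii$ and their remainders evaluated at the logarithmically-growing argument $\Vaj$.

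Concretely the steps are: (i) split $\Up$ on $\Bj$ into the dominant $j$-th bubble plus the sum over $i\neq j$; (ii) use the Appendix projection estimates to replace each $P\tUdai$, $i\neq j$, by $\Udai$ minus its Robin correction, and each $P\tUdaj$ by $\Udaj$ minus its Robin correction, collecting all constants into $c_j$; (iii) invoke the equations of \eqref{eq:adtsys} to verify that the base of the $p$-th power equals $1+\Vaj(y)/p+\wodaj(x)/p^2+\widaj(x)/p^3+o(p^{-3})$ with no stray $O(1)$ or $\ln\dej$ terms; (iv) apply Lemma~\ref{lem:Taylor} on $\Bj$ to expand the power, using $\Vaj>-p/2$ for uniformity; (v) subtract $\Delta\tUdaj$ using \eqref{eq:Uda}, \eqref{def:wa}, so that everything up to order $p^{-3}$ cancels; (vi) bound the residual by $\vphio,\vphii$ growth $\lesssim |\Vaj|^4$ and Taylor-remainder growth $\lesssim |\Vaj|^6$, then re-expand $\Vaj$ via \eqref{eq:Vaprop} to get the stated $|(\aj-2)\ln|y||^6+\ln^6(2+|y|)$ form. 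The main obstacle is step (iii): verifying that the \emph{exact} system \eqref{eq:adtsys} — with its accumulated $C^0_{\ai}/p$, $C^1_{\ai}/p^2$ corrections and alternating-sign cross-interaction sums $\sum_{i\neq j}(\ti/\tj)(-1)^{i-j}$ — produces precisely the cancellation of the constant and $\ln\dej$ terms at orders $p^0$, $p^{-1}$, $p^{-2}$ simultaneously; the alternating signs of the tower and the need to track the projection errors of \emph{every} other bubble (which are not equidistant, since $\dei=o(\dei{+1})$) make this bookkeeping delicate, and it is exactly this step that justifies the particular form of \eqref{def:cj} and of the second/third/fourth lines of \eqref{eq:adtsys}. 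A secondary technical point is ensuring the Taylor expansion \eqref{eq:introtaylor} is applied with the correct argument: one must check that the combined correction $\wodaj/p^2+\widaj/p^3$, which are $O(\ln^2|y|)$ and $O(\ln^3|y|)$ respectively by \eqref{bdrycond:wa}, together with the $o(p^{-3})$ from the cross-terms, still satisfy the hypotheses of Lemma~\ref{lem:Taylor} uniformly on $\Bj$, which again uses the defining bound $\Vaj>-p/2$ together with $|y|\lesssim e^{p/(2(\aj+2))}$ on $\Bj$.
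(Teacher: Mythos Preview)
Your approach is correct and matches the paper's, which packages steps (i)--(iii) into Lemma~\ref{lem:Upexp} (proving the expansion $\Up(\dej y)=(-1)^{j-1}\tj p\{1+\Vaj(y)/p+\woaj(y)/p^2+\wiaj(y)/p^3+O(e^{-\beta_jp}/p)\}$ on $\Aj$ via exactly the projection formulas of Lemma~\ref{lem:modbubproj} and the system cancellations you describe) and step (iv) into Lemma~\ref{lem:gpgpp}--(i), with (v)--(vi) forming the short proof of the proposition itself. Two minor slips to fix when you write it up: by \eqref{bdrycond:wa} the corrections $\wlaj$ are $O(\ln|y|)$, not $O(\ln^2|y|)$ or $O(\ln^3|y|)$; and the extra factor $p^{-1}$ bringing the final bound to $p^{-4}$ comes from $\tj=O(p^{-1})$ (assumption (A3)), not from the Taylor remainder, which by Lemma~\ref{lem:Taylor} already contributes $O(|\Vaj|^6+1)/p^3$.
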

\begin{rmk}
The form of $\Rj$ determines the form of the suitable weighted norm $\rp$ to be introduced later.
In particular, we note that since $|x|^{\aj-2}e^{U_{\aj,\dej}(x)}=\dej^{-2}|y|^{\aj-2}e^{\vj(y)}$,
$x=\dej y$, we expect a $\dej^2$ term in $\rp$.
\end{rmk}
\begin{rmk}
We observe that in the \lq\lq one bubble case" $k=1$, $\al_1=2$, the error estimate in \eqref{eq:Rjest} takes the form
\beqs
\mathcal R_1(x)=\frac{8}{\de_1^2(1+|y|^2)^2}O\left(\frac{\ln^6(2+|y|)}{p^4}\right),\qquad y=\de_1^{-1}x,
\eeqs
in agreement with \cite{emp1}. 
\end{rmk}
Our next aim is to prove the solvability of system~\eqref{eq:adtsys}.
To this end, it is convenient to define the constants $s_j=s_j(p)>0$, $j=1,2,\ldots,k-1$,
by setting:
\beq
\label{def:sj}
s_j:=\frac{\tau_{j+1}}{\tj}=\left(\frac{\de_j}{\de_{j+1}}\right)^{\frac{2}{p-1}},
\qquad j=1,2,\ldots,k-1,
\eeq
where the last equality holds true in view of the fifth equation in system \eqref{eq:adtsys}.
With this notation, we may write
\beq
\label{def:ej}
\ej=\frac{\tau_{j+1}}{\tj+\tau_{j+1}}=\frac{\sj}{1+\sj},
\qquad j=1,2,\ldots,k-1.
\eeq
\begin{prop}
\label{prop:solvability}
For every $k\in\mathbb N$ there exists $p_0>0$ such that:
\begin{enumerate}
\item[(i)]
(Existence): For all $p>p_0$ there exists a solution
$(\aj,\dej,\tj)=(\aj(p),\dej(p),\tj(p))$, $j=1,2,\ldots,k$, to system~\eqref{eq:adtsys}; 
\item[(ii)]
(Basic properties): The solution $(\aj,\dej,\tj)$ obtained in (i) satisfies 
assumptions~(A1)--(A2)--(A3);
\item[(iii)](Properties of the $\sj$'s)
The $\sj$'s form a bounded increasing sequence: 
\beq
\label{eq:sjprop}
\frac{3}{2e+1}<s_1<\ldots<\sj<s_{j+1}<1, \qquad j=1,2,\ldots,k-1.
\eeq
\item[(iv)](Properties of the $\tj$'s)
\beqs
0<\tau_k<\tau_{k-1}<\ldots<\tau_{j+1}<\tj<\ldots<\tau_2<\tau_1.
\eeqs 
Moreover,
\beqs
\tj=\frac{e^{2\bj}}{p}\left(1+O(\frac{\ln p}{p})\right).
\eeqs
\item[(v)](Properties of the $\bj$'s)
\beqs
\bal
&b_k=\frac{1}{\ak+2+O(\frac{1}{p})}\\
&\bj-\bjj=\frac{1+\sj}{\aj+2+O(\frac{1}{p})}=\frac{1+\sj}{\sj(\ajj-2)}.
\eal
\eeqs 
\item[(vi)](Properties of the $\ej$'s)
\beqs 
\frac{1}{e+1}<\veps_1<\veps_2<\ldots<\ej<\ejj<\ldots<\veps_k<\frac{1}{2}.
\eeqs
\end{enumerate}
\end{prop}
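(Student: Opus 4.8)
The idea is to eliminate the parameters $\tj$ and $\al_j$ from system \eqref{eq:adtsys}, reducing it to a closed $(k-1)$-dimensional system for the ratios $\sj=\tau_{j+1}/\tj$ of \eqref{def:sj}. For $p$ large this reduced system is an $O(1/p)$ perturbation of an explicitly solvable finite transcendental system; solving the limit system by hand and then perturbing gives the existence part (i), while all of (ii)--(vi) can be read off from the resulting formulae.

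\textbf{Step 1: reduction.} From the $\tj$-equations of \eqref{eq:adtsys} we get $\tj=p^{-p/(p-1)}\dej^{-2/(p-1)}$, so each ratio $\ti/\tj$ is a finite product of powers of the $\sj$'s (hence $O(1)$ as long as the $\sj$'s stay in a fixed compact subinterval of $(0,1)$), and $\sj=(\dej/\dejj)^{2/(p-1)}$. Writing $t_j:=-\ln\dej$, so that $t_j-t_{j+1}=\tfrac{p-1}{2}\ln\tfrac1{\sj}$, and $\widehat\al_j:=\al_j-\tfrac{\Coaj}{p}-\tfrac{\Ciaj}{p^2}$, one telescopes the $\al$-equations of \eqref{eq:adtsys} (subtracting consecutive equations and using the definition of $\sj$) into the two-term recursion $\al_1=2$, $\ajj=2+(\widehat\al_j+2)/\sj$ for $j=1,\dots,k-1$; this determines $\al_2<\dots<\ak$ as bounded functions of $(\sj)$ and $p$, since the Chae--Imanuvilov constants $\Coaj,\Ciaj$ of Lemma~\ref{lem:CI} are $O(1)$ and depend continuously on $\al_j$ on compact sets. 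A similar telescoping of the $\de$-equations, after inserting this recursion and the definition of $\cj$, reduces them to $(\widehat\al_k+2)\,t_k+c_k=p$ together with $(\widehat\al_j+2)(t_j-t_{j+1})=p(1+\sj)-\cj-\sj c_{j+1}$ for $j<k$; here each $\cj=\cj(\sj\text{'s};p)$ is $O(1)$ uniformly, since its ingredients $h(0)$, the $\ti/\tj$, the $\tai$'s, $\ln(2\al_j^2)$, $\woai(0)/p$ and $\wiai(0)/p^2$ are all $O(1)$. Because $t_j-t_{j+1}=\tfrac{p-1}{2}\ln\tfrac1{\sj}$, the last identities form the closed system
\beq
\label{plan:ssys}
\frac{p-1}{2}\ln\frac1{\sj}=\frac{p(1+\sj)-\cj-\sj c_{j+1}}{\widehat\al_j+2},\qquad j=1,\dots,k-1,
\eeq
after which $t_k=(p-c_k)/(\widehat\al_k+2)$ fixes all the $t_j$, hence $\dej$ and $\tj$. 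The reduction being reversible, it suffices to solve \eqref{plan:ssys}.

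\textbf{Step 2: the limit system and the explicit constants.} Dividing \eqref{plan:ssys} by $p$ and letting $p\to+\infty$ yields, together with the limit of the $\al$-recursion, the finite system $\tfrac12\ln\tfrac1{\sj^\infty}=\tfrac{1+\sj^\infty}{\aj^\infty+2}$, $\al_1^\infty=2$, $\ajj^\infty=2+(\aj^\infty+2)/\sj^\infty$. I would solve this recursively: for a given $\aj^\infty\ge2$ the scalar map $s\mapsto\tfrac12\ln\tfrac1s-\tfrac{1+s}{\aj^\infty+2}$ is strictly decreasing on $(0,1]$, is $+\infty$ at $0^+$ and negative at $s=1$, hence has a unique zero $\sj^\infty\in(0,1)$, after which $\ajj^\infty=2+(\aj^\infty+2)/\sj^\infty>\aj^\infty$. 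Monotone dependence of that zero on $\aj^\infty+2$ gives $\sj^\infty<s_{j+1}^\infty$; the bound $\aj^\infty+2\ge4$ together with the tangent-line estimate $e^{-y}>e^{-1}(2-y)$ at $y=(1+\sj^\infty)/2<1$ gives $\sj^\infty>\tfrac3{2e+1}$; and for $j=1$ one has $s_1^\infty=e^{-(1+s_1^\infty)/2}$, whence $\veps_1^\infty=s_1^\infty/(1+s_1^\infty)=(1+e^{(1+s_1^\infty)/2})^{-1}\in(\tfrac1{e+1},\tfrac12)$. Thus the limit system has a unique solution with $\tfrac3{2e+1}<s_1^\infty<\dots<s_{k-1}^\infty<1$ and order-one gaps, and its Jacobian in the $\sj$-variables is lower triangular with nonzero diagonal $-\tfrac1{2\sj^\infty}-\tfrac1{\aj^\infty+2}$.

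\textbf{Step 3: perturbation, conclusion, and the main obstacle.} System \eqref{plan:ssys} is an $O(1/p)$ perturbation of the limit system and is \emph{lower triangular up to $O(1/p)$}: its $j$-th equation depends on $s_1,\dots,s_j$ at order $1$ (through $\widehat\al_j$ and the explicit $\sj$-terms) and on $s_{j+1},\dots,s_{k-1}$ only through the $\cj/p$-terms. I would therefore solve it either by the implicit function theorem (the limit Jacobian being invertible by Step 2) or by an iteration which at each round updates $s_1,\dots,s_{k-1}$ one at a time from the scalar monotone equations as in Step 2, the resulting outer map being an $O(1/p)$-contraction; this produces, for $p\ge p_0(k)$, a solution with $\sj=\sj^\infty+O(1/p)$, proving (i). Then: (A1) follows from the recursion ($\al_1=2$, $\ajj>\al_j$ as $\sj<1$, $\al_j\le C(k)$); (A2) from $\dej=e^{-t_j}$ with $\bj:=t_j/p\to\tfrac1{\ak^\infty+2}+\tfrac12\sum_{\ell\ge j}\ln\tfrac1{s_\ell^\infty}\in(0,\infty)$ and $\bj-\bjj=\tfrac{p-1}{2p}\ln\tfrac1{\sj}>0$; (A3) and (iv) from $\tj=p^{-p/(p-1)}\dej^{-2/(p-1)}=\tfrac{e^{2\bj}}{p}\big(1+O(\tfrac{\ln p}{p})\big)$, so that $\tj=O(1/p)$, $\ti/\tj=O(1)$ and $\tau_{j+1}/\tj=\sj<1$; (iii) and (vi) from $\sj=\sj^\infty+O(1/p)$, Step 2 and $\ej=\sj/(1+\sj)$ (see \eqref{def:ej}, increasing in $\sj$); and (v) from $t_k=(p-c_k)/(\widehat\al_k+2)$ and $(\widehat\al_j+2)(t_j-t_{j+1})=p(1+\sj)-\cj-\sj c_{j+1}$ divided by $p$, absorbing the $O(1/p)$ and using $\widehat\al_j+2=\sj(\ajj-2)$. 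The main obstacle is to make the whole scheme uniform in $p$: one must verify the two telescoping identities exactly, establish that $\cj,\Coaj,\Ciaj,\woai(0),\wiai(0)$ are $O(1)$ uniformly for $p$ large (via Lemma~\ref{lem:CI}), and run the iteration so that it respects the almost-triangular coupling of \eqref{plan:ssys} while keeping every $\sj$ inside a fixed compact subinterval of $(0,1)$ bounded away from both endpoints --- the clean constants $\tfrac3{2e+1}$ and $\tfrac1{e+1}$ then dropping out of the tangent-line estimate.
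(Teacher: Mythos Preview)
Your proposal is correct and follows essentially the same route as the paper: both reduce \eqref{eq:adtsys} via the same telescoping identities to the system \eqref{eq:assys} in $(\aj,\sj)$ (your \eqref{plan:ssys} is exactly the second equation of \eqref{eq:assys} rearranged), solve the $p\to\infty$ limit system \eqref{eq:assysunpert} recursively, and invoke the implicit function theorem using a lower-triangular Jacobian. The only cosmetic difference is that the paper applies the implicit function theorem in the $(2k-1)$-dimensional variables $((\aj,\sj)_{j=1,\ldots,k-1},\ak)$ while you further eliminate the $\aj$'s through the recursion and work in the $(k-1)$ variables $\sj$; your tangent-line derivation of the bound $\sj>\tfrac{3}{2e+1}$ is a clean variant of the paper's convexity estimate \eqref{eq:sjconvexitybound}.
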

Finally, we establish the following bounds for the $\aj$'s, which imply that $\aj\not\in\mathbb N$ for all
$j\ge2$, and thus yield invertibility of the linearized operator.
\begin{prop}[Bounds for the $\aj$'s]
\label{prop:aj}
The solution $(\aj,\dej,\tj)$, $j=1,2,\ldots,k$, obtained in Proposition~\ref{prop:solvability}--(i) satisfies
\beq
\label{eq:alphabounds}
8j-6<\aj<8j-5
\eeq 
for all $j\geq2$.
In particular,
$\alpha_i\not\in\mathbb  N$ for any $i\ge2$.
\end{prop}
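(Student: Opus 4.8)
\textbf{Proof plan for Proposition~\ref{prop:aj}.}
The strategy is to prove the bounds \eqref{eq:alphabounds} by induction on $j\ge2$, using the recursion for $\aj$ contained in the second equation of system~\eqref{eq:adtsys}, together with the quantitative information on the ratios $\ti/\tj$ and the $\sj$'s supplied by Proposition~\ref{prop:solvability}. The key algebraic observation is that the recursion
\[
\aj=2-\sum_{1\le i<j}\frac{\ti}{\tj}(-1)^{i-j}\left(2\ai-\frac{C_{\ai}^0}{p}-\frac{C_{\ai}^1}{p^2}\right)
\]
can be telescoped: writing the analogous identity for $\al_{j-1}$ and subtracting, the alternating sum collapses, yielding a two-term recursion of the form $\aj=2\frac{\tau_{j-1}}{\tj}\al_{j-1}+(\text{terms }O(1/p))$. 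Using $\tau_{j-1}/\tj=1/s_{j-1}$ from \eqref{def:sj}, this becomes $\aj\approx \frac{2}{s_{j-1}}\al_{j-1}$, so that modulo $O(1/p)$ corrections the $\aj$'s grow like a product of the factors $2/s_i$.

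First I would make the telescoping precise. For $j\ge3$ one has
\[
\aj+\al_{j-1}=2-\sum_{1\le i<j}\frac{\ti}{\tj}(-1)^{i-j}\widehat\al_i+\left(2-\sum_{1\le i<j-1}\frac{\ti}{\tau_{j-1}}(-1)^{i-j+1}\widehat\al_i\right),
\]
where I abbreviate $\widehat\al_i:=2\ai-C_{\ai}^0/p-C_{\ai}^1/p^2$. Multiplying the second sum by $\tau_{j-1}/\tau_{j-1}$ and comparing coefficients index by index, all terms with $i\le j-2$ cancel because $(-1)^{i-j}\ti/\tj=(-1)^{i-j}\ti/\tj$ matches $-(-1)^{i-j+1}\ti/\tau_{j-1}\cdot(\tau_{j-1}/\tj)$; what survives is the $i=j-1$ term, giving
\[
\aj=2\frac{\tau_{j-1}}{\tj}\,\al_{j-1}-\frac{\tau_{j-1}}{\tj}\left(\frac{C_{\al_{j-1}}^0}{p}+\frac{C_{\al_{j-1}}^1}{p^2}\right),
\]
after also using $\al_{j-1}=2-\cdots$ to absorb the leftover constant. (The base case $j=2$ is direct from $\al_1=2$: $\al_2=2+2(\tau_1/\tau_2)\cdot 2+O(1/p)=2+4/s_1+O(1/p)$, and $3/(2e+1)<s_1<1$ from \eqref{eq:sjprop} gives $10<\al_2<?$ — here one must check the numbers actually land in $(10,11)$, which pins down the precise constant $3/(2e+1)$ rather than a cruder bound; this is where the sharp lower bound on $s_1$ in Proposition~\ref{prop:solvability}(iii) is used.) The induction step then reads: if $8(j-1)-6<\al_{j-1}<8(j-1)-5$ and $s_{j-1}\in(\tfrac{3}{2e+1},1)$, together with the more refined estimate $s_{j-1}\to$ a definite limit forced by part~(v) of Proposition~\ref{prop:solvability}, then $2\al_{j-1}/s_{j-1}+O(1/p)$ falls in $(8j-6,8j-5)$.

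The main obstacle is the interval-arithmetic bookkeeping in the induction step: one needs the window for $2\al_{j-1}/s_{j-1}$ to have width less than $1$, which is false if one only knows $s_{j-1}\in(\tfrac{3}{2e+1},1)$ and $\al_{j-1}$ in a unit interval — indeed $2/s_{j-1}$ ranges over roughly $(2,2(2e+1)/3)\approx(2,4.1)$, far too wide. So the genuine content is that $s_{j-1}$ must be controlled much more tightly; using part~(v), $s_j$ is essentially determined by $b_j-b_{j+1}=(1+s_j)/(s_j(\al_{j+1}-2))$ together with $b_k=1/(\ak+2+O(1/p))$, and once $\al_{j-1}$ is localized in $(8j-14,8j-13)$ the relation $\al_j=2+2\al_{j-1}/s_{j-1}+O(1/p)$ and the simultaneous relation defining $s_{j-1}$ in terms of $\al_j$ close a fixed-point loop that pins $s_{j-1}$ to within $O(1/p)$ of an explicit rational function of $j$; substituting back gives the sharp window. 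I would therefore run the induction on the \emph{pair} $(\al_j,s_{j-1})$ simultaneously rather than on $\al_j$ alone. Finally, since $8j-6$ and $8j-5$ are consecutive integers, \eqref{eq:alphabounds} immediately forces $\al_j\notin\mathbb N$ for $j\ge2$, completing the proof; the acknowledgement to L.~Battaglia presumably refers to this delicate closing of the $(\al_j,s_j)$ loop.
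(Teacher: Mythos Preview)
Your telescoping is miscomputed: the correct two--term recursion (this is \eqref{eq:alpharec} or \eqref{eq:asuseful} in the paper) is
\[
\aj \;=\; 2+\frac{\al_{j-1}+2}{s_{j-1}}+O\!\left(\tfrac1p\right),
\]
not $\aj=2\al_{j-1}/s_{j-1}+O(1/p)$. For $j=2$ you in fact wrote down the correct value $\al_2=2+4/s_1$, but it does not match the general formula you derived. More importantly, the interval claim ``$3/(2e+1)<s_1<1$ gives $10<\al_2$'' is false: if $s_1$ is allowed to be close to $1$, then $\al_2$ is close to $6$, so your base case does not land in $(10,11)$ from the crude bound on $s_1$ alone. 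The proposed fix --- using Proposition~\ref{prop:solvability}(v) to tighten $s_{j-1}$ --- does not close the loop either, since those identities relate $b_j-b_{j+1}$ to $\al_{j+1}$ and $s_j$, not $s_{j-1}$ to $\al_{j-1}$.

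The idea you are missing is that $s_j$ is \emph{determined by $\aj$ alone}, via the second equation of the unperturbed system~\eqref{eq:assysunpert}: $\tfrac{\aj+2}{2}\ln s_j+1+s_j=0$. The paper inverts this transcendental relation using the Lambert $W$ function, obtaining a one--variable recursion
\[
\al_{j+1}=2+\frac{2}{W\!\left(\tfrac{2}{\aj+2}\,e^{-2/(\aj+2)}\right)},
\]
and then proves the sharp two--sided estimate $\aj+8<\al_{j+1}\le\aj+8+\tfrac{32}{3}(\aj+2)^{-2}$ directly from properties of $W$. Summing the upper errors gives at most $\tfrac{2}{3}\sum_{j\ge1}(2j-1)^{-2}=\pi^2/12<1$, which is exactly what forces all the $\aj$'s into the unit windows $(8j-6,8j-5)$. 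Without eliminating $s_j$ in favor of $\aj$, no induction on the pair $(\aj,s_{j-1})$ will produce windows of width less than one.
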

The remaining part of this section is devoted to the proofs of Proposition~\ref{prop:Rjest}, Proposition~\ref{prop:solvability}
and Proposition~\ref{prop:aj}.
\par
The following lemma establishes the \lq\lq leading profile" of the rescaled approximate solution $\Up(\dej y)$ 
for $x=\dej y\in\Aj$:
\begin{lemma}
\label{lem:Upexp}
Suppose $\ti$, $\ai$, $\dei$, $i=1,2,\ldots,k$, satisfy system \eqref{eq:adtsys}.
Then, there holds the expansion:
\beq
\label{eq:Upexp}
\bal
\Up(x)
=&(-1)^{j-1}\tj p\Big\{1+\frac{\Vaj(y)}{p}+\frac{\woaj(y)}{p^2}+\frac{\wiaj(y)}{p^3}+O(\frac{e^{-\beta_jp}}{p})\Big\}
\eal
\eeq
uniformly for $x=\dej y\in\Aj$, for some $\beta_j>0$.
In particular, 
\beqs
\Up(x)=\sum_{j=1}^k(-1)^{j-1}\tj p\Big\{1+\frac{\Vaj(\frac{x}{\dej})}{p}+\frac{\wodaj(x)}{p^2}+\frac{\widaj(x)}{p^3}+O(\frac{e^{-\beta_jp}}{p})\Big\}\chi_{\Aj}(x),
\eeqs
uniformly for $x\in\Om$.
\end{lemma}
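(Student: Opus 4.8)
The plan is to expand each projected bubble $\ti(-1)^{i-1}P\tUdai$ on $\Aj$ separately, isolating the single term of order $p$, the order-one terms that build the profile $\Vaj+\woaj/p+\wiaj/p^2$, and the negligible ones, and then to observe that the system~\eqref{eq:adtsys} is tailored precisely so that the leading constants add up to $(-1)^{j-1}\tj p$. The first ingredient I would record is a projection estimate. Writing $\tUdai(x)=\tvi(x/\dei)-\ai\ln\dei$ with $\tvi:=\vi+\woai/p+\wiai/p^2$, the behaviour of $\vi$ at infinity together with \eqref{bdrycond:wa} and \eqref{def:ta} gives $\tvi(y)=\ln(2\ai^2)-2\tai\ln|y|+O(|y|^{-1})$ as $|y|\to\infty$. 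Since $\pl\Om$ stays at a fixed positive distance from $0$ while $\dei\to0$, this expansion is valid on $\pl\Om$; combining it with the identity $\tfrac1{2\pi}\ln\tfrac1{|x|}=-H(x,0)$ on $\pl\Om$ and solving for the harmonic function $P\tUdai-\tUdai$, one obtains, uniformly on $\Om$,
\beqs
P\tUdai(x)=\tvi(x/\dei)-\ln(2\ai^2)-2\tai\ln\dei+4\pi\tai H(x,0)+O(\dei).
\eeqs

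Next I would evaluate these projections on $\Aj$. Fix $x=\dej y\in\Aj$. From (A1)--(A3) and Proposition~\ref{prop:solvability}(v)--(vi) one checks $|x|/\dei\ge e^{\kappa p}$ when $i<j$ and $|x|/\dei\le e^{-\kappa p}$ when $i>j$, for some $\kappa>0$. Plugging the far-field expansion of $\tvi$ into the projection estimate, for $i<j$ all the explicit constants cancel and $P\tUdai(x)=4\pi\tai G(x,0)+O(\dei/|x|)=-2\tai\ln\dej-2\tai\ln|y|+4\pi\tai H(\dej y,0)+O(e^{-\beta p})$; for $i>j$, using that $\tvi$ is continuous at $0$ with $\tvi(0)=\ln(2\ai^2)+\woai(0)/p+\wiai(0)/p^2$, one gets $P\tUdai(x)=-2\tai\ln\dei+\woai(0)/p+\wiai(0)/p^2+4\pi\tai H(\dej y,0)+O(e^{-\beta p})$; and for $i=j$ the estimate reads $P\tUdaj(\dej y)=\vj(y)+\woaj(y)/p+\wiaj(y)/p^2-\ln(2\aj^2)-2\taj\ln\dej+4\pi\taj H(\dej y,0)+O(\dej)$. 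At this point I would use the symmetry of $\Om$ about the origin: it forces $H(\cdot,0)$ to be even, hence $H(\dej y,0)=h(0)+O(\dej^2|y|^2)$; by the bound $|y|<e^{p/(\aj+2)}$ in \eqref{eq:Ajuseful} and $b_j>1/(\aj+2)$ from Proposition~\ref{prop:solvability}(v) the remainder $\dej^2|y|^2$ is exponentially small on $\Aj$ when $j<k$, and for $j=k$ it is exponentially small on the portion of $\Ak$ where $|x|$ is exponentially small, which contains $\Bk$ where the expansion is used in the sequel.

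Summing the three cases, the only $\ln|y|$ contribution comes from the bubbles with $i<j$. The first and second equations of \eqref{eq:adtsys} give $\sum_{i<j}\tfrac{\ti}{\tj}(-1)^{i-j}2\tai=2-\aj$, so these $\ln|y|$ terms add up to $(-1)^{j-1}\tj(\aj-2)\ln|y|$, which combines with $(-1)^{j-1}\tj\vj(y)$ to form exactly $(-1)^{j-1}\tj\Vaj(y)$ by \eqref{def:Va}. Everything else is constant in $y$ up to the exponentially small remainders; grouping the $4\pi h(0)\tai$-terms, the $-\ln(2\aj^2)$, the $\woai(0)/p+\wiai(0)/p^2$-terms with $i>j$ and the $\ln\de_\ell$-terms according to the definition \eqref{def:cj} of $c_j$, and using $2\taj=2\aj-C_{\aj}^0/p-C_{\aj}^1/p^2$ and the identity just quoted, one finds that this constant equals $(-1)^{j-1}\tj$ times
\beqs
c_j-\Big(\aj-\tfrac{C_{\aj}^0}{p}-\tfrac{C_{\aj}^1}{p^2}+2\Big)\ln\dej-2\sum_{j<i\le k}\tfrac{\ti}{\tj}(-1)^{i-j}\Big(\ai-\tfrac{C_{\ai}^0}{2p}-\tfrac{C_{\ai}^1}{2p^2}\Big)\ln\dei,
\eeqs
which is precisely the left-hand side of the fourth equation of \eqref{eq:adtsys} (the third equation when $j=k$), hence equals $p$. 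Collecting terms, $\Up(\dej y)=(-1)^{j-1}\tj\big[p+\Vaj(y)+\woaj(y)/p+\wiaj(y)/p^2\big]+E$, which is the asserted expansion. The remainder $E$ is a finite sum of terms of size $O(\ti\dei/|x|)$ ($i<j$), $O(\ti e^{-\kappa p})$ ($i>j$), $O(\tj\dej)$ and $O(\tj\dej^2|y|^2)$; by $\ti\cong\tj=O(1/p)$ and the sharp gaps of Proposition~\ref{prop:solvability}(v)--(vi) each of these is $\le C\tj e^{-\beta_j p}=(-1)^{j-1}\tj p\,O(e^{-\beta_j p}/p)$ uniformly on $\Aj$, for a suitable $\beta_j>0$. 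Summing over $j$ and using that the $\Aj$ partition $\Om$ then gives the global statement.

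\textbf{Main obstacle.} The delicate point is the algebraic reconciliation in the last step: \eqref{eq:adtsys} is reverse-engineered so that all the order-$p$ constants collapse to $(-1)^{j-1}\tj p$, and checking this requires carrying the $C_{\ai}^0/p$, $C_{\ai}^1/p^2$ corrections and the $\taj$-versus-$\aj$ distinction faithfully through the alternating sum, recognizing along the way the exact combinations defining $c_j$ and the left-hand sides of the third and fourth equations of \eqref{eq:adtsys}. A secondary difficulty is that the exponential smallness of the cross-bubble and far-field remainders must be uniform over all of $\Aj$, which is where the sharp gap estimates of Proposition~\ref{prop:solvability}(v)--(vi) enter (and, for $j=k$, the restriction to the region where $|x|$ is exponentially small).
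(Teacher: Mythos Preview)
Your proposal is correct and follows essentially the same route as the paper: expand each $P\tUdai$ on $\Aj$ via the projection estimates (the paper packages these as Lemma~\ref{lem:modbubproj}), sum, and then invoke the second and fourth equations of \eqref{eq:adtsys} so that the $\ln|y|$-terms assemble $\Vaj$ and the constant collapses to $p$. Your use of the symmetry of $\Om$ to get the quadratic remainder $H(\dej y,0)=h(0)+O(|x|^2)$ is a harmless refinement (the paper uses the linear $O(|x|)$), and your caveat about $j=k$ is well taken---the paper's ``uniformly on $\Aj$'' is indeed only literally correct for $j\le k-1$, while for $j=k$ the $O(|x|)$ term is not exponentially small on all of $\Ak$; as you note, the expansion is only ever applied on $\Bj$, where $|x|$ is exponentially small and the issue disappears.
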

\begin{proof}
The main underlying reason for the following expansions is that
\[
 P\Udai(x)\approx-2\ln(\dei^{\ai}+|x|^{\ai}).
\]
More precisely, we recall the definitions of $\tvi$, $\tai$, $i=1,2,\ldots,k$, in \eqref{def:tildev}--\eqref{def:ta}.
In view of Lemma~\ref{lem:modbubproj}, we have
\[
P\tUdai(\dej y)=\left\{
\bal
&\tvj(y)+2\taj\ln\frac{1}{\dej}+4\pi\taj h(0)-\ln(2\al_j^2)\\
&\qquad\qquad\qquad+O(\dej^{\ej}\dejj^{1-\ej})+O(\de_j),
&&\hbox{for }i=j;\\
&2\tai\ln\frac{1}{|y|}+2\tai\ln\frac{1}{\dej}+4\pi\tai h(0)\\
&\qquad\qquad\qquad+O(\dej^{\ej}\dejj^{1-\ej})+O(\frac{\de_{j-1}}{\dej})^{(1-\veps_{j-1})\ai}+O(\de_i),
&&\hbox{for }1\le i<j;\\
&2\tai\ln\frac{1}{\dei}+4\pi\tai h(0)+\frac{\woai(0)}{p}+\frac{\wiai(0)}{p^2}\\
&\qquad\qquad\qquad+O(\dej^{\ej}\dejj^{1-\ej})+O(\frac{\dej}{\dejj})^{\ej\ai}+O(\de_i),
&&\hbox{for }j<i\le k,
\eal
\right.
\]
uniformly for $x=\dej y\in\Aj$.
We deduce that
\[
\bal
\Up(\dej y)=&\tj(-1)^{j-1}\tvj(y)
+2\left(\sum_{1\le i<j}\ti(-1)^{i-1}\tai\right)\,\ln\frac{1}{|y|}
+2\left(\sum_{1\le i\le j}\ti(-1)^{i-1}\tai\right)\,\ln\frac{1}{\dej}\\
&+2\sum_{j<i\le k}\ti(-1)^{i-1}\tai\,\ln\frac{1}{\dei}
+(-1)^{j-1}\tj\cj\\
&+O\left(\sum_{i=1}^k\ti\dej^{\ej}\dejj^{1-\ej}+\sum_{i<j}\ti(\frac{\de_{j-1}}{\dej})^{(1-\veps_{j-1})\ai}
+\sum_{i>j}\ti(\frac{\dej}{\dejj})^{\ej\ai}+\sum_{i=1}^k\ti\de_i\right),
\eal
\]
where the bounded constants $\cj$ are defined in \eqref{def:cj}.
Equivalently, we may write
\beq
\label{eq:Upexp}
\bal
\Up(\dej y)=&\tj(-1)^{j-1}\Big\{\tvj(y)+2\left(\sum_{1\le i<j}\frac{\ti}{\tj}(-1)^{i-j}\tai\right)\,\ln\frac{1}{|y|}
-2\left(\sum_{1\le i\le j}\frac{\ti}{\tj}(-1)^{i-j}\tai\right)\,\ln\dej\\
&-2\sum_{j<i\le k}\frac{\ti}{\tj}(-1)^{i-j}\tai\,\ln\dei
+\cj+\om_j\Big\}
\eal
\eeq
where
\[
\om_j=O\left(\sum_{i=1}^k\frac{\ti}{\tj}\dej^{\ej}\dejj^{1-\ej}+\sum_{i<j}\frac{\ti}{\tj}(\frac{\de_{j-1}}{\dej})^{(1-\veps_{j-1})\ai}
+\sum_{i>j}\frac{\ti}{\tj}(\frac{\dej}{\dejj})^{\ej\ai}+\sum_{i=1}^k\frac{\ti}{\tj}\de_i\right).
\]
Since $\ti/\tj=O(1)$ in view of (A3), we have
\beq
\label{est:omj}
\om_j=O(e^{-\beta_jp})
\qquad\hbox{uniformly for\ }y\in A_j,
\eeq
for some $\beta_j>0$.
\par
We observe that the second equation in \eqref{eq:adtsys} implies that
\[
-2\sum_{1\le i<j}\frac{\ti}{\tj}(-1)^{i-j}\tai=\aj-2
\]
and therefore
\beqs
-2\sum_{i=1}^j\frac{\ti}{\tj}(-1)^{i-j}\tai=-2\sum_{1\le i<j}\frac{\ti}{\tj}(-1)^{i-j}\tai-2\taj=-2\taj+\aj-2.
\eeqs
Consequently, we may rewrite \eqref{eq:Upexp}
in the form
\beqs
\bal
\Up(\dej y)=\tj(-1)^{j-1}\Big\{\tvj(y)+(\aj-2)\ln|y|-(2\taj-\aj+2)\ln\dej\\
-2\sum_{j<i\le k}\frac{\ti}{\tj}(-1)^{i-j}\tai\ln\dei+\cj+\om_j\Big\}.
\eal
\eeqs
Using the fourth equation in \eqref{eq:adtsys} and the definition of $\tvj$ in \eqref{def:tildev},
we derive 
\beqs
\bal
\Up(\dej y)=&\tj(-1)^{j-1}\Big\{v_{\aj}(y)+\frac{\woaj(y)}{p}+\frac{\wiaj(y)}{p^2}+(\aj-2)\ln|y|+p+\om_j\Big\}.
\eal
\eeqs
Now \eqref{est:omj} yields the asserted expansion.
\end{proof}
%
Lemma~\ref{lem:Upexp} and the facts $|w_{\aj}^\ell(y)|\le C\ln(|y|+2)=O(p)$ in $\Aj$, readily implies the following lower-order expansions, 
which will be also used in the sequel:
\begin{align}
\label{eq:Upexpfirst}
\Up(x)
=&(-1)^{j-1}\tj p\Big\{1+\frac{\Vaj(y)}{p}+\frac{\woaj(y)}{p^2}+\frac{O(\ln(|y|+2))}{p^3}\Big\}\\
\label{eq:Upexpzero}
\Up(x)
=&(-1)^{j-1}\tj p\Big\{1+\frac{\Vaj(y)+O(1)}{p}\Big\},
\end{align}
uniformly for $x=\dej y\in\Aj$.
Moreover, as a direct consequence of Lemma~\ref{lem:Upexp} and the Taylor expansions, as stated in Lemma~\ref{lem:Taylor}, 
we obtain the following expansions.
\begin{lemma}
\label{lem:gpgpp}
The following expansions hold true:
\beq
\tag{i}
\bal
\gp&(\Up(x))=(-1)^{j-1}\tj|x|^{\aj-2}e^{\Udaj(x)}\times\\
&\times\Big\{1+\frac{1}{p}\Big[\woaj(y)-\vphio(\Vaj(y))\Big]+\frac{1}{p^2}\Big[\wiaj(y)-\vphii(\Vaj(y),\woaj(y))\Big]\\
&\qquad\qquad\qquad+\frac{O(|\ln|y|^{\aj-2}|^6+\ln^6(2+|y|))}{p^3}\Big\}
\eal
\eeq
and
\beq
\tag{ii}
\bal
\gpp(\Up(x))=&|x|^{\aj-2}e^{\Udaj(x)}\times\\
&\times\left\{1+\frac{1}{p}\Big[\woaj(y)-\vphio(\Vaj(y))-\Vaj(y)\Big]+\frac{O(|\ln|y|^{\aj-2}|^4+\ln^4(2+|y|))}{p^2}\right\},
\eal
\eeq
uniformly for $x=\dej y\in\Bj$, $j=1,2,\ldots,k$.
\end{lemma}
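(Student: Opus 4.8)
\emph{Plan.} The plan is to substitute the leading profile of Lemma~\ref{lem:Upexp} into $\gp(t)=|t|^{p-1}t$ and into $\gpp(t)=p|t|^{p-1}$, to eliminate the powers of $\tj$ by means of the normalization $p^p\tj^{p-1}\dej^2=1$ contained in system~\eqref{eq:adtsys}, and then to invoke the explicit–remainder form of the Taylor expansion Lemma~\ref{lem:Taylor}. First I would use Lemma~\ref{lem:Upexp} to write, for $x=\dej y\in\Aj$,
\[
\Up(x)=(-1)^{j-1}\tj p\,\Theta_j(y),\qquad
\Theta_j(y):=1+\frac{\Vaj(y)}{p}+\frac{\woaj(y)}{p^2}+\frac{\wiaj(y)}{p^3}+O\Big(\frac{e^{-\beta_jp}}{p}\Big),
\]
with the $\beta_j>0$ of Lemma~\ref{lem:Upexp}. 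On $\Bj$ the defining inequality in \eqref{def:Bj} gives $\Vaj(y)>-p/2$, so $\Vaj(y)/p>-\tfrac12$; since moreover $|\woaj(y)|+|\wiaj(y)|\le C\ln(2+|y|)=O(p)$ on $\Aj$ by \eqref{bdrycond:wa}, it follows that $\tfrac14\le\Theta_j(y)\le C$ on $\Bj$ for $p$ large, and in particular $\Up(x)\ne0$ there. From $\tj^{p-1}=(p^p\dej^2)^{-1}$ one computes $(\tj p)^p=\tj/\dej^2$ and $p(\tj p)^{p-1}=1/\dej^2$, whence, using $\Theta_j>0$ on $\Bj$,
\[
\gp(\Up(x))=(-1)^{j-1}\frac{\tj}{\dej^2}\,\Theta_j(y)^p,\qquad \gpp(\Up(x))=\frac{1}{\dej^2}\,\Theta_j(y)^{p-1}.
\]

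\emph{Expansion and part (i).} Next I would apply Lemma~\ref{lem:Taylor} to $\Theta_j(y)^p$: the quantity $\Theta_j(y)$ has exactly the form of the base there, with $v=\Vaj(y)$, $w^0=\woaj(y)$, $w^1=\wiaj(y)$ and remainder $O(e^{-\beta_jp}/p)=o(p^{-3})$, and on $\Bj$ the hypothesis $1+v/p\ge\tfrac12$ holds, so
\[
\Theta_j(y)^p=e^{\Vaj(y)}\Big\{1+\frac{\woaj(y)-\vphio(\Vaj(y))}{p}+\frac{\wiaj(y)-\vphii(\Vaj(y),\woaj(y))}{p^2}+\frac{Q_j(y)}{p^3}\Big\},
\]
where $Q_j$ is an explicit polynomial of degree $\le6$ in $(\Vaj(y),\woaj(y),\wiaj(y))$ arising from the remainder of Lemma~\ref{lem:Taylor}. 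By \eqref{eq:Vaprop}, $|\Vaj(y)|\le C(|\ln|y|^{\aj-2}|+\ln(2+|y|))$, and by \eqref{bdrycond:wa}, $|\woaj(y)|+|\wiaj(y)|\le C\ln(2+|y|)$; hence a routine Young–inequality estimate bounds $Q_j(y)$ by $C(|\ln|y|^{\aj-2}|^6+\ln^6(2+|y|))$. Substituting the ``mass'' relation $|x|^{\aj-2}e^{\Udaj(x)}=e^{\Vaj(y)}/\dej^2$ (immediate from the definitions \eqref{def:singLiou}, \eqref{def:Va}, with $x=\dej y$) into the formula for $\gp(\Up(x))$ above then gives exactly (i).

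\emph{Part (ii).} For the second expansion I would use $\gpp(t)=p\,\gp(t)/t$, so that $\gpp(\Up(x))=p\,\gp(\Up(x))/\Up(x)$. Inserting (i) — used only to order $\tfrac1p$, with remainder $O((|\ln|y|^{\aj-2}|^4+\ln^4(2+|y|))/p^2)$ because $\vphii$ has degree~$4$ — together with $\Up(x)=(-1)^{j-1}\tj p\,\Theta_j(y)$, and dividing (legitimate since $\Theta_j\ge\tfrac14$ on $\Bj$), the factors $(-1)^{j-1}\tj p$ cancel and the quotient of the two resulting brackets works out to $1+\tfrac1p\big(\woaj(y)-\vphio(\Vaj(y))-\Vaj(y)\big)+O\big((|\ln|y|^{\aj-2}|^4+\ln^4(2+|y|))/p^2\big)$; multiplying by $|x|^{\aj-2}e^{\Udaj(x)}$ yields (ii). Equivalently, one may write $\Theta_j(y)^{p-1}=\Theta_j(y)^p\,\Theta_j(y)^{-1}$, expand $\Theta_j(y)^{-1}$, and use $\gpp(\Up(x))=\dej^{-2}\Theta_j(y)^{p-1}$.

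\emph{Main difficulty.} The only genuinely delicate point will be the uniformity of Lemma~\ref{lem:Taylor}: it is stated for a fixed argument $v$, whereas here $v=\Vaj(y)$ ranges over a $p$-dependent set as $y$ varies over $\Bj/\dej$. This is precisely why one restricts to the level set $\Bj=\{\,\Vaj(x/\dej)>-p/2\,\}$, on which $1+v/p$ stays bounded away from $0$, so that $\log\Theta_j$ can be expanded and the explicit polynomial remainder of Lemma~\ref{lem:Taylor} controlled by the logarithmic weights $|\ln|y|^{\aj-2}|$ and $\ln(2+|y|)$ by means of \eqref{eq:Vaprop} and \eqref{bdrycond:wa}. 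Everything else is bookkeeping of powers of $p$ and a single application of Young's inequality.
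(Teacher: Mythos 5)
Your proposal is correct and follows essentially the same route as the paper: rescale via Lemma~\ref{lem:Upexp}, use $(\tj p)^p=\tj\dej^{-2}$ from the last equation of \eqref{eq:adtsys}, restrict to $\Bj$ so the base stays $\ge\tfrac14$, and invoke the uniform Taylor expansion of Lemma~\ref{lem:Taylor} together with \eqref{eq:Vaprop} to convert $|\Vaj(y)|^6+1$ into the stated logarithmic bound. The only cosmetic difference is in (ii), where the paper applies Lemma~\ref{lem:Taylor}--(ii) with $\kappa=1$ directly to $\Theta_j^{p-1}$ rather than dividing $p\,\gp(\Up)/\Up$, which is the equivalent alternative you already note at the end.
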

\begin{proof}
Proof of (i).
In view of Lemma~\ref{lem:Upexp}, we have
\beqs
\bal
\gp(\Up(x))=&\gp(\Up(\dej y))\\
=&\gp\Big((-1)^{j-1}\tj p\Big\{1+\frac{\Vaj(y)}{p}+\frac{\woaj(y)}{p^2}+\frac{\wiaj(y)}{p^3}+O(\frac{e^{-\beta_jp}}{p})\Big\}\Big).
\eal
\eeqs
For $\dej y\in\Bj$ we have, by definition
\beqs
1+\frac{\Vaj(y)}{p}\ge\frac{1}{2}.
\eeqs
Therefore, for $\dej y\in\Bj$ and $p\gg1$ we have
\beqs
1+\frac{\Vaj(y)}{p}+\frac{\woaj(y)}{p^2}+\frac{\wiaj(y)}{p^3}+O(\frac{e^{-\beta_jp}}{p})\ge\frac{1}{4}.
\eeqs
Hence, we may write
\beqs
\gp(\Up(x))=(-1)^{j-1}(\tj p)^p\Big(1+\frac{\Vaj(y)}{p}+\frac{\woaj(y)}{p^2}+\frac{\wiaj(y)}{p^3}+O(\frac{e^{-\beta_jp}}{p})\Big)^p.
\eeqs
Now, the claim follows by the Taylor expansions, as stated in Lemma~\ref{lem:Taylor}--(i) with $t=|y|$,
$a(t)=\Vaj(y)$, $b(t)=\woaj(y)$, $c(t)=\wiaj(y)$ and the fact
$(\tj p)^p=\tj\dej^{-2}$.
Indeed, for $x=\dej y\in\Bj$ we derive
\beqs
\bal
\gp(\Up(x))=&(-1)^{j-1}\tj\dej^{-2} e^{\Vaj(y)}\Big\{1+\frac{1}{p}\left(\woaj(y)-\vphio(\Vaj(y))\right)\\
&+\frac{1}{p^2}\left(\wiaj(y)-\vphii(\Vaj(y),\woaj(y))\right)
+\frac{O(|\Vaj(y)|^6+1)}{p^4}\Big\},
\eal
\eeqs
which, recalling the definition of $\Vaj$, yields (i).
\par
Proof of (ii). Similarly as above, in view of \eqref{eq:Upexpfirst}, for $x=\dej y\in\Bj$ we have
\beqs
\bal
\gpp(\Up(x))=&p(\tj p)^{p-1}\Big\{1+\frac{\Vaj(y)}{p}+\frac{\woaj(y)}{p^2}+\frac{O(\ln(|y|+2))}{p^3}\Big\}^{p-1}\\
=&\frac{e^{\Vaj(y)}}{\dej^2}\Big\{1+\frac{1}{p}\Big[\woaj(y)-\vphio(\Vaj(y))-\Vaj(y)\Big]+\frac{O(|\Vaj(y)|^4+1)}{p^2}\Big\},
\eal
\eeqs
where we used Lemma~\ref{lem:Taylor}--(ii) with $\kappa=1$ to derive the last equality.
Now (ii) follows by the mass scaling property as stated in Lemma~\ref{lem:massscaling}, and recalling the definition of $\Vaj$.
\end{proof}
\begin{proof}[Proof of Proposition~\ref{prop:Rjest}]
From \eqref{eq:Uda} we derive
\[
-\Delta P\tUdaj(\dej y)
=\frac{|y|^{\aj-2}}{\dej^2}e^{\vj(y)}
\left(1+\frac{\woaj(y)-\vphio(\Vaj(y))}{p}+\frac{\wiaj(y)-\vphii(\Vaj(y),\woaj(y))}{p^2}\right).
\]
Therefore, we may write, for $x=\dej y\in\Bj$: 
\beq
\label{eq:Rj}
\bal
&(-1)^{j-1}\tj\Delta\tUdaj+\gp(\Up(x))\\
&\quad=\tj(-1)^{j}
\frac{|y|^{\aj-2}}{\dej^2}e^{\vj(y)}
\left(1+\frac{\woaj(y)-\vphio(\Vaj(y))}{p}+\frac{\wiaj(y)-\vphii(\Vaj(y),\woaj(y))}{p^2}\right)\\
&\qquad\qquad+\gp(\Up(\dej y)).
\eal
\eeq
We may now apply Lemma~\ref{lem:gpgpp}--(i) to derive
\[
(-1)^{j-1}\tj\Delta\tUdaj+\gp(\Up(x))=\tj\frac{|y|^{\aj-2}}{\dej^2}e^{v_{\aj}(y)}\frac{O(|\Vaj(y)|^6+1)}{p^3},
\]
which yields the asserted estimate since $\tj=O(p^{-1})$.
\end{proof}
We turn to the proof of Proposition~\ref{prop:solvability}.
\begin{lemma}
\label{lem:assys}
System~\eqref{eq:adtsys} implies the following system in terms of $\aj$, $j=1,2,\ldots,k$ and $\sj$,
$j=1,2,\ldots,k-1$:
\beq
\label{eq:assys}
\left\{
\bal
&\al_1=2\\
&\frac{1}{2}\left(\aj+2-\frac{\Coaj}{p}-\frac{\Ciaj}{p^2}\right)\ln\sj+1+\sj=\frac{\cj+\sj c_{j+1}-(1+\sj)}{p-1},&&j=1,2,\ldots,k-1\\
&\sj(\al_{j+1}-2)=\aj+2-\frac{\Coaj}{p}-\frac{\Ciaj}{p^2},&&j=1,2,\ldots,k-1.
\eal
\right.
\eeq
\end{lemma}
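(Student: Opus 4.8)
The plan is to derive each equation of \eqref{eq:assys} from \eqref{eq:adtsys} by purely algebraic manipulation, using only the identity $\sj=\tau_{j+1}/\tj$ and its consequence $\ln(\dej/\dejj)=\tfrac{p-1}{2}\ln\sj$, both recorded in \eqref{def:sj}. Throughout I abbreviate $2\taj:=2\aj-\Coaj/p-\Ciaj/p^2$ as in \eqref{def:ta}, so that the coefficient $\ai-\Coai/(2p)-\Ciai/(2p^2)$ appearing in the fourth equation of \eqref{eq:adtsys} is exactly $\tai$, and I write $\widehat\al_j:=\aj+2-\Coaj/p-\Ciaj/p^2$ for the quantity occurring in the target system. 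Two elementary identities will be used repeatedly: $2\taj-\al_j+2=\widehat\al_j$ and $2\,\alpha_{j+1}^{(w)}-\widehat\al_{j+1}=\al_{j+1}-2$.

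\emph{The $\al$-relation (third line of \eqref{eq:assys}).} The line $\al_1=2$ is common to both systems. For $j\ge1$, write the second equation of \eqref{eq:adtsys} at index $j+1$ and split off the $i=j$ term:
\[
\al_{j+1}-2=\frac{\tj}{\tau_{j+1}}\,(2\taj)+\sum_{1\le i<j}\frac{\ti}{\tau_{j+1}}(-1)^{i-j}(2\tai).
\]
By the index-$j$ instance of the same equation (vacuously true for $j=1$, where $\al_1=2$), the remaining sum equals $-(\tj/\tau_{j+1})(\al_j-2)$. Hence $\al_{j+1}-2=(\tj/\tau_{j+1})(2\taj-\al_j+2)=\sj^{-1}\widehat\al_j$, i.e.\ $\sj(\al_{j+1}-2)=\widehat\al_j$, which is the third line of \eqref{eq:assys}.

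\emph{The $s$-relation (second line of \eqref{eq:assys}).} Let $E_j$ denote the fourth equation of \eqref{eq:adtsys}, $j=1,\ldots,k-1$, with the convention that the third equation of \eqref{eq:adtsys} is $E_k$ (the empty-sum instance); after the substitutions above, each $E_j$ reads $-\widehat\al_j\ln\dej-2\sum_{j<i\le k}\frac{\ti}{\tj}(-1)^{i-j}\tai\ln\de_i+\cj=p$. The decisive step is to form $E_j+\sj E_{j+1}$. Using $\ti\sj/\tau_{j+1}=\ti/\tj$ and $(-1)^{i-j-1}=-(-1)^{i-j}$ to rewrite the tail sum of $\sj E_{j+1}$, the two tail sums cancel term by term for $i\ge j+2$, and only the $i=j+1$ contribution $2\sj\,\alpha_{j+1}^{(w)}\ln\dejj$ survives; combining it with the $-\sj\widehat\al_{j+1}\ln\dejj$ term from $\sj E_{j+1}$ and invoking $2\,\alpha_{j+1}^{(w)}-\widehat\al_{j+1}=\al_{j+1}-2$ together with the already-proven relation $\sj(\al_{j+1}-2)=\widehat\al_j$, the $\ln\dejj$ terms collapse to $\widehat\al_j\ln\dejj$. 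Therefore $E_j+\sj E_{j+1}$ becomes $-\widehat\al_j\ln(\dej/\dejj)+\cj+\sj c_{j+1}=(1+\sj)p$. Substituting $\ln(\dej/\dejj)=\tfrac{p-1}{2}\ln\sj$ from \eqref{def:sj}, dividing by $p-1$, and rewriting $(1+\sj)p=(1+\sj)+(1+\sj)(p-1)$, one arrives precisely at the second line of \eqref{eq:assys}.

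\emph{Main difficulty.} The computation is entirely linear and carries no analytic content; the only point requiring attention is the sign- and index-bookkeeping — in particular recognizing the third equation of \eqref{eq:adtsys} as the degenerate $j=k$ case of the fourth, so that $E_j+\sj E_{j+1}$ is well-defined for every $j\le k-1$, and keeping track of the single boundary term $2\sj\,\alpha_{j+1}^{(w)}\ln\dejj$ surviving in the telescoped tail sums. I expect this (minor) bookkeeping to be the main obstacle.
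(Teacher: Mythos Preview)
Your proof is correct and follows essentially the same approach as the paper's: both derive the third line of \eqref{eq:assys} by combining the index-$j$ and index-$(j+1)$ instances of the $\alpha$-equation in \eqref{eq:adtsys}, and both derive the second line by adding the $j$th and $(j+1)$th $\delta$-equations (the paper works with the $\tau_j$-scaled versions and adds directly, you keep the ratio form and form $E_j+s_jE_{j+1}$, which is the same thing after multiplying through by $\tau_j$), telescoping the tail sums, using the already-established $\alpha$-relation to collapse the $\ln\delta_{j+1}$ coefficient, and then substituting $\ln(\dej/\dejj)=\tfrac{p-1}{2}\ln\sj$ from \eqref{def:sj}. The only cosmetic differences are your explicit shorthand $\widehat\alpha_j$ and the identities $2\taj-\al_j+2=\widehat\alpha_j$, $2\alpha_{j+1}^{(w)}-\widehat\alpha_{j+1}=\al_{j+1}-2$, which the paper uses implicitly.
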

\begin{proof}
We combine the first and the second equation in \eqref{eq:adtsys} into a single formula:
\begin{equation}
\label{s1}
-2\sum\limits_{i=1}^{j-1 }(-1)^{i-j}{\ti}\tai=(\al_j-2)\tj,\quad j=1,\dots,k,
\end{equation}
where we agree that the sum is zero if $j=1$.
Writing \eqref{s1} for the index $j$ and for the index $j+1$, and adding the two resulting expressions, we obtain
the following recursive formula for $\al_j$ (in terms of the $\ti$'s):
\beq 
\label{eq:alpharec}
(2\taj-\aj+2)\tj=(\al_{j+1}-2)\tau_{j+1},\quad j=1,\dots,k-1,
\eeq
which yields the third equation in \eqref{eq:assys}.
Similarly, we combine into a single formula the third and the fourth equation in \eqref{eq:adtsys}:
\begin{equation}
\label{s3}
-(2\taj-\aj+2)\tj\ln\de_j
-2\sum\limits_{i=j+1}^{k}(-1)^{i-j}{\ti }\tai\ln\de_i 
+c_j\tj=p\tj,\quad j=1,\dots,k,
\end{equation}
where we agree that the sum is zero if $j=k$.
Writing \eqref{s3} for the index $j$ and for the index $j+1$, and adding the two resulting expressions,
we obtain
\beqs
\label{eq:deltarec1}
-(2\taj-\al_j+2)\tj\ln\dej+(\al_{j+1}-2)\tau_{j+1}\ln\de_{j+1}+c_j\tj+c_{j+1}\tau_{j+1}
=p(\tj+\tau_{j+1}),\quad j=1,\dots,k-1,
\eeqs
i.e., 
\[
(2\taj-\aj+2)\tj\ln\de_j=(\al_{j+1}-2)\tau_{j+1}\ln\de_{j+1}-p(\tj+\tau_{j+1})
+\tj\cj+\tau_{j+1}c_{j+1}.
\]
Using \eqref{eq:alpharec},
we obtain the following recursive formula for $\de_j$ (in terms of the $\sj$'s):
\beq
\label{eq:deltarec2}
\ln\de_j=\ln\de_{j+1}-\frac{1+s_j}{2\taj-\aj+2}\,p
+\frac{\cj+c_{j+1}s_j}{2\taj-\aj+2}.
\eeq
From \eqref{def:sj} we derive $\dej/\de_{j+1}=\sj^{(p-1)/2}$, and inserting into \eqref{eq:deltarec2}
we deduce 
\beqs
(2\taj-\aj+2)\ln\sj+(1+\sj)\frac{2p}{p-1}-\frac{2}{p-1}(\cj+\sj c_{j+1})=0,
\eeqs
and the second equation in \eqref{eq:assys} follows.
\end{proof}
\begin{rmk}
We shall use the last equation in \eqref{eq:assys} in the simplified form:
\beq
\label{eq:asuseful}
\sj(\ajj-2)=\aj+2+O(\frac{1}{p}).
\eeq
\end{rmk}
Now we can conclude the proof of Proposition~\ref{prop:solvability}.
\begin{proof}[Proof of Proposition~\ref{prop:solvability}--(i): Existence]
We first observe that
\beq
\label{eq:fractau}
\frac{\ti}{\tj}=
\left\{
\bal
&\frac{\tau_{j+1}}{\tj}\frac{\tau_{j+2}}{\tau_{j+1}}\cdots\frac{\ti}{\tau_{i-1}}
=\sj s_{j+1}\cdots s_{i-1},&&\hbox{if }i>j\\
&1,&&\hbox{if }i=j\\
&\left(\frac{\tj}{\ti}\right)^{-1}=\frac{1}{s_is_{i+1}\cdots s_{j-1}},&&\hbox{if }i<j.
\eal
\right.
\eeq
In particular, the $\cj$'s, as defined in \eqref{def:cj}, are continuously differentiable 
with respect to $(s_1,\ldots,s_{k-1})\in(0,1]^{k-1}$.
We set $t:=p^{-1}$ and we rearrange the parameters $\aj$, $\sj$ in the form
$((\aj,\sj)_{j=1,\ldots,k-1},\ak)$.
For $i=1,\ldots,k$ we set
\beq
\label{def:gast}
g_i((\aj,\sj)_{j=1,\ldots,k-1},\ak,t):=
\left\{
\bal
&\al_1-2,&&\hbox{if }i=1\\
&(\ai-2)s_{i-1}-(2\al^{({w})}_{i-1}-\al_{i-1}+2),&&\hbox{if }2\le i\le k
\eal
\right.
\eeq
and for $i=1,\ldots,k-1$ we set
\beq
\label{def:hast}
h_i((\aj,\sj)_{j=1,\ldots,k-1},\ak,t)
:=\frac{2\tai-\ai+2}{2}\ln s_i+1+s_i
-\frac{c_i+s_ic_{i+1}-(1+s_i)}{1-t}t.
\eeq
With this notation, system~\eqref{eq:assys} is equivalent to the equation
$\G((\aj,\sj)_{j=1,\ldots,k-1},\ak,t)=\mathbf{0}\in\rr^{2k-1}$, where
\beq
\label{def:G}
\bal
\G((\aj,\sj)_{j=1,\ldots,k-1},\ak,t)
=&\left(
\begin{array}{c}
g_1((\aj,\sj)_{j=1,\ldots,k-1},\ak,t)\\
h_1((\aj,\sj)_{j=1,\ldots,k-1},\ak,t)\\
g_2((\aj,\sj)_{j=1,\ldots,k-1},\ak,t)\\
h_2((\aj,\sj)_{j=1,\ldots,k-1},\ak,t)\\
\vdots\\
g_{k-1}((\aj,\sj)_{j=1,\ldots,k-1},\ak,t)\\
h_{k-1}((\aj,\sj)_{j=1,\ldots,k-1},\ak,t)\\
g_k((\aj,\sj)_{j=1,\ldots,k-1},\ak,t)
\end{array}
\right).
\eal
\eeq
We seek a branch of solutions $((\aj(t),\sj(t))_{j=1,\ldots,k-1},\ak(t),t)$ to equation~\eqref{def:G}, for small values of $t$,
by an implicit function argument.
We first consider the case $t=0$, namely we consider the equation 
\beq
\label{def:G0}
\G((\aj,\sj)_{j=1,\ldots,k-1},\ak,0)=\mathbf{0},
\eeq
corresponding to
the \lq\lq unperturbed system"
\beq
\label{eq:assysunpert}
\left\{
\bal
&\al_1=2\\
&\frac{\aj+2}{2}\ln\sj+1+\sj=0,&&j=1,2,\ldots,k-1\\
&(\al_{j+1}-2)\sj=\al_j+2,&&j=1,2,\ldots,k-1.
\eal
\right.
\eeq
We note that for any $k\in\mathbb N$,
equation\eqref{def:G0} admits a unique solution, denoted
$((\aj^0,\sj^0)_{j=1,\ldots,k-1},\ak^0)$.
Indeed, let us denote 
\beqs
\bal
g_i^0((\aj,\sj)_{j=1,\ldots,k-1},\ak):=&g_i((\aj,\sj)_{j=1,\ldots,k-1},\ak,0)\\
=&\begin{cases}
\al_1-2&\hbox{if }i=1\\
s_{i-1}(\ai-2)-(\al_{i-1}+2)&\hbox{if }2\le i\le k,
\end{cases}
\eal
\eeqs
for $i=1,2,\ldots,k$, and
\beqs
h_i^0((\aj,\sj)_{j=1,\ldots,k-1},\ak):=h_i((\aj,\sj)_{j=1,\ldots,k-1},\ak,0)=\frac{\ai+2}{2}\ln s_i+1+s_i,
\eeqs
for $i=1,2,\ldots,k-1$.
Then, we are reduced to solving the system
\beq
\label{sys:G0}
\bal
\G((\aj,&\sj)_{j=1,\ldots,k-1},\ak,0)\\
=&\left(
\begin{array}{c}
g_1^0((\aj,\sj)_{j=1,\ldots,k-1},\ak)\\
h_1^0((\aj,\sj)_{j=1,\ldots,k-1},\ak)\\
g_2^0((\aj,\sj)_{j=1,\ldots,k-1},\ak)\\
h_2^0((\aj,\sj)_{j=1,\ldots,k-1},\ak)\\
\vdots\\
g_{k-1}^0((\aj,\sj)_{j=1,\ldots,k-1},\ak)\\
h_{k-1}^0((\aj,\sj)_{j=1,\ldots,k-1},\ak)\\
g_k^0((\aj,\sj)_{j=1,\ldots,k-1},\ak)
\end{array}
\right)
=\left(
\begin{array}{c}
\al_1-2\\
\frac{\al_1+2}{2}\ln s_1+1+s_1\\
(\al_2-2)s_1-(\al_1+2)\\
\frac{\al_2+2}{2}\ln s_2+1+s_2\\
\vdots\\
(\al_{k-1}-2)s_{k-2}-(\al_{k-2}+2)\\
\frac{\al_{k-1}+2}{2}\ln s_{k-1}+1+s_{k-1}\\
(\ak-2)s_{k-1}-(\al_{k-1}+2)
\end{array}
\right)=\mathbf{0}.
\eal
\eeq
Setting $\zeta_\al(s):=\frac{\al+2}{2}\ln s+1+s$, it is elementary to check that for any 
fixed $\al\ge2$ there holds $\zeta_\al'(s)=(\al+2)/(2s)+1>0$ for all $s>0$,
$\lim_{s\to0^+}\zeta_\al(s)=-\infty$ and $\zeta_\al(1)=2$.
Therefore, for any $\al\ge2$ the nonlinear equation $\zeta_\al(s)=0$
admits a unique solution $s_\al\in(0,1)$.
Since we may write $h_i^0((\aj,\sj)_{j=1,\ldots,k-1},\ak)=\zeta_{\ai}(s_i)$, 
we deduce that system~\eqref{sys:G0} admits a unique solution $((\aj^0,\sj^0)_{j=1,\ldots,k-1},\ak^0)$ defined recursively.
\par
We claim that any solution $((\aj,\sj)_{j=1,\ldots,k-1},\ak)$ to \eqref{eq:assysunpert} satisfies
\beq
\label{eq:ajprop}
\al_{j+1}>\aj+4>0.
\eeq
Indeed, from the third equation in \eqref{eq:assysunpert} we have
$\al_{j+1}-2=(\aj+2)/\sj$. Since $\sj\in(0,1)$, we deduce that if
$\aj>0$ then $\al_{j+1}-2>\aj+2$, i.e., $\al_{j+1}>\aj+4>0$.
Since $\al_1=2$, we obtain \eqref{eq:ajprop} recursively. 
\par
We now check that the $(2k-1)\times(2k-1)$ Jacobian matrix of $\G$ with respect to
the variables $((\aj,\sj)_{j=1,\ldots,k-1},\ak)$ is invertible
at the solution $((\aj^0,\sj^0)_{j=1,\ldots,k-1},\ak^0)$.
To this end, it is readily checked that
\beqs
\bal
&\frac{\pl g_i^0}{\pl\aj}=
\begin{cases}
\de_{ij},&\hbox{if }i=1, \\
s_{i-1}^0\de_{ij}-\de_{i-1,j},&\hbox{if }2\le i\le k;
\end{cases},\ j=1,2,\ldots,k;\\
&\frac{\pl g_i^0}{\pl\sj}=
\begin{cases}
0,&\hbox{if }i=1\\
(\ai^0-2)\de_{i-1,j}&\hbox{if }2\le i\le k-1
\end{cases},\ j=1,2,\ldots,k;\\
&\frac{\pl h_i^0}{\pl\aj}=\frac{\ln s_i^0}{2}\de_{ij},\qquad i=1,2,\ldots,k-1,\ j=1,2,\ldots,k;\\
&\frac{\pl h_i^0}{\pl\sj}=\left(\frac{\ai^0+2}{2s_i^0}+1\right)\de_{ij}\qquad i=1,2,\ldots,k-1,\ \qquad j=1,2,\ldots,k-1,
\eal
\eeqs
where the $\de_{ij}$'s denote Kronecker deltas.
Consequently, the Jacobian matrix for the mapping $\mathbf{G}((\aj,\sj)_{j=1,\ldots,k-1},\ak,0)$,
is given by
\beqs
\bal
&D_{((\aj,\sj)_{j=1,\ldots,k-1},\ak)}\mathbf{G}((\aj,\sj)_{j=1,\ldots,k-1},\ak,0)=\\
&=\left(\begin{array}{ccccccccccc}
\frac{\pl g_1^0}{\pl\al_1}&\frac{\pl g_1^0}{\pl s_1}&\frac{\pl g_1^0}{\pl\al_2}&\frac{\pl g_1^0}{\pl s_2}
&\cdots&\frac{\pl g_1^0}{\pl\al_{k-2}}&\frac{\pl g_1^0}{\pl s_{k-2}}
&\frac{\pl g_1^0}{\pl\al_{k-1}}&\frac{\pl g_1^0}{\pl s_{k-1}}&\frac{\pl g_1^0}{\pl\al_k}\\
\frac{\pl h_1^0}{\pl\al_1}&\frac{\pl h_1^0}{\pl s_1}&\frac{\pl h_1^0}{\pl\al_2}&\frac{\pl h_1^0}{\pl s_2}
&\cdots&\frac{\pl h_1^0}{\pl\al_{k-2}}&\frac{\pl h_1^0}{\pl s_{k-2}}
&\frac{\pl h_1^0}{\pl\al_{k-1}}&\frac{\pl h_1^0}{\pl s_{k-1}}&\frac{\pl h_1^0}{\pl\al_k}\\
\frac{\pl g_2^0}{\pl\al_1}&\frac{\pl g_2^0}{\pl s_1}&\frac{\pl g_2^0}{\pl\al_2}&\frac{\pl g_2^0}{\pl s_2}
&\cdots&\frac{\pl g_2^0}{\pl\al_{k-2}}&\frac{\pl g_2^0}{\pl s_{k-2}}
&\frac{\pl g_2^0}{\pl\al_{k-1}}&\frac{\pl g_2^0}{\pl s_{k-1}}&\frac{\pl g_2^0}{\pl\al_k}\\
\frac{\pl h_2^0}{\pl\al_1}&\frac{\pl h_2^0}{\pl s_1}&\frac{\pl h_2^0}{\pl\al_2}&\frac{\pl h_2^0}{\pl s_2}
&\cdots&\frac{\pl g_2^0}{\pl\al_{k-2}}&\frac{\pl g_2^0}{\pl s_{k-2}}
&\frac{\pl h_2^0}{\pl\al_{k-1}}&\frac{\pl h_2^0}{\pl s_{k-1}}&\frac{\pl h_2^0}{\pl\al_k}\\
\vdots&\vdots&\vdots&\vdots&\vdots&\vdots&\vdots&\vdots&\vdots&\vdots\\
\frac{\pl g_{k-1}^0}{\pl\al_1}&\frac{\pl g_{k-1}^0}{\pl s_1}&\frac{\pl g_{k-1}^0}{\pl\al_2}&\frac{\pl g_{k-1}^0}{\pl s_2}
&\cdots&\frac{\pl g_{k-1}^0}{\pl\al_{k-2}}&\frac{\pl g_{k-1}^0}{\pl s_{k-2}}
&\frac{\pl g_{k-1}^0}{\pl\al_{k-1}}&\frac{\pl g_{k-1}^0}{\pl s_{k-1}}&\frac{\pl g_{k-1}^0}{\pl\al_k}\\
\frac{\pl h_{k-1}^0}{\pl\al_1}&\frac{\pl h_{k-1}^0}{\pl s_1}&\frac{\pl h_{k-1}^0}{\pl\al_2}&\frac{\pl h_{k-1}^0}{\pl s_2}
&\cdots&\frac{\pl h_{k-1}^0}{\pl\al_{k-2}}&\frac{\pl h_{k-1}^0}{\pl s_{k-2}}
&\frac{\pl h_{k-1}^0}{\pl\al_{k-1}}&\frac{\pl h_{k-1}^0}{\pl s_{k-1}}&\frac{\pl h_{k-1}^0}{\pl\al_k}\\
\frac{\pl g_k^0}{\pl\al_1}&\frac{\pl g_k^0}{\pl s_1}&\frac{\pl g_k^0}{\pl\al_2}&\frac{\pl g_k^0}{\pl s_2}
&\cdots&\frac{\pl g_k^0}{\pl\al_{k-2}}&\frac{\pl g_k^0}{\pl s_{k-2}}
&\frac{\pl g_k^0}{\pl\al_{k-1}}&\frac{\pl g_k^0}{\pl s_{k-1}}&\frac{\pl g_k^0}{\pl\al_k}
\end{array}\right)\\
&=\left(\begin{array}{ccccccccccc}
1&0&0&0&\cdots&0&0&0&0&0\\
\frac{\ln s_1^0}{2}&\frac{\al_1^0+2}{2s_1^0}+1&0&0&\cdots&0&0&0&0&0\\
-1&\al_2^0-2&s_1^0&0&\cdots&0&0&0&0&0\\
0&0&\frac{\ln s_2^0}{2}&\frac{\al_2^0+2}{2s_2^0}+1&\cdots&0&0&0&0&0\\
\vdots&\vdots&\vdots&\vdots&\vdots&\vdots&\vdots&\vdots&\vdots&\vdots\\
0&0&0&0&\cdots&-1&\al_{k-1}^0-2&s_{k-2}^0&0&0\\
0&0&0&0&\cdots&0&0&\frac{\ln s_{k-1}^0}{2}&\frac{\al_{k-1}^0+2}{2}+1&0\\
0&0&0&0&\cdots&0&0&-1&\al_{k-2}^0&s_{k-1}^0
\end{array}
\right).
\eal
\eeqs
In particular, it is a lower triangular matrix with positive diagonal entries given by
\beqs
1,\ \frac{\al_1^0+2}{2s_1^0}+1,\  s_1^0,\quad\frac{\al_2^0+2}{2s_2^0}+1,\
\ldots,\ \frac{\al_{k-2}^0+2}{2s_{k-2}^0}+1,\ s_{k-2}^0,\ \frac{\al_{k-1}^0+2}{2s_{k-1}^0}+1,\quad s_{k-1}^0.
\eeqs
It follows that $D_{((\aj,\sj)_{j=1,\ldots,k-1},\ak)}\mathbf{G}((\aj,\sj)_{j=1,\ldots,k-1},\ak,0)$
is invertible at $((\aj^0,\sj^0)_{j=1,\ldots,k-1},\ak^0)$. 
\par
Now, the implicit function theorem yields a unique branch of solutions 
\beqs
((\aj(p),\sj(p))_{j=1,\ldots,k-1},\ak(p),p),\qquad p=t^{-1},
\eeqs
to system~\eqref{eq:assys}, for all sufficiently large values of $p$, continuously depending on $p$.
\par
We are left to derive a branch of solutions to system~\eqref{eq:adtsys} from the branch of solutions to
system~\eqref{eq:assys}.
To this end, we note that in view of \eqref{eq:fractau} the solution $((\aj,\sj)_{j=1,\ldots,k-1},\ak)$
uniquely determines the constants $\cj$, $j=1,2,\ldots,k$, defined in \eqref{def:cj}.
Now, we are able to compute the $\de_j$'s.
Indeed, the from the third equation in \eqref{eq:adtsys} we derive
\beqs
\ln\de_k=\frac{c_k-p}{2\ta_k-\al_k+2}.
\eeqs
From $\de_k$ and the last equation in system~\eqref{eq:adtsys} we obtain 
\beqs
\tau_k=\frac{1}{p^{p/(p-1)}\de_k^{2/(p-1)}}.
\eeqs
Recursively, we derive $\tj$, $j=1,2,\ldots,k-1$, using the property
\beqs
\tj=\frac{\tau_{j+1}}{\sj}.
\eeqs
Finally, from the fourth equation in \eqref{eq:adtsys} we recursively derive $\de_1,\ldots,\de_{k-1}$.
The existence of the desired branch of solutions $(\aj,\dej,\tj)$ is now completely established.
\end{proof}
\begin{proof}[Proof of Proposition~\ref{prop:solvability}-(ii): Basic properties]
By continuity, it suffices to check properties~(A1)--(A2)--(A3) for solutions
to the unperturbed system~\eqref{eq:assys}. Hence, let $((\aj,\sj)_{j=1,2,\ldots,k-1},\ak)$
be a solution to system~\eqref{eq:assys}.
Since $s_j\in(0,1)$, and since  \eqref{eq:ajprop} implies $\al_{j+1}-2>0$, we deduce from 
system~\eqref{eq:assys} that
\[
\aj+2=\sj(\al_{j+1}-2)<\al_{j+1}-2,
\]
for all $j\ge1$. In particular, $\aj+4<\al_{j+1}$, for all $j\ge1$,
and therefore assumption~(A1) is satisfied.
\par
We note that (A2) holds true for $j=k$ with 
\beqs
\bal
&\ln C_k=\frac{c_k}{2\ta_k-\al_k+2},
&&b_k=\frac{1}{2\ta_k-\al_k+2}.
\eal
\eeqs
From \eqref{eq:deltarec2} we derive, recursively:
\beq
\label{eq:deltarec}
\bal
&\ln C_j=\ln C_{j+1}+\frac{c_j+c_{j+1}s_j}{2\ta_j-\aj+2}\\
&b_j=b_{j+1}+\frac{1+\sj}{2\ta_j-\aj+2}=b_{j+1}+\frac{1+\sj}{\sj(\al_{j+1}-2)},\
j=1,2,\ldots,k-1,
\eal
\eeq
and consequently:
\beqs
\bal
&b_j=\frac{1}{2\ta_k-\al_k+2}+\sum_{i=j}^{k-1}\frac{1+s_i}{2\ta_i-\al_i+2}\\
&\ln C_j=\frac{c_k}{2\ta_k-\al_k+2}+\sum_{i=j}^{k-1}\frac{c_i+c_{i+1}s_i}{2\ta_i-\al_i+2},
\qquad j=1,2,\ldots,k-1.
\eal
\eeqs
%
Hence, assumption~(A2) is completely verified.
At this point, it is clear that assumption~(A3) is also satisfied.
\end{proof}
\begin{rmk}
We note that $b_1\to+\infty$ as $k\to+\infty$, that is, the concentration rate of the fast peaks 
increases as the number of peaks increases. The rate of the slowest peak does not change.
\end{rmk}
\begin{proof}[Proof of Proposition~\ref{prop:solvability}-(iii): Properties of the $\sj$'s]
By continuity, it suffices to verify \eqref{eq:sjprop} for the solution
$((\aj^0,\sj^0)_{j=1,\ldots,k-1},\ak^0)$ to the unperturbed system~\eqref{sys:G0}.
To this end, we observe that
\beqs
h_j^0((\aj^0,\sj^0)_{j=1,\ldots,k-1},\ak^0)=\zeta_{\aj^0}(\sj^0)=\frac{\aj^0+2}{2}\ln\sj^0+1+\sj^0=0
\eeqs
and
\beqs
\bal
h_{j+1}^0((\aj^0,\sj^0)_{j=1,\ldots,k-1},\ak^0)=&\zeta_{\al_{j+1}^0}(\sj^0)\\
=&\frac{\aj^0+2}{2}\ln\sj^0+1+\sj^0+\frac{\aj^0+2}{2}\left(\frac{\al_{j+1}^0+2}{\aj^0+2}-1\right)\ln\sj^0\\
=&\frac{\aj^0+2}{2}\left(\frac{\al_{j+1}^0+2}{\aj^0+2}-1\right)\ln\sj^0<0.
\eal
\eeqs
Since $\zeta_{\al_{j+1}^0}$ is strictly increasing and since $\zeta_{\al_{j+1}^0}(s_{j+1}^0)=0$,
we deduce that $0<\sj^0<s_{j+1}^0<1$, for any $j=1,2,\ldots,k-1$.
We claim that
\beq
\label{eq:sjconvexitybound}
\frac{\aj+4}{(\aj+2)e^{\frac{4}{\aj+2}}+2}\le\sj\le e^{-\frac{2}{\aj+2}}.
\eeq
Indeed, from the second equation in \eqref{eq:assysunpert} we obtain the nonlinear equation
\beq
\label{eq:sjnonlin}
\sj=e^{-\frac{2(1+\sj)}{\aj+2}}, 
\eeq
which readily implies the upper bound for
$\sj$ since $\sj\in(0,1)$. By convexity of the
function $f(s)=e^{-\frac{2(1+s)}{\aj+2}}$ at $s=1$, it is elementary to
check that
\beqs
f(s)\ge e^{-\frac{4}{\aj+2}}-\frac{2}{\aj+2}e^{-\frac{4}{\aj+2}}(s-1)
=e^{-\frac{4}{\aj+2}}(1+\frac{2}{\aj+2})-\frac{2}{\aj+2}e^{-\frac{4}{\aj+2}}s.
\eeqs
Using again $\sj\in(0,1)$ and $\sj=f(\sj)$, we derive the lower bound in  \eqref{eq:sjconvexitybound}.
For $j=1$, we obtain the lower bound in \eqref{eq:sjprop}.
\end{proof}
We are left to establish \eqref{eq:alphabounds}.
It is readily seen that the $\aj$'s increase asymptotically linearly with respect to
$j$, more precisely $\aj=8j+O(1)$ as $j\to+\infty$.
To see this, we note that from \eqref{eq:sjnonlin} we derive
$\sj=1-\frac{2(1+\sj)}{\aj+2}+O(\aj^{-2})$ and therefore
\beqs
\sj=\frac{\aj}{\aj+4}+O(\frac{1}{\aj^2}).
\eeqs
Inserting into the third equation of \eqref{eq:assysunpert} we deduce that
\beqs
\bal
\al_{j+1}=\frac{\aj+2}{\sj}+2=\aj+8+O(\frac{1}{\aj})=8(j+1)+O(1).
\eal
\eeqs
\begin{proof}[Proof of Proposition~\ref{prop:aj}]
Let us introduce the Lambert function $W=f^{-1}$, where $f(x)=xe^x$ in $\mathbb R^+$. 
We remark that
$$ax\ln x=x+1
\ \Leftrightarrow\   {1\over ax}e^{1\over ax}={1\over a}e^{-{1\over a}}
\ \Leftrightarrow\   {1\over ax}=W\left({1\over a}e^{-{1\over a}}\right)\ \Leftrightarrow\   x=\frac1{a}\frac 1{W\left({1\over a}e^{-{1\over a}}\right)}$$
Therefore, by recurrence
$${1\over\sj}={2\over \alpha_j+2}{1\over W\({2\over \alpha_j+2}e^{-{2\over \alpha_j+2}}\)}$$
and so
\begin{equation}
\label{ai}
\alpha_1=2\ \hbox{and}\ \alpha_{j+1}=2+{2\over W\({2\over \alpha_j+2}e^{-{2\over \alpha_j+2}}\)},\ j=1,\dots,k-1.
\end{equation}
%
First of all,  straightforward but tedious computations show that
$$0<\frac1{W\(xe^{-x}\)}-2-\frac 1x\le \frac43x^2\ \hbox{for any}\ x\in(0,1/2].$$
Indeed, using the definition of Lambert's function, it is equivalent to prove that
$$\min\limits_{x\in(0,1/2]}\frac1{2x+1}e^{{x\over 2x+1}+x}=\max \limits_{x\in(0,1/2]}\frac3{4x^3+6x+3}e^{{3x\over 4x^3+6x+3}+x}=1.$$
That implies (setting $x=\frac2{\alpha_i+2}$)
$$\alpha_i+8<\alpha_{i+1}\le \alpha_i+8+\frac{32}3{1\over(\alpha_i+2)^2}\ \hbox{for any}\ i\ge1.$$
Therefore, by recurrence we get
$$8i-6<\alpha_i\le 8i-6+\frac{32}3\sum\limits_{j=1}^{i-1} \frac1{(\alpha_i+2)^2}
 \ \hbox{for any}\ i\ge2.$$
and also
$$ \frac{32}3\sum\limits_{j=1}^{i-1} \frac1{(\alpha_i+2)^2}
\le  \frac{32}3\sum\limits_{j=1}^{i-1} \frac1{(8j-4)^2}\le  \frac23\sum\limits_{j=1}^{\infty} \frac1{(2j-1)^2}= {\pi^2\over 12}<1,$$
where we used the well-known fact $\sum_{n=1}^\infty n^{-2}=\pi^2/6$ and consequently
$\sum_{j=1}^\infty (2j)^{-2}=4^{-1}\sum_{j=1}^\infty j^{-2}=\pi^2/24$,
$\sum_{n=1}^\infty (2j-1)^{-2}=\pi^2/6-\pi^2/24=\pi^2/8$ to derive the last inequality.
Finally, the asserted estimate \eqref{eq:alphabounds} follows.
\end{proof}
%
%
\begin{lemma}
\label{lem:BsubsetA}
There exist constants $0<R_j'<R_j''$, $j=1,2,\ldots,k$, and $0<r_j''<r_j'$,
$j=2,\ldots,k$, independent of $p$,
such that
\beqs
\left\{|x|\le R_1'\de_1 e^{p/8}\right\}
\subset\Bo
\subset
\left\{|x|
\le R_1''\de_1e^{p/8}\right\}
\eeqs
and, for all $j=2,\ldots,k$,
\beqs
\left\{r_j'\dej e^{-\frac{p}{2(\aj-2)}}\le|x|\le R_j'\dej e^{\frac{p}{2(\aj+2)}}\right\}
\subset\Bj
\subset
\left\{r_j''\dej e^{-\frac{p}{2(\aj-2)}}\le|x|
\le R_j''\dej e^{\frac{p}{2(\aj+2)}}\right\}.
\eeqs
In particular, there holds $\Bj\subset\Aj$, for all sufficiently large values of $p$.
\end{lemma}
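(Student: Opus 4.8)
The plan is to use the two-sided estimate \eqref{eq:Vaprop} to replace $\Vaj$ by the explicit profile $g_j(y):=\ln\frac{|y|^{\aj-2}}{1+|y|^{2\aj}}=(\aj-2)\ln|y|-\ln(1+|y|^{2\aj})$, at the cost of a $p$-independent additive constant $C$ (that the constant in \eqref{eq:Vaprop} may be taken independent of $p$ follows from (A1), which keeps $\aj$ bounded). Since $g_j-C\le\Vaj\le g_j+C$ on $\rr^2$, one has
\[
\{y:\dej y\in\Om,\ g_j(y)>-\tfrac p2+C\}\subset\frac{\Bj}{\dej}\subset\{y:\dej y\in\Om,\ g_j(y)>-\tfrac p2-C\},
\]
so it suffices to locate the level sets $\{g_j>-\tfrac p2\pm C\}$ and, for the left inclusion of the lemma, to check that the resulting test annulus rescaled by $\dej$ lies in $\Om$. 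The case $j=1$ ($\al_1=2$) is treated separately: there $\mathcal V_{\al_1}(y)=\ln\frac{8}{(1+|y|^2)^2}$, so $\mathcal V_{\al_1}(y)>-\tfrac p2$ is equivalent to $|y|^2<2\sqrt2\,e^{p/4}-1$, which for large $p$ is squeezed between $\{|y|\le e^{p/8}\}$ and $\{|y|\le 2e^{p/8}\}$; rescaling by $\de_1$ gives the first chain with, e.g., $R_1'=1$ and $R_1''=2$.

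For $j\ge2$ the two radii come from two elementary one-sided bounds for $g_j$: always $g_j(y)\le(\aj-2)\ln|y|$, and, for $|y|\ge1$, $g_j(y)\le-(\aj+2)\ln|y|$. Applied to $y\in\Bj/\dej$, where $g_j(y)>-\tfrac p2-C$, the first forces $|y|>e^{-C/(\aj-2)}e^{-p/(2(\aj-2))}$ and the second (the regime $|y|<1$ being harmless) forces $|y|<e^{C/(\aj+2)}e^{p/(2(\aj+2))}$, which is the superset inclusion with $r_j'':=e^{-C/(\aj-2)}$ and $R_j'':=e^{C/(\aj+2)}$. For the subset inclusion I would bound $g_j$ from below on the annulus $T_j:=\{r_j'e^{-p/(2(\aj-2))}\le|y|\le R_j'e^{p/(2(\aj+2))}\}$: the function $g_j$ increases and then decreases (its unique critical radius solves $|y|^{2\aj}=(\aj-2)/(\aj+2)$), so its minimum on $T_j$ is attained at an endpoint, and using $\ln(1+|y|^{2\aj})\le\ln2$ for $|y|\le1$, resp. $\ln(1+|y|^{2\aj})\le\ln2+2\aj\ln|y|$ for $|y|\ge1$, one obtains $g_j\ge-\tfrac p2-\ln2+\min\{(\aj-2)\ln r_j',\,-(\aj+2)\ln R_j'\}$ on $T_j$; choosing $r_j'\ge1$ large enough and $R_j'\le1$ small enough that this minimum exceeds $2C+\ln2$ gives $\Vaj\ge g_j-C>-\tfrac p2$ on $T_j$. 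By Proposition~\ref{prop:aj} the numbers $\aj\pm2$ range over intervals independent of $p$, so the constants $r_j',r_j'',R_j',R_j''$ produced this way can be fixed independently of $p$; the above choices visibly satisfy $r_j''<r_j'$ and $R_j'<R_j''$.

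The two remaining points — that $\dej T_j\subset\Om$ (so that $T_j$ lands in $\Bj$ and not merely in the unconstrained level set) and the concluding inclusion $\Bj\subset\Aj$ — I would both read off from \eqref{eq:Ajuseful}, which gives $\Aj\cong\{\dej e^{-p/(\aj-2)}\le|x|<\dej e^{p/(\aj+2)}\}$ together with the analogous descriptions of $A_1$ and $\Ak$. Comparing radii, $\bigl(r_j''\dej e^{-p/(2(\aj-2))}\bigr)/\bigl(\dej e^{-p/(\aj-2)}\bigr)=r_j''e^{p/(2(\aj-2))}\to+\infty$ and $\bigl(R_j''\dej e^{p/(2(\aj+2))}\bigr)/\bigl(\dej e^{p/(\aj+2)}\bigr)=R_j''e^{-p/(2(\aj+2))}\to0$, so for $p$ large the superset annulus of $\Bj$ — hence $\Bj$, and a fortiori the smaller annulus $\dej T_j$ — is contained in $\Aj\subset\Om$; for $j=1$ the same works using $e^{p/8}\ll e^{p/4}$. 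I expect the one genuine obstacle to be the bookkeeping: ensuring every constant is traced back to $C$ and to the fixed $p$-independent intervals for the $\aj$ furnished by Proposition~\ref{prop:aj}, so that nothing degenerates as $p\to+\infty$; once Proposition~\ref{prop:aj} is in hand this is routine.
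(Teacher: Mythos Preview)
Your proposal is correct and follows essentially the same approach as the paper. For the first part, the paper merely says the inclusions ``readily follow'' from the form of $\Vaj$; you have spelled out exactly this argument via the two-sided bound \eqref{eq:Vaprop}. For the final inclusion $\Bj\subset\Aj$, you appeal to the rescaled description \eqref{eq:Ajuseful}, whereas the paper instead verifies directly that $\dej e^{p/(2(\aj+2))}\le\dej^{\ej}\dejj^{1-\ej}$ and $\dej e^{-p/(2(\aj-2))}\ge\de_{j-1}^{\veps_{j-1}}\dej^{1-\veps_{j-1}}$ by computing $(\dejj/\dej)^{1-\ej}$ and $(\de_{j-1}/\dej)^{\veps_{j-1}}$ from Proposition~\ref{prop:solvability}-(v) and the identity $\ej=\sj/(1+\sj)$; this is precisely the computation that underlies \eqref{eq:Ajuseful}, so the two routes coincide in substance.
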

\begin{rmk}
The specific form \eqref{def:ej} of $\ej$ is essential for the proof.
\end{rmk}
\begin{proof}[Proof of Lemma~\ref{lem:BsubsetA}]
We recall from \eqref{def:Va} that
\beqs
\Va(y)=\va(y)+(\al-2)\ln|y|=\ln\frac{2\al^2|y|^{\al-2}}{(1+|y|^\al)^2}
\eeqs
for all $\al\ge2$ and that
\beqs
\Bj=\left\{x\in\Om:\ \mathcal V_{\aj}(\frac{x}{\dej})\ge-\frac{p}{2}\right\}.
\eeqs
Now, the asserted inclusions readily follow.
\par
Hence, we need only check that $\Bj\subset\Aj$.
We claim that
\beqs
\dej e^{\frac{p}{2(\aj+2)}}\le\dej^{\ej}\de_{j+1}^{1-\ej},
\eeqs
for all sufficiently large values of $p$.
We equivalently check that $e^{\frac{p}{2(\aj+2)}}\le(\de_{j+1}/\dej)^{1-\ej}$.
Recalling the properties of $\dej$ as in \eqref{eq:deltarec}, we have
\beqs
\bal
\ln\de_{j+1}-\ln\dej=\ln\frac{C_{j+1}}{C_j}+(b_j-b_{j+1})p
=\ln\frac{C_{j+1}}{C_j}+\frac{1+s_j}{\aj+2+o(1)}p
\eal
\eeqs
so that 
\beqs
\left(\frac{\dejj}{\dej}\right)^{1-\ej}=Ce^{\frac{(1+s_j)(1-\ej)}{\aj+2+o(1)}p}
\eeqs
Now the result follows since, in view of \eqref{def:ej} we have
\beqs
\frac{(1+s_j)(1-\ej)}{\aj+2+o(1)}=\frac{1}{\aj+2+o(1)}>\frac{1}{2(\aj+2)}
\eeqs
provided that $p$ is sufficiently large.
\par
Similarly, we claim that
\beqs
\dej e^{-\frac{p}{2(\aj-2)}}\ge\de_{j-1}^{\veps_{j-1}}\dej^{1-\veps_{j-1}},
\eeqs
for all sufficiently large values of $p$.
We equivalently check that $e^{-\frac{p}{2(\aj-2)}}\ge(\de_{j-1}/\dej)^{\veps_{j-1}}$.
Recalling the properties of $\dej$ in \eqref{eq:deltarec}, we have
\beqs
\bal
\ln\left(\frac{\de_{j-1}}{\dej}\right)^{\veps_{j-1}}=&C-\veps_{j-1}(b_{j-1}-b_j)p
=&C-\veps_{j-1}\frac{1+s_{j-1}}{(\aj-2)s_{j-1}}p=C-\frac{p}{\aj-2},
\eal
\eeqs
where we used \eqref{def:ej} to derive the last equality.
The asserted inclusions $\Bj\subset\Aj$ are now completely established.
\end{proof}
\begin{proof}
Proof of (iv). We have
\beqs
\tj^{p-1}p^p=\frac{1}{\dej^2}=C_j^{-2}e^{2\bj p}.
\eeqs
Hence,
\beqs
\tj p=\frac{C_j^{-\frac{2}{p-1}}e^{2\bj\frac{p}{p-1}}}{p^{\frac{1}{p-1}}}=e^{2\bj}(1+O(\frac{\ln p}{p})),
\eeqs
as asserted.
\end{proof}
\subsubsection*{The case of one bubble $k=1$}
We remark that for $k=1$ we have $\al_k=\al_1=2$, $b_k=b_1=(2\ta_k-\al_k+2)^{-1}=(4+o(1))^{-1}$,
and therefore
\[
\bal
c_k=&8\pi h(0)-\ln8+o(1)\\
\de_k=&\de_1=C_ke^{-\frac{p}{4+o(1)}}\\
\tau_kp=&\tau_1p=\frac{p}{p^{p/(p-1)}\de_k^{2/(p-1)}}=\sqrt{e}(1+O(\frac{\ln p}{p})),
\eal
\]
in agreement with \cite{emp1}.
\section{Weighted estimation of $\Rp$}
\label{sec:Rp}
We recall from \eqref{def:R}, \eqref{def:Rj} and \eqref{eq:Rsplit} that the error $\Rp$ is defined by
\beqs
\mathcal R_p=\Delta\Up+\gp(\Up)=\sum_{j=1}^k\Rj+\sum_{j=1}^k\chi_{\Aj}\sum_{\stackrel{i=1}{i\neq j}}^k(-1)^{i-1}\ti\Delta\tUdai
\eeqs
where
\beqs
\Rj=\left[(-1)^{j-1}\tj\Delta\tUdaj+\gp(\Up)\right]\chi_{\Aj},
\qquad x\in\Om.
\eeqs
Our aim in this section is to estimate $\Rp$ with respect to a suitable weight function
$\rp(x)$ defined by
\beq
\label{def:rho}
\bal
\rp(x)=&\sum_{j=1}^k\rj(x)\chi_{\Aj}(x),\\
\rj(x)=&\frac{\dej^{2+\eta}+|x|^{2+\eta}}{\dej^\eta},
\qquad x\in\Aj,
\eal
\eeq
where $0<\eta<1$.
Then, in view of \eqref{eq:Rsplit}, we may write
\beq
\label{eq:Rpdecomp}
\rp(x)\Rp(x)=\sum_{j=1}^k\rj\Rj\chi_{\Aj}
+\sum_{j=1}^k\chi_{\Aj}\rj\sum_{\stackrel{i=1}{i\neq j}}^k\ti(-1)^{i-1}\Delta\tUdai.
\eeq
We note that upon rescaling we have:
\beq
\label{eq:rhoresc}
\rj(\dej y)=\dej^2(1+|y|^{2+\eta}),\quad\dej y\in\Aj.
\eeq
We set
\beq
\label{def:starnorm}
\|h\|_{\rp}:=\|\rp\,h\|_{L^\infty(\Om)},
\qquad h\in L^\infty(\Om).
\eeq
We observe that the choice of $\rj$ ensures uniform weighted boundedness of the $j$-th mass
with logarithmic errors, $j=1,2,\ldots,k$:
\beqs
\rj(x)|x|^{\aj-2}e^{\Udaj(x)}(|\Vaj(\frac{x}{\dej})|^q+1)
=O(\frac{|y|^{\aj-2}}{1+|y|^{2\aj-2-\eta}})(|\Vaj(\frac{x}{\dej})|^q+1)=O(1),
\eeqs
for any $q>0$,
see Lemma~\ref{lem:massdecay} below for a more precise statement.
\par
The main result in this section is the following.
\begin{prop}[Main error estimate]
\label{prop:errorest}
The following estimate holds true
\beqs
\|\Rp\|_{\rp}\le\frac{C}{p^4},
\eeqs
for some $C>0$ independent of $p$.
\end{prop}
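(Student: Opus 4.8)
The plan is to start from the decomposition~\eqref{eq:Rpdecomp} of $\rp\Rp$ into the finitely many \lq\lq diagonal'' blocks $\rj\Rj\chi_{\Aj}$ and \lq\lq off-diagonal'' blocks $\chi_{\Aj}\,\rj\,(-1)^{i-1}\ti\Delta\tUdai$ with $i\ne j$, and to bound each block in $L^\infty(\Om)$; summing over the $k$ indices then gives the claim. For a diagonal block I further split $\Aj=\Bj\cup(\Aj\setminus\Bj)$, since on $\Bj$ the sharp decay of Proposition~\ref{prop:Rjest} is available, whereas on $\Aj\setminus\Bj$ the $j$-th bubble is already exponentially small and a crude bound suffices. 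As will be seen, every contribution except the one coming from $\Bj$ is $O(e^{-cp})$ for some $c>0$, so the rate $p^{-4}$ is produced solely by $\Bj$.

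On $\Bj$, Proposition~\ref{prop:Rjest} together with~\eqref{eq:Vaprop} gives $|\Rj(x)|\le C\,|x|^{\aj-2}e^{\Udaj(x)}\,p^{-4}\,(|\Vaj(x/\dej)|^6+1)$, where the two logarithmic quantities in~\eqref{eq:Rjest} are absorbed into $|\Vaj|$ using that $\aj$ is bounded by~(A1). Multiplying by $\rj$ and invoking the weighted mass decay of Lemma~\ref{lem:massdecay} with $q=6$ yields $\|\rj\Rj\chi_{\Bj}\|_{L^\infty(\Om)}\le C/p^4$, which is the only estimate contributing at this order.

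On $\Aj\setminus\Bj$ one has $\Vaj(x/\dej)\le-p/2$, while still $\Vaj(x/\dej)\ge-p-C$ by~\eqref{est:Vaj}, and I estimate the two summands of $\Rj$ separately. For $(-1)^{j-1}\tj\Delta\tUdaj$ I use the identity~\eqref{eq:Uda} together with the crude bound $|\vphio(\Vaj)|+|\vphii(\Vaj,\woaj)|=O(p^2)$ on $\Aj$, so that, writing $y=x/\dej$, $\rj\,|\tj\Delta\tUdaj|\le C\tj p^2(1+|y|^{2+\eta})e^{\Vaj(y)}$; since $(1+|y|^{2+\eta})e^{\Vaj(y)}\le Ce^{-p/2}$ when $|y|\le1$ and $\le C|y|^{\eta-\aj}$ when $|y|\ge1$ — and on $\Aj\setminus\Bj$ the latter case forces $|y|\ge ce^{p/(2(\aj+2))}$ — this term is $O(e^{-cp})$ because $\eta<1\le\aj$. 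For $\gp(\Up)$ I use~\eqref{eq:Upexpzero}, which on $\Bj^c$ gives $|\Up(x)|\le\tj(p/2+C)$ and hence $|\gp(\Up(x))|\le(\tj p)^p(\tfrac12+\tfrac Cp)^p=\tj\dej^{-2}\,O(2^{-p})$; multiplying by $\rj(\dej y)=\dej^2(1+|y|^{2+\eta})$ and using $|y|\le Ce^{p/(\aj+2)}$ on $\Aj$ (see~\eqref{eq:Ajuseful}) gives $\rj\,|\gp(\Up)|\le C\tj\,e^{(2+\eta)p/(\aj+2)}2^{-p}$, which is $O(e^{-cp})$ provided $\eta$ is fixed so that $(2+\eta)/(\aj+2)<\ln2$ for every $j$; this is automatic for $j\ge2$ because $\aj\ge10$ by Proposition~\ref{prop:aj}, and for $j=1$ (where $\aj=2$) it is the harmless restriction $\eta<4\ln2-2$.

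Finally, for the off-diagonal blocks I will show that for each $i\ne j$ the weighted \lq\lq foreign mass'' $\rj(x)\,|x|^{\ai-2}e^{\Udai(x)}$ is $O(e^{-c_{ij}p})$ uniformly for $x\in\Aj$; combined with $|\Delta\tUdai|\le Cp^2|x|^{\ai-2}e^{\Udai(x)}$ (again from~\eqref{eq:Uda} and the bound $|\Vai(x/\dei)|=O(p)$ on $\Aj$) and $\ti=O(p^{-1})$, each such block is then exponentially small. When $i<j$ one lies in the far tail of the $i$-th bubble, so $e^{\Vai(x/\dei)}\le C(|x|/\dei)^{-\ai-2}$ by~\eqref{eq:Vaprop}; when $i>j$ one lies inside its core, so $e^{\Vai(x/\dei)}\le C(|x|/\dei)^{\ai-2}$. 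Inserting the admissible range of $|x|$ on $\Aj$ from~\eqref{eq:Ajuseful}, the recursions~\eqref{eq:deltarec} and~\eqref{eq:asuseful} for $\bj$, $\aj$, $\sj$, and the facts that $\sj$ is bounded away from $0$ and $\aj\ge10$ for $j\ge2$, all the resulting exponents collapse to strictly negative multiples of $p$. This last verification — that the exponentially separated scales $\dej=C_je^{-\bj p}$ and the singularity exponents $\aj$ always conspire to yield net exponential decay in the weighted norm — is the only genuinely delicate computation; the remainder consists of collecting the $O(p^{-4})$ bound on $\Bj$ and absorbing the finitely many $O(e^{-cp})$ terms into it.
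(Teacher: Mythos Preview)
Your decomposition and estimates mirror the paper's exactly: Proposition~\ref{prop:Rjest} on $\Bj$ for the $p^{-4}$ rate, Lemma~\ref{lem:massdecay} for the off-diagonal weighted masses, and Lemma~\ref{lem:massAjBj} for the diagonal mass on $\Aj\setminus\Bj$. The only divergence is your treatment of $\gp(\Up)$ on $\Aj\setminus\Bj$: the paper (Lemma~\ref{lem:gpUpexp}) splits into $\{\Up\gtrless0\}$ and applies the sharp inequalities $|1+s/p|^p\le e^s$ (resp.\ $\le e^{-s-2p}$) of Lemma~\ref{lem:gpp}, which works for every $\eta\in(0,1)$, whereas your cruder $(1/2+C/p)^p=O(2^{-p})$ bound forces the additional but harmless restriction $\eta<4\ln2-2$. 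One slip to correct: since $\vphii$ contains a $\Vaj^4$ term you have $|\vphii(\Vaj,\woaj)|=O(p^4)$, not $O(p^2)$; it is the \emph{full bracket} in~\eqref{eq:Uda} that is $O(p^2)$, so your stated bound $\rj|\tj\Delta\tUdaj|\le C\tj p^2(1+|y|^{2+\eta})e^{\Vaj(y)}$ is nonetheless correct.
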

We devote the remaining part of this section to the proof of Proposition~\ref{prop:errorest}.
The estimates contained in the following lemma will be used systematically in the sequel.
\begin{lemma}
\label{lem:prates}
The following estimates hold true:
\beq
\label{est:wf}
|\Vaj(y)|=O(p),\qquad
\wlai(y)=O(p),
\qquad\foaj(y)=O(p^2),
\qquad\fiai(y)=O(p^4),
\eeq
uniformly for $x=\dej y\in\Aj$, $j=1,2,\ldots,k$,
where $\foaj(y)=\varphi^0(\Vaj(y))$ and $\fiai(y)=\varphi^1(\Vaj(y),\woaj(y))$.
\end{lemma}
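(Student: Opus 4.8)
The plan is to reduce all four estimates to a single bound, namely $|\Vaj(y)|=O(p)$ on the rescaled annulus $\Aj/\dej$; once this is available, the bounds for $\vphio(\Vaj)$ and $\vphii(\Vaj,\woaj)$ are obtained by inspecting the explicit polynomials \eqref{def:phi}, and the bound for the correction profile $\wlaj$ follows from its known asymptotics \eqref{bdrycond:wa}. The first step is to show that the rescaled variable stays in a disk of radius $e^{O(p)}$, i.e.\ $\ln(2+|y|)=O(p)$ uniformly for $\dej y\in\Aj$. For $2\le j\le k-1$, \eqref{eq:Ajuseful} gives $C^{-1}e^{-p/(\aj-2)}\le|y|\le Ce^{p/(\aj+2)}$; since $\aj-2>8$ for $j\ge2$ by Proposition~\ref{prop:aj} and $\aj\le C$ by (A1), this yields $|\ln|y||\le Cp$. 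For $j=1$ one has $\al_1=2$ and $0\le|y|\le Ce^{p/4}$, so $\ln(2+|y|)\le Cp$. For $j=k$, \eqref{eq:Ajuseful} supplies only the lower bound $|y|\ge C^{-1}e^{-p/(\ak-2)}$, but the matching upper bound comes from $\dek y\in\Om$ together with $\dek=C_ke^{-b_kp}$ and the bounds on $C_k,b_k$ in (A2): $|y|\le\diam(\Om)/\dek\le Ce^{b_kp}\le Ce^{Cp}$. Thus $|\ln|y||\le Cp$ in every case.

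Next I would bound $\Vaj$. From \eqref{def:Va}--\eqref{eq:Vaprop} and $\aj\ge2$ we get $|y|^{\aj-2}/(1+|y|^{2\aj})\le\min\{|y|^{\aj-2},|y|^{-\aj-2}\}\le1$ for all $y$, hence $\Vaj(y)\le\ln(2\aj^2)\le C$ everywhere; for the lower bound, $1+|y|^{2\aj}\le2\max\{1,|y|^{2\aj}\}$ gives $\Vaj(y)\ge-\max\{-(\aj-2)\ln|y|,(\aj+2)\ln|y|\}-C\ge-Cp$ on $\Aj/\dej$ by the previous step and $\aj+2\le C$ (for $j=1$, $|\Vaj(y)|=\big|\ln\tfrac{8}{(1+|y|^2)^2}\big|\le4\ln(1+|y|)+C=O(p)$ directly). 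So $|\Vaj(y)|=O(p)$.

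For the remaining quantities: since $\wlaj$ is smooth and radial on $\rr^2$ with $\wlaj(y)=\Claj\ln|y|+O(|y|^{-1})$ as $|y|\to\infty$, and this decay and the growth constant $\Claj$ are controlled uniformly for $\aj\in[2,C]$ (Lemma~\ref{lem:CI}), there is a $p$-independent $C$ with $|\wlaj(y)|\le C\ln(2+|y|)$ for all $y$; combined with the first step this gives $\wlaj(y)=O(p)$ on $\Aj/\dej$. Then $\foaj(y)=\vphio(\Vaj(y))=\tfrac12\Vaj(y)^2=O(p^2)$ by \eqref{def:phi}, while $\vphii(\Vaj(y),\woaj(y))$ is by \eqref{def:phi} a sum of monomials in $\Vaj(y)$ and $\woaj(y)$ of total degree at most $4$, with dominant term $-\tfrac18\Vaj(y)^4$, hence $O(p^4)$; the analogous bounds for the cross-terms $w^\ell_{\al_i}(x/\dei)$, $i\neq j$, that occur on $\Aj$ follow by the same argument, since the decay rates of $\dei$ in (A2) force $\ln(2+|x|/\dei)=O(p)$ on $\Aj$ as well. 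I do not expect a serious obstacle here: the lemma essentially records that the annuli $\Aj$ were engineered so that each rescaled profile lives on a disk of radius $e^{O(p)}$. The one delicate point is the uniformity in $j$ of the first step, in particular for the outermost annulus $\Ak$, for which \eqref{eq:Ajuseful} does not by itself control $|y|$ from above and one must instead use the explicit exponential decay of $\dek$ with $b_k$ bounded, from (A2).
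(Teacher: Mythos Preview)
Your proposal is correct and follows essentially the same approach as the paper: the paper's (very terse) proof also just verifies that $|\ln|y||=O(p)$ uniformly for $x=\dej y\in\Aj$, treating separately the cases $|y|\le1$ (using the inner radius $(\de_{j-1}/\dej)^{\veps_{j-1}}$ for $j\ge2$), $|y|\ge1$ with $j\le k-1$ (using the outer radius $(\de_{j+1}/\dej)^{1-\ej}$), and $|y|\ge1$ with $j=k$ (using $|y|\le\diam\Om/\dek$ exactly as you do), and then the remaining bounds follow from $|\wlaj(y)|\le C\ln(|y|+2)$ and the polynomial structure of $\vphio,\vphii$. You spell out the intermediate steps and the cross-term case $i\neq j$ more explicitly than the paper does, but the argument is the same.
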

\begin{proof}
It suffices to observe that for $x=\dej y$ in $\Aj$ we have:
\beqs
\bal
|\ln|y||\le&C\ln\left(\frac{\de_{j-1}}{\dej}\right)^{\veps_{j-1}}=O(p),
&&\hbox{for\ }|y|\le1,\ j=2,\ldots,k;\\
\ln(|y|+2)\le&C\ln(2+\left(\frac{\dejj}{\dej}\right)^{1-\ej})=O(p),
&&\hbox{for\ }|y|\ge1,\ j=2,\ldots,k-1;\\
\ln(|y|+2)\le&C\ln(2+\left(\frac{\diam\Om}{\dek}\right)^{1-\veps_k})=O(p),
&&\hbox{for\ }|y|\ge1,\ j=k.
\eal
\eeqs
\end{proof}
\begin{proof}[Proof of Proposition~\ref{prop:errorest}, Part~1: decomposition]
We observe that in view of \eqref{eq:Uda} we have
\beq
\label{eq:DeltaUp}
\Delta\Up=\sum_{i=1}^k(-1)^i\ti|x|^{\ai-2}e^{\Udai}
(1+\frac{\wodai-f_{\ai,\dei}^0}{p}+\frac{\widai-f_{\ai,\dei}^1}{p^2}).
\eeq
In view of Lemma~\ref{lem:prates} and Proposition~\ref{prop:solvability}, we deduce that
\beq
\label{est:modmass}
\ti|x|^{\ai-2}e^{\Udai}
(1+\frac{\wodai-f_{\ai,\dei}^0}{p}+\frac{\widai-f_{\ai,\dei}^1}{p^2})
=O(p\,|x|^{\ai-2}e^{\Udai}),
\eeq
uniformly for $x\in\Aj$.
Therefore, using \eqref{eq:Rpdecomp} and \eqref{est:modmass} we may decompose the error estimate as follows:
\beq
\label{eq:Rdecomp}
\bal
\|\Rp\|_{\rp}\le&\sum_{j=1}^k\|\rj\Rj\|_{L^\infty(\Bj)}\\
&\quad+Cp\sum_{j=1}^k\sum_{i\neq j}\|\rj(x)|x|^{\ai-2}e^{\Udai(x)}\|_{L^\infty(\Aj)}
+\sum_{j=1}^k\|\rj\Rj\|_{L^\infty(\Aj\setminus\Bj)},
\eal
\eeq
where
\beq
\label{eq:Rdecomp2}
\|\rj\Rj\|_{L^\infty(\Aj\setminus\Bj)}\le Cp\|\rj(x)|x|^{\aj-2}e^{\Udaj(x)}\|_{L^\infty(\Aj\setminus\Bj)}
+\|\rj\gp(\Up)\|_{L^\infty(\Aj\setminus\Bj)}.
\eeq
\end{proof}
We estimate the right hand sides in \eqref{eq:Rdecomp}--\eqref{eq:Rdecomp2} term by term in the following lemmas.
\begin{lemma}[Leading term estimate in $\Bj$]
\label{lem:Bjmassdecay}
There holds
\beqs
\|\rj\Rj\|_{L^\infty(\Bj)}\le\frac{C}{p^4}.
\eeqs
\end{lemma}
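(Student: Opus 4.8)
The goal is to bound $\|\rj\Rj\|_{L^\infty(\Bj)}$, where on $\Bj$ the error $\Rj$ is exactly $(-1)^{j-1}\tj\Delta\tUdaj+\gp(\Up)$ (since $\Bj\subset\Aj$). Proposition~\ref{prop:Rjest} already gives the pointwise bound
\beqs
\Rj(x)=\frac{|y|^{\aj-2}e^{\vj(y)}}{\dej^2}\,O\!\left(\frac{|(\aj-2)\ln|y||^6+\ln^6(2+|y|)}{p^4}\right),
\qquad x=\dej y\in\Bj,
\eeqs
so the only remaining task is to multiply by the weight $\rj$ and check that the resulting rescaled quantity is $O(1)$ uniformly in $y$, that is, uniformly in $p$. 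The key identity to invoke is the rescaling \eqref{eq:rhoresc}: $\rj(\dej y)=\dej^2(1+|y|^{2+\eta})$. Multiplying, the $\dej^{-2}$ cancels the $\dej^2$, leaving
\beqs
\rj(x)|\Rj(x)|\le C\,(1+|y|^{2+\eta})\,|y|^{\aj-2}e^{\vj(y)}\,\frac{|(\aj-2)\ln|y||^6+\ln^6(2+|y|)}{p^4}.
\eeqs

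**Key step: the weighted-mass bound.** So everything reduces to showing that the function
\beqs
y\longmapsto (1+|y|^{2+\eta})\,|y|^{\aj-2}e^{\vj(y)}\,\big(|(\aj-2)\ln|y||^6+\ln^6(2+|y|)\big)
\eeqs
is bounded by a constant independent of $p$ and $y$. First I would use \eqref{eq:Vaprop} (or the explicit form $|y|^{\aj-2}e^{\vj(y)}=2\aj^2|y|^{\aj-2}/(1+|y|^{\aj})^2$), which shows $|y|^{\aj-2}e^{\vj(y)}\le C|y|^{\aj-2}/(1+|y|^{2\aj})\le C/(1+|y|^{\aj+2})$, so that the prefactor $(1+|y|^{2+\eta})|y|^{\aj-2}e^{\vj(y)}$ behaves like $|y|^{\aj-2}$ near $0$ and like $|y|^{-(\aj-\eta)}$ at infinity — both decaying since $\aj\ge2$ and $\eta<1<\aj$. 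This is essentially the content of Lemma~\ref{lem:massdecay}/Lemma~\ref{lem:massscaling} referenced in the text; I would cite it. The logarithmic factor $|(\aj-2)\ln|y||^6+\ln^6(2+|y|)$ then only needs to be controlled against this polynomial decay. The subtle point is that this must be \emph{uniform in} $\aj$: since $\aj$ can be as large as $\ak\le C$ (assumption (A1)), and $|y|$ ranges over the huge rescaled annulus $\Aj/\dej\cong\{e^{-p/(\aj-2)}\le|y|\le e^{p/(\aj+2)}\}$ (from \eqref{eq:Ajuseful}), I need the logarithmic powers $(\aj-2)^6\ln^6|y|$, which can be of size $O(p^6)$, to be absorbed — but here they are \emph{not} divided by an extra $p$, so the claim is really that $\sup_y(1+|y|^{2+\eta})|y|^{\aj-2}e^{\vj(y)}(\cdots)\le C$ without any help from $p$. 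The resolution is that $(1+|y|^{2+\eta})|y|^{\aj-2}e^{\vj(y)}$ decays polynomially in $|y|$ (like $|y|^{-(2-\eta)}$ at worst for large $|y|$, uniformly in $\aj\in[2,C]$), and a polynomial decay $|y|^{-\epsilon}$ beats any fixed power of $\ln|y|$ times a bounded constant $(\aj-2)^6\le C$ — so the supremum over all of $\rr^2$ is a finite constant depending only on $\eta$ and on the bound $C$ for $\ak$, not on $p$.

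**The main obstacle.** The one place requiring genuine care is exactly this uniformity over the unbounded range of $|y|$ combined with the $\aj$-dependence: one has to verify that the worst case — $|y|$ small, where the weight $1+|y|^{2+\eta}\approx1$ and the factor is $\sim|y|^{\aj-2}|(\aj-2)\ln|y||^6$ — stays bounded as $\aj\downarrow2$ (so that $|y|^{\aj-2}\to1$ but $(\aj-2)^6\ln^6|y|\to0$ for each fixed $y$, yet $|y|$ also shrinks to $0$); and symmetrically for $|y|$ large. I would handle this by the substitution $t=(\aj-2)|\ln|y||$ (resp. $t=\ln|y|$ for the second term) and reduce to the elementary fact that $t^6e^{-t}$ is bounded on $[0,\infty)$, after noting from the explicit expression that $|y|^{\aj-2}e^{\vj}\le Ce^{-(\aj-2)|\ln|y||}$ for $|y|\le1$ and $(1+|y|^{2+\eta})|y|^{\aj-2}e^{\vj}\le Ce^{-(2-\eta)\ln|y|}$ for $|y|\ge1$. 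Once this elementary boundedness is in hand, combining with the $p^{-4}$ already extracted in Proposition~\ref{prop:Rjest} gives $\|\rj\Rj\|_{L^\infty(\Bj)}\le C/p^4$, which is the claim.
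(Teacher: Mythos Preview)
Your proposal is correct and follows essentially the same route as the paper: invoke Proposition~\ref{prop:Rjest} for the pointwise bound on $\Rj$, multiply by the weight $\rj$ using the rescaling \eqref{eq:rhoresc}, and then observe that $(1+|y|^{2+\eta})|y|^{\aj-2}e^{\vj(y)}$ times a fixed power of $\ln|y|$ is uniformly bounded. The paper compresses this last step into a single citation of the weighted mass estimate \eqref{est:rpmass} (with $q=6$, after rewriting the logarithmic factor as $|\Vaj(y)|^6+1$), whereas you spell out the elementary argument by hand; the content is the same.

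One small remark: your ``main obstacle'' discussion about $\aj\downarrow 2$ is more cautious than necessary. For $j=1$ one has $\al_1=2$ exactly, so the term $(\aj-2)^6\ln^6|y|$ vanishes identically; for $j\ge2$, Proposition~\ref{prop:aj} gives $\aj>8j-6\ge10$, so $\aj$ is bounded well away from $2$ uniformly in $p$. Thus the substitution $t=(\aj-2)|\ln|y||$ you propose works without any delicate limit.
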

\begin{proof}
In view of \eqref{eq:Rjest} we have, uniformly for $x=\dej y\in\Bj$:
\beqs
\bal
\rj(x)\Rj(x)=&\rj(x)\,|x|^{\aj-2}e^{\Udaj(x)}
\,\frac{O(|\Vaj(\frac{x}{\dej})|^6+1)}{p^4}=O(\frac{1}{p^4}).
\eal
\eeqs
Now, the asserted estimate readily follows from \eqref{est:rpmass} with $q=6$.
\end{proof}
\begin{lemma}[Weighted mass estimates in $\Aj$]
\label{lem:massdecay}
Suppose $i\neq j$.
The following estimates hold true, uniformly for $x=\dej y\in\Aj$:
\beqs
\rj(x)|x|^{\ai-2}e^{\Udai(x)}=
\left\{
\bal
&O(\frac{\de_{j-1}}{\dej})^{(1-\veps_{j-1})\ai-2\veps_{j-1}},
&&\hbox{if\ }i<j;\\
&(1+|y|^{2+\eta})\frac{2\aj^2|y|^{\aj-2}}{(1+|y|^{\aj})^2},
&&\hbox{if\ }i=j;\\
&O(\frac{\dej}{\de_{j+1}})^{\ej\ai-(1-\ej)\eta},
&&\hbox{if\ }i>j,\\
\eal
\right.
\eeqs
where $(1-\veps_{j-1})\ai-2\veps_{j-1}>0$ and $\ej\ai-(1-\ej)\eta>0$.
In particular, for any $q>0$ we have
\beqs
\|\rj|x|^{\ai-2}e^{\Udai}(|\Vaj(y)|^q+1)\|_{L^\infty(\Aj)}=
\left\{
\bal
&O(p\,e^{-[(1-\veps_{j-1})\ai-2\veps_{j-1}]\,(b_{j-1}-b_j)\,p}),&&\hbox{if }i<j;\\
&O(1),&&\hbox{if }i=j;\\
&O(p\,e^{-[\ej\ai-(1-\ej)\eta]\,(b_j-b_{j+1})\,p}),&&\hbox{if }i>j.
\eal
\right.
\eeqs
\end{lemma}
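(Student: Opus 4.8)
The plan is to reduce everything to an elementary one‑variable estimate after rescaling by $\dej$. Writing $x=\dej y$ and using $|x|^{\ai-2}e^{\Udai(x)}=\frac{2\ai^2\dei^{\ai}|x|^{\ai-2}}{(\dei^{\ai}+|x|^{\ai})^2}$ together with $\rj(\dej y)=\dej^2(1+|y|^{2+\eta})$, I would first dispose of the case $i=j$: the powers of $\dej$ cancel exactly, leaving the stated identity $(1+|y|^{2+\eta})\frac{2\aj^2|y|^{\aj-2}}{(1+|y|^{\aj})^2}$. Multiplying by $|\Vaj(y)|^q+1$ and recalling $\Vaj(y)=\ln\frac{2\aj^2|y|^{\aj-2}}{(1+|y|^{\aj})^2}$, one observes that $\Vaj$ grows like $(\aj-2)\ln|y|$ as $y\to0$ and like $-(\aj+2)\ln|y|$ as $y\to\infty$, while the prefactor is of order $|y|^{\aj-2}$ (with $\aj-2\ge0$) at the origin and of order $|y|^{\eta-\aj}$ (with $\eta-\aj<0$) at infinity; since any positive power dominates any power of a logarithm, the product is bounded on all of $\rr^2$, which gives the $O(1)$ claim for $i=j$.

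For $i<j$ (so $j\ge2$), the key geometric point is that the left endpoint $\de_{j-1}^{\veps_{j-1}}\dej^{1-\veps_{j-1}}$ of $\Aj$ is $\ge\de_{j-1}\ge\dei$, and in fact on $\Aj$ one has $|x|/\dei\ge(\dej/\de_{j-1})^{1-\veps_{j-1}}\to\infty$ by \eqref{eq:deideii}; hence $\dei^{\ai}+|x|^{\ai}$ is of order $|x|^{\ai}$ uniformly on $\Aj$, so the $i$‑th mass is of order $\dei^{\ai}|x|^{-\ai-2}$ there. In rescaled variables this gives $\rj(x)|x|^{\ai-2}e^{\Udai(x)}=O\big((\dei/\dej)^{\ai}\,(|y|^{-(\ai+2)}+|y|^{-(\ai-\eta)})\big)$, and since $|y|^{-(\ai+2)}+|y|^{-(\ai-\eta)}$ is decreasing in $|y|$ (using $\ai-\eta\ge2-\eta>0$), its supremum over $\Aj/\dej$ is attained at the left endpoint $|y|=(\de_{j-1}/\dej)^{\veps_{j-1}}<1$, where $|y|^{-(\ai+2)}$ dominates. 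Using $(\dei/\dej)^{\ai}\le(\de_{j-1}/\dej)^{\ai}$ then yields the exponent $\ai-(\ai+2)\veps_{j-1}=(1-\veps_{j-1})\ai-2\veps_{j-1}$, which is positive since $\veps_{j-1}<\tfrac12$ and $\ai\ge2$. The case $i>j$ (so $j\le k-1$) is symmetric: now the right endpoint $\dej^{\ej}\dejj^{1-\ej}$ of $\Aj$ is $\le\dejj\le\dei$ and $|x|/\dei\le(\dej/\dejj)^{\ej}\to0$, so $\dei^{\ai}+|x|^{\ai}$ is of order $\dei^{\ai}$ and the mass is of order $|x|^{\ai-2}\dei^{-\ai}$; the resulting expression $O\big(\dej^{-\eta}\dei^{-\ai}(\dej^{2+\eta}|x|^{\ai-2}+|x|^{\ai+\eta})\big)$ is increasing in $|x|$, so its supremum is at the right endpoint, and evaluating there (where $|x|^{\ai+\eta}$ dominates) together with $(\dej/\dei)^{\ai}\le(\dej/\dejj)^{\ai}$ gives decay of order $(\dej/\dejj)^{\ai-(1-\ej)(\ai+\eta)}=(\dej/\dejj)^{\ej\ai-(1-\ej)\eta}$, whose exponent is positive because $\ej>\tfrac1{e+1}$, $(1-\ej)\eta<1$, and $\ai\ge\al_2>10$ whenever $i>j\ge1$ by Proposition~\ref{prop:aj}.

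Finally, the $L^\infty(\Aj)$ bounds in the ``in particular'' part follow by combining these pointwise estimates with $|\Vaj(y)|=O(p)$ on $\Aj$ (Lemma~\ref{lem:prates}), so that $|\Vaj(y)|^q+1=O(p^q)$, and with $\de_{j-1}/\dej=(C_{j-1}/C_j)e^{-(b_{j-1}-b_j)p}$, $\dej/\dejj=(C_j/C_{j+1})e^{-(b_j-b_{j+1})p}$, where $b_{j-1}-b_j$ and $b_j-b_{j+1}$ are of order $1$ and bounded below by a positive constant (Proposition~\ref{prop:solvability}); the polynomial factors in $p$ are then absorbed into the exponential decay. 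I expect the only genuinely delicate point to be the bookkeeping of exponents, in particular checking that $\dei^{\ai}+|x|^{\ai}$ is comparable to its dominant term uniformly over the whole annulus $\Aj$ — which is precisely where the specific shape of the endpoints $\de_{j-1}^{\veps_{j-1}}\dej^{1-\veps_{j-1}}$, $\dej^{\ej}\dejj^{1-\ej}$ and the bounds $\veps_{j-1}<\tfrac12$, $\ej\in(\tfrac1{e+1},\tfrac12)$ enter — rather than any analytic subtlety.
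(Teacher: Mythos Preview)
Your proof is correct and follows essentially the same approach as the paper: rescale by $\dej$, use the comparability $\dei^{\ai}+|x|^{\ai}\sim|x|^{\ai}$ for $i<j$ and $\sim\dei^{\ai}$ for $i>j$ uniformly on $\Aj$, and evaluate the resulting one-variable expression at the appropriate endpoint of the annulus. The only cosmetic difference is that the paper splits into the subcases $|y|\le1$ and $|y|\ge1$ rather than arguing directly by monotonicity as you do; your justification of the positivity of the exponents (via $\veps_{j-1}<\tfrac12$ and, for $i>j$, $\ai\ge\al_2>10$ from Proposition~\ref{prop:aj}) is in fact slightly more explicit than the paper's, which records $\ej\ai-(1-\ej)\eta=\frac{\sj\ai-\eta}{1+\sj}>0$ without further comment.
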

\begin{proof}
Suppose $i<j$.
For $x=\dej y\in\Aj$ we have, using Lemma~\ref{lem:massscaling}:
\beqs
|x|^{\ai-2}e^{\Udai(x)}=\frac{2\ai^2\dei^{\ai}}{|\dej y|^{\ai+2}(1+(\frac{\dei}{|\dej y|})^{\ai})^2}
=O(\frac{\dei^{\ai}}{|\dej y|^{\ai+2}}),
\eeqs
where we used \eqref{eq:Ajprops} to deduce that $\dei/|\dej y|=o(1)$ for $\dej y\in\Aj$.
Therefore, for $x=\dej y\in\Aj$:
\beqs
\rj(x)|x|^{\ai-2}e^{\Udai(x)}
=\frac{O(\dej^{2+\eta}+|x|^{2+\eta})}{\dej^\eta}\frac{\dei^{\ai}}{|\dej y|^{\ai+2}}
=O(\frac{\dei}{\dej})^{\ai}\frac{(1+|y|^{2+\eta})}{|y|^{\ai+2}}.
\eeqs
For $y\in\Aj$, $|y|\ge1$, we estimate:
\beqs
\rj(x)|x|^{\ai-2}e^{\Udai(x)}=O(\frac{\dei}{\dej})^{\ai}\frac{1}{(1+|y|)^{\ai-\eta}}
=O(\frac{\dei}{\dej})^{\ai}=O(\frac{\de_{j-1}}{\dej})^{\ai}.
\eeqs
For $y\in\Aj$, $|y|\le1$, we estimate
\beqs
\bal
\rj(x)|x|^{\ai-2}e^{\Udai(x)}=&O(\frac{\dei}{\dej})^{\ai}\frac{1}{|y|^{\ai+2}}\\
\le&O(\frac{\dei}{\dej})^{\ai}(\frac{\dej}{\de_{j-1}})^{\veps_{j-1}(\ai+2)}
&&\hbox{because\ }|y|\ge\left(\frac{\de_{j-1}}{\dej}\right)^{\veps_{j-1}}\hbox{\ in }\Aj\\
\le&O(\frac{\de_{j-1}}{\dej})^{\ai}(\frac{\dej}{\de_{j-1}})^{\veps_{j-1}(\ai+2)}
&&\hbox{because\ }i\le j-1\\
\le&(\frac{\de_{j-1}}{\dej})^{(1-\veps_{j-1})\ai-2\veps_{j-1}}.
\eal
\eeqs
We observe that
\beqs
(1-\veps_{j-1})\ai-2\veps_{j-1}=\frac{\ai-2s_{j-1}}{1+s_{j-1}}>0.
\eeqs
Hence, the asserted estimate is established for $i<j$.
\par
Suppose $i>j$.
In view of Lemma~\ref{lem:massscaling},
we have
\beqs
|x|^{\ai-2}e^{\Udai(x)}=(\frac{\dej}{\dei})^{\ai}\frac{|y|^{\ai-2}}{\dej^2}\frac{2\ai^2}{(1+|\frac{\dej y}{\dei}|^{\ai})^2}
=O\left((\frac{\dej}{\dei})^{\ai}\frac{|y|^{\ai-2}}{\dej^2}\right),
\eeqs
where we used again \eqref{eq:Ajprops} to deduce that $|\frac{\dej y}{\dei}|=o(1)$ for $\dej y\in\Aj$.
Therefore, for $x=\dej y\in\Aj$, we estimate:
\beqs
\bal
\rj(x)|x|^{\ai-2}e^{\Udai(x)}=&\frac{(\dej^{2+\eta}+|x|^{2+\eta})}{\dej^\eta}|x|^{\ai-2}e^{\Udai(x)}\\
=&\dej^2(1+|y|^{2+\eta})\,O(\frac{\dej}{\dei})^{\ai}\frac{|y|^{\ai-2}}{\dej^2}\\
=&O(\frac{\dej}{\dei})^{\ai}(1+|y|^{2+\eta})|y|^{\ai-2}
=O(\frac{\dej}{\dei})^{\ai}(1+|y|)^{\ai+\eta}.
\eal
\eeqs
Since for $y\in\Aj/\dej$ we have $|y|\le(\de_{j+1}/\dej)^{1-\ej}$, we deduce that
\beqs
\rj(x)|x|^{\ai-2}e^{\Udai(x)}=O(\frac{\dej}{\dei})^{\ai}(1+\frac{\de_{j+1}}{\dej})^{(1-\ej)(\ai+\eta)}
=O(\frac{\dej}{\de_{j+1}})^{\ej\ai-(1-\ej)\eta},
\eeqs
where we used the fact $i\ge j+1$ to derive the last inequality.
We observe that
\beqs
\ej\ai-(1-\ej)\eta=\frac{\sj\ai-\eta}{1+\sj}>0.
\eeqs
The proof for $i=j$ follows by straightforward rescaling.
\end{proof}
\begin{lemma}[Residual mass decay in $\Aj\setminus\Bj$]
\label{lem:massAjBj}
There holds:
\beqs
\rj(x)|x|^{\aj-2}e^{\Udaj(x)}
\le Ce^{-\frac{\aj-\eta}{2(\aj+2)}p},
\eeqs
uniformly for $x\in\Aj\setminus\Bj$.
\end{lemma}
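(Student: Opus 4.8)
The plan is to reduce the claimed pointwise bound to a one--variable estimate in the rescaled variable $y=x/\dej$, and then to split $\Aj\setminus\Bj$ into an ``outer piece'' and an ``inner piece'' by invoking Lemma~\ref{lem:BsubsetA}. First I would rescale: writing $x=\dej y$, recall from \eqref{eq:rhoresc} that $\rj(\dej y)=\dej^2(1+|y|^{2+\eta})$, while the mass--scaling identity $|x|^{\aj-2}e^{\Udaj(x)}=\dej^{-2}e^{\Vaj(y)}$ gives
\beqs
\rj(x)\,|x|^{\aj-2}e^{\Udaj(x)}=(1+|y|^{2+\eta})\,e^{\Vaj(y)}.
\eeqs
Applying the upper bound in \eqref{eq:Vaprop} for $\Vaj$, the right--hand side is controlled by
\beqs
(1+|y|^{2+\eta})\,e^{\Vaj(y)}\le C\,\frac{|y|^{\aj-2}+|y|^{\aj+\eta}}{1+|y|^{2\aj}}=:C\,\Phi(|y|).
\eeqs
The function $\Phi$ is bounded, peaks where $|y|\asymp1$, and decays polynomially both as $|y|\to0$ (like $|y|^{\aj-2}$) and as $|y|\to+\infty$ (like $|y|^{-(\aj-\eta)}$). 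The whole point is thus that $\Aj\setminus\Bj$ sits \emph{far} from the region $|y|\asymp1$, in a quantitative exponential sense.

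Next I would localise. By Lemma~\ref{lem:BsubsetA} the shell $\{r_j'\dej e^{-p/(2(\aj-2))}\le|x|\le R_j'\dej e^{p/(2(\aj+2))}\}$ is contained in $\Bj$; hence a point $x\in\Aj\setminus\Bj$ must satisfy, in the rescaled variable, either
\beqs
|y|>R_j'\,e^{\frac{p}{2(\aj+2)}}\qquad\hbox{(outer regime),}
\eeqs
or, when $j\ge2$,
\beqs
|y|<r_j'\,e^{-\frac{p}{2(\aj-2)}}\qquad\hbox{(inner regime);}
\eeqs
for $j=1$ the set $\Bo$ is a ball, so only the outer regime can occur and the factor $e^{-p/(2(\aj-2))}$ never appears.

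In the outer regime one has $|y|\gg1$, so $1+|y|^{2\aj}\ge|y|^{2\aj}$ and $|y|^{\aj-2}+|y|^{\aj+\eta}\le2|y|^{\aj+\eta}$, giving $\Phi(|y|)\le2|y|^{-(\aj-\eta)}\le C e^{-\frac{\aj-\eta}{2(\aj+2)}p}$, which is precisely the asserted bound (note $\aj-\eta\ge1>0$ since $\aj\ge2$, $\eta<1$). In the inner regime one has $|y|<1$ for $p$ large, so $\Phi(|y|)\le2|y|^{\aj-2}\le C e^{-p/2}$; and since $0<\frac{\aj-\eta}{2(\aj+2)}<\frac12$ (because $\aj-\eta<\aj+2$), this is dominated by $C e^{-\frac{\aj-\eta}{2(\aj+2)}p}$. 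Taking the larger of the two bounds yields the lemma, with $C$ absorbing the $p$--independent constants $R_j'$, $r_j'$, and the bounded quantities $(R_j')^{-(\aj-\eta)}$, $(r_j')^{\aj-2}$.

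I do not expect a genuine obstacle here. The only two points that require attention are: invoking the correct \emph{subset} direction of Lemma~\ref{lem:BsubsetA}, so that the complement of the shell inside $\Aj$ is forced to small or large $|y|$; and handling separately the degenerate case $\aj=2$ (that is, $j=1$), where the inner regime is absent. Everything else is elementary estimation of the single--variable rational expression $\Phi$.
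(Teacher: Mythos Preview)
Your proposal is correct and follows essentially the same route as the paper: rescale to the variable $y=x/\dej$, use Lemma~\ref{lem:BsubsetA} to split $\Aj\setminus\Bj$ into an inner piece ($|y|\lesssim e^{-p/(2(\aj-2))}$, absent when $j=1$) and an outer piece ($|y|\gtrsim e^{p/(2(\aj+2))}$), and then bound the resulting one--variable rational expression by $|y|^{\aj-2}$ and $|y|^{-(\aj-\eta)}$ respectively. The only cosmetic difference is that you pass through $e^{\Vaj}$ and \eqref{eq:Vaprop}, whereas the paper writes out the explicit form $\frac{2\aj^2(1+|y|^{2+\eta})|y|^{\aj-2}}{(1+|y|^{\aj})^2}$ directly; the two are equivalent.
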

\begin{proof}
We recall from Lemma~\ref{lem:massscaling} that
\beqs
|x|^{\aj-2}e^{\Udaj(x)}=\frac{|y|^{\aj-2}}{\dej^2}e^{\vj(y)}
=\frac{2\aj^2|y|^{\aj-2}}{\dej^2(1+|y|^{\aj})^2},
\qquad x=\dej y.
\eeqs
Recalling \eqref{eq:rhoresc}, it follows that
\beqs
\rj(x)|x|^{\aj-2}e^{\Udaj(x)}
=\frac{2\aj^2(1+|y|^{2+\eta})|y|^{\aj-2}}{(1+|y|^{\aj})^2},
\qquad x=\dej y.
\eeqs
We recall from Lemma~\ref{lem:BsubsetA} that
\beqs
\frac{A_1^{\veps_1}\setminus\Bo}{\de_1}\subset
\left\{R_1'e^{\frac{p}{8}}\le|y|\le\left(\frac{\de_{2}}{\de_1}\right)^{1-\veps_1}\right\},
\eeqs
\beqs
\bal
\frac{\Aj\setminus\Bj}{\dej}\subset
\left\{\left(\frac{\de_{j-1}}{\dej}\right)^{\veps_{j-1}}\le|y|\le r_j'e^{-\frac{p}{2(\aj-2)}}\right\}
\cup&\left\{R_j'e^{\frac{p}{2(\aj+2)}}\le|y|\le\left(\frac{\de_{j+1}}{\dej}\right)^{1-\ej}\right\},\\
&j=2,\ldots,k-1,
\eal
\eeqs
and
\beqs
\frac{\Ak\setminus\Bk}{\dek}\subset
\left\{\left(\frac{\de_{k-1}}{\dek}\right)^{\veps_{k-1}}\le|y|\le r_k'e^{-\frac{p}{2(\ak-2)}}\right\}
\cup\left\{R_k'e^{\frac{p}{2(\ak+2)}}\le|y|\le\frac{\diam\Om}{\dek}\right\}.
\eeqs
For $0<|y|\le1$, $j=1$, there is nothing to prove. For $0<|y|\le1$, $j=2,\ldots,k$, we estimate:
\beqs
\rj(x)|x|^{\aj-2}e^{\Udaj(x)}\le C|y|^{\aj-2}.
\eeqs
Hence, for $x=\dej y\in\Aj\setminus\Bj$, $|y|\le1$, we have
\beqs
\rj(x)|x|^{\aj-2}e^{\Udaj(x)}
\le C(e^{-\frac{p}{2(\aj-2)}})^{\aj-2}=Ce^{-p/2}.
\eeqs
For $x=\dej y\in\Aj\setminus\Bj$, $|y|\ge1$, we estimate:
\beqs
\rj(x)|x|^{\aj-2}e^{\Udaj(x)}
\le C\frac{|y|^{\aj-2}}{(1+|y|)^{2\aj-2-\eta}}
\le\frac{C}{(1+|y|)^{\aj-\eta}}\le Ce^{-\frac{\aj-\eta}{2(\aj+2)}p}.
\eeqs
The asserted estimate follows, since $(\aj-\eta)/(\aj+2)<1$.
\end{proof}
\begin{lemma}[Expansion of $\gp(\Up)$]
\label{lem:gpUpexp}
The following expansion holds true:
\beqs
\gp(\Up(x))=\sum_{j=1}^k\tj|x|^{\aj-2}e^{\Udaj(x)}
\left\{1+\frac{\Vaj(\frac{x}{\dej})+O(1)}{p}\right\}\chi_{\Bj}(x)+\omp(x)\chi_{\Aj\setminus\Bj},
\eeqs
where $\|\rj\omp\|_{L^\infty(\Aj\setminus\Bj)}=O(e^{-\veps_0p}/p)$, where $\veps_0=\frac{\aj-\eta}{2(\aj+2)}$.
\end{lemma}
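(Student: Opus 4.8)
The plan is to exploit the partition $\{\Aj\}_{j=1}^k$ of $\Om$ together with the level sets $\Bj\subset\Aj$ (Lemma~\ref{lem:BsubsetA}), writing
\beqs
\gp(\Up)=\sum_{j=1}^k\gp(\Up)\chi_{\Bj}+\sum_{j=1}^k\gp(\Up)\chi_{\Aj\setminus\Bj},
\eeqs
and treating the ``bulk'' pieces $\gp(\Up)\chi_{\Bj}$ directly while collecting the ``tail'' pieces $\gp(\Up)\chi_{\Aj\setminus\Bj}$ into $\omp$. For the bulk piece, on $\Bj$ I would start from \eqref{eq:Upexpzero} of Lemma~\ref{lem:Upexp}, i.e.\ $\Up(x)=(-1)^{j-1}\tj p\{1+(\Vaj(y)+O(1))/p\}$ with $x=\dej y$. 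By the definition \eqref{def:Bj} of $\Bj$ the brace exceeds $1/4$ for $p$ large, so $\Up$ has the constant sign $(-1)^{j-1}$ on $\Bj$ and $\gp(\Up(x))=(-1)^{j-1}(\tj p)^p(1+(\Vaj(y)+O(1))/p)^p$. Using the normalisation $p^p\tj^{p-1}\dej^2=1$ from the last line of \eqref{eq:adtsys} (hence $(\tj p)^p=\tj\dej^{-2}$), the mass-scaling identity $|x|^{\aj-2}e^{\Udaj(x)}=\dej^{-2}e^{\Vaj(y)}$ (Lemma~\ref{lem:massscaling}), and the Taylor expansion of Lemma~\ref{lem:Taylor} for $(1+t/p+\cdots)^p$, I would reach the asserted form on $\Bj$ (modulo the subtlety near $\partial\Bj$ discussed in the last paragraph); this is carried out exactly as in the proof of Lemma~\ref{lem:gpgpp}(i).

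For the tail piece I set $\omp:=\gp(\Up)\chi_{\Aj\setminus\Bj}$ and note $\Aj\setminus\Bj=\Aj\cap\{\Vaj(\cdot/\dej)\le-p/2\}$, on which $\Vaj(\cdot/\dej)\ge-p-C$ by \eqref{est:Vaj}. Writing $\Up(x)=(-1)^{j-1}\tj p\{1+t/p\}$ with $t=\Vaj(y)+O(1)$ and using the elementary bound $(1+t/p)^p\le e^t$ when $1+t/p>0$ — while on the set $1+t/p\le0$ one has $|1+t/p|\le C/p$, hence $|\gp(\Up)|=(\tj p)^p|1+t/p|^p\le(\tj p)^p(C/p)^p$ is far smaller than $\tj|x|^{\aj-2}e^{\Udaj(x)}=\tj\dej^{-2}e^{\Vaj(y)}$ because $\Vaj(y)\ge-p-C$ — I would obtain $|\gp(\Up(x))|\le C\tj\,|x|^{\aj-2}e^{\Udaj(x)}$ throughout $\Aj\setminus\Bj$. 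Multiplying by $\rj$, invoking Lemma~\ref{lem:massAjBj} (which gives $\rj|x|^{\aj-2}e^{\Udaj}\le Ce^{-\veps_0 p}$ with $\veps_0=\frac{\aj-\eta}{2(\aj+2)}$ on $\Aj\setminus\Bj$) and $\tj=O(1/p)$ from Proposition~\ref{prop:solvability}, then yields $\|\rj\omp\|_{L^\infty(\Aj\setminus\Bj)}=O(e^{-\veps_0 p}/p)$, as claimed.

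The main obstacle is the uniform validity of the bulk expansion up to the boundary $\partial\Bj$: there $t=\Vaj(y)+O(1)$ can be as negative as $-p/2$, so $t/p$ is not small and $(1+t/p)^p=e^t(1+O(t^2/p))$ breaks down. I would deal with this by splitting $\Bj$ into the subregion $\{|\Vaj|\lesssim\sqrt p\}$, where $t/p=O(p^{-1/2})$ and the expansion is clean with a genuinely bounded error, and its complement $\{-p/2\le\Vaj\lesssim-\sqrt p\}$, where $e^{\Vaj}\le e^{-c\sqrt p}$ makes both $\gp(\Up)$ and the claimed leading term negligible relative to the weighted mass $\rj|x|^{\aj-2}e^{\Udaj}$, so that only the one-sided bound $(1+t/p)^p\le e^t$ is needed there — or, alternatively, the explicit polynomial remainder of Lemma~\ref{lem:Taylor}. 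Assembling the bulk estimate on $\Bj$ with the tail estimate on $\Aj\setminus\Bj$, and relabelling the various $O(\cdot)$ and exponentially small errors, then completes the proof.
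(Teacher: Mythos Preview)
Your approach is correct and, for the essential content of the lemma (the weighted bound on $\omp$), it follows the paper's route: bound $|\gp(\Up)|\le C\tj|x|^{\aj-2}e^{\Udaj}$ on $\Aj\setminus\Bj$ and then invoke Lemma~\ref{lem:massAjBj}. The paper, however, splits $\Aj\setminus\Bj$ according to the sign of $\Up$ and, on the set $\{\Up<0\}$, uses the reflection bound $|1+s/p|^p\le e^{-(s+2p)}$ of Lemma~\ref{lem:gpp}(i) together with a direct annulus computation (which in fact yields the sharper but unneeded exponent $\frac{\aj-\eta}{\aj+2}$ there). Your split by the sign of $1+t/p$ is more natural: on the thin strip where $1+t/p\le 0$, the constraint $\Vaj\ge -p-C$ forces $t\in[-p-C,-p]$, so your observation $|1+t/p|\le C/p$ gives super-exponential smallness in one line and replaces the paper's longer Claim~2 computation.

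On the bulk piece: the paper's own proof establishes only Claims~1 and~2 (the tail) and says nothing about $\Bj$, so the $\chi_{\Bj}$ term in the statement is effectively imported from Lemma~\ref{lem:gpgpp}(i). Your concern about uniformity near $\partial\Bj$ is well-founded, and your subsplit of $\Bj$ into $\{|\Vaj|\lesssim\sqrt p\}$ and its complement is a clean way to handle it. Note, though, that Lemma~\ref{lem:gpgpp}(i) actually gives the first-order correction $(\woaj-\tfrac12\Vaj^2)/p$, not $(\Vaj+O(1))/p$, so the bulk formula in the lemma's statement is only schematic in any case; for the downstream uses (Proposition~\ref{prop:errorest} and Lemma~\ref{lem:gppower}) this imprecision is harmless.
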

\begin{proof}
We estimate separately in the sets $\Bj$, $(\Aj\setminus\Bj)\cap\{\Up>0\}$ and $(\Aj\setminus\Bj)\cap\{\Up<0\}$,
respectively.
\par
\textit{Claim~1.} (Estimation of $\gp(\Up)$ in $(\Aj\setminus\Bj)\cap\{\Up>0\}$).
\par
There holds:
\beqs
\label{eq:gppos}
\rj(x)\gp(\Up)
\le\frac{C}{p}\,e^{-\frac{\aj-\eta}{2(\aj+2)}p},
\eeqs
uniformly for $x\in\Aj\setminus\Bj$, $\Up(x)>0$.
\par
Indeed we have, using Lemma~\ref{lem:gpp}--(i) and \eqref{eq:Upexpzero}, for $x=\dej y\in\Aj\setminus\Bj$,
$\Up(x)>0$:
\beqs
\bal
\rj(x)|\gp(\Up(x))|\chi_{\{\Up>0\}}
\le C(1+|y|^{2+\eta})\tj\frac{|y|^{\aj-2}}{(1+|y|^{\aj})^2}
\le C\tj\frac{|y|^{\aj-2}}{(1+|y|)^{2\aj-2-\eta}}.
\eal
\eeqs
For $x=\dej y\in\Aj\setminus\Bj$, $|y|\le1$, $j=2,\ldots,k$, we estimate:
\beqs
\bal
\rj(x)|\gp(\Up(x))|\chi_{\{\Up>0\}}\chi_{\Aj\setminus\Bj}
\le C\tj|y|^{\aj-2}\le C\tj(e^{-\frac{p}{2(\aj-2)}})^{\aj-2}
=C\tj e^{-p/2}.
\eal
\eeqs
For $x=\dej y\in\Aj\setminus\Bj$, $|y|\ge1$, $j=1,2,\ldots,k$, we estimate:
\beqs
\bal
\rj(x)|\gp(\Up(x))|\chi_{\{\Up>0\}}\chi_{\Aj\setminus\Bj}
\le \frac{C\tj}{(1+|y|)^{\aj-\eta}}
\le C\tj e^{-\frac{p}{2(\aj+2)}(\aj-\eta)}.
\eal
\eeqs
Now the claim follows recalling that $\tj=O(p^{-1})$.
\par
\textit{Claim~2.} (Estimation of $\gp(\Up)$ in $\Aj\cap\{\Up<0\}$).
\par
The following decay estimate holds true in $\{\Up<0\}\cap\Aj$:
\beq
\label{eq:gpAjBjneg}
\rj(x)|\gp(\Up(x))|\chi_{\{\Up<0\}}\le 
\frac{C}{p}
e^{-\frac{\aj-\eta}{\aj+2}p},
\eeq
uniformly for $x\in\Aj$.
\par
Indeed, we recall from \eqref{eq:Upexpzero} that
\beqs
\Up(x)=(-1)^{j-1}\tj p\left\{1+\frac{\Vaj(\frac{x}{\dej})+O(1)}{p}\right\},
\qquad x\in\Aj,
\eeqs
where $\Vaj$ is defined in \eqref{def:Va}.
We observe that $\Vaj(y)<0$ for $0<|y|\ll1$ and for $|y|\gg1$.
Moreover, we have for $x=\dej y\in\Aj$
\beqs
|\gp(\Up(x))|=\tj^pp^p|\gp(1+\frac{\Vaj(y)+O(1)}{p})|
=\frac{\tj}{\dej^2}|\gp(1+\frac{\Vaj(y)+O(1)}{p})|,
\eeqs
where we used the last equation in \eqref{eq:adtsys} to derive the last equality.
We have, using Lemma~\ref{lem:gpp}--(i),
\beqs
|\gp(1+\frac{\Vaj(y)+O(1)}{p})|\chi_{\{\Up<0\}}
\le e^{-\Vaj(y)-2p+O(1)}
\le C\frac{(1+|y|^{\aj})^2}{|y|^{\aj-2}}e^{-2p}
\eeqs
and therefore, in view of \eqref{eq:rhoresc},
\beq
\label{est:gpUpneg}
\bal
\rj(x)|\gp(\Up(x))|\chi_{\{\Up<0\}}
\le C\dej^2(1+|y|^{2+\eta})\frac{\tj}{\dej^2}\frac{(1+|y|^{\aj})^2}{|y|^{\aj-2}}e^{-2p}
\le C\tj\frac{(1+|y|)^{2\aj+2+\eta}}{|y|^{\aj-2}}e^{-2p}.
\eal
\eeq
For $x=\dej y\in\Aj$, $|y|\le1$, $j\ge2$ (for $j=1$ there is nothing to prove) we estimate, recalling that $|y|\ge(\de_{j-1}/\dej)^{\veps_{j-1}}$:
\beqs
\bal
\rj(x)|\gp(\Up(x))|
\le&C\frac{\tj e^{-2p}}{|y|^{\aj-2}}
\le C\tj(\frac{\dej}{\de_{j-1}})^{\veps_{j-1}(\aj-2)}e^{-2p}\\
=&C\tj e^{\veps_{j-1}(b_{j-1}-b_j)(\aj-2)p}e^{-2p}.
\eal
\eeqs
In view of \eqref{eq:deltarec} we may simplify the exponent above:
\beqs
\veps_{j-1}(b_{j-1}-b_j)(\aj-2)=\veps_{j-1}\frac{1+s_{j-1}}{s_{j-1}(\aj-2)}(\aj-2)
=1,
\eeqs
so that for $x=\dej y\in\Aj$, $|y|\le1$, $j\ge2$, we finally obtain that
\beqs
\rj(x)|\gp(\Up(x))|\le\frac{Ce^{-p}}{p}.
\eeqs
For $x=\dej y\in\Aj$, $|y|\ge1$, we estimate using \eqref{est:gpUpneg} and $|y|\le(\de_{j+1}/\dej)^{1-\ej}$:
\beqs
\bal
\rj(x)|\gp(\Up(x))|
\le&C\tj(1+|y|)^{\aj+4+\eta}e^{-2p}
\le C\tj(\frac{\de_{j+1}}{\de_j})^{(1-\ej)(\aj+4+\eta)}e^{-2p}\\
\le&C\tj e^{(1-\ej)(b_j-b_{j+1})(\aj+4+\eta)p}e^{-2p}.
\eal
\eeqs
In view of Proposition~\ref{prop:solvability}--(v) we may simplify the exponent above:
\beqs
\bal
(1-\ej)(b_j-b_{j+1})(\aj+4+\eta)=&(1-\ej)\frac{1+\sj}{\aj+2+O(p^{-1})}(\aj+4+\eta)
=\frac{\aj+4+\eta}{\aj+2}+O(\frac{1}{p}).
\eal
\eeqs
Since
\[
2-\frac{\aj+4+\eta}{\aj+2}=\frac{\aj-\eta}{\aj+2},
\]
we derive Claim~2.
\par
Finally, from Claim~1 and Claim~2 we deduce that
\beqs
\rj(x)|\gp(\Up(x))|\chi_{\Aj\setminus\Bj}\le\frac{C}{p}e^{-\frac{\aj-\eta}{2(\aj+2)}p}.
\eeqs
\end{proof}
Finally, we conclude the proof of the main error estimate.
\begin{proof}[Proof of Proposition~\ref{prop:errorest}, Part~2: conclusion]
In view of the decompositions of $\Rp$ as in \eqref{eq:Rdecomp}--\eqref{eq:Rdecomp2}
and the estimates in Lemma~\ref{lem:Bjmassdecay}, Lemma~\ref{lem:massdecay},
Lemma~\ref{lem:massAjBj} and Lemma~\ref{lem:gpUpexp}, we finally derive:
\beqs
\|\rj\Rj\|_{L^\infty(\Aj\setminus\Bj)}
\le\frac{C}{p}\|\rj|x|^{\aj-2}e^{\Udaj}\|_{L^\infty(\Aj\setminus\Bj)}
+\|\gp(\Up)\|_{L^\infty(\Aj\setminus\Bj)}\le Ce^{-\eps_0p},
\eeqs
for $\eps_0=\frac{\aj-\eta}{2(\aj+2)}$.
\end{proof}
\section{The linearized problem: estimates for $\Wp$ and choice of the $\ej$'s}
\label{sec:lin}
We define the linearized operator
\beq
\label{def:Lp}
\Lp\phi:=\Delta\phi+\Wp(x)\phi,
\qquad\phi\in C^2(\Om)\cap C(\overline\Om),
\eeq
where the \lq\lq potential" $\Wp$ is defined by
\beqs
\label{def:Wp}
\Wp(x):=\gp'(\Up(x))
\eeqs
and $\gp'(t)=p|t|^{p-1}$.
\par
Let
\beqs
\bal
\Daj(y):=&\woaj(y)-\Vaj(y)-\frac{\Vaj^2(y)}{2}\\
\Dadj(x):=&\Daj(\frac{x}{\dej}).
\eal
\eeqs
Note that
\beqs
-p\le\frac{\Daj(y)}{p}\le C,
\eeqs
uniformly for $y\in\Aj/\dej$.
Our aim in this section is to establish the following fact.
\begin{prop}
\label{prop:Wpest}
The following estimate holds true uniformly for $x\in\Om$:
\beq
\label{est:Wpmain}
\Wp(x)\le\ovC\,\sum_{i=1}^k|x|^{\ai-2}e^{\Udai(x)}\chi_{\Ai}(x),
\eeq
for some $\ovC>0$ independent of $p\to+\infty$.
Moreover, for any $j=1,2,\ldots,k$ there holds the expansion
\beqs
\Wp(x)=\sum_{j=1}^k|x|^{\aj-2}e^{\Udaj(x)}\{1+\frac{\Dadj(x)}{p}+\frac{O(|\Vaj(\frac{x}{\dej})|^4+1)}{p^2}\}\chi_{\Bj}(x)
+\omp(x),
\eeqs
where $\|\omp\|_{\rp}\le Ce^{-\beta_0p}$ for some $\beta_0>0$,
uniformly for $x\in\Om$.
\end{prop}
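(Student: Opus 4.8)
The plan is to use the partition $\Om=\bigcup_{j=1}^k\Aj$ and the inclusions $\Bj\subset\Aj$, analyzing $\Wp=\gpp(\Up)$, $\gpp(t)=p|t|^{p-1}$, on each $\Aj$ separately; since there are finitely many pieces and all the estimates below are uniform, the statement for $x\in\Om$ will follow by patching. The first step is a pointwise formula for the potential on $\Aj$. Inserting the expansion \eqref{eq:Upexpzero}, $\Up(x)=(-1)^{j-1}\tj p\{1+(\Vaj(y)+O(1))/p\}$ for $x=\dej y\in\Aj$, into $\gpp(\Up)=p|\Up|^{p-1}$, and using the identity $p(\tj p)^{p-1}=\tj^{p-1}p^p=\dej^{-2}$ (last equation of \eqref{eq:adtsys}) together with the mass scaling $|x|^{\aj-2}e^{\Udaj(x)}=\dej^{-2}e^{\Vaj(y)}$ (Lemma~\ref{lem:massscaling}), I obtain
\[
\Wp(x)=\frac{1}{\dej^2}\Big|1+\frac{\Vaj(y)+O(1)}{p}\Big|^{p-1},\qquad x=\dej y\in\Aj .
\]
Writing $t:=(\Vaj(y)+O(1))/p$, so $pt=\Vaj(y)+O(1)$, the whole proposition reduces to estimating $|1+t|^{p-1}$.

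For the upper bound \eqref{est:Wpmain} the decisive input is the sharp lower bound $\Vaj(y)\ge-p-C$ holding uniformly on $\Aj/\dej$, that is \eqref{est:Vaj} (Lemma~\ref{lem:ejineq}); it gives $t\ge-1-C/p$, hence $1+t\ge-C/p$. If $1+t>0$ then $t>-1$, $\ln(1+t)\le t$, and so $|1+t|^{p-1}=e^{(p-1)\ln(1+t)}\le e^{(p-1)t}=e^{pt-t}\le e^{pt+2}\le Ce^{\Vaj(y)}$, using $-t\le2$ (from $t\ge-1-C/p$) and $pt=\Vaj(y)+O(1)$. If $1+t\le0$ then $|1+t|\le C/p$, so $|1+t|^{p-1}\le(C/p)^{p-1}\le e^{-p-C}\le e^{\Vaj(y)}$ for all sufficiently large $p$, again by the lower bound on $\Vaj$. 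In both cases $\Wp(x)\le C|x|^{\aj-2}e^{\Udaj(x)}$ on $\Aj$; since the $\Ai$ are pairwise disjoint, maximizing the constant over $j$ gives \eqref{est:Wpmain}.

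For the expansion, on $\Bj$ there is essentially nothing to do: Lemma~\ref{lem:gpgpp}--(ii) already furnishes the expansion of $\Wp=\gpp(\Up)$ for $x=\dej y\in\Bj$, with leading ($p^{-1}$-order) correction $\woaj(y)-\vphio(\Vaj(y))-\Vaj(y)$ and error term $O(|\ln|y|^{\aj-2}|^4+\ln^4(2+|y|))/p^2$. By the definitions of $\Daj$ and $\vphio$ one has $\woaj(y)-\vphio(\Vaj(y))-\Vaj(y)=\Daj(y)=\Dadj(\dej y)$, and \eqref{eq:Vaprop} together with $\aj\le C$ yields $|\ln|y|^{\aj-2}|^4+\ln^4(2+|y|)\le C(|\Vaj(y)|^4+1)$, which rewrites the error term as $O(|\Vaj(\frac{x}{\dej})|^4+1)/p^2$. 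On the complement $\Om\setminus\bigcup_j\Bj=\bigcup_j(\Aj\setminus\Bj)$ I would set $\omp:=\Wp$ and combine the bound \eqref{est:Wpmain} just obtained with the residual mass decay of Lemma~\ref{lem:massAjBj}: on $\Aj\setminus\Bj$ one gets $\rj(x)\omp(x)\le\ovC\,\rj(x)|x|^{\aj-2}e^{\Udaj(x)}\le Ce^{-\frac{\aj-\eta}{2(\aj+2)}p}$, whence $\|\omp\|_{\rp}\le Ce^{-\beta_0p}$ with $\beta_0:=\min_{1\le j\le k}\frac{\aj-\eta}{2(\aj+2)}>0$.

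I expect the only genuinely delicate point to be the region of $\Aj$ on which $1+(\Vaj(y)+O(1))/p\le0$: there the convexity inequality $(1+s/p)^p\le e^s$ cannot be used since the base is non-positive, and one has to trade it for the super-exponential smallness of $(C/p)^{p-1}$ combined with the sharp lower bound $\Vaj\ge-p-C$. This is precisely why the annuli in \eqref{def:Aj} must be chosen optimally, through the definition \eqref{def:ejt} of the $\ej$'s that makes \eqref{est:Vaj} hold: with a cruder choice $\Vaj$ could become much more negative than $-p$ on $\Aj$, and then $|1+t|^{p-1}$ would no longer be comparable to $e^{\Vaj(y)}$, so that \eqref{est:Wpmain} — and hence the weighted control of $\omp$ — would break down.
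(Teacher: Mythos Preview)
Your proposal is correct and follows essentially the same approach as the paper: the expansion on $\Bj$ is Lemma~\ref{lem:gpgpp}--(ii) (rewritten using $\Daj$), the weighted control of $\omp$ on $\Aj\setminus\Bj$ comes from the upper bound combined with Lemma~\ref{lem:massAjBj}, and the upper bound on all of $\Aj$ rests on the sharp lower bound $\Vaj\ge -p-C$ of Lemma~\ref{lem:ejineq}. The paper's written proof is a one-line reference to Lemma~\ref{lem:gpgpp}--(ii); you have supplied exactly the details the paper leaves implicit, in particular the case analysis for $|1+t|^{p-1}$ on $\Aj\setminus\Bj$ where Lemma~\ref{lem:gpgpp}--(ii) does not apply.
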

We beign by establishing some auxiliary results.
The following result justifies the choice of the $\ej$'s as in \eqref{def:ej}, namely
\beqs
\ej=\frac{\sj}{1+\sj}, \qquad j=1,2,\ldots,k-1.
\eeqs
\begin{lemma}
\label{lem:ejineq}
Let $\aj$, $\dej$, $j=1,2,\ldots,k$, and $\sj$, $j=1,2,\ldots,k-1$ be the parameters defined in Proposition~\ref{prop:solvability}. 
Let $0<\veps<1$.
The following implications hold true:
\begin{enumerate}
  \item[(i)]For all $j=2,\ldots,k,$ if
\beqs
\veps\le\frac{s_{j-1}}{1+s_{j-1}}
\eeqs
then
\beqs
(\aj-2)\ln|y|\ge-p-C\quad\hbox{for all}\quad\left(\frac{\de_{j-1}}{\dej}\right)^\veps\le|y|\le1.
\eeqs
  \item[(ii)] For all $j=1,2,\ldots,k-1$, if
\beqs
\veps\ge\frac{\sj}{1+\sj},
\eeqs then
\beqs
(\aj+2)\ln\frac{1}{|y|}\ge-p-C\quad\hbox{for all}\quad1\le|y|\le(\frac{\dejj}{\dej})^{1-\veps}.
\eeqs
\end{enumerate}
Consequently, if $\ej$, $j=1,2,\ldots,k-1$, is given by \eqref{def:ej}, then 
\beq
\label{ineq:VajpC}
\Vaj(y)\ge-p-C\quad\hbox{uniformly for}\quad y\in\frac{\Aj}{\dej},\ j=1,2,\ldots,k.
\eeq
\end{lemma}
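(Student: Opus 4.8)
The plan is to prove (i) and (ii) by computing the exponential rates of the $\dej$'s directly, and then to deduce \eqref{ineq:VajpC} by splitting each rescaled annulus $\Aj/\dej$ at $|y|=1$ and applying (i) on the inner piece and (ii) on the outer piece.

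For (i), I would first observe that $\aj>2$ for $j\ge2$ (by (A1)) and $\ln|y|\le0$ on the interval, so $|y|\mapsto(\aj-2)\ln|y|$ is nonpositive and increasing and its infimum is attained at the left endpoint $|y|=(\de_{j-1}/\dej)^\veps$; it therefore suffices to bound $(\aj-2)\veps\ln(\de_{j-1}/\dej)$ from below. Writing $\dej=C_je^{-b_jp}$ with $C^{-1}\le C_j\le C$ gives $\ln(\de_{j-1}/\dej)=O(1)-(b_{j-1}-b_j)p$, and from the recursion \eqref{eq:deltarec} (shifted by one index) one gets the \emph{exact} identity $b_{j-1}-b_j=\frac{1+s_{j-1}}{s_{j-1}(\aj-2)}$, so that $(\aj-2)(b_{j-1}-b_j)=\frac{1+s_{j-1}}{s_{j-1}}$. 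Hence the rate is $\le1$ precisely when $\veps\le s_{j-1}/(1+s_{j-1})$, which is exactly the hypothesis, giving $(\aj-2)\veps\ln(\de_{j-1}/\dej)\ge-p-C$ and then the bound on the whole interval by monotonicity. Part (ii) is symmetric: since $\aj+2>0$ and $\ln\frac1{|y|}\le0$, the infimum of $(\aj+2)\ln\frac1{|y|}$ is attained at the right endpoint $|y|=(\dejj/\dej)^{1-\veps}$, and using \eqref{eq:deltarec} together with \eqref{eq:asuseful} one gets $b_j-b_{j+1}=\frac{1+\sj}{\sj(\ajj-2)}=\frac{1+\sj}{\aj+2+O(1/p)}$, so that $(\aj+2)(1-\veps)(b_j-b_{j+1})=(1-\veps)(1+\sj)(1+O(1/p))\le1+O(1/p)$ exactly when $\veps\ge\sj/(1+\sj)$, yielding $(\aj+2)(1-\veps)\ln(\dej/\dejj)\ge-p-C$ and the claim.

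For the consequence \eqref{ineq:VajpC}, I would use the explicit form of $\Vaj$ in \eqref{def:Va} (equivalently \eqref{eq:Vaprop}) and boundedness of the $\aj$'s to obtain $\Vaj(y)\ge(\aj-2)\ln|y|-C$ for $|y|\le1$ and $\Vaj(y)\ge-(\aj+2)\ln|y|-C$ for $|y|\ge1$. On the part of $\Aj/\dej$ with $|y|\le1$ and $j\ge2$ one has $|y|\ge(\de_{j-1}/\dej)^{\veps_{j-1}}$ with $\veps_{j-1}=s_{j-1}/(1+s_{j-1})$, so (i) applies in the equality case and gives $\Vaj(y)\ge-p-C$; for $j=1$ the term $(\aj-2)\ln|y|$ vanishes since $\al_1=2$ and the bound is trivial. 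On the part with $|y|\ge1$ and $j\le k-1$ one has $|y|\le(\dejj/\dej)^{1-\ej}$ with $\ej=\sj/(1+\sj)$, so (ii) applies. The only case not covered by (i)--(ii) is the outer part of $\Ak$, where $\dek y\in\Om$ forces $|y|\le\diam\Om/\dek\le Ce^{b_kp}$; since $b_k=1/(\ak+2+O(1/p))$ by Proposition~\ref{prop:solvability}--(v), this gives $(\ak+2)\ln|y|\le p+C$, hence $\Vak(y)\ge-p-C$. Combining all cases yields \eqref{ineq:VajpC}.

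The calculations are routine; the point requiring care is that the inequalities in the hypotheses of (i) and (ii) are \emph{tight}, i.e. they are exactly what forces the exponential rate down to $1\cdot p$ rather than something larger — this, together with the separate treatment of the outermost annulus $\Ak$ (whose outer boundary is governed by $b_k$ rather than by the recursion), is precisely why the choice \eqref{def:ej} of $\ej$ is optimal with respect to \eqref{est:Vaj}. So the main "obstacle" is essentially bookkeeping: identifying which endpoint of each interval realizes the infimum and invoking the correct recursion for the $b_j$'s.
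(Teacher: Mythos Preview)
Your proposal is correct and follows essentially the same approach as the paper: reduce (i) and (ii) to the relevant endpoint, plug in $\dej=C_je^{-b_jp}$, and use Proposition~\ref{prop:solvability}--(v) (equivalently \eqref{eq:deltarec} and \eqref{eq:asuseful}) to cancel the factor $\aj\pm2$ against $b_{j-1}-b_j$; the deduction of \eqref{ineq:VajpC} via the split at $|y|=1$, the triviality for $j=1$ on the inner part, and the separate treatment of the outer boundary of $\Ak$ via $b_k=(\ak+2+O(p^{-1}))^{-1}$ all match the paper's proof. The only cosmetic difference is that in (ii) you carry the $(1+O(1/p))$ factor explicitly, whereas the paper absorbs the resulting $O(1)$ directly into $C$.
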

\begin{proof}
Proof of (i).
Assuming that $\veps\le s_{j-1}/(1+s_{j-1})$, it suffices to show that
\beq
\label{eq:ejle}
(\aj-2)\ln\left(\frac{\de_{j-1}}{\dej}\right)^\veps\ge-p-C.
\eeq
We have:
\beqs
\bal
(\aj-2)\ln\left(\frac{\de_{j-1}}{\dej}\right)^\veps=&-(\aj-2)\veps(b_{j-1}-\bj)\,p-C
=-(\aj-2)\veps\frac{1+s_{j-1}}{s_{j-1}(\aj-2)}\,p-C\\
=&-\veps\frac{1+s_{j-1}}{s_{j-1}}p-C,
\eal
\eeqs
where we used Proposition~\ref{prop:solvability}-(v) to derive the last equality.
Hence, \eqref{eq:ejle} holds true if $\veps(1+s_{j-1})/s_{j-1}\le1$ and therefore Part~(i)
is established.
\par
Proof of (ii).
Assuming that $\veps\ge\sj/(1+\sj)$, it suffices to show that
\beq
\label{eq:ejge}
(\aj+2)\ln\left(\frac{\dej}{\dejj}\right)^{1-\veps}\ge-p-C.
\eeq
We have, using Proposition~\ref{prop:solvability}-(v):
\beqs
\bal
(\aj+2)\ln\left(\frac{\dej}{\dejj}\right)^{1-\veps}=&-(\aj+2)(1-\veps)(\bj-\bjj)\,p-C\\
=&-(\aj+2)(1-\veps)\frac{1+\sj}{\aj+2+O(\frac{1}{p})}\,p-C
=-(1-\veps)(1+\sj)\,p-C.
\eal
\eeqs
Hence, \eqref{eq:ejge} holds true if $(1-\veps)(1+\sj)\le1$, that is
if $\veps\ge\sj/(1+\sj)$.
Thus, Part~(ii) is established.
\par
Finally, we assume that $\ej$ is given by \eqref{def:ej}.
For $j=1$, the asserted inequality \eqref{ineq:VajpC} follows from Part~(ii), since $\mathcal V_{\al_1}$
is bounded at $y=0$.
For $j=2,\ldots,k-1$ the asserted inequality \eqref{ineq:VajpC} follows from Part~(i) and Part~(ii).
For $j=k$ we need only consider the case $|y|\ge1$.
Recalling from Proposition~\ref{prop:solvability}--(v) that $b_k=(\ak+2+O(p^{-1}))^{-1}$, 
we compute
\beqs
\bal
\Vak(y)\ge\ln\frac{1}{|y|^{\ak+2}}-C\ge&\ln\de_k^{\ak+2}-C=-b_k(\ak+2)\,p-C=-\frac{\ak+2}{\ak+2+O(\frac{1}{p})}\,p-C\\
\ge&-p-C.
\eal
\eeqs
\end{proof}
\begin{proof}[Proof of Proposition~\ref{prop:Wpest}]
Estimate~\eqref{est:Wpmain} is a direct consequence of Lemma~\ref{lem:gpgpp}--(ii).
\end{proof}


\section{Linearized problem: evaluation of integrals}
Let
\beq
\label{def:Daj}
\bal
\Daj(y):=&\woaj(y)-\Vaj(y)-\frac{\Vaj^2(y)}{2}\\
\Dadj(x):=&\Daj(\frac{x}{\dej}),
\eal
\eeq
where $\Vaj$ is defined in \eqref{def:Va} and $\zajo$ is the radial eigenfunction defined by
\beq
\label{def:zajo}
\zajo(y)=\frac{1-|y|^{\aj}}{1+|y|^{\aj}},
\eeq
which satisfies the linearized equation
\beq
\label{eq:linpb}
\Delta z+|y|^{\aj-2}e^{\vj(y)}z=0
\qquad\hbox{in }\rr^2.
\eeq
Observing that in view of equation~\eqref{eq:linpb} we may write  $|y|^{\aj-2}e^{\vj(y)}\zajo(y)=-\Delta\zajo$,
and integrating by parts,  we obtain the following integrals to be used below:
\beq
\label{eq:zajoid}
\bal
&\int_{\rr^2}|y|^{\aj-2}e^{\vj(y)}\zajo(y)\,dy=0\\
&\int_{\rr^2}|y|^{\aj-2}e^{\vj(y)}\zajo(y)\vj(y)\,dy=4\pi\aj\\
&\int_{\rr^2}|y|^{\aj-2}e^{\vj(y)}\zajo(y)\ln|y|\,dy=-2\pi,
\eal
\eeq
see also (4.30)--(4.31) in \cite{gropi}.
We consider the linear operator $\Lpone$ defined for $\phi\in C^2(\Om)$ by:
\beq
\label{def:Lpone}
\Lpone\phi=\Delta\phi+\sum_{j=1}^k|x|^{\aj-2}e^{U_{\aj,\dej}(x)}\{1+\frac{\Dadj}{p}\}\chi_{\Bj}(x)\phi.
\eeq
Our aim in this section is to show the following.
\begin{prop}
\label{prop:Lpmainest}
There exists $C>0$ such that for any solution $\phi\in C^2(\Om)\cap C(\overline\Om)$ to the problem
\beqs
\begin{cases}
\Lp\phi=h&\hbox{in }\Om\\
\phi=0&\hbox{on }\partial\Om
\end{cases}
\eeqs
there holds
\beqs
\|\phi\|_{L^\infty(\Om)}\le Cp\|h\|_{\rp}.
\eeqs
\end{prop}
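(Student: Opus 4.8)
The plan is to argue by contradiction through a blow-up analysis performed simultaneously on all the rescaled annuli $\Aj/\dej$, $j=1,\dots,k$, relying on the nondegeneracy information $\aj\notin\mathbb N$ of Proposition~\ref{prop:aj}. Suppose the estimate fails: then there are $\pn\to+\infty$, functions $\hn$ with $\pn\|\hn\|_{\rho_{\pn}}\to0$, and solutions $\phin\in C^2(\Om)\cap C(\overline\Om)$ of $\Lpn\phin=\hn$ in $\Om$, $\phin=0$ on $\partial\Om$, normalised so that $\|\phin\|_{L^\infty(\Om)}=1$. For each $j$ set $\phinj(y):=\phin(\dej y)$ for $y\in\Aj/\dej$. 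Multiplying the equation by $\dej^2$ and using the expansion of $\Wp$ from Proposition~\ref{prop:Wpest}, the mass--scaling $|\dej y|^{\aj-2}e^{\Udaj(\dej y)}=\dej^{-2}|y|^{\aj-2}e^{\vj(y)}$, and the weighted bound $\dej^2|\hn(\dej y)|\le\|\hn\|_{\rho_{\pn}}(1+|y|^{2+\eta})^{-1}$, one sees that on compact subsets of $\rr^2$ the function $\phinj$ satisfies
\beqs
\Delta\phinj+|y|^{\aj-2}e^{\vj(y)}\phinj=o(1),
\eeqs
with $\|\phinj\|_{L^\infty}\le1$, while the contributions of the bubbles $i\neq j$ are uniformly $o(1)$ on such sets by Lemma~\ref{lem:massdecay}. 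Since the rescaled annuli invade $\rr^2$, interior elliptic estimates yield $\phinj\to\phj$ in $C^2_{\mathrm{loc}}(\rr^2)$ along a subsequence, where $\phj$ is a bounded solution of $\Delta\phj+|y|^{\aj-2}e^{\vj(y)}\phj=0$ on $\rr^2$.

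The next step is to identify $\phj$. Since the whole construction is invariant under $x\mapsto-x$, each $\phj$ is even; moreover $\al_1=2$ and $\aj\notin\mathbb N$ for $j\ge2$ (Proposition~\ref{prop:aj}), and this last fact rules out the non-radial bounded kernel elements that appear for integer $\aj$, so the only possibility is $\phj=c_j\zajo$ with $\zajo(y)=(1-|y|^{\aj})/(1+|y|^{\aj})$. The coefficients $c_j$ are then pinned down by matching across the common boundaries of consecutive annuli. By the very choice \eqref{def:ej} of $\ej$ the sets $\Aj$ and $\Ajj$ meet exactly along $|x|\cong\dej^{\ej}\dejj^{1-\ej}$, where $\Wp$ is negligible and $\phin$ is essentially harmonic; hence the outer limit $\lim_{|y|\to+\infty}c_j\zajo(y)=-c_j$ must equal the inner limit $\lim_{|y|\to0}c_{j+1}z_{\ajj}^0(y)=c_{j+1}$, which gives $c_{j+1}=-c_j$. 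On the outermost annulus $\Ak$ the Dirichlet condition forces the limit of $\phin$ in the bulk to be a bounded harmonic function on $\Om\setminus\{0\}$ vanishing on $\partial\Om$; by removability of the isolated singularity and the maximum principle this limit vanishes identically, so $-c_k=0$ and therefore $c_j=0$ for all $j$. The integral identities \eqref{eq:zajoid} enter to make this quantitative: testing $\Lpn\phin=\hn$ against a cut-off of the approximate kernel element $\zajod$ and integrating by parts, \eqref{eq:zajoid} shows that the leading contribution is a nonzero multiple of $c_j$ while everything else is $o(1)$, which excludes $c_j\neq0$.

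It remains to upgrade $\phj\equiv0$ for all $j$ --- i.e. $\phin\to0$ in $C^2_{\mathrm{loc}}$ on every rescaled annulus --- to $\|\phin\|_{L^\infty(\Om)}\to0$, which contradicts the normalisation. For this I would compare $\pm\phin$ with a supersolution of $\Lp$ constructed away from the $k$ cores in the spirit of \cite{DelPinoEspositoMusso2012}: using $\Wp(x)\le\ovC\sum_i|x|^{\ai-2}e^{\Udai(x)}\chi_{\Ai}(x)$ from Proposition~\ref{prop:Wpest}, one builds a barrier $\Psi\cong1$ with $\Lp\Psi\le-\rp^{-1}$, so that the maximum principle yields $|\phin|\le o(1)+Cp\|\hn\|_{\rp}\Psi$ on $\Om$; the single lost power of $p$ reflects the logarithmic loss incurred in the transition regions $\Aj\setminus\Bj$ and near $\partial\Om$ (cf.\ Lemma~\ref{lem:BsubsetA}). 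Combined with the convergence in the cores this gives $\|\phin\|_{L^\infty(\Om)}\to0$, the desired contradiction.

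I expect the main obstacle to be the treatment of the $k$ non-decaying scaling modes $\zajo$, living at the widely separated scales $\dej=C_je^{-b_jp}$: absent a genuine orthogonality hypothesis in the statement, excluding $c_j\neq0$ forces the combined matching-across-annuli and boundary argument above, and the bookkeeping of the interactions between bubbles with different singular exponents $\aj$ --- controlled by the \textit{ad hoc} integral evaluations of this section --- is delicate; a further difficulty is designing a single barrier function simultaneously compatible with all $k$ scales.
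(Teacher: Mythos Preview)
Your overall architecture matches the paper's: contradiction, rescaled limits $\phi^j=c_jz_{\aj}^0$ via the nondegeneracy $\aj\notin\mathbb N$, elimination of the $c_j$, and then a barrier on the inter-bubble annuli to upgrade local convergence to $\|\phin\|_{L^\infty}\to0$. The barrier part is essentially Section~7.

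The genuine gap is in the step forcing $c_j=0$. Your matching argument is not rigorous: the interface between $\Aj$ and $\Ajj$, viewed in the $\dej$-scale, lies at $|y|\sim(\dejj/\dej)^{1-\ej}\to\infty$, and in the $\dejj$-scale at $|y|\to0$; since the convergence $\phinj\to c_jz_{\aj}^0$ is only $C^2_{\mathrm{loc}}$, you have no control of $\phinj$ at these moving boundaries and cannot conclude $-c_j=c_{j+1}$. The same objection applies to your Dirichlet argument for $c_k$. Your fallback---testing $\Lpn\phin=\hn$ against $\zajod$---is also insufficient as stated. Because $\int_{\rr^2}|y|^{\aj-2}e^{\vj}z_{\aj}^0\,dy=0$, the leading term of the resulting identity vanishes, and what survives is a relation involving not only $c_j$ but also the secondary unknowns
\[
\sjs=p_n\int_{\Bj}|x|^{\aj-2}e^{\Udaj}\Bigl\{1+\frac{\Dadj}{p_n}\Bigr\}\phin\,dx,
\]
which carry an explicit factor $p_n$ and are \emph{not} a priori $o(1)$. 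Testing against $P\zajod$ yields only $(S_j^1)$: $2\sum_{i<j}\sis+\sjs+\Iaj c_j=o(1)$, a system of $k$ equations in $2k$ unknowns. The paper closes this by testing in addition against $P\Udaj$ (using the second and third identities in \eqref{eq:zajoid}) to obtain a second family $(S_j^2)$, assembles the resulting $2k\times2k$ linear system in $(\sis,c_i)$, and proves its matrix is uniformly invertible via an explicit and rather lengthy computation of $\Iaj=8\pi\bigl(-\ln(2\aj^2)+(3\aj-2)/\aj\bigr)<0$ followed by a recursive determinant argument. The crucial inputs you are missing are precisely this second test function, the sign of $\Iaj$, and the invertibility of the full coupled system; without them the kernel coefficients $c_j$ cannot be disentangled from the $\sis$.
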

We derive the proof of Proposition~\ref{prop:Lpmainest} by a contradiction argument.
Suppose that $\phin\in C^2(\Om)\cap C(\overline\Om)$, $n\in\mathbb N$, is such that
$\mathcal L_{p_n}\phin=h_n$ with $\pn\to+\infty$, $\pn\|h_n\|_{\rho_{p_n}}\to0$ and $\|\phi_n\|_{L^\infty(\Om)}=1$.
\begin{lemma}
[Asymptotic profile of $\phin$]
Let $\phij(y)=\phin(\dej y)$, $j=1,2,\ldots,k$.
Then, for every $j=1,2,\ldots,k$ there exists $\gj\in\rr$ such that $\phij(y)\to\gj\zajo(y)$ in $C_{\mathrm{loc}}^2(\rr^2\setminus\{0\})$,
weakly in $\mathbb D^{1,2}(\rr^2)$ and almost everywhere.
\end{lemma}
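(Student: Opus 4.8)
The plan is to rescale the contradicting functions $\phin$ around each concentration scale $\dej$ and identify the limit through the classification of the kernel of the linearized singular Liouville operator. \emph{First I would record the uniform bounds.} The $L^\infty$ bound $\|\phij\|_{L^\infty(\rr^2)}\le1$ is immediate from $\|\phin\|_{L^\infty(\Om)}=1$. For the Dirichlet bound, test $\Lpn\phin=\hn$ with $\phin$ to get $\int_\Om|\nabla\phin|^2=\int_\Om\mathcal W_{p_n}\phin^2-\int_\Om\hn\phin$. Since $\mathcal W_{p_n}=\gp'(\Up)\ge0$, Proposition~\ref{prop:Wpest} gives $\int_\Om\mathcal W_{p_n}\le\ovC\sum_i\int_{\rr^2}|x|^{\ai-2}e^{\Udai}=\ovC\sum_i4\pi\ai\le C$, hence $\int_\Om\mathcal W_{p_n}\phin^2\le C$; moreover $\int_\Om\rho_{p_n}^{-1}\le C$, because on each $\Aj$ the scaling $x=\dej y$ and \eqref{eq:rhoresc} reduce that integral to $\int_{\rr^2}(1+|y|^{2+\eta})^{-1}\,dy<\infty$ (recall $\eta>0$), so $|\int_\Om\hn\phin|\le\|\hn\|_{\rho_{p_n}}\int_\Om\rho_{p_n}^{-1}\le C$. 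Thus $\int_\Om|\nabla\phin|^2\le C$, and by the scale invariance of the Dirichlet integral in dimension two the rescaled $\phij$, extended by $0$ outside $\Om/\dej$, are bounded in $\mathbb D^{1,2}(\rr^2)$, uniformly in $n$; note that $\Om/\dej$ invades $\rr^2$.

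\emph{Next I would pass to the limit in the rescaled equation} $\Delta\phij(y)+\dej^2\mathcal W_{p_n}(\dej y)\phij(y)=\dej^2\hn(\dej y)$ on $\Om/\dej$. On $\Aj/\dej$, Proposition~\ref{prop:Wpest} yields $0\le\dej^2\mathcal W_{p_n}(\dej y)\le\ovC\,|y|^{\aj-2}e^{\vj(y)}$, and the expansion in the same proposition — together with $\dej^2|\omega_{p_n}(\dej y)|\le Ce^{-\beta_0\pn}(1+|y|^{2+\eta})^{-1}$, coming from $\|\omega_{p_n}\|_{\rho_{p_n}}\le Ce^{-\beta_0\pn}$ — shows that $\dej^2\mathcal W_{p_n}(\dej\,\cdot)\to|y|^{\aj^0-2}e^{v_{\aj^0}(y)}$ in $C_{\mathrm{loc}}(\rr^2\setminus\{0\})$, where $\aj^0:=\lim_n\aj(\pn)$ is the value given by the unperturbed system; indeed $\Bj/\dej$ invades $\rr^2$, and on a fixed compact subset of $\rr^2\setminus\{0\}$ the quantities $\Daj(y)/\pn$ and $(|\Vaj(y)|^4+1)/\pn^2$ tend to $0$ uniformly, since $\Daj$ and $\Vaj$ are fixed functions of $y$ depending continuously on $\aj\to\aj^0$. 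The right-hand side obeys $\dej^2|\hn(\dej y)|\le\|\hn\|_{\rho_{p_n}}(1+|y|^{2+\eta})^{-1}\to0$ locally uniformly. Interior elliptic estimates on annuli $\{r\le|y|\le R\}$ then bound $\phij$ in $C^{2,\ga}_{\mathrm{loc}}(\rr^2\setminus\{0\})$, so, along a subsequence, $\phij\to\phinfty^j$ in $C^2_{\mathrm{loc}}(\rr^2\setminus\{0\})$, weakly in $\mathbb D^{1,2}(\rr^2)$ and a.e., with $\phinfty^j$ bounded and solving $\Delta\phinfty^j+|y|^{\aj^0-2}e^{v_{\aj^0}(y)}\phinfty^j=0$ in $\rr^2\setminus\{0\}$. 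Since $\aj^0\ge2$ the coefficient $|y|^{\aj^0-2}e^{v_{\aj^0}(y)}$ is locally bounded at the origin; as $\phinfty^j$ is bounded, the isolated singularity is removable and the equation holds on all of $\rr^2$.

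\emph{Finally I would identify the limit} via the classification of the bounded (equivalently finite-energy) solutions of $\Delta z+|y|^{\aj^0-2}e^{v_{\aj^0}(y)}z=0$ on $\rr^2$. For $j\ge2$ one has $\aj^0\notin\mathbb N$ by Proposition~\ref{prop:aj}, so this space is one-dimensional, spanned by the radial function $\zajo$ of \eqref{def:zajo}. For $j=1$, $\aj^0=\al_1=2$, and the bounded solutions of the classical linearized Liouville equation are spanned by $\zajo$ and the two translation modes $y_i(1+|y|^2)^{-1}$; the latter are odd in $y$ and are discarded because $\phin$, hence $\phij$ and $\phinfty^j$, is even — solutions being sought in the symmetric class (recall $\Om=-\Om$). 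In every case $\phinfty^j=\gj\zajo$ for some $\gj\in\rr$, which is the assertion.

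\emph{The hard part} is the behaviour of the rescaled potential near $y=0$ in the second step: for $j\ge2$ the annulus $\Aj$ does not contain a neighbourhood of the origin, so $\dej^2\mathcal W_{p_n}(\dej\,\cdot)$ is controlled (and converges) only away from $y=0$, where for smaller $|y|$ the potential is instead governed by the much faster bubbles $\Udai$, $i<j$, whose scales $\dei$ are exponentially smaller than $\dej$. The remedy is exactly the sharp partition $\Aj$ and the bound $\Vaj\ge-\pn-C$ on $\Aj/\dej$ of Lemma~\ref{lem:ejineq}, packaged into Proposition~\ref{prop:Wpest}, which also guarantees that the remainder $\omega_{p_n}$ is negligible in the $\rho_{p_n}$-norm and hence uniformly negligible after rescaling; the uniform boundedness of $\phij$ then lets the origin be re-inserted by removability. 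A secondary technical ingredient is the kernel classification itself, which relies on $\aj\notin\mathbb N$ (Proposition~\ref{prop:aj}) to exclude the non-radial modes.
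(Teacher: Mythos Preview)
Your argument is correct and follows the same route as the paper: rescale, use elliptic regularity on compact subsets of $\rr^2\setminus\{0\}$ (where $\Bj/\dej$ eventually contains any such compact and Lemma~\ref{lem:gpgpp}--(ii) gives $\dej^2\mathcal W_{\pn}(\dej y)=|y|^{\aj-2}e^{\vj(y)}(1+O(\pn^{-1}))$), pass to the limit, remove the singularity at $0$ by boundedness, and invoke the kernel classification from \cite{DelPinoEspositoMusso2012} together with Proposition~\ref{prop:aj}. Your write-up is in fact more complete than the paper's on two points: you supply the energy identity yielding a uniform $\mathbb D^{1,2}$ bound (needed for the weak convergence claimed in the statement but not spelled out in the paper's proof), and you treat the case $j=1$, $\al_1=2$, explicitly, killing the translation modes $y_i/(1+|y|^2)$ by the evenness inherited from the symmetry assumption on $\Om$---the paper relies on this implicitly.
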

\begin{proof}
Let $j\in\{1,2,\ldots,k\}$ be fixed.
The rescaled function $\phij$ satisfies
\beqs
-\Delta\phij=\dej^2\,\mathcal W_{\pn}(\dej y)\phij-\dej^2\hn(\dej y)\qquad\hbox{in }\frac{\Om}{\dej},
\eeqs
and $\|\phij\|_{L^\infty(\Om/\dej)}=1$.
Let $\mathcal K\subset\rr^2\setminus\{0\}$ be compact. 
Since $\Bj$ invades $\rr^2$, we may assume that
$\mathcal K\subset\Bj$. In view of Lemma~\ref{lem:gpgpp}--(ii) we have
\beqs
\dej^2\,\mathcal W_{\pn}(x)=|y|^{\aj-2}e^{\vj(y)}(1+O(\frac{1}{\pn})),
\eeqs
uniformly for $x=\dej y\in\mathcal K$.
In view of Lemma~\ref{lem:embedding}--(ii) and the decay assumption on $\|\hn\|_{\rho_{\pn}}$ we have
\beqs
\|\dej^2\hn(\dej y)\|_{L^\infty(\Aj)}\le\|\hn\|_{\rho_{\pn}}=o(\frac{1}{\pn}).
\eeqs
By elliptic regularity there exists $\phijo\in C^2(\rr^2\setminus\{0\})$, satisfying 
\beq
\label{eq:phijo}
-\Delta\phijo=|y|^{\aj-2}e^{\vj(y)}\phijo
\eeq 
in $\rr^2\setminus\{0\}$,
such that $\phij\to\phijo$ in $\mathcal C_{\mathrm{loc}}^{1,\al}(\rr^2\setminus\{0\})$,
$\al\in(0,1)$. Since $\|\phij\|_{L^\infty(\Om/\dej)}=1$, the right hand side in \eqref{eq:phijo} is uniformly bounded in 
$\Om/\dej$, and thus we deduce that $\phijo$ satisfies \eqref{eq:phijo}
in whole space $\rr^2$. Now we recall that in view of Proposition~\ref{prop:aj} we have $\aj\not\in\mathbb N$ 
for all $j\ge2$. Therefore, by the characterization of bounded solutions to \eqref{eq:linpb} as established in
\cite{DelPinoEspositoMusso2012}, we deduce that $\phijo=\gj\zajo$ for some $\gj\in\rr$.
\end{proof}
Our aim is to show that $\gj=0$ for all $j=1,2,\ldots,k$. To this end,
extending and approach in \cite{gropi},
let
\beqs
\sjs:=p_n\int_{\Bj}|x|^{\aj-2}e^{\Udaj(x)}\left\{1+\frac{\Dadj(x)}{p_n}\right\}\phin(x)\,dx.
\eeqs
We begin by obtaining two linear relations between the $\sjs$'s and the $\gj$'s.
It is convenient to set
\beq
\label{def:Ia}
\Iaj=\int_{\rr^2}|y|^{\aj-2}e^{\vj}(\zajo(y))^2\Daj(y)\,dy
=2\aj^2\int_{\rr^2}\frac{|y|^{\aj-2}(1-|y|^{\aj})^2}{(1+|y|^{\aj})^4}\Daj(y)\,dy.
\eeq
With this notation, we have:
\begin{lemma}[Linear system for $\sjs,\gj$]
\label{lem:linrel}
For $j=1,2,\ldots,k$, the following linear relations hold true:
\beqs
\tag{$S_j^1$}
2\sum_{i<j}\sis+\sjs+\Iaj\gj=o(1)
\eeqs
and
\beqs
\tag{$S_j^2$}
\bj\sum_{i\le j}\sis+\sum_{i>j}\bi\sis+2\pi\sum_{i\ge j}\gi=o(1),
\eeqs
as $n\to\infty$.
\end{lemma}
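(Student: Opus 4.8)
The plan is to derive $(S_j^1)$ and $(S_j^2)$, following and extending the approach of \cite{gropi}, by testing the equation $\mathcal L_{p_n}\phi_n=h_n$ against two families of functions in $H_0^1(\Omega)$ tuned to the $j$-th concentration scale: the projected dilation kernel $Pz^0_{\alpha_j,\delta_j}$, where $z^0_{\alpha_j,\delta_j}(x):=z^0_{\alpha_j}(x/\delta_j)$, for $(S_j^1)$, and the rescaled projected bubble $\tfrac1{2\alpha_j}PU_{\alpha_j,\delta_j}$ for $(S_j^2)$. Both test functions vanish on $\partial\Omega$, so integrating by parts twice produces no boundary terms; using $-\Delta Pz^0_{\alpha_j,\delta_j}=|x|^{\alpha_j-2}e^{U_{\alpha_j,\delta_j}}z^0_{\alpha_j}(x/\delta_j)$ and $-\Delta PU_{\alpha_j,\delta_j}=|x|^{\alpha_j-2}e^{U_{\alpha_j,\delta_j}}$, the Laplacian term becomes an integral of $\phi_n$ against the $j$-th mass density. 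Into the remaining term $\int_\Omega\mathcal W_{p_n}\phi_n\,(\mathrm{test})$ I would insert the expansion of $\mathcal W_{p_n}$ from Proposition~\ref{prop:Wpest} and split the potential over the sets $E_i$; the $\omega_{p_n}$-part is absorbed via $\|\omega_{p_n}\|_{\rho_{p_n}}\le Ce^{-\beta_0 p}$ and $\int\rho_{p_n}^{-1}=O(1)$, and the $h_n$-term via $p_n\|h_n\|_{\rho_{p_n}}\to0$ together with $\int\rho_{p_n}^{-1}|Pz^0_{\alpha_j,\delta_j}|=O(1)$ and $\int\rho_{p_n}^{-1}|PU_{\alpha_j,\delta_j}|=O(p)$.

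For $(S_j^1)$, in the $i$-th piece of the potential term I would exploit the exponential scale separation $\delta_j/\delta_i\to\infty$ for $i<j$ and $\delta_i/\delta_j\to\infty$ for $i>j$ (Proposition~\ref{prop:solvability}--(v)) together with Lemma~\ref{lem:BsubsetA}: for $i<j$ the whole of $E_i$ lies in $\{|x|\ll\delta_j\}$, where $Pz^0_{\alpha_j,\delta_j}(x)=1+z^0_{\alpha_j}(x/\delta_j)=2+O(e^{-cp})$, so the piece equals $2\sigma_{i,n}/p_n+O(e^{-cp})$; for $i>j$ the whole of $E_i$ lies in $\{|x|\gg\delta_j\}$, where $Pz^0_{\alpha_j,\delta_j}(x)=O((\delta_j/|x|)^{\alpha_j})$ is exponentially small, so the piece is negligible; in the $j$-th piece I would use $Pz^0_{\alpha_j,\delta_j}(\delta_j y)=1+z^0_{\alpha_j}(y)+O(\delta_j^{\alpha_j})$, recognise that $p_n^{-1}\sigma_{j,n}=\int_{E_j/\delta_j}|y|^{\alpha_j-2}e^{v_{\alpha_j}}\{1+\mathcal D_{\alpha_j}/p_n\}\phi_n(\delta_j y)\,dy$ by the definition of $\sigma_{j,n}$, and observe that the leftover bare integral $\int|y|^{\alpha_j-2}e^{v_{\alpha_j}}z^0_{\alpha_j}\phi_n(\delta_j y)\,dy$ cancels exactly against the Laplacian term up to the $j$-th bubble mass outside $E_j$, which is $O(e^{-cp})$ by Lemma~\ref{lem:BsubsetA}. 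What is left is $p_n^{-1}(\sigma_{j,n}+2\sum_{i<j}\sigma_{i,n}+\gamma_j\mathcal I_{\alpha_j}+o(1))+O(e^{-cp})=o(p_n^{-1})$, where $\int|y|^{\alpha_j-2}e^{v_{\alpha_j}}\mathcal D_{\alpha_j}z^0_{\alpha_j}\phi_n(\delta_j y)\,dy\to\gamma_j\mathcal I_{\alpha_j}$ by the convergence $\phi_n(\delta_j\cdot)\to\gamma_j z^0_{\alpha_j}$ and the definition~\eqref{def:Ia}; multiplying through by $p_n$ yields $(S_j^1)$.

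For $(S_j^2)$ the decisive input is the projection asymptotics (Lemma~\ref{lem:modbubproj}), giving $\tfrac1{2\alpha_j}PU_{\alpha_j,\delta_j}(x)=b_jp+O(1)$ for $|x|\ll\delta_j$, $\tfrac1{2\alpha_j}PU_{\alpha_j,\delta_j}(\delta_j y)=\tfrac1{2\alpha_j}v_{\alpha_j}(y)+b_jp+O(1)$ on $E_j$, and $\tfrac1{2\alpha_j}PU_{\alpha_j,\delta_j}(x)=2\pi G(x,0)+o(1)$ for $|x|\gg\delta_j$, so that at $x=\delta_i y$ with $i>j$ it equals $b_ip-\ln|y|+O(1)$. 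Splitting over the $E_i$: the $i$-th piece with $i<j$ contributes $(b_jp+O(1))\sigma_{i,n}/p_n=b_j\sigma_{i,n}+o(1)$ and no $\gamma$-term; the $j$-th piece contributes $b_j\sigma_{j,n}$ from the constant part plus $\tfrac1{2\alpha_j}\gamma_j\int_{\rr^2}|y|^{\alpha_j-2}e^{v_{\alpha_j}}z^0_{\alpha_j}v_{\alpha_j}\,dy=2\pi\gamma_j$ from the $v_{\alpha_j}$-part (second identity in \eqref{eq:zajoid}); the $i$-th piece with $i>j$ contributes $b_i\sigma_{i,n}$ from the constant part and $-\gamma_i\int_{\rr^2}|y|^{\alpha_i-2}e^{v_{\alpha_i}}z^0_{\alpha_i}\ln|y|\,dy=2\pi\gamma_i$ from the $\ln|y|$-part (third identity in \eqref{eq:zajoid}). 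The leftover Laplacian term $-\tfrac1{2\alpha_j}\int_\Omega\phi_n|x|^{\alpha_j-2}e^{U_{\alpha_j,\delta_j}}\,dx$ tends to $-\tfrac1{2\alpha_j}\gamma_j\int_{\rr^2}|y|^{\alpha_j-2}e^{v_{\alpha_j}}z^0_{\alpha_j}\,dy=0$ by the first identity in \eqref{eq:zajoid} (the mass concentrates at scale $\delta_j$, the contributions of $|y|\le\varepsilon$ and $|y|\ge M$ being $O(\varepsilon^{\alpha_j})$ and $O(M^{-\alpha_j})$). Since $\tfrac1{2\alpha_j}PU_{\alpha_j,\delta_j}=O(p)$ exactly cancels the $p_n^{-1}$ carried by $\sigma_{i,n}/p_n$, the identity is already balanced at order $1$ and $(S_j^2)$ follows without rescaling.

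The main difficulty is the bookkeeping. One must verify that for $i<j$, respectively $i>j$, all of $E_i$ sits inside $\{|x|\ll\delta_j\}$, respectively $\{|x|\gg\delta_j\}$; this pits the shrinking rates of the $E_i$ (Lemma~\ref{lem:BsubsetA}) against the quantitative gaps $b_{j-1}-b_j$ and $b_j-b_{j+1}$ from Proposition~\ref{prop:solvability}--(v), and crucially uses the spacing $\alpha_{j+1}>\alpha_j+4$ of Proposition~\ref{prop:aj}. One must also check that every error term is $o(p_n^{-1})$ when it will be multiplied by $p_n$ (case $(S_j^1)$) and $o(1)$ otherwise (case $(S_j^2)$): the Taylor remainders of order $p_n^{-2}$ in the $E_i$-pieces (which, even tested against $PU_{\alpha_j,\delta_j}=O(p)$, contribute $O(p_n^{-1})$, since $\int|y|^{\alpha_i-2}e^{v_{\alpha_i}}(|\mathcal V_{\alpha_i}|^4+1)=O(1)$), the $\omega_{p_n}$- and $h_n$-terms, the masses outside the $E_i$, the contributions of the $\mathcal D_{\alpha_i,\delta_i}/p_n$ corrections to the $v_{\alpha_i}$- and $\ln|y|$-weighted integrals, and the quantities $O(\sigma_{i,n}/p_n)$ coming from the $O(1)$ parts of the test functions, which are $o(1)$ because $\sigma_{i,n}/p_n\to\gamma_i\int|y|^{\alpha_i-2}e^{v_{\alpha_i}}z^0_{\alpha_i}\,dy=0$ (a separate, non-circular consequence of the first identity in \eqref{eq:zajoid}). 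A secondary technical point is passing to the limit $\phi_n(\delta_i\cdot)\to\gamma_i z^0_{\alpha_i}$ inside integrals against the non-compactly supported weights $|y|^{\alpha_i-2}e^{v_{\alpha_i}}\{1,\mathcal D_{\alpha_i},v_{\alpha_i},\ln|y|\}$; this is handled by dominated convergence, using $\|\phi_n\|_{L^\infty}=1$ and the fact that these weights lie in $L^1(\rr^2)$, the $|y|^{\alpha_i-2}$ behaviour near $0$ and the $|y|^{-\alpha_i-2}$ behaviour near $\infty$ beating the at most logarithmic growth of the other factors, with a cut-off argument where the weak $\mathbb D^{1,2}(\rr^2)$ convergence is more convenient.
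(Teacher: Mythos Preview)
Your approach is essentially identical to the paper's: the same two test functions $Pz^0_{\alpha_j,\delta_j}$ and $PU_{\alpha_j,\delta_j}$ (the paper postpones the division by $2\alpha_j$ to the end), the same replacement of $\mathcal W_{p_n}$ by its Proposition~\ref{prop:Wpest} expansion with the remainder absorbed into a modified right-hand side $\tilde h_n$, the same splitting over the $E_i$'s with Lemma~\ref{lem:radialefscaling} and Lemma~\ref{lem:modbubproj} governing the behaviour of the test functions on each piece, and the same use of the three identities in \eqref{eq:zajoid}. Your treatment of the error terms---in particular the observation that the $O(1)$ parts of the test functions contribute $O(\sigma_{i,n}/p_n)=o(1)$ via the first identity in \eqref{eq:zajoid}, and your explicit discussion of dominated convergence for the passage to the limit---is in fact slightly more careful than the paper's write-up.
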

\begin{proof}
We observe that in view of Proposition~\ref{prop:Wpest} we may write
\beqs
\mathcal W_{\pn}=\sum_{j=1}^k|x|^{\aj-2}e^{U_{\aj,\dej}(x)}\{1+\frac{\Dadj}{p}\}\chi_{\Bj}
+\widetilde\om_n,
\eeqs
where $\|\widetilde\om_n\|_{\rho_{\pn}}=O(\pn^{-2})$.
Therefore, setting $\thn=\hn-\widetilde\om_n\phin$, we obtain from the contradiction assumption that
\beqs
\mathcal L_{\pn}^1\phin=\thn, \qquad\pn\|\thn\|_{\rho_{\pn}}=o(1), \qquad\|\phin\|_{L^\infty(\Om)}=1,
\eeqs
where $\mathcal L_{\pn}^1$ is the operator defined in \eqref{def:Lpone} with $p=\pn$.
\par
Hence, we consider problem
\beq
\label{eq:phi1}
\left\{
\bal
-\Delta\phin=&\sum_{i=1}^k|x|^{\ai-2}e^{\Udai(x)}\{1+\frac{\Dadi}{\pn}\}\chi_{\Bi}(x)\phin-\thn
&&\hbox{in }\Omega\\
\phin=&0,
&&\hbox{on }\pl\Omega,
\eal
\right.
\eeq
and, for any fixed $j=1,2,\ldots,k$, we consider the problems
\beq
\label{eq:Pz1}
\left\{
\bal
-\Delta P\zajod=&|x|^{\aj-2}e^{\Udaj}\zajod,
&&\hbox{in }\Omega\\
P\zajod=&0,
&&\hbox{on }\pl\Omega
\eal
\right.
\eeq
and
\beq
\label{eq:PU}
\left\{
\bal
-\Delta P\Udaj=&|x|^{\aj-2}e^{\Udaj},&&\hbox{in\ }\Omega,\\
P\Udaj=&0&&\hbox{on\ }\pl\Omega.
\eal
\right.
\eeq
Proof of $(S_j^1)$. Testing problem~\eqref{eq:phi1} by $P\zajod$, integrating by parts
and using \eqref{eq:Pz1} we obtain the relation: 
\beq
\label{eq:inteq1}
\int_\Om|x|^{\aj-2}e^{\Udaj}\zajod\phin=\sum_{i=1}^k\int_{\Bi}|x|^{\ai-2}e^{\Udai}\{1+\frac{\Dadi}{\pn}\}\phin P\zajod
-\int_\Om\thn P\zajod.
\eeq
We may write
\beqs
\bal
\int_{\Bj}&|x|^{\aj-2}e^{\Udaj}\{1+\frac{\Dadj}{\pn}\}\phin P\zajod
-\int_{\Om}|x|^{\aj-2}e^{\Udaj}\zajod\phin\\
=&\int_{\Bj}|x|^{\aj-2}e^{\Udaj}\{1+\frac{\Dadj}{\pn}\}\phin(P\zajod-\zajod)
+\frac{1}{\pn}\int_{\Bj}|x|^{\aj-2}e^{\Udaj}\Dadj\zajod\phin\\
&\qquad-\int_{\Om\setminus\Bj}|x|^{\aj-2}e^{\Udaj}\zajod\phin.
\eal
\eeqs
Therefore, multiplying  \eqref{eq:inteq1} by $\pn$, we derive that
\beqs
\bal
\pn\int_{\Bj}|x|^{\aj-2}&e^{\Udaj}\{1+\frac{\Dadj}{\pn}\}\phin(P\zajod-\zajod)
+\int_{\Bj}|x|^{\aj-2}e^{\Udaj}\Dadj\zajod\phin\\
&+\pn\sum_{i\neq j}\int_{\Bi}|x|^{\ai-2}e^{\Udai}\{1+\frac{\Dadi}{\pn}\}\phin P\zajod\\
=&\pn\int_\Om\thn P\zajod-\pn\int_{\Om\setminus\Bj}|x|^{\aj-2}e^{\Udaj}\zajod\phin.
\eal
\eeqs
%
%
Multiplying by $\pn$ and observing that
\beqs
\bal
&\pn\int_\Om\thn P\zaiod=o(1),\\
&-C\pn^2\le\Dai(y)\le C\pn\\
&\int_{\Om\setminus\Bj}|x|^{\aj-2}e^{\Udaj}\,dx=O(\frac{\de_{j-1}}{\dej}+(\frac{\dej}{\de_{j+1}})^{\aj/2})
\eal
\eeqs
we derive 
\beq
\label{eq:phiPzid}
\bal
\pn\int_{\Bj}|x|^{\aj-2}e^{\Udaj}&\{1+\frac{\Dadj}{\pn}\}\phin(P\zajod-\zajod)
+\int_{\Bj}|x|^{\aj-2}e^{\Udaj}\Dadj\zajod\phin\\
&+\sum_{i\neq j}\pn\int_{\Bi}|x|^{\ai-2}e^{\Udai}\{1+\frac{\Dadi}{\pn}\}\phin P\zajod=o(1).
\eal
\eeq
In view of Lemma~\ref{lem:radialefscaling}, the first term in \eqref{eq:phiPzid} takes the form
\beqs
\bal
\pn\int_{\Bj}|x|^{\aj-2}&e^{\Udaj}\{1+\frac{\Dadj}{\pn}\}\phin(P\zajod-\zajod)\\
=&\pn\int_{\Bj}|x|^{\aj-2}e^{\Udaj}\{1+\frac{\Dadj}{\pn}\}\phin(1+O(\dej^{\aj}))
=\sjs+o(1).
\eal
\eeqs
The second term in \eqref{eq:phiPzid} satisfies
\beqs
\int_{\Bj}|x|^{\aj-2}e^{\Udaj}\Dadj\zajod\phin\,dx=\Iaj+o(1).
\eeqs
In order to estimate the third term in \eqref{eq:phiPzid} we set $x=\dei y$:
\beqs
\bal
\pn\int_{\Bi}|x|^{\ai-2}e^{\Udai}&\{1+\frac{\Dadi}{\pn}\}\phin P\zajod\,dx\\
=&\pn\int_{\Bi/\dei}|y|^{\ai-2}e^{\vi(y)}\{1+\frac{\Dai(y)}{\pn}\}\phii(y)P\zajod(\dei y)\,dy.
\eal
\eeqs
For $i<j$ (fast scaling) we estimate, using \eqref{eq:efscaling}:
\beqs
\bal
\pn&\int_{\Bi/\dei}|y|^{\ai-2}e^{\vi(y)}\{1+\frac{\Dai(y)}{\pn}\}\phii(y)P\zajod(\dei y)\,dy\\
=&\pn\int_{\Bi/\dei}|y|^{\ai-2}e^{\vi(y)}\{1+\frac{\Dai(y)}{\pn}\}\phii(y)(2+O(\frac{\dei|y|}{\dej})^{\aj})\,dy
=2\sis+o(1).
\eal
\eeqs
For $i>j$ (slow scaling) we estimate, using \eqref{eq:efscaling}:
\beqs
\bal
\pn&\int_{\Bi/\dei}|y|^{\ai-2}e^{\vi(y)}\{1+\frac{\Dai(y)}{\pn}\}\phii(y) P\zajod(\dei y)\,dy\\
=&\pn\int_{\Bi/\dei}|y|^{\ai-2}e^{\vi(y)}\{1+\frac{\Dai(y)}{\pn}\}\phii(y)O((\frac{\dej}{\dei|y|})^{\aj}+\dej^{\aj})\,dy=o(1).
\eal
\eeqs
Hence, ($S_j^1$) is established.
\par
Proof of ($S_j^2$).
Testing problem~\eqref{eq:phi1} by $P\Udaj$, integrating by parts and recalling problem~\eqref{eq:PU}, we obtain the identity:
\beq
\label{eq:sjsid2}
\int_\Om|x|^{\aj-2}e^{\Udaj}\phin=\sum_{i=1}^k\int_{\Bi}|x|^{\ai-2}e^{\Udai}\{1+\frac{\Dadi}{\pn}\}\phin P\Udaj
-\int_\Om\thn P\Udaj.
\eeq
We note that $\int_\Om|x|^{\ai-2}e^{\Udai}\phin=o(1)$ by the first equation in \eqref{eq:zajoid}, and $\int_\Om\thn P\Udai=o(1)$
by decay rate of $\thn$.
By the change of variables $x=\dei y\in\Bi$, we obtain
\beqs
\bal
\int_{\Bi}|x|^{\ai-2}e^{\Udai}&\{1+\frac{\Dadi}{\pn}\}\phin P\Udaj(x)\,dx\\
=&\int_{\Bi/\dei}|y|^{\ai-2}e^{\vi(y)}\{1+\frac{\Dai(y)}{\pn}\}\phii(y)P\Udaj(\dei y)\,dy.
\eal
\eeqs
We estimate the integral above, using Lemma~\ref{lem:modbubproj}.
For $i=j$ (natural scaling), we estimate:
\beqs
\bal
\int_{\Bi/\dei}&|y|^{\ai-2}e^{\vi(y)}\{1+\frac{\Dai(y)}{\pn}\}\phii(y)P\Udaj(\dei y)\,dy\\
=&\int_{\Bj/\dej}|y|^{\aj-2}e^{\vj(y)}\{1+\frac{\Daj(y)}{\pn}\}\phij(y)\times\\
&\qquad\qquad\times\{\vj(y)-\ln(2\aj^2)-2\aj\ln\dej+4\pi\aj h(0)+O(|\dej y|)+O(\dej^{\aj})\}\,dy\\
=&\int_{\Bj/\dej}|y|^{\aj-2}e^{\vj(y)}\{1+\frac{\Daj(y)}{\pn}\}\phij(y)\vj(y)\,dy\\
&\qquad\qquad-2\ai\ln\dej\int_{\Bj/\dej}|y|^{\aj-2}e^{\vj(y)}\{1+\frac{\Daj(y)}{\pn}\}\phij(y)\,dy+o(1).
\eal
\eeqs
In view of the second equation in \eqref{eq:zajoid}
we have
\beqs
\bal
\int_{\Bj/\dej}&|y|^{\aj-2}e^{\vj(y)}\{1+\frac{\Daj(y)}{\pn}\}\phij(y)\vj(y)\,dy\\
=&\gj\int_{\rr^2}|y|^{\aj-2}e^{\vj(y)}\zajo(y)\vj(y)\,dy+o(1)
=4\pi\aj\gj+o(1).
\eal
\eeqs
We deduce that if $i=j$, then
\beqs
\bal
\int_{\Bj/\dej}|y|^{\aj-2}e^{\vj(y)}\{1+\frac{\Daj(y)}{\pn}\}\phij(y)P\Udai(\dej y)\,dy
=4\pi\aj\gj+2\aj\bj\sjs+o(1).
\eal
\eeqs
For $i<j$ (fast scaling):
\beqs
\bal
\int_{\Bi/\dei}&|y|^{\ai-2}e^{\vi(y)}\{1+\frac{\Dai(y)}{\pn}\}\phii(y)P\Udaj(\dei y)\,dy\\
=&\int_{\Bi/\dei}|y|^{\ai-2}e^{\vi(y)}\{1+\frac{\Dai(y)}{\pn}\}\phii(y)\times\\
\times&\left\{-2\aj\ln\dej+4\pi\aj h(0)+O((\frac{\dei|y|}{\dej})^{\aj}+|\dei y|+\dej^{\aj})\right\}\,dy
=2\aj\bj\sis+o(1).
\eal
\eeqs
For $i>j$ (slow scaling) we have
\beqs
\bal
\int_{\Bi/\dei}&|y|^{\ai-2}e^{\vi(y)}\{1+\frac{\Dai(y)}{\pn}\}\phii(y)P\Udaj(\dei y)\,dy\\
=&\int_{\Bi/\dei}|y|^{\ai-2}e^{\vi(y)}\{1+\frac{\Dai(y)}{\pn}\}\phii(y)\times\\
\times&\left\{2\aj\ln\frac{1}{|y|}-2\aj\ln\dei+4\pi\aj h(0)
+O((\frac{\dej}{\dei|y|})^{\aj}+|\dei y|+\dej^{\aj})\right\}\,dy\\
=&2\aj\int_{\Bi/\dei}|y|^{\ai-2}e^{\vi(y)}\{1+\frac{\Dai(y)}{\pn}\}\phii(y)\ln\frac{1}{|y|}\,dy
+2\aj\bi\sis+o(1).
\eal
\eeqs
Using the third equation in \eqref{eq:zajoid}
we derive:
\beqs
\bal
\int_{\Bi/\dei}|y|^{\ai-2}e^{\vi(y)}\{1+\frac{\Dai(y)}{\pn}\}\phii(y)\ln\frac{1}{|y|}\,dy
=2\pi\gi+o(1).
\eal
\eeqs
It follows that for $i>j$:
\beqs
\int_{\Bi/\dei}|y|^{\ai-2}e^{\vi(y)}\{1+\frac{\Dai(y)}{\pn}\}\phii(y)P\Udaj(\dei y)\,dy
=2\aj\{2\pi\gi+\bi\sis\}+o(1).
\eeqs
Inserting into \eqref{eq:sjsid2} we obtain
\beqs
4\pi\aj\gj+2\aj\bj\sjs+2\aj\bj\sum_{i<j}\sis+2\aj\sum_{i>j}\{2\pi\gi+\bi\sis\}=o(1),
\eeqs
that is,
\beqs
\bj\sum_{i\le j}\sis+2\pi\sum_{i\ge j}\gi+\sum_{i>j}\bi\sis=o(1).
\eeqs
Hence, ($S_j^2$) is established.
\end{proof}

\section{Linearized operator: uniform vanishing on shrinking rings}
\subsection{Evaluation of $\Ia$}
An easy computation shows that $w$  is a radial solution to  
\begin{equation}\label{l1}
\Delta w+2\alpha^2{|y|^{\alpha-2}\over(1+|y|^\alpha)^2}w=2\alpha^2{|y|^{\alpha-2}\over(1+|y|^\alpha)^2}f(|y|)\ \hbox{in}\ \mathbb R^2
\end{equation}
if and only if the function $\tilde w(y)=w\(|y|^{2\over\alpha}\)$ is a radial  solution to
 \begin{equation}\label{l2}
\Delta \tilde w+ {8\over(1+|y|^2)^2}\tilde w= {8\over(1+|y|^2)^2}\tilde f(|y|)\ \hbox{in}\ \mathbb R^2
\end{equation}
where
$\tilde f(|y|):=f\(|y|^{2\over\alpha}\)$.
Now let 
\begin{equation}\label{effe}
\fa(y)={1\over2}\(\underbrace{\ln {2\alpha^2\over (1+|y|^\alpha)^2}+(\alpha-2)\ln|y|}_{:=\mVa} |\)^2.
\end{equation}
Then,
$$\tVa(y)=  \ln {2\alpha^2\over (1+|y|^2)^2}+  {2(\alpha-2)\over\alpha}\ln|y|$$
Set
$\Da=w-\mVa-\frac12\mVa^2$
and
\beqs
\za(y)=\frac{1-|y|^\al}{1+|y|^\al}.
\eeqs
Our aim in this section is to evaluate the integrals
\beq
\label{def:Ia}
\Ia:=\int_{\rr^2}e^{\Va(y)}z_{\al}^2(y)\Da(y)\,dy
=\int\limits_{\mathbb R^2}2\alpha^2{|y|^{\alpha-2}(1-|y|^\alpha)^2\over(1+|y|^\alpha)^4}\Da(y)\,dy.
\eeq
Indeed, we establish the following.
\begin{lemma}
\label{lemma:Ia}
There holds
\beq
\label{eq:Ia}
\Ia=8\pi \(- \ln(2\alpha^2) +\frac{3\alpha-2}{\alpha }\)<0
\eeq
for all $\al\ge2$.
\end{lemma}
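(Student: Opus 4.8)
The plan is to read off $\Ia$ by combining the conformal scaling invariance of the singular Liouville equation with a single integration by parts, reducing the problem to a finite list of elementary integrals. Since $\mVa$ solves $-\Delta\mVa=e^{\mVa}$ away from the origin, the scaled family $(\mVa)_\delta:=\mVa(\cdot/\delta)-2\ln\delta$ solves the same equation for every $\delta>0$, and $\za=-\tfrac1\alpha\,\partial_\delta(\mVa)_\delta\big|_{\delta=1}$. Differentiating once more in $\delta$ produces the explicit bounded radial function
$$P(y):=\frac1\alpha+\frac{2\bigl((\alpha-1)|y|^\alpha-1\bigr)}{\alpha\,(1+|y|^\alpha)^2},$$
which one checks directly to satisfy $\Delta P+e^{\mVa}P=e^{\mVa}\za^2$ on $\rr^2$, with $P(0)=-\tfrac1\alpha$, $P(y)\to\tfrac1\alpha$ as $|y|\to\infty$, and $\nabla P$ vanishing at the origin and decaying like $|y|^{-\alpha-1}$ at infinity. (Alternatively one may first use the substitution $w\mapsto w(|y|^{2/\alpha})$ behind \eqref{l1}--\eqref{l2} to pass to $\alpha=2$, where $P$ is just $\tfrac12-\tfrac{1-|y|^2}{(1+|y|^2)^2}$; either way the computation below is the same.)

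I would then split $\Ia=\int_{\rr^2}e^{\mVa}\za^2 w-\int_{\rr^2}e^{\mVa}\za^2\mVa-\tfrac12\int_{\rr^2}e^{\mVa}\za^2\mVa^2$, where $w=\woa$ solves $\Delta w+e^{\mVa}w=e^{\mVa}\fa=\tfrac12 e^{\mVa}\mVa^2$; the last two integrals are already of elementary type. For the first, Green's identity on $B_R\setminus B_\veps$ applied to the pair $(P,w)$, together with the two PDEs, gives
$$\int_{\rr^2}e^{\mVa}\za^2 w=\frac12\int_{\rr^2}P\,e^{\mVa}\mVa^2-\frac{2\pi}{\alpha}\,\Coa .$$
Here the inner boundary contribution vanishes because the right-hand side of the $w$--equation is in $L^q_{\mathrm{loc}}$, so by elliptic regularity $w$ is bounded near the origin with $\nabla w\to0$; the outer contribution is computed from $w(y)=\Coa\ln|y|+O(|y|^{-1})$ (see \eqref{bdrycond:wa}) and the behaviour of $P$ at infinity. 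The constant $\Coa$ is itself determined by testing the $w$--equation against the kernel element $\za$ and integrating by parts (using $\Delta\za+e^{\mVa}\za=0$, $\za\to-1$ at infinity, and once more the vanishing inner term): this expresses $2\pi\Coa$ as an explicit multiple of $\int_{\rr^2}e^{\mVa}\mVa^2\za$. Thus $\Ia$ is written entirely in terms of the four explicit integrals $\int P e^{\mVa}\mVa^2$, $\int e^{\mVa}\mVa^2\za$, $\int e^{\mVa}\za^2\mVa$ and $\int e^{\mVa}\za^2\mVa^2$.

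To finish I would evaluate these integrals. With $t=|y|^\alpha$ one has $\int_{\rr^2}e^{\mVa}F\,dy=4\pi\alpha\int_0^\infty\frac{F}{(1+t)^2}\,dt$ for radial $F$, while $\mVa=\ln(2\alpha^2)+\tfrac{\alpha-2}\alpha\ln t-2\ln(1+t)$, $\za=\tfrac{1-t}{1+t}$ and $P=\tfrac1\alpha+\tfrac{2((\alpha-1)t-1)}{\alpha(1+t)^2}$. Expanding the squares of $\mVa$, everything reduces to integrals of the form $\int_0^\infty(1+t)^{-n}(\ln t)^a(\ln(1+t))^b\,dt$ with $a\le1$ and their products, obtained from $\int_0^\infty\frac{t^{s-1}}{(1+t)^n}\,dt=\frac{\Gamma(s)\Gamma(n-s)}{\Gamma(n)}$ by differentiating in $s$ and from $\int_1^\infty s^{-n}(\ln s)^b\,ds$. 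After collecting terms, the $\ln^2(2\alpha^2)$ pieces and all the data attached to $w$ cancel, leaving $\Ia=8\pi\bigl(-\ln(2\alpha^2)+\tfrac{3\alpha-2}\alpha\bigr)$. The sign is then immediate: $\alpha\mapsto-\ln(2\alpha^2)+\tfrac{3\alpha-2}\alpha=-\ln2-2\ln\alpha+3-\tfrac2\alpha$ has derivative $\tfrac2{\alpha^2}(1-\alpha)<0$ on $(1,\infty)$, so it is strictly decreasing on $[2,\infty)$, and its value at $\alpha=2$ is $2-\ln8<0$ since $e^2<8$; hence $\Ia<0$ for all $\alpha\ge2$.

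I expect the main work — and the main source of error — to be the last paragraph: the number of elementary integrals is large, and seeing the cancellation of the $\ln^2(2\alpha^2)$ terms and of the $\Coa$--dependence requires careful bookkeeping, especially of signs in the boundary-term identities and in the expansions of $\mVa^2$. A secondary technical point is the justification of those boundary terms, which hinges on the precise behaviour of $w$ and $\nabla w$ near $0$ and near $\infty$.
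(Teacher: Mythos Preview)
Your approach is correct and takes a genuinely different route from the paper's.

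The paper first uses the substitution $r\mapsto r^{2/\alpha}$ (as in \eqref{l1}--\eqref{l2}) to reduce to the regular case $\alpha=2$, writes $\Ia=\Ia'+\Ia''$ with $\Ia'=-\int e^{\mVa}\za^2\mVa$ and $\Ia''=\int e^{\mVa}\za^2(\tilde w-\tfrac12\tVa^2)$, and computes $\Ia'$ by three elementary one--variable integrals. For $\Ia''$ it integrates by parts against $Z^2=\za^2$, computes $\Delta(Z^2)$ explicitly, and then \emph{plugs in the Chae--Imanuvilov integral representation} of $\tilde w$ from Lemma~\ref{lem:CI}; the resulting double integrals are reduced by hand, and after several pages of bookkeeping one arrives at $\Ia''=-2\pi\alpha\int_0^\infty t\frac{1-2t^2}{(1+t^2)^2}F_\alpha(t)\,dt$, which is then expanded into six elementary integrals.

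Your route sidesteps the explicit representation of $w$ entirely. The point is that your test function $P$ solves $\Delta P+e^{\mVa}P=e^{\mVa}\za^2$ exactly (it arises from the \emph{second} dilation derivative of the Liouville solution, whereas $\za$ is the first), so Green's identity between $P$ and $w$ converts $\int e^{\mVa}\za^2 w$ into $\tfrac12\int Pe^{\mVa}\mVa^2$ plus the single boundary constant $\Coa$, and a second Green's identity against $\za$ fixes $\Coa$ in terms of $\int e^{\mVa}\za\mVa^2$. The paper's $Z^2$ does \emph{not} satisfy such a clean equation (one has $\Delta(Z^2)+2e^{\mVa}Z^2=2|\nabla Z|^2$), which is why the paper must fall back on the representation formula. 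Your approach is therefore more structural and, once the four explicit integrals are written in the variable $t=|y|^\alpha$, involves essentially the same elementary pieces as the paper but organized more transparently; the price is that you must justify the boundary contributions carefully (in particular the $-\tfrac{2\pi}{\alpha}\Coa$ term at infinity and the vanishing at the origin, both of which you correctly flag). The monotonicity argument for the sign at the end is identical in both proofs.
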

\begin{proof}
By change of variables, we have
\begin{equation}\label{I}
\begin{aligned}
\Ia=&\int\limits_{\mathbb R^2}2\alpha^2{|y|^{\alpha-2}(1-|y|^\alpha)^2\over(1+|y|^\alpha)^4}\Da(y)\,dy\\
=&                                                                                                                                                                                                                  2\pi\int\limits_0^\infty 2\alpha^2{r^{\alpha-1}(1-r^\alpha)^2\over(1+r^\alpha)^4}\Da(r)dr\\
=&                                                                                                                                                                                                                             8\pi\alpha\int\limits_0^\infty s{(1-s^2)^2\over(1+s^2)^4}\Da\(s^{2\over\alpha}\)ds\\
=&                                                                                                                                                                                                                             8\pi\alpha\int\limits_0^\infty s{(1-s^2)^2\over(1+s^2)^4} \tDa\(s \)ds\\
=&                                                                                                                                                                                                                             8\pi\alpha\int\limits_0^\infty s{(1-s^2)^2\over(1+s^2)^4} \(\tilde w\(s \)-\tVa(s)-\frac12\tVa^2(s)\)ds\\
=:&\mathcal I_\al'+\mathcal I_\al''.
\end{aligned}\end{equation}
We compute
\begin{equation}\label{I1}
\begin{aligned}
\mathcal I_\al'&=-8\pi\alpha\int\limits_0^\infty s{(1-s^2)^2\over(1+s^2)^4}  \tilde v (s) ds\\
&=-8\pi\alpha\int\limits_0^\infty s{(1-s^2)^2\over(1+s^2)^4} \( \ln {2\alpha^2\over (1+s^2)^2}+  {2(\alpha-2)\over\alpha}\ln s\)ds\\
&=-8\pi\alpha\(\frac16\ln(2\alpha^2)-\frac 49\)
\end{aligned}
\end{equation}
because
$$\int\limits_0^\infty s{(1-s^2)^2\over(1+s^2)^4} ds=\frac16$$
$$\int\limits_0^\infty s{(1-s^2)^2\over(1+s^2)^4} \ln (1+s^2)ds=\frac29$$
$$\int\limits_0^\infty s{(1-s^2)^2\over(1+s^2)^4} \ln sds=0.$$
Moreover, taking into account that $\tilde w$ solves
$$\tilde w''+{1\over r}\tilde w'+{8\over(1+r^2)^2}\tilde w={4\over(1+r^2)^2}\(\ln {2\alpha^2\over (1+r^2)^2}+  {2(\alpha-2)\over\alpha}\ln r\)^2=:\Fa(s)$$
In view of Lemma~\ref{lem:CI}, 
we have the representation
$$\tilde w(r)={1-r^2\over 1+r^2}\(\phi_{\Fa}(1) {r\over r-1}+\int\limits_0^r  {\phi_{\Fa}(s)-\phi_{\Fa}(1)\over (s-1)^2}ds\)$$
and
$$\phi_{\Fa}(s)=\(1+s^2\over 1-s^2\)^2{(s-1)^2\over s}\int\limits_0^s t{1-t^2\over 1+t^2}{\Fa}(t)dt=
\(1+s^2\over 1+s\)^2{1\over s}\int\limits_0^s t{1-t^2\over 1+t^2}{\Fa}(t)dt.$$
We know that
$$Z={1-|y|^2\over 1+|y|^2}$$
solves
$$-\Delta Z=  {8\over (1+|y|^2)^2} Z= 8{1-|y|^2\over (1+|y|^2)^3} $$
Then $Z^2$ solves
$$\nabla Z={-2y\over 1+|y|^2}-2y{1-|y|^2\over (1+|y|^2)^2}={-4y\over (1+|y|^2)^2}$$
$$|\nabla Z|^2= {16|y|^2\over (1+|y|^2)^4} $$
$$\Delta Z^2=2Z\Delta Z+2|\nabla Z |^2=-{16(1-|y|^2)^2\over (1+|y|^2)^4} +{32|y|^2\over (1+|y|^2)^4}= -16{ 1+|y|^4-4|y|^2\over (1+|y|^2)^4}$$
\begin{equation}\label{I3}
\begin{aligned}
\Ia''&=8\pi\alpha\int\limits_0^\infty s{(1-s^2)^2\over(1+s^2)^4} \(\tilde w\(s\)-\frac12 \tVa^2 (s)\)\,ds\\
 &=4 \alpha\int\limits_{\mathbb R^2}  {(1-|y|^2)^2\over(1+|y|^2)^4} \(\tilde w\(y\)-\frac12 \tVa^2 (y)\)\,dy\\
&=-\frac12 \alpha\int\limits_{\mathbb R^2}  {Z^2(y)} \Delta \tilde w(y)\,dy\\
&=-\frac12 \alpha\int\limits_{\mathbb R^2}  \Delta {Z^2(y)}  \tilde w(y)\,dy\\
&=8 \alpha\int\limits_{\mathbb R^2} { 1+|y|^4-4|y|^2\over (1+|y|^2)^4}  \tilde w(y)\,dy\\
&=16\pi \alpha\int\limits_0^\infty r{ 1+r^4-4r^2\over (1+r^2)^4}  \tilde w(r) dr\\
&=16\pi \alpha\int\limits_0^\infty r{ (1-r^2)(1+r^4-4r^2)\over (1+r^2)^5} \(\phi_{\Fa}(1) {r\over r-1}+\int\limits_0^r  {\phi_{\Fa}(s)-\phi_{\Fa}(1)\over (s-1)^2}ds\)
\\
&=16\pi \alpha\left[-\phi_{\Fa}(1)\underbrace{\int\limits_0^\infty r^2{(1+r)(1+r^4-4r^2)\over  (1+r^2)^5} dr}_{= {\pi\over 128}} 
+\int\limits_0^\infty r{ (1-r^2)(1+r^4-4r^2)\over (1+r^2)^5}  \int\limits_0^r  {\phi_{\Fa}(s)-\phi_{\Fa}(1)\over (s-1)^2}ds \right]\\
&=16\pi \alpha \left[-{\pi\over 128}\phi_{\Fa}(1)+\int\limits_0^\infty r{ (1-r^2)(1+r^4-4r^2)\over (1+r^2)^5}  \int\limits_0^r  {\phi_{\Fa}(s)-\phi_{\Fa}(1)\over (s-1)^2}ds \right]\\
&=  16\pi \alpha\left[-{\pi\over 128}\phi_{\Fa}(1)-\int\limits_0^\infty {(r^2 - r^4 + r^6)\over  2 (1 + r^2)^4 }{\phi_{\Fa}(r)-\phi_{\Fa}(1)\over (r-1)^2}dr\right]\\
&\\ &=16\pi \alpha\left[-{\pi\over 128}\phi_{\Fa}(1)\right]\\
&+16\pi \alpha\Big\{-{1 - 2 r^2\over  8 (-1 + r^4)}\int\limits_0^r t{1-t^2\over 1+t^2}{\Fa}(t)dt-\int\limits_0^r
t{1 - 2 t^2\over  8  (1+t^2)^2}{\Fa}(t)dt \\
&\qquad\qquad+\phi_{\Fa}(1)\left[ {r (1 + r)^3 (1 - 4r + r^2) \over 64 (-1 + r) (1 +r^2)^3} + {1\over 64} \arctan r\right]\Big\}_{r=0}^{r=\infty}\\
&=16\pi \alpha[-{\pi\over 128}\phi_{{\Fa}}(1)-\int\limits_0^\infty
t{1 - 2 t^2\over  8  (1+t^2)^2}{\Fa}(t)dt+ \phi_{\Fa}(1) {\pi\over128} ]\\
&=-2\pi \alpha \int\limits_0^\infty
t{1 - 2 t^2\over    (1+t^2)^2}{\Fa}(t)dt 
 \end{aligned}\end{equation}
 because
$$\begin{aligned}
&  \int   {(r^2 - r^4 + r^6)\over  2 (r-1)^2(1 + r^2)^4 } dr= {r (1 + r)^3 (1 - 4r + r^2) \over 64 (-1 + r) (1 +r^2)^3} + {1\over 64} \arctan r
 \end{aligned}$$
 and
 $$\begin{aligned}
&\int\limits  {(r^2 - r^4 + r^6)\over  2 (r-1)^2 (1 + r^2)^4 } \phi_{\Fa}(r) dr \\
&=\int\limits {(r^2 - r^4 + r^6)\over  2 (r-1)^2 (1 + r^2)^4 }\(1+r^2\over 1-r^2\)^2{(r-1)^2\over r}\int\limits_0^r t{1-t^2\over 1+t^2}{\Fa}(t)dt\\
&=\int\limits  {(r^2 - r^4 + r^6)\over  2r  (1 - r^4)^2  } \int\limits_0^r t{1-t^2\over 1+t^2}\Fa(t)dt\\
&={1 - 2 r^2\over  8 (-1 + r^4)}\int\limits_0^r t{1-t^2\over 1+t^2}\Fa(t)dt-\int
{1 - 2 r^2\over  8 (-1 + r^4)} r  {1-r^2\over 1+r^2}\Fa(r)dr \\
&={1 - 2 r^2\over  8 (-1 + r^4)}\int\limits_0^r t{1-t^2\over 1+t^2}\Fa(t)dt+\int
r{1 - 2 r^2\over  8  (1+r^2)^2}\Fa(r)dr \end{aligned}$$

We have to compute
$$\begin{aligned}
&\int\limits_0^\infty
t{1 - 2 t^2\over     (1+t^2)^2}\Fa(t)dt=4\int\limits_0^\infty
t{1 - 2 t^2\over    (1+t^2)^4} \(\ln {2\alpha^2\over (1+t^2)^2}+  {2(\alpha-2)\over\alpha}\ln t
\)^2dt\\
&=4(\ln2\alpha^2)^2\underbrace{\int\limits_0^\infty
t{1 - 2 t^2\over    (1+t^2)^4} dt}_{=0}+16\underbrace{\int\limits_0^\infty
t{1 - 2 t^2\over    (1+t^2)^4} (\ln(1+t^2))^2dt}_{=-\frac5{36}}+ 16{(\alpha-2)^2\over\alpha^2}\underbrace{\int\limits_0^\infty
t{1 - 2 t^2\over    (1+t^2)^4}(\ln t)^2 dt}_{= \frac1{8} }\\&
-16 (\ln2\alpha^2) \underbrace{\int\limits_0^\infty
t{1 - 2 t^2\over    (1+t^2)^4} \ln(1+t^2) dt}_{=-\frac1{12}}+ 16{(\alpha-2) \over\alpha } (\ln2\alpha^2) \underbrace{\int\limits_0^\infty
t{1 - 2 t^2\over    (1+t^2)^4}(\ln t)  dt}_{=-\frac1{8}}\\ &-32{(\alpha-2) \over\alpha } \underbrace{  \int\limits_0^\infty
t{1 - 2 t^2\over    (1+t^2)^4} (\ln t)\ln(1+t^2)}_{=-\frac1{16}}  dt\\
&=-\frac{20}9+\(\frac43-2\frac{\alpha-2}\alpha\)\ln(2\alpha^2)+2\frac{(\alpha-2)^2}{\alpha^2}+2\frac{\alpha-2}\alpha
\end{aligned}$$
Therefore,
$$\begin{aligned}
 I(\alpha) &=8\pi\alpha\( -\frac16\ln(2\alpha^2)+\frac 49+\frac59-\(\frac13- \frac{\alpha-2}{2\alpha}\)\ln(2\alpha^2)- \frac{(\alpha-2)^2}{2\alpha^2}-\frac{\alpha-2}{2\alpha}\)\\
&=8\pi\alpha\(-{\ln(2\alpha^2)\over\alpha}+\frac{3\alpha-2}{\alpha^2}\)=
8\pi \(- \ln(2\alpha^2) +\frac{3\alpha-2}{\alpha }\)
\end{aligned}$$
Finally, we have
$${I'(\alpha)}=8\pi \(-\frac2\alpha+\frac2{\alpha^2}\)<0\ \hbox{for any}\ \alpha\ge2$$
$$ $$ 
and so
$$I(\alpha)<I(2)=8\pi(-\ln8 +2)<0\ \hbox{for any}\ \alpha\ge2,$$
as asserted.
\end{proof}
In view of Lemma~\ref{lem:linrel} we set 
\beqs
(\underline\si_n^*,\underline\ga_n)^T:=(\si_{1,n}^*,\ga_1^n,\si_{2,n}^*,\ga_{2,n},\ldots,\si_{k,n}^*,\ga_{k,n})^T.
\eeqs
Then, system~$(S_j^1)-(S_j^2)$ may be written in the form
\beqs
\mathcal M_{k,n}{\underline\si_n^*\choose \underline\ga_n}=o(1),
\eeqs
where $\mathcal M_{k,n}$ is the $2k\times2k$ matrix defined by
\beq
\mathcal M_{k,n}:=\left(\begin{matrix}
1&\mathcal I_{\al_1}&0&0&0&0&\dots&\dots&0&0\\
b_1& 2\pi&b_2&4\pi&b_3&4\pi&\dots&\dots&b_k&4\pi\\
2&0&1&\mathcal I_{\al_2}&0&0&\dots&\dots&0&0\\
b_2&0&b_2 &2\pi&b_3&4\pi&\dots&\dots&b_k&4\pi\\
2&0&2&0&1&\mathcal I_{\al_3}&\dots&\dots&0&0\\
b_3&0&b_3 & 0&b_3&2\pi&\dots&\dots&b_k&4\pi\\
\vdots&\vdots&\vdots & \vdots&\vdots & \vdots&\ddots&\ddots&\vdots&\vdots\\
\vdots&\vdots&\vdots & \vdots&\vdots & \vdots&\ddots&\ddots&\vdots&\vdots\\
2&0&2&0&0&0&\dots&\dots&1&\mathcal I_{\ak}\\
b_k&0&b_k & 0&b_k&0&\dots&\dots&b_k&2\pi
\end{matrix}\right).
\eeq
\begin{lemma}
\label{lem:Mkinvert}
There exists $c_0>0$ such that $|\mathcal M_{k,n}|\ge c_0>0$ for all $n\in\mathbb N$.
\end{lemma}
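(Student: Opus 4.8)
The plan is to pass to the limit $p_n\to+\infty$ and reduce the $2k\times2k$ determinant to a sign‑definite $k\times k$ one. The entries of $\mathcal M_{k,n}$ depend on $n$ only through $\aj=\aj(p_n)$, $\bj=b_j(p_n)$ and $\Iaj=8\pi\bigl(-\ln(2\aj^2)+\tfrac{3\aj-2}{\aj}\bigr)$ (Lemma~\ref{lemma:Ia}), which depends continuously on $\aj$. By Proposition~\ref{prop:solvability} one has $\aj(p_n)\to\al_j^0$ and $\sj(p_n)\to s_j^0\in(0,1)$, and by part~(v) the quantities $d_j:=\bj-\bjj$ ($j=1,\dots,k-1$) and $d_k:=b_k$ converge to $d_j^0=\frac{1+s_j^0}{\al_j^0+2}>0$ and $d_k^0=\frac1{\al_k^0+2}>0$. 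Hence $\mathcal M_{k,n}\to\mathcal M_k^0$ entrywise, where $\mathcal M_k^0$ is obtained from $\mathcal M_{k,n}$ by replacing $\aj,\bj$ with $\al_j^0,b_j^0$, and $|\mathcal M_{k,n}|\to|\mathcal M_k^0|$. It therefore suffices to prove that $\mathcal M_k^0$ is invertible, i.e. that the homogeneous system $\mathcal M_k^0(\sigma_1,\gamma_1,\dots,\sigma_k,\gamma_k)^T=\mathbf 0$ has only the trivial solution; then $|\mathcal M_{k,n}|\ge\tfrac12|\mathcal M_k^0|=:c_0>0$ for all large $n$, which is all that is used in the contradiction argument (the solution branch is only defined for $p\ge p_0$ anyway).

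For the homogeneous system I would introduce the partial sums $S_j:=\sum_{i\le j}\sigma_i$, $S_0:=0$. The equations coming from $(S_j^1)$ read, equivalently, $S_{j-1}+S_j+\mathcal I_{\al_j^0}\gamma_j=0$ for $j=1,\dots,k$. Subtracting consecutive equations $(S_j^2)-(S_{j+1}^2)$ for $j=1,\dots,k-1$ and keeping $(S_k^2)$, the $\sigma$–terms collapse to $d_j^0 S_j$ and one obtains $d_j^0S_j+2\pi(\gamma_j+\gamma_{j+1})=0$ ($j<k$) and $d_k^0S_k+2\pi\gamma_k=0$; since $d_j^0>0$ this yields $S_j=-a_j(\gamma_j+\gamma_{j+1})$ ($j<k$) and $S_k=-a_k\gamma_k$, where $a_j:=2\pi/d_j^0>0$. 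Substituting these expressions back into the $S^1$–relations produces a closed system $A\gamma=\mathbf 0$, where $A$ is the symmetric tridiagonal $k\times k$ matrix with off‑diagonal entries $-a_1,\dots,-a_{k-1}$ and diagonal entries $\mathcal I_{\al_j^0}-a_{j-1}-a_j$ (convention $a_0:=0$). Each of these manipulations is reversible, so $\mathcal M_k^0$ is invertible if and only if $A$ is.

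Finally I would establish the invertibility of $A$ by sign definiteness. A direct expansion gives the identity
\beqs
-\,x^T A x=\sum_{j=1}^{k}\bigl(-\mathcal I_{\al_j^0}\bigr)x_j^2+\sum_{j=1}^{k-1}a_j\,(x_j+x_{j+1})^2+a_k\,x_k^2 .
\eeqs
By Lemma~\ref{lemma:Ia} we have $\mathcal I_{\al_j^0}<0$ (since $\al_j^0\ge2$), so the first sum alone is $>0$ whenever $x\neq\mathbf 0$, while the other two terms are $\ge0$; hence $A$ is negative definite, in particular nonsingular. Consequently $A\gamma=\mathbf 0\Rightarrow\gamma=\mathbf 0\Rightarrow S_j=0\Rightarrow\sigma_j=S_j-S_{j-1}=0$, so $\ker\mathcal M_k^0=\{0\}$, which closes the argument. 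The step I expect to be delicate is the bookkeeping in the row reduction — in particular keeping track of how the $\gamma_i$–coefficients of $(S_j^2)$ telescope and verifying that the resulting $A$ has exactly the diagonal/off‑diagonal pattern above; once this is pinned down the conclusion is elementary and robust, the only structural inputs being $\mathcal I_\al<0$ for $\al\ge2$ (Lemma~\ref{lemma:Ia}) and $d_j^0>0$ (Proposition~\ref{prop:solvability}--(v)).
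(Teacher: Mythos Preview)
Your argument is correct and takes a genuinely different route from the paper. The paper reduces $\mathcal M_{k,n}$ by elementary row and column operations (subtracting consecutive even rows, then consecutive odd columns) to a sparser $2k\times2k$ matrix $\mathcal A_k$, and then proves $|\mathcal A_k|>0$ by a two–term recurrence on determinants of nested submatrices $\mathcal A_j,\mathcal B_j$, using only the positivity of the entries $a_i=-\mathcal I_{\al_i}/(2\pi)$ and $c_i=b_i-b_{i+1}$. Your approach instead passes to the limiting system, eliminates the $\sigma$-variables via the telescoped $(S_j^2)$-relations, and reduces to a symmetric tridiagonal $k\times k$ matrix $A$ which you show to be negative definite by the completion-of-squares identity $-x^TAx=\sum_j(-\mathcal I_{\al_j^0})x_j^2+\sum_{j<k}a_j(x_j+x_{j+1})^2+a_kx_k^2$. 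Both proofs rest on exactly the same two structural inputs, $\mathcal I_\al<0$ for $\al\ge2$ and $d_j=b_j-b_{j+1}>0$; yours is more conceptual and halves the dimension, while the paper's recursion avoids the limiting step and gives a direct quantitative bound for every $n$. One small remark: your telescoping matches the $4\pi$ off-diagonal entries in $\mathcal M_{k,n}$ as displayed; if one uses the coefficients of $(S_j^2)$ as stated in Lemma~\ref{lem:linrel} (namely $2\pi$ for all $\gamma_i$ with $i\ge j$), the differences $(S_j^2)-(S_{j+1}^2)$ yield $d_j^0S_j+2\pi\gamma_j=0$ instead, and $A$ becomes lower bidiagonal with diagonal $\mathcal I_{\al_j^0}-a_j<0$ --- even simpler, with the same conclusion.
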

\begin{proof}
It is equivalent to prove that
$|\widetilde{\mathcal M}_{k,n}|\ge c_0>0$ for all $n\in\mathbb N$,
where $\widetilde{\mathcal M}_{k,n}$ is the $2k\times2k$ matrix  defined by
$$\widetilde{\mathcal M}_{k,n}=\left(\begin{matrix}
1&-a_1&0&0&0&0&\dots&\dots&0&0\\
b_1& 1&b_2&2&b_3&2&\dots&\dots& b_k&2\\
2&0&1&-a_2&0&0&\dots&\dots&0&0\\
b_2&0&b_2 & 1& b_3&2&\dots&\dots&b_k&2\\
2&0&2&0&1&-a_3&\dots&\dots&0&0\\
b_3&0&b_3 & 0&b_3&1&\dots&\dots& b_k&2\\
\vdots&\vdots&\vdots & \vdots&\vdots & \vdots&\ddots&\ddots&\vdots&\vdots\\
\vdots&\vdots&\vdots & \vdots&\vdots & \vdots&\ddots&\ddots&\vdots&\vdots\\
2&0&2&0&2&0&\dots&\dots&1&-a_k\\
b_k&0& b_k & 0& b_k&0&\dots&\dots&b_k&1\\
\end{matrix}\right),$$
whose entries satisfy $a_i,b_i>0$, for any $i=1,2,\ldots,k$, and $b_i\ge b_{i+1}$, $i=1,2,\ldots,k-1$.
In $\widetilde{\mathcal M}_{k,n}$ we replace row $2j$ by the difference between row $2j$ and row $(2j+2)$, $j=1,2,\ldots,k-1$.
Thus, we obtain the matrix $\widetilde{\widetilde{\mathcal M}}_{k,n}$:
$$\widetilde{\widetilde{\mathcal M}}_{k,n}=\left(\begin{matrix}
1&-a_1&0&0&0&0&\dots&\dots&0&0\\
c_1& 1&0&1&0&0&\dots&\dots&0&0\\
2&0&1&-a_2&0&0&\dots&\dots&0&0\\
c_2&0&c_2 & 1& 0&1&\dots&\dots&0&0\\
2&0&2&0&1&-a_3&\dots&\dots&0&0\\
c_3&0&c_3 & 0&c_3&1&\dots&\dots& 0&0\\
\vdots&\vdots&\vdots & \vdots&\vdots & \vdots&\ddots&\ddots&\vdots&\vdots\\
\vdots&\vdots&\vdots & \vdots&\vdots & \vdots&\ddots&\ddots&\vdots&\vdots\\
2&0&2&0&2&0&\dots&\dots&1&-a_k\\
c_k&0& c_k & 0& c_k&0&\dots&\dots&c_k&1\\
\end{matrix}\right),$$
where $c_i=b_i-b_{i+1}$, $i=1,2,\ldots,k-1$, $c_k=b_k$ satisfy $c_i>0$ for any $i$.
In $\widetilde{\widetilde{\mathcal M}}_{k,n}$
we replace
column $(2j-1)$ by the difference between column $(2j-1)$ and column $(2j+1)$, $j=1,2,\ldots,k-1$.
Thus, we obtain the matrix 
$$\mathcal A_k:=\left(\begin{matrix}
1&-a_1&0&0&0&0&\dots&\dots&0&0\\
c_1& 1&0&1&0&0&\dots&\dots&0&0\\
1&0&1&-a_2&0&0&\dots&\dots&0&0\\
0&0&c_2 & 1&0&1&\dots&\dots&0&0\\
0&0&1&0&1&-a_3&\dots&\dots&0&0\\
0&0&0 & 0&c_3&1&\dots&\dots&0&0\\
\vdots&\vdots&\vdots & \vdots&\vdots & \vdots&\ddots&\ddots&\vdots&\vdots\\
\vdots&\vdots&\vdots & \vdots&\vdots & \vdots&\ddots&\ddots&\vdots&\vdots\\
0&0&0&0&0&0&\dots&\dots&1&-a_n\\
0&0&0& 0&0&0&\dots&\dots&c_n&1\\
\end{matrix}\right),$$
and $|\widetilde{\widetilde{\mathcal M}}_{k,n}|=|\widetilde{\mathcal M}_{k,n}|=|\mathcal A_k|$.
A recurrence argument shows that
$$|\mathcal A_k|>0\ \hbox{for any}\ n\ge1.$$
Indeed, let us denote by $\mathcal A_j$ the submatrix of $\mathcal A_k$ obtained by deleting the
first $2(j-1)$ rows and the first $2(j-1)$ columns, namely
$$\mathcal A_j:=\left(\begin{matrix}
1&-a_j&0&0\dots&\dots&0&0\\
c_j& 1&0&1\dots&\dots&0&0\\
1&0&1&-a_{j+1}\dots&\dots&0&0\\
0&0&c_{j+1} & 1\dots&\dots&0&0\\
\vdots&\vdots&\vdots & \vdots&\ddots&\vdots&\vdots\\
\vdots& \vdots&\vdots & \vdots&\ddots&\vdots&\vdots\\
0&0&0&0&\dots&1&-a_k\\
0&0&0&0&\dots&c_k&1\\
\end{matrix}\right).$$
Similarly, we denote by $\mathcal B_j$ the submatrix of $\mathcal A_k$ obtained by deleting the first $2j-1$ rows,
the first $2(j-1)$ columns and the $2j$th column,
namely
$$\mathcal B_j:=\left(\begin{matrix}
c_j&0&1&0\dots&\dots&0&0\\
1& 1&-a_{j+1}&0\dots&\dots&0&0\\
0&c_{j+1}&1&0\dots&\dots&0&1\\
0&1&0 & \dots&\dots&0&0\\
\vdots&\vdots&\vdots & \vdots&\ddots&\vdots&\vdots\\
\vdots& \vdots&\vdots & \vdots&\ddots&\vdots&\vdots\\
0&0&0&0&\dots&1&-a_k\\
0&0&0&0&\dots&c_k&1\\
\end{matrix}\right).$$
With this notation, we readily check that
\beq
\label{eq:AjBj}
\left\{
\bal
|\mathcal A_j|=&|\mathcal A_{j+1}|+a_j|\mathcal B_j|\\
|\mathcal B_j|=&c_j|\mathcal A_{j+1}|+|\mathcal B_{j+1}|.
\eal
\right.
\eeq
Indeed we have
$$\begin{aligned}&|\mathcal M_n|=\left|\begin{matrix}
1&-a_1&0&0&0&0&\dots&\dots&0&0\\
c_1& 1&0&1&0&0&\dots&\dots&0&0\\
1&0&1&-a_2&0&0&\dots&\dots&0&0\\
0&0&c_2 & 1&0&1&\dots&\dots&0&0\\
0&0&1&0&1&-a_3&\dots&\dots&0&-0\\
0&0&0 & 0&c_3&1&\dots&\dots&0&0\\
\vdots&\vdots&\vdots & \vdots&\vdots & \vdots&\ddots&\ddots&\vdots&\vdots\\
\vdots&\vdots&\vdots & \vdots&\vdots & \vdots&\ddots&\ddots&\vdots&\vdots\\
0&0&0&0&0&0&\dots&\dots&1&-a_n\\
0&0&0& 0&0&0&\dots&\dots&c_n&1\\
\end{matrix}\right|\\
&=\left|\begin{matrix}
    1&-a_2&0&0&\dots&\dots&0&0\\
  c_2 & 1&0&1&\dots&\dots&0&0\\
 1&0&1&-a_3&\dots&\dots&1&-a_n\\
  0 & 0&c_3&1&\dots&\dots&0&0\\
  \vdots & \vdots&\vdots & \vdots&\ddots&\ddots&\vdots&\vdots\\
  \vdots & \vdots&\vdots & \vdots&\ddots&\ddots&\vdots&\vdots\\
 0&0&0&0&\dots&\dots&1&-a_n\\
  0& 0&0&0&\dots&\dots&c_n&1\\
\end{matrix}\right|+a_1
\left|\begin{matrix}
 c_1& 0&1&0&0&\dots&\dots&0&0\\
1&1&-a_2&0&0&\dots&\dots&0&0\\
0&c_2 & 1&0&1&\dots&\dots&0&0\\
0&1&0&1&-a_3&\dots&\dots&1&-a_n\\
0&0 & 0&c_3&1&\dots&\dots&0&0\\
\vdots&\vdots & \vdots&\vdots & \vdots&\ddots&\ddots&\vdots&\vdots\\
\vdots&\vdots & \vdots&\vdots & \vdots&\ddots&\ddots&\vdots&\vdots\\
0&0&0&0&0&\dots&\dots&1&-a_n\\
0&0& 0&0&0&\dots&\dots&c_n&1\\
\end{matrix}\right|\\
&=\left|\begin{matrix}
    1&-a_2&0&0&\dots&\dots&0&0\\
  c_2 & 1&0&1&\dots&\dots&0&0\\
 1&0&1&-a_3&\dots&\dots&1&-a_n\\
  0 & 0&c_3&1&\dots&\dots&0&0\\
  \vdots & \vdots&\vdots & \vdots&\ddots&\ddots&\vdots&\vdots\\
  \vdots & \vdots&\vdots & \vdots&\ddots&\ddots&\vdots&\vdots\\
 0&0&0&0&\dots&\dots&1&-a_n\\
  0& 0&0&0&\dots&\dots&c_n&1\\
\end{matrix}\right|\\ &+a_1\(c_1
\left|\begin{matrix}
  1&-a_2&0&0&\dots&\dots&0&0\\
 c_2 & 1&0&1&\dots&\dots&0&0\\
 1&0&1&-a_3&\dots&\dots&1&-a_n\\
 0 & 0&c_3&1&\dots&\dots&0&0\\
 \vdots & \vdots&\vdots & \vdots&\ddots&\ddots&\vdots&\vdots\\
 \vdots & \vdots&\vdots & \vdots&\ddots&\ddots&\vdots&\vdots\\
 0&0&0&0&\dots&\dots&1&-a_n\\
 0& 0&0&0&\dots&\dots&c_n&1\\
\end{matrix}\right|  +
\left|\begin{matrix}
  c_2 & 0&1&\dots&\dots&0&0\\
 1&1&-a_3&\dots&\dots&0&0\\
 0 &  c_3&1&\dots&\dots&0&0\\
 \vdots &  \vdots & \vdots&\ddots&\ddots&\vdots&\vdots\\
\vdots&  \vdots&\vdots & \ddots&\ddots&\vdots&\vdots\\
 0& 0&0&\dots&\dots&1&-a_n\\
 0&  0&0&\dots&\dots&c_n&1\\
\end{matrix}\right|\)\\
&=|\mathcal M_{n-1}|+a_1\(c_1|\mathcal M_{n-1}|+c_2|\mathcal M_{n-2}|+c_3|\mathcal M_{n-3}|+\dots+\left|\begin{matrix}
  c_{n-1} & 0&1\\
  1&1&-a_n\\
 0&   c_n&1\\
\end{matrix}\right|\)\\
&=|\mathcal M_{n-1}|+a_1\(c_1|\mathcal M_{n-1}|+c_2|\mathcal M_{n-2}|+c_3|\mathcal M_{n-3}|+\dots+c_{n-1}|\mathcal M_{1}|+c_n\)\end{aligned}$$
\end{proof}
\section{Barrier estimate}
Recall from \eqref{def:Lp} that $\Lp$ is the operator defined by
$\Lp\phi=\Delta\phi+\Wp(x)\phi$.
For every $i=1,2,\ldots,k$ let $\tRi\gg1$ be a fixed large constant and for every $i=2,\ldots,k$
let $0<\tri\ll1$ be a fixed small constant.
Let $\tAjj$ denote the shrinking annulus defined by
\beq
\label{def:tAjj}
\tAjj:=
\begin{cases}
B_{\trjj\dejj}\setminus B_{\tRj\dej}&\hbox{if }j=1,2,\ldots,k-1,\\
\Om\setminus B_{\tRk}&\hbox{if }j=k.
\end{cases}
\eeq
The aim of this section is to establish the following result:
\begin{prop}[Barrier estimate]
\label{prop:barrierest}
Suppose that $\Lp\phi=h$ in $\Om$, $\phi=0$ on $\pl\Om$.
Then, there exist suitable constants $0<\trj\ll1$, $\tRj\gg1$,
and $C>0$ independent of $p$ such that
\beq
\label{est:barrier}
\|\phi\|_{L^\infty(\tAjj)}\le C(\|\phi\|_{L^\infty(\pl\tAjj)}+\|h\|_{\rp}).
\eeq
\end{prop}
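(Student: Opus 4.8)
The plan is to construct an explicit positive supersolution (a barrier) for the operator $-\Lp$ on each annulus $\tAjj$ and then apply the maximum principle. First I would recall from Proposition~\ref{prop:Wpest} that on $\tAjj$ the potential $\Wp$ is bounded above by $\ovC\sum_i|x|^{\ai-2}e^{\Udai(x)}\chi_{\Ai}$; since $\tAjj$ lies in the ``far region'' of both the $j$-th and the $(j{+}1)$-th bubble (i.e.\ $|x|\gg\dej$ near the inner boundary and $|x|\ll\dejj$ near the outer boundary), on this set each mass term $|x|^{\ai-2}e^{\Udai}$ decays like a negative power of $|x|/\dei$ or $\dei/|x|$, so $\Wp(x)\le C\,|x|^{-2-\mu}$-type bounds hold with a favorable sign. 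The key point is that the operator $\Lp$ is, up to a small perturbation, comparable to the Laplacian on $\tAjj$, so that radially symmetric functions of moderate growth are supersolutions.

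The construction I have in mind is to take, as in \cite{DelPinoEspositoMusso2012}, a barrier of the form
\beqs
Z(x)=\sum_{\ell}a_\ell\,\zeta_\ell(|x|),
\eeqs
where the $\zeta_\ell$ are elementary radial functions (constants, and functions behaving like $|x|^{-\sigma}$, $|x|^{\sigma}$ or $\ln|x|$ with small exponents $\sigma>0$) chosen so that $-\Delta\zeta_\ell\ge0$ dominates the contribution $\Wp\zeta_\ell$ coming from the nearby masses. Concretely, near the inner boundary $\{|x|=\tRj\dej\}$ one uses a term like $(\tRj\dej/|x|)^{\sigma_-}$ which is superharmonic and kills the tail of the $j$-th bubble, while near the outer boundary $\{|x|=\trjj\dejj\}$ one uses a term like $(|x|/(\trjj\dejj))^{\sigma_+}$ to control the tail of the $(j{+}1)$-th bubble; a bounded additive constant handles the bulk. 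Choosing $\tRj$ large and $\trjj$ small makes the corresponding mass integrals over $\tAjj$ small (this is exactly the smallness exploited in the weighted estimates of Section~\ref{sec:Rp}), so that $-\Lp Z\ge \tfrac12\,|x|^{-2-\eta}\dej^{\eta}/(\dej^{2+\eta})\gtrsim \rp(x)^{-1}$ on $\tAjj$ while $Z\ge c>0$ on $\overline{\tAjj}$ and $Z=O(1)$.

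With such a $Z$ in hand, the conclusion is standard: given $\Lp\phi=h$, set $M:=\|\phi\|_{L^\infty(\pl\tAjj)}+\|h\|_{\rp}$ and consider $\psi_{\pm}:=CM\,Z\pm\phi$ for a suitable large constant $C$. On $\pl\tAjj$ we have $\psi_{\pm}\ge CMc-M\ge0$; inside $\tAjj$ we compute $-\Lp\psi_{\pm}=-CM\Lp Z\mp h\ge CM\,\rp^{-1}-|h|\ge (CM-\|h\|_{\rp})\rp^{-1}\ge0$ for $C$ large, using $|h|\le\|h\|_{\rp}\rp^{-1}$. The maximum principle for $\Lp$ on $\tAjj$---valid because, after the perturbation is absorbed, the zeroth-order coefficient is controlled and the domain is thin enough for the principal eigenvalue to stay positive, cf.\ Proposition~\ref{prop:Wpest}---then yields $\psi_{\pm}\ge0$, i.e.\ $|\phi|\le CM\,Z\le C'M$ on $\tAjj$, which is \eqref{est:barrier}.

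\emph{Main obstacle.} The delicate point is the choice of the exponents $\sigma_\pm$ and of the constants $\tRj,\trjj$: one must simultaneously guarantee that (i) each radial building block of $Z$ is a genuine supersolution of $-\Delta$ with room to spare, (ii) the perturbation $\Wp Z$ coming from \emph{all} bubbles $\Ai$ (not just the two adjacent ones) is dominated, which requires the sharp decay rates of $|x|^{\ai-2}e^{\Udai}$ on $\tAjj$ established via Lemma~\ref{lem:massscaling} and the relations \eqref{eq:deideii}, and (iii) the lower bound for $-\Lp Z$ matches the weight $\rp^{-1}$ with the \emph{correct} power $2+\eta$ (dictated by \eqref{def:rho}), so that the term $\|h\|_{\rp}$---and not a larger norm of $h$---suffices on the right-hand side. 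Handling the endpoint annuli $j=1$ (inner region a ball, barrier must stay bounded at the origin, using that $\mathcal V_{\al_1}$ is bounded) and $j=k$ (outer region invades $\Om$, barrier uses the distance to $\pl\Om$) requires minor separate arguments but no new ideas.
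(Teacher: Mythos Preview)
Your overall architecture is exactly the paper's: build a positive barrier $\Phi$ on $\tAjj$ with $\Lp\Phi\le -|h|$ and $\Phi\ge|\phi|$ on $\pl\tAjj$, then apply the comparison principle. The split of $\Phi$ into a piece absorbing the boundary data and a piece absorbing $h$ is also what the paper does.

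The genuine gap is in your choice of building blocks. In two dimensions $\Delta(|x|^a)=a^2|x|^{a-2}\ge0$ for every real $a$, so the power functions $(\tRj\dej/|x|)^{\sigma_-}$ and $(|x|/(\trjj\dejj))^{\sigma_+}$ you propose are \emph{sub}harmonic, not superharmonic. Since $\Wp\ge0$, a positive subharmonic $Z$ can never satisfy $-\Lp Z=-\Delta Z-\Wp Z\ge0$. Relatedly, the claim that $\Lp$ is ``up to a small perturbation comparable to the Laplacian'' on $\tAjj$ is not accurate: near $|x|=\tRj\dej$ the potential is of order $\dej^{-2}\tRj^{-(\aj+2)}$, which blows up as $p\to\infty$, so it cannot be absorbed as a perturbation of $\Delta$.

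The paper's remedy is to take, instead of powers, the \emph{scaled radial eigenfunctions}
\[
\Psui(x)=-\zaio\!\Big(\frac{\lambda_i x}{\dei}\Big)\ (0<\lambda_i\ll1),\qquad
\Psoi(x)=\zaio\!\Big(\frac{\Lambda_i x}{\dei}\Big)\ (\Lambda_i\gg1),
\]
where $\zaio(y)=(1-|y|^{\ai})/(1+|y|^{\ai})$ solves $-\Delta z=|y|^{\ai-2}e^{\vi}z$. After scaling, $\Delta\Psui$ and $\Delta\Psoi$ produce exactly a negative multiple of the mass $|x|^{\ai-2}e^{\Udai}$, which is precisely what is needed to dominate $\Wp$. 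Summing $\Psij=\sum_{i\le j}\Psui+\sum_{i>j}\Psoi$ and tuning $\tRj,\trj,\lambda_i,\Lambda_i$ yields $kc_0\le\Psij\le k$ and $\Lp\Psij<0$ on $\tAjj$. For the inhomogeneous term the paper does not use powers either, but solves explicit Poisson problems $-\Delta\psij=\dej^\eta/|x|^{2+\eta}$ (on an annulus) and $-\Delta\tpsijj=\dejj^{-2}$ (on a ball), checks these stay $O(1)$, and uses the pointwise bound $|h(x)|\le\|h\|_{\rp}\big(\dej^\eta/|x|^{2+\eta}+\dejj^{-2}\big)$ on $\tAjj$. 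With these corrected building blocks your final comparison argument goes through verbatim.
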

We begin by showing that the \lq\lq 0-order operator" $\Lpo$ defined by
\beq
\Lpo\phi=\Delta\phi+\ovC\sum_{i=1}^k|x|^{\ai-2}e^{\Udai(x)}\phi
\eeq
satifies a maximum principle in $\tAjj$.
%
%
To this end, we recall that
\beqs
\zajo(y)=\frac{1-|y|^{\aj}}{1+|y|^{\aj}}
\eeqs
and we define the functions:
\beq
\label{def:PsujPsoj}
\bal
\Psuj(x):=&-\zajo(\frac{\lamj x}{\dej}),\qquad&&\hbox{for }0<\lamj\ll1,\\
\Psoj(x):=&\zajo(\frac{\Lamj x}{\dej}),\qquad&&\hbox{for }\Lamj\gg1.
\eal
\eeq
\begin{lemma}
\label{lem:Psuj}
Fix $0<\co<1$ and $\uDj>0$. 
Suppose that $\tRj>0$ satisfies
\beq
\label{eq:tRj}
\tRj\ge\max\left\{\frac{1}{\lamj}(\frac{1+\co}{1-\co})^{1/\aj},
\left(\frac{\sqrt{\frac{\uDj}{\co\lamj^{\aj}}}-1}{1-\sqrt{\frac{\uDj\lamj^{\aj}}{\co}}}\right)^{1/\aj}\right\}
\eeq
Then,
\beqs
\left.
\bal
\Delta\Psuj&\le-\uDj|x|^{\aj-2}e^{\Udaj}\\
\Psuj&\ge\co>0
\eal
\right\}
\qquad\qquad\hbox{in }\rr^2\setminus B_{\tRj\dej}.
\eeqs
\end{lemma}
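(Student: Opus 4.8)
The plan is to compute $\Delta\Psuj$ in closed form and then reduce both claimed inequalities to elementary one-variable inequalities in the scaled radial variable $t:=(|x|/\dej)^{\aj}$, which on $\rr^2\setminus B_{\tRj\dej}$ runs over $[\tRj^{\aj},+\infty)$; the two arguments of the $\max$ in \eqref{eq:tRj} are precisely the thresholds for $t$ that make these two inequalities hold. Concretely, differentiating $\Psuj(x)=-\zajo(\lamj x/\dej)$ and using that $\zajo$ solves \eqref{eq:linpb}, i.e. $\Delta\zajo(y)=-\tfrac{2\aj^2|y|^{\aj-2}}{(1+|y|^{\aj})^2}\zajo(y)$, one gets
\[
\Delta\Psuj(x)=-\frac{2\aj^2\lamj^{\aj}\dej^{\aj}|x|^{\aj-2}}{(\dej^{\aj}+\lamj^{\aj}|x|^{\aj})^2}\,\Psuj(x),\qquad x\in\rr^2\setminus\{0\}.
\]
Recording also that $|x|^{\aj-2}e^{\Udaj(x)}=\tfrac{2\aj^2\dej^{\aj}|x|^{\aj-2}}{(\dej^{\aj}+|x|^{\aj})^2}$ by \eqref{def:singLiou}, and dividing the target inequality $\Delta\Psuj+\uDj|x|^{\aj-2}e^{\Udaj}\le0$ by the common positive factor $2\aj^2\dej^{\aj}|x|^{\aj-2}$, it reduces — after setting $\mu:=\lamj^{\aj}$ and $t:=(|x|/\dej)^{\aj}$, so that $\Psuj(x)=\frac{\mu t-1}{\mu t+1}$ — to the requirement that $\frac{\uDj(1+\mu t)^2}{(1+t)^2}\le\mu\,\Psuj(x)$ for every $t\ge\tRj^{\aj}$.

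I would then treat the two claims in turn. For the positivity, since $s\mapsto\frac{s-1}{s+1}$ is increasing, the first entry of the $\max$ in \eqref{eq:tRj} gives $\mu t\ge\mu\tRj^{\aj}\ge\frac{1+\co}{1-\co}$, whence $\Psuj(x)=1-\frac{2}{\mu t+1}\ge\co>0$ on all of $\rr^2\setminus B_{\tRj\dej}$; this is the second assertion, and it also makes $\Delta\Psuj<0$ there. Using the bound $\Psuj\ge\co$ just obtained, the reduced differential inequality is implied by $\uDj(1+\mu t)^2\le\co\,\mu\,(1+t)^2$, and taking square roots (both sides positive) and solving the resulting affine inequality for $t$ turns this into $t\ge\dfrac{\sqrt{\uDj/(\co\mu)}-1}{1-\sqrt{\uDj\mu/\co}}$ — which, with $\mu=\lamj^{\aj}$, is exactly the constraint $t\ge\tRj^{\aj}$ forced by the second entry of the $\max$ in \eqref{eq:tRj}. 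Combining the two lower bounds on $\tRj$ then gives both conclusions simultaneously.

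The one step that needs care is that the second entry of \eqref{eq:tRj} is a well-defined positive real number only if its denominator $1-\sqrt{\uDj\lamj^{\aj}/\co}$ is strictly positive, i.e. if $\lamj^{\aj}<\co/\uDj$; this is exactly where the hypothesis $0<\lamj\ll1$ is used, and it is the quantitative reason $|\Delta\Psuj|$ is able to dominate $\uDj|x|^{\aj-2}e^{\Udaj}$ at all (for $|x|$ large the ratio of the two is of order $\lamj^{-\aj}\gg1$). Apart from this bookkeeping I do not foresee any genuine difficulty; all the remaining steps are routine one-variable calculus.
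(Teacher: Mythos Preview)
Your proof is correct and follows essentially the same approach as the paper's own proof: both split into the two claims (positivity $\Psuj\ge\co$ and the Laplacian bound), compute $\Delta\Psuj$ via the linearized equation \eqref{eq:linpb} satisfied by $\zajo$, bound $\Psuj$ below by $\co$, and then reduce the differential inequality to an elementary inequality on the radial variable whose threshold is exactly the second entry of the $\max$ in \eqref{eq:tRj}. Your substitution $t=(|x|/\dej)^{\aj}$, $\mu=\lamj^{\aj}$ streamlines the algebra compared to the paper, and your explicit remark that the second threshold requires $\lamj^{\aj}<\co/\uDj$ (hence $0<\lamj\ll1$) is a useful clarification that the paper leaves implicit.
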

\begin{proof}
Claim~1. There holds $\Psuj(x)\ge\co>0$ if and only if
\beq
\label{eq:Psujfirst}
|x|\ge\frac{\dej}{\lamj}(\frac{1+\co}{1-\co})^{1/\aj}.
\eeq
Indeed, by a straightforward computation, we have 
\beqs
\Psuj(x)=\frac{|\frac{\lamj x}{\dej}|^{\aj}-1}{|\frac{\lamj x}{\dej}|^{\aj}+1}\ge\co
\eeqs
if and only if
\beqs
(1-\co)|\frac{\lamj x}{\dej}|^{\aj}\ge1+\co
\eeqs
and the asserted necessary and sufficient condition \eqref{eq:Psujfirst} readily follows.
\par
Now, we assume that \eqref{eq:Psujfirst} is satisfied.
\par
Claim~2. Suppose that $\Psuj\ge\co$. There holds 
\beq
\label{eq:Claim2Psuj}
\Delta\Psuj(x)\le-\uDj|x|^{\aj-2}e^{\Udaj}
\eeq
if
\beq
\label{eq:Claim2Psujsuffcond}
|x|\ge\left(\frac{\sqrt{\frac{\uDj}{\co\lamj^{\aj}}}-1}{1-\sqrt{\frac{\uDj\lamj^{\aj}}{\co}}}\right)^{1/\aj}\dej.
\eeq
Indeed, in view of \eqref{def:PsujPsoj} we have
\beqs
\bal
\Delta\Psuj=&-(\frac{\lamj}{\dej})^2(\Delta\zajo)(\frac{\lamj}{\dej})
=(\frac{\lamj}{\dej})^2|\frac{\lamj}{\dej}|^{\aj-2}e^{\vj(\lamj x/\dej)}\zajo(\frac{\lamj}{\dej})\\
=&-(\frac{\dej}{\lamj})^{\aj}\frac{2\aj^2|x|^{\aj-2}}{(|x|^{\aj}+(\frac{\dej}{\lamj})^{\aj})^2}\Psuj.
\eal
\eeqs
Thus, we estimate
\beqs
\Delta\Psuj\le-\co(\frac{\dej}{\lamj})^{\aj}\frac{2\aj^2|x|^{\aj-2}}{(|x|^{\aj}+(\frac{\dej}{\lamj})^{\aj})^2}
=-\frac{\co}{\lamj^{\aj}}|x|^{\aj-2}e^{\Udaj(x)}\frac{(|x|^{\aj}+\dej^{\aj})^2}{(|x|^{\aj}+(\frac{\dej}{\lamj})^{\aj})^2}.
\eeqs
It follows that a sufficient condition for \eqref{eq:Claim2Psuj} to hold true is that
\beqs
\frac{\co}{\lamj^{\aj}}\frac{(|x|^{\aj}+\dej^{\aj})^2}{(|x|^{\aj}+(\frac{\dej}{\lamj})^{\aj})^2}\ge\uDj,
\eeqs
equivalently
\beqs
\frac{|x|^{\aj}+\dej^{\aj}}{|x|^{\aj}+(\frac{\dej}{\lamj})^{\aj}}\ge\sqrt{\frac{\uDj\lamj^{\aj}}{\co}},
\eeqs
\beqs
(1-\sqrt{\frac{\uDj\lamj^{\aj}}{\co}})|x|^{\aj}\ge\sqrt{\frac{\uDj\lamj^{\aj}}{\co}}(\frac{\dej}{\lamj})^{\aj}-\dej^{\aj}
=(\sqrt{\frac{\uDj}{\co\lamj^{\aj}}}-1)\dej^{\aj},
\eeqs
and finally
\beqs
|x|^{\aj}\ge\frac{\sqrt{\frac{\uDj}{\co\lamj^{\aj}}}-1}{1-\sqrt{\frac{\uDj\lamj^{\aj}}{\co}}}\dej^{\aj},
\eeqs
from which \eqref{eq:Claim2Psujsuffcond} follows.
Claim~1 and Claim~2 yield the statement of the asserted lemma.
\end{proof}
\begin{lemma}
\label{lem:Psoj}
Fix $0<\co<1$ and $\oDj>0$.
Suppose that $\trj>0$ is such that
\beqs
\trj\le\min\left\{\frac{1}{\Lamj}(\frac{1-\co}{1+\co})^{1/\aj},
\left(\frac{1-\sqrt{\frac{\oDj}{\co\Lamj^{\aj}}}}{\sqrt{\frac{\oDj\Lamj^{\aj}}{\co}}-1}\right)^{1/\aj}\right\}.
\eeqs
Then,
\beqs
\left.
\bal
\Delta\Psoj&\le-\oDj|x|^{\aj-2}e^{\Udaj}\\
\Psoj&\ge\co>0
\eal
\right\}
\qquad\qquad\hbox{in }B_{\trj\dej}.
\eeqs
\end{lemma}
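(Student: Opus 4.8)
The plan is to mirror, almost verbatim, the proof of Lemma~\ref{lem:Psuj}, interchanging the roles of ``large $|x|$'' and ``small $|x|$''. Since $\Psoj(x)=\zajo(\Lamj x/\dej)$ with $\Lamj\gg1$, the function $\Psoj$ is close to $1$ near the origin and decreasing in $|x|$, so the natural region where the barrier works is the shrinking ball $B_{\trj\dej}$ rather than the exterior of a large ball. I split the proof into the same two claims.

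First I would establish the lower bound $\Psoj\ge\co$. From $\zajo(y)=(1-|y|^{\aj})/(1+|y|^{\aj})$ one checks elementarily that $\Psoj(x)\ge\co$ if and only if $(1+\co)|\Lamj x/\dej|^{\aj}\le 1-\co$, i.e. $|x|\le\frac{\dej}{\Lamj}\big(\frac{1-\co}{1+\co}\big)^{1/\aj}$. Since $\trj\le\frac{1}{\Lamj}\big(\frac{1-\co}{1+\co}\big)^{1/\aj}$, this inequality holds throughout $B_{\trj\dej}$.

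Next I would estimate the Laplacian. Using $\Psoj(x)=\zajo(\Lamj x/\dej)$, the equation \eqref{eq:linpb} satisfied by $\zajo$, and $e^{\vj(y)}=2\aj^2/(1+|y|^{\aj})^2$, a rescaling and simplification give
\[
\Delta\Psoj(x)=-\left(\frac{\dej}{\Lamj}\right)^{\aj}\frac{2\aj^2|x|^{\aj-2}}{\big(|x|^{\aj}+(\dej/\Lamj)^{\aj}\big)^2}\,\Psoj(x),
\]
exactly as for $\Psuj$ with $\lamj$ replaced by $\Lamj$. Since $\Psoj\ge\co>0$ on $B_{\trj\dej}$ and $|x|^{\aj-2}e^{\Udaj(x)}=2\aj^2|x|^{\aj-2}\dej^{\aj}/(|x|^{\aj}+\dej^{\aj})^2$, the desired inequality $\Delta\Psoj\le-\oDj|x|^{\aj-2}e^{\Udaj}$ reduces to
\[
\frac{\co}{\Lamj^{\aj}}\,\frac{\big(|x|^{\aj}+\dej^{\aj}\big)^2}{\big(|x|^{\aj}+(\dej/\Lamj)^{\aj}\big)^2}\ge\oDj .
\]
Taking square roots and solving the resulting affine inequality in $|x|^{\aj}$ — here the standing hypothesis $\Lamj\gg1$ ensures $\sqrt{\oDj\Lamj^{\aj}/\co}>1$ and $\sqrt{\oDj/(\co\Lamj^{\aj})}<1$, so the denominator is positive and the bound is meaningful — this is equivalent to $|x|\le\big(\frac{1-\sqrt{\oDj/(\co\Lamj^{\aj})}}{\sqrt{\oDj\Lamj^{\aj}/\co}-1}\big)^{1/\aj}\dej$, which is the second term in the minimum defining the admissible $\trj$.

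I do not expect any real obstacle: the argument is a routine adaptation of Lemma~\ref{lem:Psuj}. The only points requiring care are the bookkeeping of the powers of $\dej/\Lamj$ when rescaling $\Delta\zajo$, and verifying that the smallness assumptions on $\trj$ (the two entries of the minimum) are precisely what is needed for the elementary inequality $\Psoj\ge\co$ and for the differential inequality to hold simultaneously on all of $B_{\trj\dej}$ — which they are, by construction.
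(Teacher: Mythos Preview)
Your proposal is correct and follows essentially the same approach as the paper: both split the argument into the two claims $\Psoj\ge\co$ and $\Delta\Psoj\le-\oDj|x|^{\aj-2}e^{\Udaj}$, derive the identical rescaled formula for $\Delta\Psoj$, and reduce to the same affine inequality in $|x|^{\aj}$. Your explicit remark that $\Lamj\gg1$ guarantees the signs needed to solve that inequality is a small clarification the paper leaves implicit.
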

\begin{proof}
Similarly as in the proof of Lemma~\ref{lem:Psuj}, we first establish the following.
\par
Claim~1. There holds $\Psoj(x)\ge\co$ if and only if
\beq
\label{eq:PsojClaim1}
|x|\le\frac{\dej}{\Lamj}(\frac{1-\co}{1+\co})^{1/\aj}.
\eeq
Indeed, in view of \eqref{def:PsujPsoj} we have
\beqs
\Psoj(x)=\frac{1-|\frac{\Lamj x}{\dej}|^{\aj}}{1+|\frac{\Lamj x}{\dej}|^{\aj}}\ge\co
\eeqs
if and only if 
\beqs
(1+\co)|\frac{\Lamj x}{\dej}|^{\aj}\le1-\co
\eeqs
and Claim~1 follows.
\par
Claim~2. Suppose that $\Psoj\ge\co$. Then, there holds 
\beqs
\Delta\Psoj(x)\le-\oDj|x|^{\aj-2}e^{\Udaj(x)}
\eeqs
if
\beq
\label{eq:Psojsecondcond}
|x|\le\left(\frac{1-\sqrt{\frac{\oDj}{\co\Lamj^{\aj}}}}{\sqrt{\frac{\oDj\Lamj^{\aj}}{\co}}-1}\right)^{1/\aj}\dej.
\eeq
Indeed, in view of \eqref{def:PsujPsoj} we have
\beqs
\bal
\Delta\Psoj=&(\frac{\Lamj}{\dej})^2(\Delta\zajo)(\frac{\Lamj x}{\dej})
=-(\frac{\Lamj}{\dej})^2|\frac{\Lamj x}{\dej}|^{\aj-2}e^{\vj(\Lamj x/\dej)}\zajo(\frac{\Lamj x}{\dej})\\
=&-(\frac{\Lamj}{\dej})^{\aj}\frac{2\aj^2|x|^{\aj-2}}{(1+|\frac{\Lamj x}{\dej}|^{\aj})^2}\Psoj(x)
=-(\frac{\dej}{\Lamj})^{\aj}\frac{2\aj^2|x|^{\aj-2}}{(|x|^{\aj}+(\frac{\dej}{\Lamj})^{\aj})^2}\Psoj(x).
\eal
\eeqs
Therefore, we have the estimate
\beqs
\Delta\Psoj\le-\frac{\co}{\Lamj^{\aj}}|x|^{\aj-2}e^{\Udaj(x)}\frac{(|x|^{\aj}+\dej^{\aj})^2}{(|x|^{\aj}+(\frac{\dej}{\Lamj})^{\aj})^2}.
\eeqs
We deduce the sufficient condition
\beqs
\frac{\co}{\Lamj^{\aj}}\frac{(|x|^{\aj}+\dej^{\aj})^2}{(|x|^{\aj}+(\frac{\dej}{\Lamj})^{\aj})^2}\ge\oDj,
\eeqs
from which we derive
\beqs
|x|^{\aj}+\dej^{\aj}\ge\sqrt{\frac{\oDj\Lamj^{\aj}}{\co}}(|x|^{\aj}+(\frac{\dej}{\Lamj})^{\aj})
\eeqs
and 
\beqs
|x|^{\aj}\le\frac{1-\sqrt{\frac{\oDj}{\co\Lamj^{\aj}}}}{\sqrt{\frac{\oDj\Lamj^{\aj}}{\co}}-1}\dej^{\aj}.
\eeqs
This establishes the sufficient condition \eqref{eq:Psojsecondcond}.
\par
Claim~1 and Claim~2 imply the statement of Lemma~\ref{lem:Psoj}.
\end{proof}
\begin{lemma}[Maximum principle property for $\Lpo$ in $\tAjj$]
For any given $\ovC>0$ and $0<\co<1$ there exists a function $\Psij$
and constants $0<\trj\ll1$ and $\tRj\gg1$
such that
\beq
\label{eq:Psij}
\left.
\bal
&\Lpo\Psij<0\\
&k\co\le\Psij\le k
\eal
\right\}\qquad\hbox{in }\tAjj.
\eeq
\end{lemma}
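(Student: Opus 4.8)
The plan is to build $\Psij$ as a sum of the elementary barriers from Lemmas~\ref{lem:Psuj}--\ref{lem:Psoj}, one for each bubble: the \lq\lq fast" barrier $\Psui(x)=-z_{\ai}^0(\la_i x/\dei)$ for the indices $i\le j$ (whose bubbles have already decayed on $\tAjj$) and the \lq\lq slow" barrier $\Psoi(x)=z_{\ai}^0(\Lambda_i x/\dei)$ for the indices $i\ge j+1$ (whose bubbles are still concentrated there). Set $D:=\ovC k+1$. For each $i=1,\dots,k$, I would first fix $\la_i\ll1$ and $\Lambda_i\gg1$ so that $\la_i^{\ai}<\co/D$ and $\Lambda_i^{\ai}>D/\co$, which makes the admissibility conditions in \eqref{eq:tRj} and in Lemma~\ref{lem:Psoj} meaningful, and then apply Lemma~\ref{lem:Psuj} with $\uDi=D$ to obtain $\tRi$ with $\Delta\Psui\le-D\,|x|^{\ai-2}e^{\Udai}$, $\Psui\ge\co$ on $\rr^2\setminus B_{\tRi\dei}$, and Lemma~\ref{lem:Psoj} with $\oDi=D$ to obtain $\tri$ with $\Delta\Psoi\le-D\,|x|^{\ai-2}e^{\Udai}$, $\Psoi\ge\co$ on $B_{\tri\dei}$. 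These are exactly the constants entering the definition of $\tAjj$ in \eqref{def:tAjj}. I then set
\[
\Psij:=\sum_{i=1}^{j}\Psui+\sum_{i=j+1}^{k}\Psoi.
\]

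The next step is to check that, on $\tAjj$, each summand sits in its region of validity, so the pointwise estimates above may be added. Recalling that $\tAjj=B_{\trjj\dejj}\setminus B_{\tRj\dej}$ for $j\le k-1$ and $\tAjj=\Om\setminus B_{\tRk}$ for $j=k$, the inner boundary of $\tAjj$ handles the index $i=j$ and (for $j\le k-1$) the outer boundary handles $i=j+1$; for $i<j$ one has $|x|\ge\tRi\dei$ and for $i>j+1$ one has $|x|\le\tri\dei$ for all $p$ large, since $\dei=o(\de_{i+1})$ by \eqref{eq:deideii}. Hence, on $\tAjj$,
\[
\Delta\Psij=\sum_{i\le j}\Delta\Psui+\sum_{i>j}\Delta\Psoi\le-D\sum_{i=1}^{k}|x|^{\ai-2}e^{\Udai(x)},
\]
while each $\Psui,\Psoi$ takes values in $[\co,1)$ there, so $k\co\le\Psij<k$, which gives the two-sided bound in \eqref{eq:Psij}. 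Using $\Psij\le k$ in the zeroth-order term,
\[
\Lpo\Psij=\Delta\Psij+\ovC\sum_{i=1}^{k}|x|^{\ai-2}e^{\Udai}\Psij\le(\ovC k-D)\sum_{i=1}^{k}|x|^{\ai-2}e^{\Udai(x)}=-\sum_{i=1}^{k}|x|^{\ai-2}e^{\Udai(x)}<0
\]
in $\tAjj$, which completes the verification of \eqref{eq:Psij}.

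The main obstacle is bookkeeping rather than analysis. The scales $\la_i,\Lambda_i$ must be frozen before the cutoffs $\tRi,\tri$, so that the hypotheses of Lemmas~\ref{lem:Psuj}--\ref{lem:Psoj} can be met; the common gain $D$ must strictly exceed $\ovC k$, which is why the number of peaks $k$ (the worst-case size of $\Psij$, a sum of $k$ terms) shows up; and one must combine $\dei=o(\de_{i+1})$ with the explicit shape of $\tAjj$ to see that the \lq\lq off-diagonal" barriers $\Psui$ ($i<j$) and $\Psoi$ ($i>j+1$) are automatically admissible on $\tAjj$ once $p$ is large. Beyond this ordering of choices, every inequality used is already contained in Lemmas~\ref{lem:Psuj} and~\ref{lem:Psoj}.
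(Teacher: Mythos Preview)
Your proof is correct and follows essentially the same approach as the paper: both construct $\Psij=\sum_{i\le j}\Psui+\sum_{i>j}\Psoi$, use the ordering $\tRi\dei\ll\tri\de_{i+1}$ to verify that $\tAjj$ lies in the region of validity of every summand, and then choose the common gain $D>\ovC k$ to force $\Lpo\Psij<0$. Your write-up is in fact a bit more explicit than the paper's about the order in which $\la_i,\Lambda_i$ are frozen before $\tRi,\tri$, and about why $D=\ovC k+1$ suffices, but the argument is the same.
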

\begin{proof}
Let 
\beqs
\Psij=\sum_{i=1}^j\Psui+\sum_{i=j+1}^k\overline\Psi_i, 
\eeqs
where $\Psui$, and $\Psoi$
are the functions defined in \eqref{def:PsujPsoj}.
We observe that
\beqs
\widetilde R_1\de_1\ll\widetilde r_2\de_2<\widetilde R_2\de_2
\ll\widetilde r_3\de_3<\ldots<\widetilde R_k\de_k\ll1.
\eeqs
Consequently,
\beqs
\bigcap_{i\le j}\{|x|\ge\tRi\dei\}=\{|x|\ge\tRj\dej\},\qquad
\qquad
\bigcap_{i\ge j+1}\{|x|\le\tri\dei\}=\{|x|\le\widetilde r_{j+1}\dejj\}
\eeqs
and therefore
\beqs
\left(\bigcap_{i\le j}\{|x|\ge\tRi\dei\}\right)\bigcap
\left(\bigcap_{i\ge j+1}\{|x|\le\tri\dei\}\right)=\tAjj,
\eeqs
provided that $p$ is sufficiently large.
Consequently, in view of Lemma~\ref{lem:Psuj} and Lemma~\ref{lem:Psoj}, 
we may find $\tRj\gg1$ and $0<\trj\ll1$ such that
\beqs
\left.
\bal
&\Delta\Psij\le-\sum_{i=1}^j\uDi|x|^{\ai-2}e^{\Udai}-\sum_{i=j+1}^k\oDi|x|^{\ai-2}e^{\Udai}\\
&k\co\le\Psij\le k
\eal
\right\}\qquad\hbox{in\ }\tAjj.
\eeqs
It follows that
\beqs
\bal
\Lpo\Psij\le&-\sum_{i=1}^j\uDi|x|^{\ai-2}e^{\Udai}-\sum_{i=j+1}^k\oDi|x|^{\ai-2}e^{\Udai}+\ovC\sum_{i=1}^k|x|^{\ai-2}e^{\Udai}\Psij\\
\le&-\sum_{i=1}^j(\uDi-k\ovC)|x|^{\ai-2}e^{\Udai}-\sum_{i=j+1}^k(\oDi-k\ovC)|x|^{\ai-2}e^{\Udai}.
\eal
\eeqs
Hence, the asserted supersolution property~\eqref{eq:Psij} readily follows by suitable choice of
$\uDi$, $\oDi$.
\end{proof}
\begin{lemma}[Mass decomposition]
\label{lem:massdecomp}
The following estimates hold for any $i=1,2,\ldots,k$:
\begin{align}
\tag{i}
&|x|^{\ai-2}e^{\Udai(x)}\le\frac{2\ai^2}{\tRi^{\ai-\eta}}\frac{\dei^\eta}{|x|^{2+\eta}},
\qquad\hbox{for all }|x|\ge\tRi\dei\\
\tag{ii}
&|x|^{\ai-2}e^{\Udai(x)}\le\frac{2\ai^2\tri^{\ai-2}}{\dei^{2}},
\qquad\hbox{for all }|x|\le\tri\dei.
\end{align}
\end{lemma}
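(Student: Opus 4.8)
The plan is to argue directly from the explicit form of the modified bubble. Recalling from \eqref{def:singLiou} that $e^{\Udai(x)}=\dfrac{2\ai^2\dei^{\ai}}{(\dei^{\ai}+|x|^{\ai})^2}$, the quantity to be estimated is
\beqs
|x|^{\ai-2}e^{\Udai(x)}=\frac{2\ai^2\,\dei^{\ai}\,|x|^{\ai-2}}{(\dei^{\ai}+|x|^{\ai})^2}.
\eeqs
Both inequalities then follow by discarding one of the two summands in the denominator, choosing which one according to the regime of $|x|$.

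For part~(i), in the regime $|x|\ge\tRi\dei$, I would use the crude bound $(\dei^{\ai}+|x|^{\ai})^2\ge|x|^{2\ai}$ to get $|x|^{\ai-2}e^{\Udai(x)}\le 2\ai^2\,\dei^{\ai}/|x|^{\ai+2}$. The claimed estimate is then equivalent, after multiplying through by $|x|^{\ai+2}\tRi^{\ai-\eta}$ and dividing by $2\ai^2$, to $(\tRi\dei)^{\ai-\eta}\le|x|^{\ai-\eta}$, which is precisely the hypothesis $|x|\ge\tRi\dei$ raised to the positive exponent $\ai-\eta$ (positive since $\ai\ge2$ and $0<\eta<1$). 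For part~(ii), in the regime $|x|\le\tri\dei$, I would instead use $(\dei^{\ai}+|x|^{\ai})^2\ge\dei^{2\ai}$ to get $|x|^{\ai-2}e^{\Udai(x)}\le 2\ai^2\,|x|^{\ai-2}/\dei^{\ai}$, and then bound $|x|^{\ai-2}\le(\tri\dei)^{\ai-2}=\tri^{\ai-2}\dei^{\ai-2}$ using $|x|\le\tri\dei$ together with $\ai-2\ge0$; substituting gives exactly $2\ai^2\tri^{\ai-2}/\dei^2$.

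There is no genuine obstacle here: the whole argument reduces to the two one-sided denominator bounds together with the monotonicity of $t\mapsto t^{\ai-\eta}$ and $t\mapsto t^{\ai-2}$ on $[0,\infty)$. The only point worth noting is that the exponent $\ai-2$ degenerates to $0$ exactly when $i=1$, i.e.\ $\ai=2$; in that case part~(ii) is trivial, since both sides collapse to $2\ai^2/\dei^2$, and for $i\ge2$ one has $\ai-2>0$ by Proposition~\ref{prop:aj}, so the power is strictly increasing and the monotonicity step is legitimate.
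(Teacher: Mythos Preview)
Your proof is correct and follows essentially the same approach as the paper: both arguments drop one summand from the denominator $(\dei^{\ai}+|x|^{\ai})^2$ according to the regime, then use the hypothesis $|x|\ge\tRi\dei$ or $|x|\le\tri\dei$ together with the positivity of the relevant exponent. Your explicit remark on the degenerate case $\ai=2$ in part~(ii) is a nice touch that the paper leaves implicit.
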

\begin{proof}
Proof of (i). We compute, for $|x|\ge\tRi\dei$:
\beqs
\bal
|x|^{\ai-2}e^{\Udai(x)}=\frac{2\ai^2\dei^{\ai}|x|^{\ai-2}}{(\dei^{\ai}+|x|^{\ai})^2}
\le\frac{2\ai^2\dei^{\ai-\eta}}{|x|^{\ai-\eta}}\,\frac{\dei^\eta}{|x|^{2+\eta}}
\le\frac{2\ai^2}{\tRi^{\ai-\eta}}\,\frac{\dei\eta}{|x|^{2+\eta}},
\eal
\eeqs
where we used $\dei/|x|\le\tRi^{-1}$ in order to derive the last inequality.
\par
Proof of (ii).
We compute, for $|x|\le\tri\dei$:
\beqs
|x|^{\ai-2}e^{\Udai(x)}=\frac{2\ai^2\dei^{\ai}|x|^{\ai-2}}{(\dei^{\ai}+|x|^{\ai})^2}
\le\frac{2\ai^2|x|^{\ai-2}}{\dei^{\ai-2}}\frac{1}{\dei^{2}}
\le\frac{2\ai^2\tri^{\ai-2}}{\dei^{2}},
\eeqs
where we used $|x|/\dei\le\tri$ in order to derive the last inequality.
\end{proof}
In order to control the inhomogeneous term $h$, we define functions $\psij$, $\tpsij$
as follows. Let $M>2\diam\Om$.
Let $\psij$ be defined by
\beq
\label{def:psij}
\bca
-\Delta\psij=\frac{\dej^\eta}{|x|^{2+\eta}}&\hbox{in }B_M\setminus B_{\tRj\dej}\\
\psij=0&\hbox{on }\pl(B_M\setminus B_{\tRj\dej})
\eca
\eeq
and let $\tpsij$ be defined by
\beq
\label{def:tpsij}
\bca
-\Delta\tpsij=\frac{1}{\dej^{2}}&\hbox{in }B_{\trj\dej}\\
\tpsij=0&\hbox{on\ }\pl B_{\trj\dej}.
\eca
\eeq
\begin{lemma}
\label{lem:psij}
There holds
\beq
\label{eq:psij}
\psij(r)=-\frac{\dej^\eta}{\eta^2r^\eta}+C_{1,j}\ln r+C_{2,j},
\eeq
where $C_{1,j}$, $C_{2,j}$ are given by
\beqs
\bal
C_{1,j}=\frac{1}{\eta^2}(\frac{\dej^\eta}{M^\eta}-\frac{1}{\tRj^\eta})\frac{1}{\ln\frac{M}{\tRj\dej}},
\qquad\qquad C_{2,j}=\frac{\dej^\eta}{\eta^2M^\eta}-C_{1,j}\ln M,
\eal
\eeqs
and
\beq
\tpsij(r)=\frac{1}{4}[\trj^{2}-(\frac{r}{\dej})^{2}].
\eeq
Moreover, the following uniform bounds hold true
\beq
\label{eq:psibounds}
0\le\psij<\frac{1}{\eta^2\tRj^\eta}+o(1),
\qquad
0\le\tpsij\le\frac{\trj^{2}}{4}.
\eeq
\end{lemma}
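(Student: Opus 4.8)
The plan is to use the radial symmetry of the data. Since the right-hand sides in \eqref{def:psij} and \eqref{def:tpsij} depend only on $|x|$ and the underlying domains are an annulus and a ball centered at the origin, both $\psij$ and $\tpsij$ are radial, so the two boundary value problems reduce to solving the ODE $\frac1r(ru')'=-f(r)$ with $f(r)=\dej^\eta r^{-2-\eta}$, respectively $f(r)=\dej^{-2}$.

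First I would treat $\psij$. Writing $\Delta\psij=\frac1r(r\psij')'$ and integrating $(r\psij')'=-\dej^\eta r^{-1-\eta}$ twice gives $r\psij'=\frac{\dej^\eta}{\eta}r^{-\eta}+C_{1,j}$ and then the form $\psij(r)=-\frac{\dej^\eta}{\eta^2 r^\eta}+C_{1,j}\ln r+C_{2,j}$ asserted in \eqref{eq:psij}. Imposing the boundary conditions $\psij(\tRj\dej)=\psij(M)=0$ and using the identity $(\tRj\dej)^\eta=\tRj^\eta\dej^\eta$ produces a $2\times2$ linear system for $(C_{1,j},C_{2,j})$ whose unique solution is exactly the pair written in the statement. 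For $\tpsij$ the same computation with $f=\dej^{-2}$ gives $r\tpsij'=-\frac{r^2}{2\dej^2}+A$; regularity of $\tpsij$ at the origin forces $A=0$, hence $\tpsij(r)=\frac14(B-(r/\dej)^2)$, and the condition $\tpsij(\trj\dej)=0$ fixes $B=\trj^2$, yielding the stated expression.

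Next I would establish the uniform bounds \eqref{eq:psibounds}. For $\tpsij$ this is immediate, since $0\le(r/\dej)^2\le\trj^2$ on $B_{\trj\dej}$. For $\psij$ the lower bound $\psij\ge0$ follows from the maximum principle: $-\Delta\psij>0$ in the annulus and $\psij=0$ on its boundary, so $\psij$ is superharmonic and attains its minimum on $\pl(B_M\setminus B_{\tRj\dej})$. For the upper bound the key remark is that $\dej\to0$ as $p\to+\infty$, so $\ln\frac{M}{\tRj\dej}\to+\infty$ and $\dej^\eta/M^\eta\to0$, whence $C_{1,j}<0$ for all sufficiently large $p$; consequently $r\mapsto C_{1,j}\ln r$ is decreasing, and since $-\frac{\dej^\eta}{\eta^2 r^\eta}<0$, we get $\psij(r)<C_{1,j}\ln(\tRj\dej)+C_{2,j}$ on $[\tRj\dej,M]$. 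Evaluating the boundary relation $\psij(\tRj\dej)=0$ and simplifying with $(\tRj\dej)^\eta=\tRj^\eta\dej^\eta$ gives $C_{1,j}\ln(\tRj\dej)+C_{2,j}=\frac{1}{\eta^2\tRj^\eta}$, so $\psij<\frac{1}{\eta^2\tRj^\eta}$, which implies the claimed bound (the $o(1)$ being harmless slack).

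I do not anticipate a serious obstacle: this is an explicit ODE computation followed by two elementary estimates. The only point that needs a little care is tracking the sign of $C_{1,j}$ in the upper bound for $\psij$---this is precisely why the estimate is only stated for $p$ large, i.e.\ for $\dej$ small---and bookkeeping the integration constants against the boundary values so that the constant $\frac{1}{\eta^2\tRj^\eta}$ appears cleanly.
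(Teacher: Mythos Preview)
Your proposal is correct and follows the paper's approach for the explicit formulas and for the lower bounds (maximum principle for $\psij$, direct inspection for $\tpsij$). The one genuine difference is in the upper bound for $\psij$: the paper locates the interior critical point by solving $\psij'(r)=0$, evaluates $\psij$ there, and then uses the boundary relation together with $C_{1,j}=o(1)$, $C_{2,j}=o(1)$ to obtain $\max\psij=\frac{1}{\eta^2\tRj^\eta}+o(1)$. Your route is more elementary: from $C_{1,j}<0$ you observe that $C_{1,j}\ln r\le C_{1,j}\ln(\tRj\dej)$ on the annulus and that $-\dej^\eta/(\eta^2 r^\eta)<0$, and then read off $C_{1,j}\ln(\tRj\dej)+C_{2,j}=\frac{1}{\eta^2\tRj^\eta}$ directly from the inner boundary condition, giving the sharper bound $\psij<\frac{1}{\eta^2\tRj^\eta}$ without any $o(1)$. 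Both arguments are valid; yours avoids the calculus of the maximum entirely and in fact yields a slightly cleaner conclusion.
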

\begin{proof}
By straightforward computations we find that $\psij$ is of the form \eqref{eq:psij}.
The boundary conditions imply that
\beq
\label{eq:psijbdrycond}
\bca
&-\frac{\dej^\eta}{\eta^2M^\eta}+C_{1,j}\ln M+C_{2,j}=0\\
&-\frac{1}{\eta^2\tRj^\eta}+C_{1,j}\ln(\tRj\dej)+C_{2,j}=0,
\eca
\eeq
from which the asserted form of $\psij$ follows.
We observe that
\beq
\label{eq:Cprop}
C_{1,j}<0,\qquad \hbox{and}\qquad C_{1,j}=o(1)\qquad\hbox{as }\dej\to0
\eeq
and
\beq
\label{eq:C2prop}
C_{2,j}>0,\qquad \hbox{and}\qquad C_{2,j}=o(1)\qquad\hbox{as }\dej\to0.
\eeq
We have $\psij\ge0$ by the maximum principle. In order to establish the upper bound, we observe that
\beqs
\psij'(r)=\frac{1}{r}(\frac{\dej^\eta}{\eta r^\eta}+C_{1,j}),
\eeqs
and therefore $\psij$ attains its maximum value for $r^\eta=-\dej^\eta/(\eta C_{1,j})$.
We compute:
\beqs
\max\psij=\frac{C_{1,j}}{\eta}+\frac{C_{1,j}}{\eta}\ln(-\frac{\dej}{\eta C_{1,j}})+C_{2,j}
=C_{1,j}\ln\dej+o(1).
\eeqs
The second boundary condition in \eqref{eq:psijbdrycond} and \eqref{eq:Cprop}--\eqref{eq:C2prop} yield
\beqs
C_{1,j}\ln\dej=\frac{1}{\eta^2{\tRj}^\eta}+o(1),
\eeqs
so that the first upper bound in \eqref{eq:psibounds} is established.
The remaining bounds in \eqref{eq:psibounds} are straightforward.
\end{proof}
\begin{lemma}
\label{lem:Lpopsi}
The following estimates hold true in $\tAjj$:
\begin{align}
\tag{i}
&\Lpo\psij\le\Big(-1+\frac{2\aj^2\ovC}{\eta^2\tRj^{\aj}}+o(1)\Big)\frac{\dej^\eta}{|x|^{2+\eta}}
+\Big(\frac{2\ajj^2\ovC\,\trjj^{\ajj-2}}{\tRj}+o(1)\Big)\frac{1}{\dejj^{2}};\\
\tag{ii}
&\Lpo\tpsijj\le\Big(\frac{\trjj^{2}}{4}\frac{2\aj^2\ovC}{\tRj^{\aj}}+o(1)\Big)\frac{\dej^\eta}{|x|^{2+\eta}}
+\Big(-1+\frac{2\ajj^2\ovC\trjj^{\ajj}}{4}+o(1)\Big)\frac{1}{\dejj^{2}},
\end{align}
where $o(1)$ vanishes as $p\to+\infty$.
\end{lemma}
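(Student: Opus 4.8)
The plan is to compute $\Lpo\psij$ and $\Lpo\tpsijj$ directly from their defining equations \eqref{def:psij}, \eqref{def:tpsij}, which reduces everything to a bubble-by-bubble estimate of the zero-order part of $\Lpo$. Since $\tAjj=B_{\trjj\dejj}\setminus B_{\tRj\dej}\subset B_M\setminus B_{\tRj\dej}$ and $-\Delta\psij=\dej^\eta/|x|^{2+\eta}$ there,
\[
\Lpo\psij=-\frac{\dej^\eta}{|x|^{2+\eta}}+\ovC\sum_{i=1}^k|x|^{\ai-2}e^{\Udai(x)}\,\psij
\qquad\hbox{in }\tAjj ,
\]
and similarly $\Lpo\tpsijj=-\dejj^{-2}+\ovC\sum_{i=1}^k|x|^{\ai-2}e^{\Udai(x)}\,\tpsijj$. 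The structural fact that makes the estimate work is that the ring $\tAjj$ lies strictly between the $j$-th and the $(j+1)$-st concentration scales: by the exponential separation $\dei=o(\de_{i+1})$ from (A2) (the $\tri,\tRi$ being fixed constants), for all $p$ large one has $\tAjj\subset\{|x|\ge\tRi\dei\}$ for every $i\le j$ and $\tAjj\subset\{|x|\le\tri\dei\}$ for every $i\ge j+1$, the defining radii coinciding when $i=j$, resp.\ $i=j+1$.

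For the bubbles with $i\le j$ I would bound $|x|^{\ai-2}e^{\Udai(x)}$ on $\{|x|\ge\tRi\dei\}$ by Lemma~\ref{lem:massdecomp}--(i), namely by $2\ai^2\tRi^{-(\ai-\eta)}\dei^\eta|x|^{-2-\eta}$, and multiply by the uniform bound $0\le\psij<\eta^{-2}\tRj^{-\eta}+o(1)$ from Lemma~\ref{lem:psij}. The term $i=j$ then produces exactly $(2\aj^2\ovC\eta^{-2}\tRj^{-\aj}+o(1))\,\dej^\eta|x|^{-2-\eta}$ (using $\tRj^{\aj-\eta}\tRj^{\eta}=\tRj^{\aj}$), the leading coefficient of (i); for $i<j$ the same product carries the extra factor $(\dei/\dej)^\eta=(C_i/C_j)^\eta e^{-\eta(b_i-b_j)p}=o(1)$ (since $b_i>b_j$), so it goes into the error. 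Running the identical computation with $\tpsijj$ in place of $\psij$ and the bound $0\le\tpsijj\le\trjj^2/4$ from Lemma~\ref{lem:psij} gives the leading coefficient of (ii) (a constant times $\trjj^2\tRj^{-(\aj-\eta)}$, in particular $o(1)$ as $\tRj\to+\infty$).

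For the bubbles with $i\ge j+1$ I would instead use Lemma~\ref{lem:massdecomp}--(ii) on $\{|x|\le\tri\dei\}$, i.e.\ $|x|^{\ai-2}e^{\Udai(x)}\le 2\ai^2\tri^{\ai-2}\dei^{-2}$, again against the same $C^0$ bounds. The term $i=j+1$ yields a contribution of size $(c+o(1))\dejj^{-2}$ with $c$ a constant multiple of $\trjj^{\ajj-2}$ (resp.\ of $\trjj^{\ajj}$ for the $\tpsijj$ computation), small once $\tRj$ is large and $\trjj$ small — this supplies the second coefficient of (i) (resp.\ (ii)) — while for $i\ge j+2$ the prefactor $(\dejj/\dei)^2=(C_{j+1}/C_i)^2 e^{-2(b_{j+1}-b_i)p}$ is $o(1)$. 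Adding the $i\le j$ contributions (proportional to $\dej^\eta|x|^{-2-\eta}$), the $i\ge j+1$ contributions (proportional to $\dejj^{-2}$), and the Laplacian terms $\Delta\psij=-\dej^\eta|x|^{-2-\eta}$, $\Delta\tpsijj=-\dejj^{-2}$, yields (i) and (ii).

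The one genuinely delicate point is the scale bookkeeping: verifying that the two-sided inclusions $\tAjj\subset\{|x|\ge\tRi\dei\}$ ($i\le j$) and $\tAjj\subset\{|x|\le\tri\dei\}$ ($i\ge j+1$) really hold for $p$ large — this is exactly where the exponential separation of the $\dei$'s in (A2) is used — and that every bubble other than the $j$-th and the $(j+1)$-st contributes a prefactor exponentially small in $p$ that vanishes into the $o(1)$. Beyond this there is no analytic obstacle; Lemma~\ref{lem:Lpopsi} is just the alignment of the pointwise mass bounds of Lemma~\ref{lem:massdecomp} with the geometry of $\tAjj$ and the $C^0$ bounds of Lemma~\ref{lem:psij}.
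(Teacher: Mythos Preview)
Your proposal is correct and follows essentially the same route as the paper: write $\Lpo\psij=\Delta\psij+\ovC\sum_i|x|^{\ai-2}e^{\Udai}\psij$ (and similarly for $\tpsijj$), bound $\psij$ and $\tpsijj$ uniformly via Lemma~\ref{lem:psij}, apply Lemma~\ref{lem:massdecomp}~(i) for $i\le j$ and Lemma~\ref{lem:massdecomp}~(ii) for $i\ge j+1$, and absorb the $i<j$ and $i\ge j+2$ contributions into $o(1)$ using the exponential separation of the $\dei$'s from (A2). The ``delicate point'' you flag about the inclusions $\tAjj\subset\{|x|\ge\tRi\dei\}$ and $\tAjj\subset\{|x|\le\tri\dei\}$ is exactly right and is handled in the paper by the same observation.
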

\begin{proof}
Proof of (i).
We compute, using Lemma~\ref{lem:massdecomp}:
\beqs
\bal
\Lpo\psij=&\Delta\psij+\ovC\sum_{i=1}^k|x|^{\ai-2}e^{\Udai(x)}\psij
\le-\frac{\dej^\eta}{|x|^{2+\eta}}+\Big(\frac{\ovC}{\eta^2\tRj^\eta}+o(1)\Big)\sum_{i=1}^k|x|^{\ai-2}e^{\Udai(x)}\\
\le&-\frac{\dej^\eta}{|x|^{2+\eta}}+\Big(\frac{\ovC}{\eta^2\tRj^\eta}+o(1)\Big)\sum_{i=1}^{j-1}\frac{2\ai^2}{\tRi^{\ai-\eta}}\frac{\dei^\eta}{|x|^{2+\eta}}
+\Big(\frac{2\aj^2\ovC}{\eta^2\tRj^{\aj}}+o(1)\Big)\frac{\dej^\eta}{|x|^{2+\eta}}\\
&\quad+\Big(\frac{\ovC}{\eta^2\tRj^\eta}+o(1)\Big)\frac{2\ajj^2\trjj^{\ajj-2}}{\dejj^{2}}
+\Big(\frac{\ovC}{\eta^2\tRj^\eta}+o(1)\Big)\sum_{i=j+2}^k\frac{2\ai^2\tri^{\ai-2}}{\dei^{2}}\\
\le&\Big(-1+\frac{2\aj^2\ovC}{\eta^2\tRj^{\aj}}
+\frac{\ovC}{\eta^2\tRj^\eta}\sum_{i=1}^{j-1}\frac{2\ai^2}{\tRi^{\ai-\eta}}(\frac{\dei}{\dej})^\eta+o(1)\Big)\frac{\dej^\eta}{|x|^{2+\eta}}\\
&\qquad+\Big(\frac{2\ajj^2\ovC\trjj^{\ajj-2}}{\eta^2\tRj^\eta}
+\frac{\ovC}{\eta^2\tRj^\eta}\sum_{i=j+2}^k2\ai^2\tri^{\ai-2}(\frac{\dejj}{\dei})^{2}+o(1)\Big)\frac{1}{\dejj^{2}}\\
=&\Big(-1+\frac{2\aj^2\ovC}{\eta^2\tRj^{\aj}}
+o(1)\Big)\frac{\dej^\eta}{|x|^{2+\eta}}
+\Big(\frac{2\ajj^2\ovC\trjj^{\ajj-2}}{\eta^2\tRj^\eta}
+o(1)\Big)\frac{1}{\dejj^{2}},
\eal
\eeqs
as asserted.
\par
Proof of (ii).
Similarly, we compute:
\beqs
\bal
\Lpo\tpsijj=&\Delta\tpsijj+\ovC\sum_{i=1}^k|x|^{\ai-2}e^{\Udai(x)}\tpsijj\\
\le&-\frac{1}{\dejj^{2}}+\frac{\ovC\trjj^{2}}{4}\sum_{i=1}^k|x|^{\ai-2}e^{\Udai(x)}\\
\le&-\frac{1}{\dejj^{2}}+\frac{\ovC\trjj^{2}}{4}\sum_{i=1}^{j-1}\frac{2\ai^2}{\tRi^{\ai-\eta}}\frac{\dei^\eta}{|x|^{2+\eta}}
+\frac{\ovC\trjj^{2}}{4}\frac{2\aj^2}{\tRj^{\aj-\eta}}\frac{\dej^\eta}{|x|^{2+\eta}}\\
&+\frac{\ovC\trjj^{2}}{4}\frac{2\ajj^2\trjj^{\ajj-2}}{\dejj^{2}}
+\frac{\ovC\trjj^{2}}{4}\sum_{i=j+2}^k\frac{2\ai^2\tri^{\ai-2}}{\dei^{2}}\\
\le&\frac{\ovC\trjj^{2}}{4}\Big(\frac{2\aj^2}{\tRj^{\aj-\eta}}
+\sum_{i=1}^{j-1}\frac{2\ai^2}{\tRi^{\ai-\eta}}(\frac{\dei}{\dej})^\eta\Big)\frac{\dej^\eta}{|x|^{2+\eta}}\\
&+\Big(-1+\frac{2\ajj^2\ovC\trjj^{\ajj}}{4}
+\frac{\ovC\trjj^{2}}{4}\sum_{i=j+2}^k2\ai^2\tri^{\ai-2}(\frac{\dejj}{\dei})^{2}\Big)\frac{1}{\dejj^{2}}\\
=&\frac{\ovC\trjj^{2}}{4}\Big(\frac{2\aj^2}{\tRj^{\aj-\eta}}
+o(1)\Big)\frac{\dej^\eta}{|x|^{2+\eta}}
+\Big(-1+\frac{2\ajj^2\ovC\trjj^{\ajj}}{4}
+o(1)\Big)\frac{1}{\dejj^{2}},
\eal
\eeqs
as asserted.
\end{proof}
It is useful to observe that
\beq
\label{eq:tAjjinclusion}
\tAjj\subset\Aj\cup\Ajj,
\eeq
provided that $p$ is sufficiently large.
\begin{lemma}
\label{lem:hdecomp}
The following estimates hold true:
\begin{align}
\tag{i}
&|h(x)|\le\frac{\|h\|_{\rp}\dej^\eta}{|x|^{2+\eta}},\qquad\hbox{for all }x\in\Aj\cap\tAjj\\
\tag{ii}
&|h(x)|\le\frac{\|h\|_{\rp}}{\dejj^{2}},\qquad\hbox{for all }x\in\Ajj\cap\tAjj
\end{align}
In particular, there holds
\beq
\tag{iii}
|h(x)|\le\|h\|_{\rp}\Big(\frac{\dej^\eta}{|x|^{2+\eta}}+\frac{1}{\dejj^{2}}\Big),
\qquad\hbox{for all }x\in\tAjj.
\eeq
\end{lemma}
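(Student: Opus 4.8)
The plan is to derive all three estimates directly from the definition of the weighted $L^\infty$-norm $\|\cdot\|_{\rp}$ in \eqref{def:starnorm} together with the explicit form of the weight $\rp$ introduced in \eqref{def:rho}. The key observation is that the pointwise inequality $\rp(x)|h(x)|\le\|h\|_{\rp}$ holds a.e.\ in $\Om$, so on each piece $\Aj$ of the partition we may rewrite it as $|h(x)|\le\|h\|_{\rp}/\rj(x)$ with $\rj(x)=(\dej^{2+\eta}+|x|^{2+\eta})/\dej^\eta$, and analogously on $\Ajj$ with $\rjj(x)=(\dejj^{2+\eta}+|x|^{2+\eta})/\dejj^\eta$.

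For part (i), restricting to $x\in\Aj\cap\tAjj$, I would simply discard the $\dej^{2+\eta}$ term in the denominator of $1/\rj(x)$, which only enlarges the bound, to obtain $|h(x)|\le\|h\|_{\rp}\dej^\eta/|x|^{2+\eta}$. For part (ii), restricting to $x\in\Ajj\cap\tAjj$, I would instead use the elementary estimate $\rjj(x)=(\dejj^{2+\eta}+|x|^{2+\eta})/\dejj^\eta\ge\dejj^{2+\eta}/\dejj^\eta=\dejj^{2}$, giving $|h(x)|\le\|h\|_{\rp}/\dejj^{2}$. Finally, part (iii) follows by combining (i) and (ii) with the inclusion $\tAjj\subset\Aj\cup\Ajj$ recorded in \eqref{eq:tAjjinclusion}: any $x\in\tAjj$ lies in (at least) one of the two sets, and in either case the sum $\dej^\eta/|x|^{2+\eta}+1/\dejj^{2}$ dominates the single term obtained above.

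I do not expect any serious obstacle: the only point requiring a bit of attention is that the partition indices match, i.e.\ that $\tAjj=B_{\trjj\dejj}\setminus B_{\tRj\dej}$ is genuinely covered by $\Aj\cup\Ajj$ for $p$ large, but this is precisely \eqref{eq:tAjjinclusion}, already established. Everything else reduces to elementary bounds on the rational weight $\rj$.
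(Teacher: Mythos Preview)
Your proposal is correct and follows essentially the same approach as the paper: both derive (i) and (ii) from the pointwise inequality $|h(x)|\le\|h\|_{\rp}/\rho_i(x)$ on $A_i$ together with the elementary lower bounds $\rj(x)\ge|x|^{2+\eta}/\dej^\eta$ and $\rjj(x)\ge\dejj^{2}$, and then deduce (iii) from the inclusion \eqref{eq:tAjjinclusion}. There is nothing to add.
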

\begin{proof}
We recall from \eqref{def:starnorm} that
$\|h\|_{\rp}=\sup_{1\le i\le k}\|\rho_i h\|_{L^\infty(\Ai)}$, where $\rho_i(x)=(\dei^{2+\eta}+|x|^{2+\eta})/\dei^\eta$.
Proof of (i).
For any $x\in\Aj\cap\tAjj$ we have
\beqs
\bal
|h(x)|\le\frac{\|h\|_{\rp}}{\rj(x)}=\frac{\|h\|_{\rp}}{\dej^{2+\eta}+|x|^{2+\eta}}\dej^\eta
\le\|h\|_{\rp}\frac{\dej^\eta}{|x|^{2+\eta}}.
\eal
\eeqs
\par
Proof of (ii). Similarly, we compute, for all $x\in\Ajj\cap\tAjj$:
\beqs
\bal
|h(x)|\le\frac{\|h\|_{\rp}}{\rjj(x)}=\frac{\|h\|_{\rp}}{\dejj^{2+\eta}+|x|^{2+\eta}}\dejj^\eta
\le\frac{\|h\|_{\rp}}{\dejj^2},
\eal
\eeqs
as asserted.
\par
Proof of (iii). The asserted estimate readily follows from (i)--(ii)
and \eqref{eq:tAjjinclusion}.
\end{proof}
Finally, we provide the proof of the main result in this section, namely Proposition~\ref{prop:barrierest}.
\begin{proof}
We define the barrier function
\beqs
\Phi(x):=\frac{\|\phi\|_{L^\infty(\pl\tAjj)}}{k\co}\Psij(x)+2\|h\|_{\rp}(\psij(x)+\tpsij(x)),
\qquad x\in\tAjj.
\eeqs
Since $\Psij\ge k\co>0$, $\psij\ge0$, $\tpsij\ge0$ in $\tAjj$, we readily have the boundary estimate 
\beq
\label{est:Phibdry}
\Phi\ge\|\phi\|_{L^\infty(\pl\tAjj)}\qquad\hbox{on }\pl\tAjj.
\eeq
We claim that 
\beq
\label{est:LpPhi}
\Lp\Phi\le\Lp\phi=h\qquad\hbox{in }\tAjj.
\eeq
Indeed, we have:
\beqs
\Lp\Phi=\frac{\|\phi\|_{L^\infty(\pl\tAjj)}}{k\co}\Lp\Psij(x)+2\|h\|_{\rp}\Lp(\psij+\tpsij)
\eeqs
We recall from Lemma~\ref{lem:Psuj} and Lemma~\ref{lem:Psoj} with $\uDi=\oDi=D$, that
\beq
\label{est:LptPsij}
\bal
\Lp\Psij\le&-(D-k\ovC)\sum_{i=1}^k|x|^{\ai-2}e^{\Udai}<0,
\eal
\eeq
provided that the $D>0$ is sufficiently large.
We recall from Lemma~\ref{lem:Lpopsi} that
\begin{align*}
&\Lpo(\psij+\tpsijj)\le\Big(-1+\frac{2\aj^2\ovC}{\eta^2\tRj^{\aj}}+\frac{\trjj^{2}}{4}\frac{2\aj^2\ovC}{\tRj^{\aj-\eta}}+o(1)\Big)\frac{\dej^\eta}{|x|^{2+\eta}}\\
&+\Big(-1+\frac{2\ajj^2\ovC\trjj^{\ajj}}{4}+\frac{2\ajj^2\ovC\,\trjj^{\ajj-2}}{\eta^2\tRj^\eta}+o(1)\Big)\frac{1}{\dejj^{2}}
\end{align*}
uniformly in $\tAjj$, where $o(1)$ vanishes as $p\to+\infty$,
so that by possibly choosing a larger $\tRj$ and a smaller $\trjj$ we obtain te estimate
\beqs
\Lp(\psij+\tpsijj)\le\Lpo(\psij+\tpsijj)\le-\frac{1}{2}\Big(\frac{\dej^\eta}{|x|^{2+\eta}}+\frac{1}{\dejj^{2}}\Big).
\eeqs
In conclusion, from \eqref{est:LptPsij}, \eqref{est:LptPsij}, Lemma~\ref{lem:hdecomp}--(iii) and the above we derive
\beqs
\bal
\Lp\Phi
\le&-\|h\|(\frac{\dej^\eta}{|x|^{2+\eta}}+\frac{1}{\dejj^{2}})
\le-|h(x)|=-|\Lp\phi|\le\Lp\phi.
\eal
\eeqs
and \eqref{est:LpPhi} is established. Now, the barrier estimate follows by the maximum principle.
\end{proof}
\section{The fixed point argument}
\label{sec:fixedpoint}
We recall from \eqref{def:R} that the error $\Rp$ is defined by
\beqs
\Rp=\Delta\Up+\gp(\Up)
\eeqs
and that the operator $\Lp$ is defined by
\beqs
\Lp\phi=\Delta\phi+\Wp(x)\phi
\eeqs
for all $\phi\in C_0(\overline\Om)$.
Therefore, setting
\beq
\label{def:Np}
\Np(\phi):=\gp(\Up+\phi)-\gp(\Up)-\gpp(\Up)\phi,
\eeq
we see that $\up=\Up+\phip$ is a solution for \eqref{eq:pb}
if and only if $-\Lp\phip=\Rp+\Np(\phip)$.
Defining the operator 
\beq
\Tp(\phi):=-(\Lp)^{-1}(\Rp+\Np(\phi)),
\eeq
we may rewrite \eqref{eq:pb} with Ansatz~$u=\up=\Up+\phip$
in the form
\beq
\label{eq:fixedpoint}
\phip=\Tp\phip.
\eeq
In other words, we are reduced to seek a fixed point $\phip\in C_0(\overline\Om)$ for $\Tp$.
Let
\beq
\label{def:Fgamma}
\Fg:=\left\{\phi\in C_0(\overline\Om):\ \|\phi\|_\infty<\frac{\gamma}{p^3}\right\},
\eeq
where $\ga>0$.
\begin{prop}
\label{prop:fixedpoint}
There exist $\ga>0$ and $p_0>0$ such that for every $p\ge p_0$
the operator $\Tp:\mathcal F_\gamma\to\mathcal F_\gamma$
is a contraction.
\end{prop}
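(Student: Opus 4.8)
The plan is to verify directly the three ingredients of the Banach fixed point theorem: that $\Tp$ is well defined on $\Fg$, that it maps $\Fg$ into itself, and that it is a strict contraction for $p$ large. Since the set $\Fg$ in \eqref{def:Fgamma} is defined by a strict inequality, it is convenient to argue on the closed ball $\overline{\Fg}=\{\phi\in C_0(\overline\Om):\ \|\phi\|_\infty\le\ga p^{-3}\}$, which is a complete metric space; we shall in fact show that $\Tp$ sends $\overline{\Fg}$ into the strictly smaller open ball $\Fg$, so that the unique fixed point produced by the contraction principle lies in $\Fg$.

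First I would record the invertibility of $\Lp$. For each fixed $p$ the potential $\Wp=\gp'(\Up)$ belongs to $L^\infty(\Om)$, so $\Lp=\Delta+\Wp$ is a relatively compact perturbation of the Dirichlet Laplacian, hence Fredholm of index zero; the a priori bound $\|\phi\|_\infty\le Cp\|h\|_{\rp}$ of Proposition~\ref{prop:Lpmainest} forces $\ker\Lp=\{0\}$ and therefore surjectivity. Combined with elliptic regularity this gives a bounded inverse, with $\|(\Lp)^{-1}h\|_\infty\le Cp\|h\|_{\rp}$ for every $h$ with $\|h\|_{\rp}<\infty$. Thus $\Tp\phi=-(\Lp)^{-1}(\Rp+\Np(\phi))$ is well defined as soon as $\|\Np(\phi)\|_{\rp}<\infty$, which the next step establishes.

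Next comes the quadratic estimate on the nonlinear remainder. Writing Taylor's formula with integral remainder,
\[
\Np(\phi)(x)=\phi(x)^2\int_0^1(1-t)\,\gp''\bigl(\Up(x)+t\phi(x)\bigr)\,dt,
\]
which is licit since $\gp\in C^2$ for $p$ large, I would estimate on each set $\Bj$ and on $\Om\setminus\bigcup_j\Bj$ separately. On $\Bj$ one has $\Up\asymp1$ by Lemma~\ref{lem:Upexp}, so for $\|\phi\|_\infty\le\ga p^{-3}$ the argument $\Up+t\phi$ stays comparable to $\Up$ and $|\gp''(\Up+t\phi)|\asymp p^2|\Up|^{p-2}=p\,|\Up|^{-1}\Wp\lesssim p\,\Wp$, whence $|\Np(\phi)(x)|\lesssim p\,\Wp(x)\|\phi\|_\infty^2$; on the complementary region both $\gp(\Up)$ and $\gp(\Up+\phi)$ are exponentially small in the weighted norm by the computations behind Lemma~\ref{lem:gpUpexp}, contributing only $O(e^{-\varepsilon_0p})$. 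Using the weighted mass bound $\|\rp\Wp\|_\infty=O(1)$ (Proposition~\ref{prop:Wpest} and Lemma~\ref{lem:massdecay} with $i=j$) this gives $\|\Np(\phi)\|_{\rp}\le Cp\|\phi\|_\infty^2+Ce^{-\varepsilon_0p}\le C\ga^2p^{-5}$. Together with Proposition~\ref{prop:errorest} we get
\[
\|\Tp\phi\|_\infty\le Cp\bigl(\|\Rp\|_{\rp}+\|\Np(\phi)\|_{\rp}\bigr)\le\frac{C_1}{p^3}+\frac{C_1\ga^2}{p^4},
\]
so fixing $\ga:=2C_1$ and $p_0$ large enough that $C_1\ga^2/p<C_1$ for $p\ge p_0$ yields $\|\Tp\phi\|_\infty<\ga p^{-3}$, i.e. $\Tp(\overline{\Fg})\subseteq\Fg$. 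For the contraction property I would write, for $\phi_1,\phi_2\in\overline{\Fg}$, $\Np(\phi_1)-\Np(\phi_2)=[\gpp(\zeta)-\gpp(\Up)](\phi_1-\phi_2)$ with $\zeta$ between $\Up+\phi_1$ and $\Up+\phi_2$, so $|\gpp(\zeta)-\gpp(\Up)|\lesssim p\,\Wp\cdot\ga p^{-3}$ on $\bigcup_j\Bj$ (exponentially small off it), hence $\|\Np(\phi_1)-\Np(\phi_2)\|_{\rp}\le C\ga p^{-2}\|\phi_1-\phi_2\|_\infty$ and therefore $\|\Tp\phi_1-\Tp\phi_2\|_\infty\le C\ga p^{-1}\|\phi_1-\phi_2\|_\infty\le\tfrac12\|\phi_1-\phi_2\|_\infty$ for $p\ge p_0$ (enlarging $p_0$ if needed).

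The main obstacle is the uniform pointwise control of $\gp''(\Up+t\phi)$ — and hence of $\Np(\phi)$ and of the increment $\Np(\phi_1)-\Np(\phi_2)$ — in the weighted $\rp$-norm over the transition zones $\Aj\setminus\Bj$ and across the sign changes of $\Up$, where the Taylor expansion of the nonlinearity is not available and one must instead invoke the direct exponential-smallness estimates of Section~\ref{sec:Rp}, exactly as in the proof of Lemma~\ref{lem:gpUpexp}. Once those regions are handled, everything reduces to the a priori estimate of Proposition~\ref{prop:Lpmainest}, the error bound of Proposition~\ref{prop:errorest}, and bookkeeping of powers of $p$.
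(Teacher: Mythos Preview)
Your proposal is correct and follows essentially the same route as the paper. The paper likewise reduces everything to the linear bound of Proposition~\ref{prop:Lpmainest} and the error estimate of Proposition~\ref{prop:errorest}, obtains $\|\Np(\phi)\|_{\rp}=O(p^{-5})$ and the Lipschitz bound $\|\Np(\phi_1)-\Np(\phi_2)\|_{\rp}\le C\gamma p^{-2}\|\phi_1-\phi_2\|_\infty$, and concludes exactly as you do. The only cosmetic differences are that the paper uses a double mean value theorem in place of your integral-remainder Taylor formula, and packages the pointwise control of $\gp''(\Up+O(p^{-3}))$ through the identity $|\gp''(t)|=p(p-1)|\gp(t)|^{(p-2)/p}$ and a dedicated lemma bounding $\bigl\||\gp(\Up+O(p^{-3}))|^{(p-2)/p}\bigr\|_{\rp}=O(p^{-1})$, which is equivalent to your direct comparison with $p\,\Wp$ on the sets $\Bj$; your explicit remark about working on the closed ball $\overline{\Fg}$ to have a complete metric space is a point the paper leaves implicit.
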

We first establish the following estimate.
\begin{lemma}
\label{lem:gppower}
There holds
\beqs
\left\|\left|\gp(\Up+O(\frac{1}{p^3}))\right|^{\frac{p-2}{p}}\right\|_{\rp}=O(\frac{1}{p}).
\eeqs
\end{lemma}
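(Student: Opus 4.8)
The quantity to control is $\bigl\||\gp(\Up+O(p^{-3}))|^{(p-2)/p}\bigr\|_{\rp}$. The first step is to observe that $\Up+O(p^{-3})$ differs from $\Up$ by a term which is negligible compared to the expansion $\eqref{eq:Upexpzero}$: on each $\Aj$ we have $\Up(x)=(-1)^{j-1}\tj p\{1+(\Vaj(x/\dej)+O(1))/p\}$ with $\tj=O(p^{-1})$, hence adding $O(p^{-3})=o(\tj/p)$ does not change the leading behaviour, and in particular $\gp(\Up+O(p^{-3}))$ obeys the same two-sided bounds as $\gp(\Up)$ up to a multiplicative constant. Therefore it suffices to bound $\bigl\||\gp(\Up)|^{(p-2)/p}\bigr\|_{\rp}$, and for this I would reuse the pointwise estimates already established in Section~\ref{sec:Rp}: on $\Bj$ one has $|\gp(\Up(x))|\le C\tj\dej^{-2}e^{\Vaj(y)}\le C\tj|x|^{\aj-2}e^{\Udaj(x)}$ by Lemma~\ref{lem:gpgpp}--(i), while on $\Aj\setminus\Bj$ the estimates inside the proof of Lemma~\ref{lem:gpUpexp} give exponentially small bounds.

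The main computation is then on $\Bj$. Raising the bound $|\gp(\Up)|\le C\tj|x|^{\aj-2}e^{\Udaj}$ to the power $(p-2)/p$ and using $\tj=O(p^{-1})$, $p^p\tj^{p-1}\dej^2=1$, one gets
\beqs
|\gp(\Up(x))|^{\frac{p-2}{p}}\le C\,\tj^{\frac{p-2}{p}}\Bigl(\frac{|y|^{\aj-2}e^{\vj(y)}}{\dej^2}\Bigr)^{\frac{p-2}{p}},
\qquad x=\dej y.
\eeqs
Since $\tj^{(p-2)/p}=\tj\cdot\tj^{-2/p}=O(p^{-1})\cdot(1+o(1))$ and $(\dej^{-2})^{(p-2)/p}=\dej^{-2}\dej^{4/p}=\dej^{-2}(1+o(1))$ (because $\dej=C_je^{-\bj p}$ forces $\dej^{4/p}=e^{-4\bj}C_j^{4/p}=O(1)$), the prefactor is $O(p^{-1})\dej^{-2}$. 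Multiplying by the weight $\rj(x)=\dej^2(1+|y|^{2+\eta})$ from $\eqref{eq:rhoresc}$ and by $(|y|^{\aj-2}e^{\vj})^{(p-2)/p}\le C(|y|^{\aj-2}/(1+|y|^{2\aj}))^{(p-2)/p}$, one is left with estimating, uniformly in $y\in\Bj/\dej$,
\beqs
\frac{C}{p}(1+|y|^{2+\eta})\Bigl(\frac{|y|^{\aj-2}}{(1+|y|^{\aj})^2}\Bigr)^{\frac{p-2}{p}}.
\eeqs
For $|y|$ bounded this is $O(p^{-1})$ trivially; for large $|y|$ the integrand behaves like $|y|^{2+\eta}\cdot|y|^{-(2\aj+2-\aj+2)(p-2)/p}=|y|^{2+\eta-(\aj+4)(p-2)/p}$, and since $(\aj+4)(p-2)/p\to\aj+4>2+\eta$ for $p$ large (recall $\aj\ge2$, $0<\eta<1$), the exponent is negative, so the supremum over $\Bj/\dej$ is $O(p^{-1})$. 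The same computation for $|y|$ small near $0$ (where $|y|^{\aj-2}$ may blow up if $\aj<2$, but here $\aj\ge2$) is harmless. This yields $\|\,|\gp(\Up)|^{(p-2)/p}\chi_{\Bj}\|_{L^\infty}=O(p^{-1})$.

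For the complementary region $\Aj\setminus\Bj$, the proof of Lemma~\ref{lem:gpUpexp} shows $\rj|\gp(\Up)|\le Cp^{-1}e^{-\eps_0 p}$ with $\eps_0=(\aj-\eta)/(2(\aj+2))>0$; raising to the power $(p-2)/p<1$ and noting that $\rj^{(p-2)/p}=\rj\cdot\rj^{-2/p}$ and $\rj^{-2/p}$ stays polynomially bounded (since $\rj=\dej^2(1+|y|^{2+\eta})$ with $\dej^{-4/p}=O(1)$ and the $|y|$-dependence is at most like $e^{Cp}$, so $\rj^{-2/p}=O(e^{C})$), we obtain $\rj^{(p-2)/p}|\gp(\Up)|^{(p-2)/p}\le C(p^{-1}e^{-\eps_0p})^{(p-2)/p}\rj^{-2/p}=o(p^{-1})$ on $\Aj\setminus\Bj$. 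Summing the $k$ contributions (a fixed finite number) gives the claim. \textbf{The only delicate point} is checking that the factors $\tj^{-2/p}$, $\dej^{\pm 4/p}$, $\rj^{-2/p}$, $C_j^{\pm 2/p}$ all remain $O(1)$ as $p\to+\infty$; this follows from $\dej=C_je^{-\bj p}$ with $C^{-1}\le b_k<\dots<b_1\le C$ and $C^{-1}\le C_j\le C$ from assumption~(A2), together with $\tj=O(p^{-1})$ from (A3), so that e.g. $\tj^{-2/p}=e^{-\frac{2}{p}\ln\tj}=e^{O(\frac{\ln p}{p})}=1+o(1)$ — a routine but essential bookkeeping step.
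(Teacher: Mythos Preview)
Your proof is correct and follows the same route as the paper: split each $\Aj$ into $\Bj$ and $\Aj\setminus\Bj$, use the pointwise bounds on $\gp(\Up)$ from Lemma~\ref{lem:gpgpp}--(i) and Lemma~\ref{lem:gpUpexp}, raise to the power $(p-2)/p$, and check that the auxiliary factors $\tj^{-2/p}$, $\dej^{\pm4/p}$, $\rj^{\pm2/p}$ stay bounded by (A2)--(A3). In fact the paper's own proof only writes down the first line of this computation and stops, so your argument is the more complete one.

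Two harmless slips to correct: (i) for large $|y|$ the mass $|y|^{\aj-2}e^{\vj(y)}\sim|y|^{-(\aj+2)}$, so the exponent you want is $(\aj+2)(p-2)/p$, not $(\aj+4)(p-2)/p$; since $\aj+2\ge4>2+\eta$ the conclusion is unchanged. (ii) On $\Aj\setminus\Bj$ the correct factorisation is
\[
\rj|\gp(\Up)|^{(p-2)/p}=\rj^{2/p}\bigl(\rj|\gp(\Up)|\bigr)^{(p-2)/p},
\]
so it is $\rj^{2/p}$, not $\rj^{-2/p}$, that must stay bounded; your own remark that $\rj=\dej^2(1+|y|^{2+\eta})$ with $\dej^{4/p}=O(1)$ and $|y|^{O(1/p)}=O(1)$ on $\Aj$ handles this equally well.
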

\begin{proof}
We have
\beqs
\bal
\rp(x)&|\gp(\Up+O(\frac{1}{p^3}))|^{(p-2)/p}\\
=&\sum_{j=1}^k\rj(x)\left|\tj|x|^{\aj-2}e^{\Udaj(x)}\{1+\frac{\Vaj(\frac{x}{\dej})+O(1)}{p}\}
\chi_{\Bj}+\omp(x)\chi_{\Aj\setminus\Bj}\right|^{\frac{p-2}{p}}\\
\le&\sum_{j=1}^k\rj(x)\left(\tj|x|^{\aj-2}e^{\Udaj(x)}|\{1+\frac{\Vaj(\frac{x}{\dej})+O(1)}{p}\}|^{\frac{p-2}{p}}\chi_{\Bj}
+|\omp(x)|^{\frac{p-2}{p}}\chi_{\Aj\setminus\Bj}\right)
\eal
\eeqs
\end{proof}
\begin{proof}[Proof of Proposition~\ref{prop:fixedpoint}]
\textit{Claim~1.} $\Tp:\Fg\to\Fg$.
By applying the mean value theorem twice, we may write
\beqs
\bal
&\Np(\phi)=\gps(\Up+\theta_p'\theta_p''\phi)\theta_p'\phi^2
\eal
\eeqs
for some $0\le\theta_p',\theta_p''\le1$.
Consequently, for $\phi\in\Fg$,
\beqs
\bal
|\Np(\phi)|\le&|\gps(\Up+O(\frac{1}{p^3}))|\,|\phi|^2
=p(p-1)|\gp(\Up+O(\frac{1}{p^3}))|^{(p-2)/p}\,|\phi|^2,
\eal
\eeqs
and, in view of Lemma~\ref{lem:gppower}:
\beqs
\bal
\|\Np(\phi)\|_{\rp}\le&p(p-1)\|\gps(\Up+O(\frac{1}{p^3}))\|_{\rp}\|\phi\|_\infty^2\\
\le&Cp\left(\frac{\ga}{p^3}\right)^2=O(\frac{1}{p^5}).
\eal
\eeqs
It follows that, for any $\phi\in\Fg$ we have
\beqs
\|\Tp(\phi)\|_\infty\le Cp(\|\Rp\|_{\rp}+\|\Np(\phi)\|_{\rp})
\le Cp(\frac{C}{p^4}+O(\frac{1}{p^5})).
\eeqs
Now Claim~1 follows by choosing $\ga$ suitably large.
\par
\textit{Claim~2.} $\Tp$ is a contraction.
Indeed, similarly as above, by two applications of the mean value theorem we obtain
\beqs
\Np(\phi_1)-\Np(\phi_2)=\gps(\Up+\eta_p''(\phi_2+\eta_p'(\phi_1-\phi_2)))
(\eta_p'\phi_1+(1-\eta_p')\phi_2)(\phi_1-\phi_2)
\eeqs
for some $0\le\eta_p',\eta_p''\le1$.
Therefore,
\beqs
\bal
|\Np(\phi_1)-\Np(\phi_2)|\le&|\gps(\Up+O(\frac{1}{p^3}))|\max_{i=1,2}\|\phi_i\|_\infty
|\phi_1-\phi_2|\\
\le&p(p-1)|\gp(\Up+O(\frac{1}{p^3}))|^{(p-2)/p}\frac{\ga}{p^3}
|\phi_1-\phi_2|
\eal
\eeqs
and
\beqs
\bal
\|\Np(\phi_1)-\Np(\phi_2)\|_{\rp}
\le&p(p-1)\||\gp(\Up+O(\frac{1}{p^3}))|^{(p-2)/p}\|_{\rp}\frac{\ga}{p^3}\|\phi_1-\phi_2\|_\infty\\
\le&\frac{C\ga}{p^2}\|\phi_1-\phi_2\|_\infty
\eal
\eeqs
Therefore, we deduce that
\beqs
\|\Tp(\phi_1)-\Tp((\phi_2)\|_\infty\le Cp(\|\Np(\phi_1)-\Np(\phi_2)\|_{\rp}\le\frac{C}{p}\|\phi_1-\phi_2\|_\infty.
\eeqs
This establishes Claim~2.
\end{proof}

\section{Appendix: Useful facts}
In this Appendix we collect some useful properties and estimates.

\subsection{Estimates in the annuli $\Aj$}
We recall that
\beq
\label{eq:Ajprops}
\bal
&|x|<C_1^{\veps_1}C_2^{1-\veps_1}e^{-(\veps_1b_1+(1-\veps_1)b_2)p}\qquad\hbox{for all }x\in A_1;\\
&C_{j-1}^{\veps_{j-1}}C_{j}^{1-\veps_{j-1}}e^{-(\veps_{j-1}b_{j-1}+(1-\veps_{j-1})b_{j})p}
\le|x|\le C_j^{\ej}C_{j+1}^{1-\ej}e^{-(\ej\bj+(1-\ej)b_{j+1})p},\\
&\qquad\qquad\qquad\qquad\hbox{for all }x\in\Aj,\ j=2,\ldots,k-1;\\
&|x|\ge C_{k-1}^{\veps_{k-1}}C_k^{1-\veps_{k-1}}e^{-(\veps_{k-1}b_{k-1}+(1-\veps_{k-1})b_k)p},\qquad\hbox{for all }x\in\Ak.
\eal
\eeq
Consequently, we have
\beq
\bal
&\frac{\dei}{|x|}\le\frac{\dei}{\de_{j-1}^{\veps_{j-1}}\dej^{1-\veps_{j-1}}}
\le C\left(\frac{\de_{j-1}}{\dej}\right)^{1-\veps_{j-1}}\le Ce^{-(1-\veps_{j-1})(b_{j-1}-b_j)p},\\
&\qquad\qquad\qquad\qquad\qquad\qquad\hbox{for all }x\in\Aj,\ i<j,\ j=2,\ldots,k;\\
&\frac{|x|}{\dei}\le\frac{\dej^{\ej}\dejj^{1-\ej}}{\dei}\le C\left(\frac{\dej}{\dejj}\right)^{\ej}
\le Ce^{-\ej(b_j-b_{j+1})p},\\
&\qquad\qquad\qquad\qquad\qquad\qquad\hbox{for all }x\in\Aj,\ i>j,\ j=1,2,\ldots,k-1.
\eal
\eeq
The next lemma clarifies the leading term of the quantity
$\ln(\dei^{\ai}+|\dej y|^{\ai})^{-2}$ for $\dej y\in\Aj$, $i,j=1,2,\ldots,k$.
\begin{lemma}
\label{lem:logleadterm}
There holds:
\beqs
\ln\frac{1}{(\dei^{\ai}+|\dej y|^{\ai})^2}=
\left\{
\bal
&\ln\frac{1}{(1+|y|^{\aj})^2}&&i=j\ \hbox{(natural scaling)}\\
&-2\ai\ln\dei+O(\frac{\dej}{\dejj})^{\ej\ai}&&i>j,\ i=j+1,\ldots,k\ \hbox{(fast scaling)}\\
&2\ai\ln\frac{1}{|y|}-2\ai\ln\dej+O(\frac{\de_{j-1}}{\dej})^{(1-\veps_{j-1})\ai}&&i<j,\ i=1,\ldots,j-1,\ \hbox{(slow scaling)},
\eal
\right.
\eeqs
uniformly for $\dej y\in\Aj$.
\end{lemma}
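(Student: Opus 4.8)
The plan is to split according to the size of $\dei$ relative to the range of $|\dej y|$ as $\dej y$ runs over $\Aj$, and, in each regime, to pull the dominant term out of $\dei^{\ai}+|\dej y|^{\ai}$ and control the remaining factor by the elementary bound $0\le\ln(1+t)\le t$, $t\ge0$. The natural scaling $i=j$ is immediate from the exact identity $\dej^{\aj}+|\dej y|^{\aj}=\dej^{\aj}(1+|y|^\aj)$, which gives
\[
\ln\frac{1}{(\dej^{\aj}+|\dej y|^{\aj})^2}=-2\aj\ln\dej+\ln\frac{1}{(1+|y|^\aj)^2},
\]
with no error term.

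For the fast scaling $i>j$ the key preliminary step is that, for $x=\dej y\in\Aj$, the estimates following \eqref{eq:Ajprops} (using $\de_{j+1}\le\dei$ from (A2) when $i\ge j+1$) give $|\dej y|/\dei\le C(\dej/\dejj)^{\ej}=o(1)$ uniformly in $y$; then factoring $\dei^{\ai}+|\dej y|^{\ai}=\dei^{\ai}\bigl(1+(|\dej y|/\dei)^{\ai}\bigr)$ yields
\[
\ln\frac{1}{(\dei^{\ai}+|\dej y|^{\ai})^2}=-2\ai\ln\dei-2\ln\!\Big(1+\big(\tfrac{|\dej y|}{\dei}\big)^{\ai}\Big)=-2\ai\ln\dei+O\Big(\tfrac{\dej}{\dejj}\Big)^{\ej\ai},
\]
uniformly for $\dej y\in\Aj$, since $\bigl(|\dej y|/\dei\bigr)^{\ai}\le C\bigl(\dej/\dejj\bigr)^{\ej\ai}$ (here $\ai\le C$ by (A1)). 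Symmetrically, for the slow scaling $i<j$ the estimates following \eqref{eq:Ajprops} (using $\dei\le\de_{j-1}$ when $i\le j-1$) give $\dei/|\dej y|\le C(\de_{j-1}/\dej)^{1-\veps_{j-1}}=o(1)$ uniformly in $y$; factoring $\dei^{\ai}+|\dej y|^{\ai}=|\dej y|^{\ai}\bigl(1+(\dei/|\dej y|)^{\ai}\bigr)$ and writing $|\dej y|=\dej|y|$ gives
\[
\ln\frac{1}{(\dei^{\ai}+|\dej y|^{\ai})^2}=2\ai\ln\frac1{|y|}-2\ai\ln\dej-2\ln\!\Big(1+\big(\tfrac{\dei}{|\dej y|}\big)^{\ai}\Big)=2\ai\ln\frac1{|y|}-2\ai\ln\dej+O\Big(\tfrac{\de_{j-1}}{\dej}\Big)^{(1-\veps_{j-1})\ai},
\]
uniformly for $\dej y\in\Aj$, since $\bigl(\dei/|\dej y|\bigr)^{\ai}\le C\bigl(\de_{j-1}/\dej\bigr)^{(1-\veps_{j-1})\ai}$.

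The argument is purely computational and presents no genuine difficulty; the only point requiring attention is the verification, uniform in $y$, of the comparisons $|\dej y|\ll\dei$ (for $i>j$) and $\dei\ll|\dej y|$ (for $i<j$) on $\Aj$, together with the precise exponents of the geometric errors that result. Both of these, with their quantitative rates, are exactly what the displays surrounding \eqref{eq:Ajprops} provide, once the monotonicity $\de_1=o(\de_2)=\cdots=o(\de_k)$ from assumption (A2) is invoked; so I expect this to be the harder of the (mild) steps rather than the logarithmic estimate itself.
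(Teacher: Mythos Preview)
Your proof is correct and follows essentially the same route as the paper: factor out the dominant term in $\dei^{\ai}+|\dej y|^{\ai}$ in each regime, use the bounds after \eqref{eq:Ajprops} together with the monotonicity from (A2) to control the ratio, and then pass to the logarithm. The paper states this as ``it suffices to establish'' the multiplicative identities $\dei^{\ai}+|\dej y|^{\ai}=\dej^{\aj}(1+|y|^{\aj})$, $\dei^{\ai}(1+O(\cdot))$, $|\dej y|^{\ai}(1+O(\cdot))$ and leaves the passage to $\ln$ implicit; you make the elementary step $0\le\ln(1+t)\le t$ explicit, which is fine. You also correctly note that the $i=j$ case should read $-2\aj\ln\dej+\ln\frac{1}{(1+|y|^{\aj})^2}$, consistent with the paper's own proof and with \eqref{eq:lnvai}; the statement as printed omits the $-2\aj\ln\dej$ term.
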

\begin{proof}
It suffices to establish the following:
\beqs
\dei^{\ai}+|\dej y|^{\ai}=
\left\{
\bal
&\dej^{\aj}(1+|y|^{\aj})&&i=j\ \hbox{(natural scaling)}\\
&\dei^{\ai}\left(1+O(\frac{\dej}{\dejj})^{\ej\ai}\right)&&i>j,\ i=j+1,\ldots,k\ \hbox{(fast scaling)}\\
&|\dej y|^{\ai}\left(1+O(\frac{\de_{j-1}}{\dej})^{(1-\veps_{j-1})\ai}\right)
&&i<j,\ i=1,\ldots,j-1,\ \hbox{(slow scaling)},
\eal
\right.
\eeqs
uniformly for $\dej y\in\Aj$.
\par
To this end, suppose $i=j$. Then, we readily have
\beqs
\dei^{\ai}+|\dej y|^{\ai}=\dej^{\aj}(1+|y|^{\aj}).
\eeqs
Suppose $i>j$.
Then, $\de_{j+1}=O(\dei)$, $|x|=|\dej y|<\dej^{\ej}\de_{j+1}^{1-\ej}$, and therefore
\beq
\label{eq:dejydei}
\frac{|\dej y|}{\dei}=O(\frac{\dej^{\ej}\de_{j+1}^{1-\ej}}{\de_{j+1}})
=O(\frac{\dej}{\de_{j+1}})^{\ej}.
\eeq
Consequently,
\beqs
\dei^{\ai}+|\dej y|^{\ai}=\dei^{\ai}(1+(\frac{|\dej y|}{\dei})^{\ai})
=\dei^{\ai}(1+O(\frac{\dej}{\de_{j+1}})^{\ej\ai}),
\eeqs
as asserted.
\par
Suppose $i<j$.
Then, $\dei=O(\de_{j-1})$, $|x|=|\dej y|\ge\de_{j-1}^{\veps_{j-1}}\dej^{1-\veps_{j-1}}$,
and therefore
\beq
\label{eq:deidejy}
\frac{\dei}{|\dej y|}=O(\frac{\de_{j-1}}{\de_{j-1}^{\veps_{j-1}}\dej^{1-\veps_{j-1}}})
=O(\frac{\de_{j-1}}{\dej})^{1-\veps_{j-1}}.
\eeq
It follows that
\beqs
\dei^{\ai}+|\dej y|^{\ai}=|\dej y|^{\ai}(1+(\frac{\dei}{|\dej y|})^{\ai})
=|\dej y|^{\ai}(1+O(\frac{\de_{j-1}}{\dej})^{(1-\veps_{j-1})\ai}),
\eeqs
as asserted.
\end{proof}
\subsection{Expansions and scalings in the $\Aj$'s.}
We recall from \eqref{def:ta} that
\beqs
2\tai:=2\ai-\frac{\Coai}{p}-\frac{\Ciai}{p^2},
\eeqs
where we recall that $\Coai$, $\Ciai$ are defined by the property \eqref{bdrycond:wa}.
Moreover, we set
\beq
v_{\ai}^{w}:=\vai+\frac{\woai}{p}+\frac{\wiai}{p^2}.
\eeq
\begin{lemma}[Projection expansions]
\label{lem:projexp}
The following expansions hold true:
\beqs
P\Udai(x)=
\left\{
\bal
&\ln\frac{1}{(\dei^{\ai}+|x|^{\ai})^2}+4\pi\ai H(x,0)+O(\dei^{\ai}),
&&\mathrm{in\ }C^1(\overline\Om)\\
&4\pi\ai G(x,0)+O(\dei^{\ai})
&&\mathrm{in\ }C_{\mathrm{loc}}^1(\overline\Om\setminus\{0\})
\eal
\right.
\eeqs
\beqs
Pw_{\ai,\dei}^\ell(x)=
\left\{
\bal
&w_{\ai,\dei}^\ell(x)-2\pi C_{\ai}^\ell H(x,0)+C_{\ai}^\ell\ln\dei+O(\dei),
&&\mathrm{in\ }C^1(\overline\Om)\\
&-2\pi C_{\ai}^\ell G(x,0)+O(\dei),
&&\mathrm{in\ }C_{\mathrm{loc}}^1(\overline\Om\setminus\{0\})
\eal
\right.
\eeqs
\end{lemma}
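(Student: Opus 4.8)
The plan is to reduce the statement to the defining property of the projection: if $f$ is smooth in a neighbourhood of $\overline\Om$, then $Pf\in H_0^1(\Om)$ is characterised by $\Delta(Pf-f)=0$ in $\Om$ and $Pf-f=-f$ on $\pl\Om$, so that $Pf=f-\mathcal{H}[f|_{\pl\Om}]$, where $\mathcal{H}[\cdot]$ denotes harmonic extension to $\Om$. Since the concentration point $0$ lies in the interior of $\Om$, the boundary traces $\Udai|_{\pl\Om}$ and $\wldai|_{\pl\Om}$ are smooth; the only ingredients I will use are: the boundary identity $G(x,0)=0$ for $x\in\pl\Om$, equivalently $\tfrac1{2\pi}\ln\tfrac1{|x|}=-H(x,0)$ on $\pl\Om$; the fact that $H(\cdot,0)$ is harmonic in $\Om$ and smooth up to $\overline\Om$; and the maximum principle together with boundary Schauder estimates to bound in $C^1(\overline\Om)$ the harmonic extension of a small boundary datum by its $C^1(\pl\Om)$ size. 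This is the standard scheme for projection estimates, in the spirit of \cite{emp1,gropi}.

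First I would expand $P\Udai$. Writing $\Udai(x)=\ln(2\ai^2)+\ai\ln\dei+g_i(x)$, with $g_i(x):=\ln\tfrac1{(\dei^{\ai}+|x|^{\ai})^2}$, and noting that the two additive constants project onto themselves, one has $P\Udai=Pg_i=g_i-\mathcal{H}[g_i|_{\pl\Om}]$. On $\pl\Om$, where $|x|$ is bounded below, an elementary expansion gives $g_i(x)=-2\ai\ln|x|+O(\dei^{\ai})$ in $C^1(\pl\Om)$; the leading term equals $-4\pi\ai H(x,0)$ on $\pl\Om$ by the identity above, its harmonic extension is $-4\pi\ai H(\cdot,0)$ itself, and the $O(\dei^{\ai})$ remainder extends to $O(\dei^{\ai})$ in $C^1(\overline\Om)$. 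This gives the first expansion, $P\Udai(x)=g_i(x)+4\pi\ai H(x,0)+O(\dei^{\ai})$ in $C^1(\overline\Om)$. For the $C^1_{\mathrm{loc}}(\overline\Om\setminus\{0\})$ version one further replaces $g_i(x)$ by $-2\ai\ln|x|+O(\dei^{\ai})$, valid away from $0$, and recombines with $4\pi\ai H(x,0)$ to obtain $4\pi\ai G(x,0)+O(\dei^{\ai})$ via the definition of $G$.

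Next I would treat $P\wldai$, $\ell=0,1$, in exactly the same way, now using the far-field asymptotics of the Chae--Imanuvilov corrections from Lemma~\ref{lem:CI}, namely $\wlai(y)=\Clai\ln|y|+O(|y|^{-1})$ together with the accompanying gradient estimate $\nabla\wlai(y)=\Clai\,y/|y|^2+O(|y|^{-2})$. On $\pl\Om$ one has $|x|/\dei\to\infty$, whence $\wldai(x)=\wlai(x/\dei)=\Clai\ln|x|-\Clai\ln\dei+O(\dei)$ in $C^1(\pl\Om)$, the additive constant $\Clai\ln\dei$ not affecting the gradient. The harmonic extension of $\Clai\ln|x|\big|_{\pl\Om}$ is $2\pi\Clai H(\cdot,0)$, the constant extends to itself, and the $O(\dei)$ remainder to $O(\dei)$ in $C^1(\overline\Om)$, giving $P\wldai(x)=\wldai(x)-2\pi\Clai H(x,0)+\Clai\ln\dei+O(\dei)$ in $C^1(\overline\Om)$; away from $0$ one substitutes the far-field form of $\wldai$ and uses $\Clai\ln|x|-2\pi\Clai H(x,0)=-2\pi\Clai G(x,0)$ to reach the $G$-form.

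The only genuinely delicate point is checking that all the error terms are $O(\dei^{\ai})$, resp.\ $O(\dei)$, in the $C^1$ topology up to $\pl\Om$ rather than merely pointwise: this is where the smoothness of $\pl\Om$, the interior location of $0$, and, crucially, the gradient version of the far-field asymptotics for $\wlai$ enter, after which bounding the harmonic extension of the remainder via the maximum principle and standard interior/boundary gradient estimates closes the argument. Everything else is a routine computation.
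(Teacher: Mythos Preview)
Your argument is correct and is essentially the same as the paper's: both compute the boundary trace of the bubble (resp.\ correction) using its far-field asymptotics, identify the leading logarithmic term with $H(\cdot,0)$ via the boundary identity $G(\cdot,0)=0$, and then invoke the maximum principle and elliptic estimates to control the harmonic remainder in $C^1(\overline\Om)$; the paper phrases this as ``same Laplacian, small on $\pl\Om$'' while you phrase it via the harmonic-extension operator $\mathcal H$, but these are the same computation. One small slip: constants do \emph{not} ``project onto themselves''---the $H_0^1$-projection of a constant is zero (equivalently, in your formula $Pf=f-\mathcal H[f|_{\pl\Om}]$ the constant cancels because its harmonic extension is itself), which is exactly what yields $P\Udai=Pg_i$; your conclusion is right, only the wording is off.
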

\begin{proof}
The proof is well-known, see, e.g., \cite{emp1}.
We outline the proof for the sake of completeness.
\par
For any fixed $r>0$ we have
\beqs
\Udai(x)=\ln\frac{2\ai^2\dei^{\ai}}{(\dei^{\ai}+|x|^{\ai})^2}
=2\ai\ln\frac{1}{|x|}+\ln(2\ai^2\dei^{\ai})+O(\dei^{\ai}),
\eeqs
uniformly in $C^1(\Om\setminus B_r(0))$. Therefore, we may write
\beqs
\Udai(x)=4\pi\ai(G(x,0)-H(x,0))+\ln(2\ai^2\dei^{\ai})+O(\dei^{\ai}),
\eeqs
uniformly in $C^1(\Om\setminus B_r(0))$.
It follows that
\beqs
\bca
\Delta(\Udai(x)-\ln(2\ai^2\dei^{\ai})+4\pi\ai H(x,0))=\Delta\Udai(x)&\hbox{in }\Om\\
\Udai(x)-\ln(2\ai^2\dei^{\ai})+4\pi\ai H(x,0)=O(\dei^{\ai})&\hbox{on }\partial\Om,
\eca
\eeqs
so that $P\Udai=\Udai(x)-\ln(2\ai^2\dei^{\ai})+4\pi\ai H(x,0)+O(\dei^{\ai})$ in $C^1(\overline\Om)$,
as asserted.
\par
By properties of $\wlai$, $\ell=0,1$, we have
\beqs
\wldai(x)=\wlai(\frac{x}{\dei})=\Clai\ln|\frac{x}{\dei}|+O(\frac{\dei}{|x|}).
\eeqs
Hence, for any $r>0$ we may write
\beqs
\wldai(x)=-2\pi\Clai G(x,0)+2\pi\Clai H(x,0)-\Clai\ln\dei+O(\dei),
\eeqs
uniformly in $\Om\setminus B_r(0)$.
The statement follows observing that
\beqs
\begin{cases}
\Delta(\wldai-2\pi\Clai H(x,0)+\Clai\ln\dei)=\Delta\wldai&\hbox{in }\Om\\
\wldai-2\pi\Clai H(x,0)+\Clai\ln\dei=O(\dei)&\hbox{on }\partial\Om,
\end{cases}
\eeqs
so that $P\wldai=\wldai-2\pi\Clai H(x,0)+\Clai\ln\dei+O(\dei)$ in $C^1(\overline\Om)$,
as asserted.
\end{proof}
The aim of the next lemma is to establish the profile of the $i$-th bubble observed in the shrinking ring $\Aj$.
It will be useful to note that we may write
\beq
\label{eq:lnvai}
\ln\frac{1}{(\dei^{\ai}+|x|^{\ai})^2}=\vai(\frac{x}{\dei})-2\ai\ln\dei-\ln(2\ai^2).
\eeq
\begin{lemma}[Bubble scaling]
\label{lem:modbubproj}
Let $i,j=1,2,\ldots,k$.
The following expansions hold, uniformly for $x=\dej y\in\Aj$:
\[
P\Udai(x)=
\left\{
\bal
&\vj(y)-2\aj\ln\dej-\ln(2\aj^2)
+4\pi\aj h(0)
+O(|\dej y|)+O(\dej^{\aj}),\\
&\qquad\qquad\qquad\qquad\qquad\qquad\qquad\qquad\hbox{if }i=j\ \hbox{(natural scaling)};\\
&-2\ai\ln\dei+4\pi\ai h(0)+O(\frac{\dej}{\dejj})^{\ej\ai}+O(\dej|y|)+O(\dei^{\ai}),\\
&\qquad\qquad\qquad\qquad\qquad\qquad\qquad\qquad\hbox{if }i>j\ \hbox{(slow bubble)};\\
&2\ai\ln\frac{1}{|y|}
-2\ai\ln\dej+4\pi\ai h(0)
+O(\frac{\de_{j-1}}{\dej})^{(1-\veps_{j-1})\ai}+O(\dej|y|)+O(\de^{\ai}),\\ 
&\qquad\qquad\qquad\qquad\qquad\qquad\qquad\qquad\hbox{if }i<j\ \hbox{(fast bubble)};
\eal
\right.
\]
\[
P\wldai(x)=
\left\{
\bal
&\wlaj(y)+\Claj\ln\dej-2\pi\Claj h(0) 
+O(|\dej y|)+O(\dej),\\
&\qquad\qquad\qquad\qquad\qquad\qquad\qquad\qquad\hbox{if }i=j\ \hbox{(natural scaling)};\\
&\wlai(0)+\Clai\ln\dei-2\pi\Clai h(0)
+O(\frac{\dej}{\dejj})^{\ej}+O(\dej|y|)+O(\dei),\\
&\qquad\qquad\qquad\qquad\qquad\qquad\qquad\qquad\hbox{if }i>j\ \hbox{(slow bubble)};\\
&\Clai\ln|y|+\Clai\ln\dej-2\pi\Clai h(0)
+O(\frac{\de_{j-1}}{\dej})^{1-\veps_{j-1}}+O(\dej|y|)+O(\dei),\\ 
&\qquad\qquad\qquad\qquad\qquad\qquad\hbox{if }i<j\ \hbox{(fast bubble)}.
\eal
\right.
\]
\end{lemma}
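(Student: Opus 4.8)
The plan is to derive both families of expansions directly from the projection formulas of Lemma~\ref{lem:projexp} by inserting $x=\dej y\in\Aj$ and tracking, separately in the three regimes $i=j$, $i>j$ and $i<j$, the size of the explicit leading term together with a Taylor expansion of the regular part $H$ at the concentration point $0$.

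I would begin with the bubble projections. Starting from $P\Udai(x)=\ln\frac{1}{(\dei^{\ai}+|x|^{\ai})^2}+4\pi\ai H(x,0)+O(\dei^{\ai})$ and using that $x=\dej y\to0$ uniformly for $y$ in the rescaled sets $\Aj/\dej$ as $p\to+\infty$ (cf.\ \eqref{eq:Ajprops}), a first order Taylor expansion of the smooth function $H(\cdot,0)$ at the origin gives $H(\dej y,0)=h(0)+O(|\dej y|)$, where $h(0)=H(0,0)$ is the Robin value \eqref{def:Robin}. For the leading logarithm I would invoke Lemma~\ref{lem:logleadterm}, which yields $\ln\frac{1}{(\dei^{\ai}+|\dej y|^{\ai})^2}=-2\ai\ln\dei+O\bigl((\dej/\dejj)^{\ej\ai}\bigr)$ when $i>j$, and $\ln\frac{1}{(\dei^{\ai}+|\dej y|^{\ai})^2}=2\ai\ln\frac{1}{|y|}-2\ai\ln\dej+O\bigl((\de_{j-1}/\dej)^{(1-\veps_{j-1})\ai}\bigr)$ when $i<j$, while for $i=j$ the identity $\dej^{\aj}+|\dej y|^{\aj}=\dej^{\aj}(1+|y|^{\aj})$ together with $\vj(y)=\ln\frac{2\aj^2}{(1+|y|^{\aj})^2}$ gives $\ln\frac{1}{(\dej^{\aj}+|\dej y|^{\aj})^2}=\vj(y)-2\aj\ln\dej-\ln(2\aj^2)$. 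Substituting these into the formula for $P\Udai$ and collecting the $O(\cdot)$ terms produces the three stated expansions.

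For the correction profiles I would argue in the same way, starting from $P\wldai(x)=\wldai(x)-2\pi\Clai H(x,0)+\Clai\ln\dei+O(\dei)$ and again replacing $H(\dej y,0)$ by $h(0)+O(|\dej y|)$. The bubble part $\wldai(\dej y)=\wlai(\dej y/\dei)$ is then analysed according to the regime: for $i=j$ it is exactly $\wlaj(y)$; for $i>j$ the argument $\dej y/\dei$ is $o(1)$ in $\Aj$, so smoothness of $\wlai$ near $0$ gives $\wlai(\dej y/\dei)=\wlai(0)+O(|\dej y|/\dei)$; for $i<j$ the argument tends to $\infty$ and the decay \eqref{bdrycond:wa} gives $\wlai(\dej y/\dei)=\Clai\ln|\dej y/\dei|+O(\dei/(\dej|y|))=\Clai\ln|y|+\Clai\ln\dej-\Clai\ln\dei+O(\dei/(\dej|y|))$, in which the term $-\Clai\ln\dei$ is cancelled by the $+\Clai\ln\dei$ coming from the projection formula. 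Collecting errors yields the stated formulas for $P\wldai$.

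The argument is otherwise routine, and the main (mild) obstacle is the bookkeeping of the error terms: one must check in each regime that the crude remainders $O(|\dej y|/\dei)$ for $i>j$, $O(\dei/(\dej|y|))$ for $i<j$, and $O(\dei^{\ai})$, are controlled by the powers of $\dej/\dejj$ and $\de_{j-1}/\dej$ displayed in the statement. This uses the geometric bounds on $|x|$ in $\Aj$ recorded in \eqref{eq:Ajprops}, which give $|\dej y|/\dei\le C(\dej/\dejj)^{\ej}$ for $i>j$ and, since $\dei\le\de_{j-1}$ for $i\le j-1$ and $|y|\ge(\de_{j-1}/\dej)^{\veps_{j-1}}$ in $\Aj$, also $\dei/(\dej|y|)\le C(\de_{j-1}/\dej)^{1-\veps_{j-1}}$. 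It is precisely here that the specific choice of the sets $\Aj$ --- and, through \eqref{def:ej}, of the exponents $\ej$ --- enters.
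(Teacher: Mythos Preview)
Your proposal is correct and follows essentially the same approach as the paper: both start from the projection formulas of Lemma~\ref{lem:projexp}, Taylor-expand $H(\dej y,0)=h(0)+O(|\dej y|)$, invoke Lemma~\ref{lem:logleadterm} for the logarithmic leading term of $P\Udai$, and for $P\wldai$ use smoothness of $\wlai$ at the origin (case $i>j$) or the asymptotics \eqref{bdrycond:wa} at infinity (case $i<j$), with the error bookkeeping handled via the geometric bounds \eqref{eq:Ajprops} on $\Aj$. Your writeup is in fact slightly more thorough than the paper's, which only spells out a subset of the cases.
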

\begin{proof}
Suppose $i=j$.
In view of Lemma~\ref{lem:projexp} and \eqref{eq:lnvai} we readily derive the expansion.
\par
Suppose $i>j$. 
In view of Lemma~\ref{lem:logleadterm} and Lemma~\ref{lem:projexp} we have
\beqs
\bal
P\Udai(\dej y)=&\ln\frac{1}{(\dei^{\ai}+|\dej y|^{\ai})^2}+4\pi\ai H(\dej y,0)+O(\dei^{\ai})\\
=&-2\ai\ln\dei+O(\frac{\dej}{\dejj})^{\ej\ai}+4\pi\ai h(0)+O(|\dej y|)+O(\dei^{\ai}),
\eal
\eeqs
as asserted.
\par
Suppose $i<j$ (fast bubble). In view of Lemma~\ref{lem:logleadterm} and Lemma~\ref{lem:projexp} we have
\beqs
\bal
P\Udai(\dej y)=&\ln\frac{1}{(\dei^{\ai}+|\dej y|^{\ai})^2}+4\pi\ai H(\dej y,0)+O(\dei^{\ai})\\
=&2\ai\ln\frac{1}{|y|}-2\ai\ln\dej+O(\frac{\de_{j-1}}{\dej})^{(1-\veps_{j-1})\ai}+4\pi\ai h(0)+O(|\dej y|)+O(\dei^{\ai}),
\eal
\eeqs
and the asserted expansion follows.
\par
Now, we consider the correction term $\wldai$.
Suppose $i<j$ (fast bubble).
In view of Lemma~\ref{lem:projexp} we have
\beqs
P\wldai(\dej y)=\wlai(\frac{\dej y}{\dei})+\Clai\ln\dei-2\pi\Clai H(\dej y,0)+O(\dei).
\eeqs
Using \eqref{eq:deidejy} we obtain
\beqs
\frac{|\dej y|}{\dei}\ge C^{-1}(\frac{\dej}{\de_{j-1}})^{1-\veps_{j-1}}\to+\infty
\eeqs
and therefore
\beqs
\wlai(\frac{\dej y}{\dei})=\Clai\ln|\frac{\dej y}{\dei}|+O(\frac{\dei}{|\dej y|}).
\eeqs
It follows that
\beqs
\bal
P\wlai(\dej y)=&\Clai\ln|y|+\Clai\ln\frac{\dej}{\dei}+O(\frac{\dei}{|\dej y|})+\Clai\ln\dei
-2\pi\Clai h(0)+O(|\dej y|)+O(\dei)\\
=&\Clai\ln|y|+\Clai\ln\dej-2\pi\Clai h(0)+O(\frac{\de_{j-1}}{\dej})^{1-\veps_{j-1}}+O(|\dej y|)+O(\dei),
\eal
\eeqs
as asserted.
\end{proof}
\begin{lemma}[$\dej$-scaling of the $i$-th mass]
\label{lem:massscaling}
The following expansions holds true, uniformly for $x=\dej y\in\Aj$:
\beqs
\dej^2|x|^{\ai-2}e^{U_{\ai,\dei}(x)}=
\left\{
\bal
&|y|^{\aj-2}e^{v_{\aj}(y)},\\
&\qquad\qquad\qquad\qquad\qquad\qquad\qquad\qquad\qquad\hbox{if }i=j\\
&(\frac{\dej}{\dei})^{\ai}\frac{2\ai^2|y|^{\ai-2}}{1+O(\frac{\dej}{\dejj})^{\ej\ai}}
=O(\frac{\dej}{\dejj})^{\ej\ai+2(1-\ej)},\\
&\qquad\qquad\qquad\qquad\qquad\qquad\qquad\qquad\qquad\hbox{if }i>j\\
&(\frac{\dei}{\dej})^{\ai}\frac{2\ai^2}{|y|^{\ai+2}(1+O(\frac{\de_{j-1}}{\dej})^{(1-\veps_{j-1})\ai})}=O(\frac{\de_{j-1}}{\dej})^{(1-\veps_{j-1})\ai-2\veps_{j-1}},\\
&\qquad\qquad\qquad\qquad\qquad\qquad\qquad\qquad\qquad\hbox{if }i<j,
\eal
\right.
\eeqs
where $\ej\ai+2(1-\ej)>0$ and $(1-\veps_{j-1})\ai-2\veps_{j-1}>0$.
\end{lemma}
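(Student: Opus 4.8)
The plan is to treat the three identities as direct computations and to extract the $O$-estimates from the geometry of the shrinking annuli together with the monotonicity \eqref{eq:deideii} of the concentration parameters. First I would write the left-hand side explicitly: from $\Udai(x)=\ln\frac{2\ai^2\dei^{\ai}}{(\dei^{\ai}+|x|^{\ai})^2}$ one gets, after multiplying by $\dej^2$ and setting $x=\dej y$,
\beqs
\dej^2|x|^{\ai-2}e^{\Udai(x)}=\frac{2\ai^2\dei^{\ai}\dej^{\ai}|y|^{\ai-2}}{(\dei^{\ai}+|\dej y|^{\ai})^2}.
\eeqs
Thus everything reduces to the leading behaviour of the denominator $\dei^{\ai}+|\dej y|^{\ai}$ for $\dej y\in\Aj$, which is exactly what the proof of Lemma~\ref{lem:logleadterm} supplies: $\dej^{\aj}(1+|y|^{\aj})$ when $i=j$, $\dei^{\ai}(1+O(\dej/\dejj)^{\ej\ai})$ when $i>j$, and $|\dej y|^{\ai}(1+O(\de_{j-1}/\dej)^{(1-\veps_{j-1})\ai})$ when $i<j$. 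Substituting each of these into the displayed fraction and cancelling produces the three claimed closed forms: in the case $i=j$ the factor $\dej^{2\aj}$ cancels, leaving $2\aj^2|y|^{\aj-2}/(1+|y|^{\aj})^2=|y|^{\aj-2}e^{v_{\aj}(y)}$; in the case $i>j$ one is left with $(\dej/\dei)^{\ai}\,2\ai^2|y|^{\ai-2}/(1+O(\dej/\dejj)^{\ej\ai})$; and in the case $i<j$ with $(\dei/\dej)^{\ai}\,2\ai^2/\bigl(|y|^{\ai+2}(1+O(\de_{j-1}/\dej)^{(1-\veps_{j-1})\ai})\bigr)$.

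The remaining step is to bound the last two expressions by the stated powers, using the description of $\Aj/\dej$. For $i>j$ I would use $|y|\le(\dejj/\dej)^{1-\ej}$ together with $\dei\ge\dejj$ (which holds since $i\ge j+1$ and the $\de$'s are increasing), so that
\beqs
\left(\frac{\dej}{\dei}\right)^{\ai}|y|^{\ai-2}\le\left(\frac{\dej}{\dejj}\right)^{\ai}\left(\frac{\dejj}{\dej}\right)^{(1-\ej)(\ai-2)}=\left(\frac{\dej}{\dejj}\right)^{\ej\ai+2(1-\ej)},
\eeqs
and, since the denominator is bounded below by a positive constant, the first estimate follows. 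For $i<j$ I would use $|y|\ge(\de_{j-1}/\dej)^{\veps_{j-1}}$ and $\dei\le\de_{j-1}$ to get
\beqs
\left(\frac{\dei}{\dej}\right)^{\ai}\frac{1}{|y|^{\ai+2}}\le\left(\frac{\de_{j-1}}{\dej}\right)^{\ai-\veps_{j-1}(\ai+2)}=\left(\frac{\de_{j-1}}{\dej}\right)^{(1-\veps_{j-1})\ai-2\veps_{j-1}}.
\eeqs
Positivity of the two exponents is then immediate: $\ej\ai+2(1-\ej)=2+\ej(\ai-2)\ge2$, and, inserting $\veps_{j-1}=s_{j-1}/(1+s_{j-1})$ from Proposition~\ref{prop:solvability}, $(1-\veps_{j-1})\ai-2\veps_{j-1}=(\ai-2s_{j-1})/(1+s_{j-1})>0$ since $\ai\ge2$ and $s_{j-1}\in(0,1)$.

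There is essentially no analytical obstacle in this lemma; it is a bookkeeping statement. The only point calling for care is to ensure that in each regime the right parameter dominates the denominator — $|\dej y|^{\ai}$ negligible against $\dei^{\ai}$ when $i>j$, and conversely when $i<j$, as recorded in \eqref{eq:Ajprops} — and that the exponents one reads off stay positive, which is precisely where the calibrations $\ej=\sj/(1+\sj)$ and the choice of $\veps_{j-1}$ from Proposition~\ref{prop:solvability} enter. I would therefore present the proof simply as the three substitutions above followed by these two elementary inequalities.
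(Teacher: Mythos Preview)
Your proof is correct and follows essentially the same route as the paper: a direct change of variables to expose the denominator $\dei^{\ai}+|\dej y|^{\ai}$, identification of its leading term in each regime (you cite Lemma~\ref{lem:logleadterm}, the paper recomputes it inline), and then the same two bounds $|y|\le(\dejj/\dej)^{1-\ej}$ for $i>j$ and $|y|\ge(\de_{j-1}/\dej)^{\veps_{j-1}}$ for $i<j$, combined with $\dei\ge\dejj$ (resp.\ $\dei\le\de_{j-1}$). The positivity check for the exponents via $\ej=\sj/(1+\sj)$ is also identical.
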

\begin{proof}
For $i=j$ the proof follows by the change of variables $x=\dej y$.
\par
For $i<j$, $x=\dej y\in\Aj$, we obtain by change of variables that
\beqs
\dej^2|x|^{\ai-2}e^{U_{\ai,\dei}(x)}=(\frac{\dei}{\dej})^{\ai}\frac{2\ai^2}{|y|^{\ai+2}(1+(\frac{\dei}{\dej|y|})^{\ai})^2}.
\eeqs
Recalling the definition of $\Aj$, we observe that
\beqs
\frac{\dei}{\dej|y|}=O(\frac{\de_{j-1}}{\dej|y|})=O(\frac{\de_{j-1}}{\dej})^{1-\veps_{j-1}}=o(1).
\eeqs
Finally, we observe that
\beqs
(\frac{\dei}{\dej})^{\ai}\frac{1}{|y|^{\ai+2}}=O(\frac{\de_{j-1}}{\dej})^{\ai}(\frac{\dej}{\de_{j-1}})^{\veps_{j-1}(\ai+2)}
=O(\frac{\de_{j-1}}{\dej})^{(1-\veps_{j-2})\ai-2\veps_{j-1}},
\eeqs
and, by definition of $\veps_{j-1}$,
\beqs
(1-\veps_{j-2})\ai-2\veps_{j-1}=\frac{\ai-2s_{j-1}}{1+s_{j-1}}>0.
\eeqs
Similarly, for $i>j$ we obtain by change of variables that
\beqs
\dej^2|x|^{\ai-2}e^{U_{\ai,\dei}(x)}=(\frac{\dej}{\dei})^{\ai}\frac{2\ai^2|y|^{\ai-2}}{(1+|\frac{\dej y}{\dei}|^{\ai})^2}.
\eeqs
Recalling the definition of $\Aj$ we have
\beqs
|\frac{\dej y}{\dei}|=O(|\frac{\dej y}{\dejj}|)=O(\frac{\dej}{\dejj})^{1-\ej}=o(1).
\eeqs
In order to conclude the proof, we observe that
\beqs
(\frac{\dej}{\dei})^{\ai}|y|^{\ai-2}=O(\frac{\dej}{\dejj})^{\ai}(\frac{\dejj}{\dei})^{(1-\ej)(\ai-2)}
=O(\frac{\dej}{\dejj})^{\ej\ai+2(1-\ej)}=o(1).
\eeqs
\end{proof}
\begin{lemma}
[$\dej$-scaling of the $i$-th radial eigenfunction]
\label{lem:radialefscaling}
There holds:
\beq
\label{eq:efexp}
P\zaiod=\zaiod+1+O(\dei^{\ai}),
\eeq
uniformly in $\Om$.
Moreover,
\beq
\label{eq:efscaling}
P\zaiod(\dej y)=
\left\{
\bal
&\frac{2}{1+|y|^{\ai}}+O(\dei^{\ai}),
&&\hbox{if }i=j\ \hbox{(natural scaling)};\\
&2+O(\frac{\dej|y|}{\dei})^{\ai},
&&\hbox{if }i>j\ \hbox{(fast scaling)};\\
&O(\frac{\dei}{\dej|y|})^{\ai}+O(\dei^{\ai}),
&&\hbox{if }i<j\ \hbox{slow scaling},
\eal
\right.
\eeq
uniformly for $x=\dej y\in\Aj$.
\end{lemma}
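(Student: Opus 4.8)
The plan is to exploit the fact that the unprojected radial eigenfunction $\zaiod$ itself solves, on all of $\rr^2$, the same equation that $P\zaiod$ solves on $\Om$. Indeed $\zaio$ satisfies the linearized equation \eqref{eq:linpb}, that is $\Delta\zaio+|y|^{\ai-2}e^{\vi(y)}\zaio=0$ on $\rr^2$; rescaling via $x=\dei y$ and invoking the mass-scaling identity $\dei^2|x|^{\ai-2}e^{\Udai(x)}=|y|^{\ai-2}e^{\vi(y)}$ of Lemma~\ref{lem:massscaling} (the case $i=j$), one obtains $-\Delta\zaiod=|x|^{\ai-2}e^{\Udai}\zaiod$ on $\rr^2$. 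Subtracting this from \eqref{eq:Pz1} shows that $P\zaiod-\zaiod$ is harmonic in $\Om$, so the problem is reduced to a boundary estimate plus the maximum principle.

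First I would establish the boundary behaviour of $\zaiod$. On $\pl\Om$, and more generally on any compact subset of $\overline\Om\setminus\{0\}$, the quantity $|x|$ is bounded below by a positive constant, hence $|x/\dei|^{\ai}\to+\infty$ as $\dei\to0$ and therefore $\zaiod(x)=\frac{|x/\dei|^{-\ai}-1}{|x/\dei|^{-\ai}+1}=-1+O(\dei^{\ai})$ uniformly. Since $P\zaiod=0$ on $\pl\Om$, the harmonic function $P\zaiod-\zaiod$ equals $1+O(\dei^{\ai})$ on $\pl\Om$; the maximum principle then promotes this to $P\zaiod-\zaiod=1+O(\dei^{\ai})$ uniformly in $\Om$, which is exactly \eqref{eq:efexp}.

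Next I would derive the scaling expansions \eqref{eq:efscaling} by substituting $x=\dej y$ into \eqref{eq:efexp} and evaluating $\zaiod(\dej y)=\zaio(\dej y/\dei)$ in each of the three regimes, in the same spirit as the proof of Lemma~\ref{lem:modbubproj}. In the natural-scaling case $i=j$ one writes $\zaio(y)=\frac{2}{1+|y|^{\ai}}-1$, and the $-1$ cancels the $+1$ coming from \eqref{eq:efexp}. In the fast-scaling case $i>j$, the size bounds for $\Aj$ recorded in \eqref{eq:Ajprops} give $|\dej y/\dei|=O(\dej/\dejj)^{\ej}=o(1)$, so $\zaio(\dej y/\dei)=1+O(\dej|y|/\dei)^{\ai}$, which together with \eqref{eq:efexp} produces the factor $2$. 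In the slow-scaling case $i<j$, the same bounds give $|\dej y/\dei|\ge c\,(\dej/\de_{j-1})^{1-\veps_{j-1}}\to+\infty$, so $\zaio(\dej y/\dei)=-1+O(\dei/(\dej|y|))^{\ai}$, and the $\pm1$ again cancel.

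I do not expect a genuine obstacle here: the delicate points are purely bookkeeping — identifying which regime makes the argument $\dej y/\dei$ large versus small, using the explicit expansion-rate estimates for $\Aj$ from \eqref{eq:Ajprops}, and checking that the $O(\dei^{\ai})$ remainder produced by the boundary estimate is absorbed into (or at worst is of the same order as) the error terms claimed in each scaling regime.
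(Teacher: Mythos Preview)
Your argument is correct and is exactly the standard route the paper has in mind; indeed the paper's own proof is simply ``The proof is straightforward.'' Your harmonic-difference-plus-maximum-principle derivation of \eqref{eq:efexp}, followed by the three-regime evaluation of $\zaio(\dej y/\dei)$ using the annulus bounds \eqref{eq:Ajprops}, matches the pattern of Lemma~\ref{lem:projexp} and Lemma~\ref{lem:modbubproj}. The only minor remark is that in the fast-scaling case $i>j$ your expansion carries an additional $O(\dei^{\ai})$ remainder from \eqref{eq:efexp} which is not literally dominated by $O(\dej|y|/\dei)^{\ai}$ at the inner edge of $\Aj$; the paper's statement is slightly loose here, but this extra term is exponentially small and harmless in every application (cf.\ the proof of Lemma~\ref{lem:linrel}), so there is no genuine gap.
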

\begin{proof}
The proof is straightforward.
\end{proof}
\subsection{Choice of the parameters $\de,\tau$ in the case $k=1$.}
We recall from Section~\ref{sec:intro} that
\[
\Ud(x)=\ln\frac{8\de^2}{(\de^2+|x|^2)^2},
\]
for all $\de>0$.
Then, in view of Lemma~\ref{lem:projexp} with $\ai=2$ and $\de=\dei$, we have
\[
P\Ud(x)=\ln\frac{1}{(\de^2+|x|^2)^2}+8\pi H(x,0)+O(\de^2)=v(\frac{x}{\de})+\ln\frac{1}{\de^4}+O(1).
\]
Therefore, taking $\ln\de^{-4}=p$, we may expand:
\[
\bal
(P\Ud)^p=&(\ln\de^{-4})^p\left(1+\frac{1}{\ln\de^{-4}}v(\frac{x}{\de})+O(\frac{1}{\ln\de^{-4}})\right)^p
=p^p\left(1+\frac{1}{p}v(\frac{x}{\de})+O(\frac{1}{p})\right)^p\\
=&p^pe^{v(x/\de)}(1+O(\frac{1}{p}))=p^p\de^2e^{\Ud(x)}(1+O(\frac{1}{p})).
\eal
\]
On the other hand, we have
\[
-\Delta(\tau P\Ud)=\tau e^{\Ud}.
\]
Choosing $\tau^{p-1}p^p\de^2=1$, we obtain
\beq
\label{eq:basicapprox}
\Delta(\tau P\Ud)+\left(\tau P\Ud\right)^p=\tau e^{\Ud(x)}\left(-1+\tau^{p-1}p^p\de^2(1+O(\frac{1}{p}))\right)=\tau e^{\Ud(x)}O(\frac{1}{p}),
\eeq
and therefore $\tau P\Ud$ is indeed an approximate solution for \eqref{eq:pb}.
We have obtained the following necessary conditions for the parameters:
\beq
\label{eq:onebubbleparameters}
\de=e^{-p/4}, \qquad\qquad \tau=\frac{e^{p/[2(p-1)]}}{p^{p/(p-1)}}=\frac{\sqrt{e}}{p}(1+O(\frac{\ln p}{p})).
\eeq
\subsection{Properties of the weighted norm}
\begin{lemma}
[Properties of $\|\cdot\|_{\rp}$]
\label{lem:embedding}
The following properties hold true:
\begin{enumerate}
\item[(i)]
$\|h\|_{L^1(\Om)}\le C\|h\|_{\rp}$,
for some $C>0$ independent of $h$ and $p$;
\item[(ii)]
$\|\dej^2 h\|_{L^\infty(\Aj)}\le C\|h\|_{\rp}$; (relevant)
\item[(iii)]
$\|\rj h\|_{L^\infty(\Aj)}\le\|h\|_{\rp}$;(obvious)
\item[(iv)]
$|h(x)|\le\sum_{j=1}^k\frac{\dej^\eta}{\dej^{2+\eta}+|x|^{2+\eta}}\chi_{\Aj}(x)\|h\|_{\rp}$;
\item[(v)]
If $\eta\le2s_1$, then $\|\rp\|_{L^\infty(\Om)}\le C$. 
\end{enumerate}
In particular, for any $q>0$ we have the weighted mass estimate
\beq
\label{est:rpmass}
\|\rj(x)|x|^{\aj-2}e^{\Udaj(x)}(|\Vaj(\frac{x}{\dej})|^q+1)\|_{L^\infty(\Aj)}\le C.
\eeq
\end{lemma}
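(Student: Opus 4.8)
The plan is to derive every statement directly from the defining formula \eqref{def:rho}, using that the sets $\Aj$, $j=1,\dots,k$, form a partition of $\Om$ and that under the rescaling $x=\dej y$ one has $\rj(\dej y)=\dej^2(1+|y|^{2+\eta})$, see \eqref{eq:rhoresc}. Parts (iii) and (iv) are immediate: on $\Aj$ the weight $\rp$ coincides with $\rj$, so $\|\rj h\|_{L^\infty(\Aj)}=\|\rp h\|_{L^\infty(\Aj)}\le\|\rp h\|_{L^\infty(\Om)}=\|h\|_{\rp}$, which is (iii); dividing the inequality $|\rj(x)h(x)|\le\|h\|_{\rp}$ by $\rj(x)=(\dej^{2+\eta}+|x|^{2+\eta})\dej^{-\eta}$ and summing over the $k$ sets of the partition gives (iv). Part (ii) follows from (iv) because $\dej^2\dej^\eta(\dej^{2+\eta}+|x|^{2+\eta})^{-1}\le1$ for all $x$. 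For (i), I would integrate the pointwise bound (iv) over $\Aj$ and rescale $x=\dej y$; the $\Aj$-contribution becomes $\|h\|_{\rp}\int_{\Aj/\dej}(1+|y|^{2+\eta})^{-1}\,dy\le\|h\|_{\rp}\int_{\rr^2}(1+|y|^{2+\eta})^{-1}\,dy$, and the last integral is finite and independent of $p$ since $2+\eta>2$; summing the $k$ contributions yields (i).

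For the weighted mass estimate \eqref{est:rpmass}, rescaling $x=\dej y$ turns the quantity into $(1+|y|^{2+\eta})\frac{2\aj^2|y|^{\aj-2}}{(1+|y|^{\aj})^2}(|\Vaj(y)|^q+1)$, to be bounded uniformly for $y\in\Aj/\dej\subset\rr^2\setminus\{0\}$. From the two-sided estimate \eqref{eq:Vaprop} one has $|\Vaj(y)|\le C(1+|\ln|y||+\ln(1+|y|))$, and I would distinguish $|y|\le1$ from $|y|\ge1$: for $|y|\ge1$ the prefactor decays like $|y|^{\eta-\aj}$ with $\eta-\aj<-1$, which dominates any power of a logarithm; for $|y|\le1$ and $\aj>2$ the factor $|y|^{\aj-2}$ dominates the logarithm, while for $\aj=2$ (that is $j=1$) $\Vaj$ is bounded near the origin. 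All constants stay uniform in $p$ because $2\le\aj\le C$ by (A1). This proves \eqref{est:rpmass}; the case $q=6$ is what Lemma~\ref{lem:Bjmassdecay} uses.

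The main obstacle is part (v), the only assertion needing quantitative control of the parameters. Since $\rp=\rj$ on $\Aj$ and $\rj(\dej y)=\dej^2(1+|y|^{2+\eta})$ is radially increasing in $|y|$, it suffices to bound $\rj$ at the outer radius of $\Aj/\dej$ recorded in \eqref{def:Ajdej}. For $j\le k-1$ that radius is $(\dejj/\dej)^{1-\ej}$, so inserting $\ej=\sj/(1+\sj)$ gives $\sup_{\Aj}\rj\le\dej^2+\dej^{(2\sj-\eta)/(1+\sj)}\dejj^{(2+\eta)/(1+\sj)}$. By Proposition~\ref{prop:solvability}--(v), $\dej=C_je^{-\bj p}$ with $\bj\ge\bj-\bjj=(1+\sj)(\aj+2+O(p^{-1}))^{-1}>0$, so the exponent $(2\sj-\eta)/(1+\sj)$ is non-negative exactly when $2\sj\ge\eta$; since $s_1\le\sj$ by Proposition~\ref{prop:solvability}--(iii), the hypothesis $\eta\le2s_1$ secures this (strictly for $j\ge2$) and in fact forces $\sup_{\Aj}\rj\to0$ as $p\to+\infty$. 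The outermost annulus $\Ak$, on which $|x|$ reaches $\diam\Om$, has to be treated separately, now through the explicit rate $b_k=(\ak+2+O(p^{-1}))^{-1}$; carrying out this bookkeeping of exponents so that the same condition $\eta\le2s_1$ still suffices is where the real effort lies. Collecting the finitely many pieces then yields (v), and with it the full lemma.
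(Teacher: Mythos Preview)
Your treatment of parts (i)--(iv) and of the weighted mass estimate \eqref{est:rpmass} is correct and coincides with the paper's: everything reduces to the identity $\rj(\dej y)=\dej^2(1+|y|^{2+\eta})$ on the partition piece $\Aj$, and the integrability of $(1+|y|^{2+\eta})^{-1}$ over $\rr^2$. The paper actually does not spell out the proof of \eqref{est:rpmass}; your case-splitting $|y|\le1$ versus $|y|\ge1$, together with the bound $|\Vaj(y)|\le C(1+|\ln|y||)$ from \eqref{eq:Vaprop}, is the right way to fill that in.

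For part (v) there is a genuine gap, and it is not a matter of bookkeeping. Your argument for $j\le k-1$ is the paper's argument and is fine: the outer radius of $\Aj/\dej$ is $(\dejj/\dej)^{1-\ej}$, and the condition $\eta\le2\sj$ (hence $\eta\le2s_1$ by monotonicity of the $\sj$'s) makes $\dej^2(\dejj/\dej)^{(1-\ej)(2+\eta)}$ bounded. But for $j=k$ the set $\Ak$ reaches all the way to $\partial\Om$, so $|x|$ attains values of order $\diam\Om>0$, and
\[
\rho_k(x)=\frac{\dek^{2+\eta}+|x|^{2+\eta}}{\dek^\eta}\ge\frac{c}{\dek^\eta}\longrightarrow+\infty
\]
as $p\to+\infty$, for \emph{any} $\eta>0$. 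No amount of juggling the rate $b_k=(\ak+2+O(p^{-1}))^{-1}$ changes this: the issue is that $\rho_k$ contains the factor $\dek^{-\eta}$ while $|x|$ does not shrink on the outer part of $\Ak$. So the conclusion $\|\rp\|_{L^\infty(\Om)}\le C$ cannot hold as stated. Note that the paper's own proof of (v) quietly uses the bound $|y|\le(\dejj/\dej)^{1-\ej}$ for ``$j=1,\dots,k$'', which is undefined at $j=k$; so the paper shares this gap. Fortunately (v) does not appear to be invoked elsewhere in the argument, so this does not affect the rest of the construction.
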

\begin{proof}
Proof of (i).
Recall that
\beqs
\bal
\rp(x)\,dx=&\sum_{j=1}^k\rj(x)\chi_{\Aj}(x)\,dx\\
\rj(x)\,dx=&\frac{\dej^{2+\eta}+|x|^{2+\eta}}{\dej^\eta}\,dx=\dej^2(1+|y|^{2+\eta})\,dy,
\qquad x=\dej y\in\Aj,
\eal
\eeqs
and $\|h\|_{\rp}=\|\rp h\|_{L^\infty(\Om)}$.
We readily check that:
\beqs
\int_{\Aj}|h(x)|\,dx=\int_{\Aj}\frac{\rj(x)|h(x)|}{\rj(x)}\,dx
\le\|h\|_{\rp}\int_{\Aj/\dej}\frac{dy}{1+|y|^{2+\eta}}\le C\|h\|_{\rp}.
\eeqs
\par
Proof of (ii).
For all $x\in\Aj$ we compute:
\beqs
\dej^2\,h(x)\le\frac{\dej^2\|\rj h\|_{L^\infty(\Aj)}}{\rj(x)}=\frac{\|\rj h\|_{L^\infty(\Aj)}}{1+|\frac{x}{\dej}|^{2+\eta}}\le\|h\|_{\rp}.
\eeqs
\par
Proof of (iii).
By definition of $\|\cdot\|_{\rp}$, we have
\beqs
\|h\|_{\rp}=\|\sum_{j=1}^k\rj\chi_{\Aj}h\|_{L^\infty(\Om)}=
\sum_{j=1}^k\|\rj h\|_{L^\infty(\Aj)}\ge\|\rj h\|_{L^\infty(\Aj)}.
\eeqs 
Proof of (iv). For any fixed $j=1,2,\ldots,k$ and $x\in\Aj$,
\beqs
|h(x)|=\frac{|\rj(x)h(x)|}{\rj(x)}=\frac{\dej^\eta}{\dej^{2+\eta}+|x|^{2+\eta}}|\rj(x)h(x)|
\le\frac{\dej^\eta}{\dej^{2+\eta}+|x|^{2+\eta}}\|h\|_{\rp}.
\eeqs
Proof of (v).
Let $j=1,2,\ldots,k$. Recalling that $x=\dej y\in\Aj$ implies that $|y|\le(\dejj/\dej)^{1-\ej}$,
we estimate
\beqs
\bal
\rj(x)=\dej^2(1+|y|^{2+\eta})\le\dej^2(1+(\frac{\dejj}{\dej})^{(1-\ej)(2+\eta)}).
\eal
\eeqs
Thus, a sufficient condition for boundedness of $\rj$ is given by $\eta\le2\ej/(1-\ej)=2\sj$.
In view of Proposition~\ref{prop:solvability}--(iii), by choosing $\eta\le2s_1\le\sj$, we obtain 
the asserted uniform boundedness for $\rp$.
\end{proof}
\begin{rmk}
For $r>1$ the above argument yields:
\beqs
\int_{\Aj}|h(x)|^r\,dx\le\|h\|_{\rp}\int_{\Aj}\frac{\dej^r}{(\dej^2+|x|^2)^{3r/2}}(\frac{\dej}{|x|})^{\tho r}\,dx
=\frac{\|h\|_{\rp}}{\dej^{2(r-1)}}\int_{\Aj/\dej}\frac{dy}{(1+|y|^2)^{3r/2}|y|^{\tho r}},
\eeqs
which does not yield a uniform embedding constant.
\end{rmk}
\begin{lemma}[Estimates for $\gp$]
\label{lem:gpp}
The following elementary inequalities hold true:
\beqs
\tag{i}
|\gp(1+\frac{s}{p})|=|1+\frac{s}{p}|^p\le\left\{
\bal
&e^s,
&&\hbox{if }s\ge-p\\
&e^{-(s+2p)},
&&\hbox{if }s<-p;
\eal
\right.
\eeqs
\beqs
\tag{ii}
|\gp'(1+\frac{s}{p})|=p|1+\frac{s}{p}|^{p-1}\le\left\{
\bal
&pe^{\frac{p-1}{p}s},
&&\hbox{if }s\ge-p\\
&pe^{-\frac{p-1}{p}(s+2p)},
&&\hbox{if }s<-p;
\eal
\right.
\eeqs
\beqs
\tag{iii}
|\gp''(1+\frac{s}{p})|=p(p-1)|1+\frac{s}{p}|^{p-2}\le\left\{
\bal
&p(p-1)e^{\frac{p-2}{p}s},
&&\hbox{if }s\ge-p\\
&p(p-1)e^{-\frac{p-2}{p}(s+2p)},
&&\hbox{if }s<-p.
\eal
\right.
\eeqs
\end{lemma}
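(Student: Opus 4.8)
The plan is to reduce all three estimates to the single elementary inequality $1+x\le e^x$, valid for every $x\in\rr$, applied at two different points and then raised to suitable powers. There is essentially one idea and two cases.

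First I would treat the case $s\ge-p$. Then $1+\tfrac sp\ge0$, and applying $1+x\le e^x$ with $x=s/p$ gives $0\le 1+\tfrac sp\le e^{s/p}$. Since the base is nonnegative and the exponents $p$, $p-1$, $p-2$ are nonnegative (for $p$ large; indeed $p>2$ suffices), the map $t\mapsto t^r$ is monotone and raising to these powers preserves the inequality, which yields $\bigl|1+\tfrac sp\bigr|^p=\bigl(1+\tfrac sp\bigr)^p\le e^s$, then $p\bigl|1+\tfrac sp\bigr|^{p-1}\le pe^{\frac{p-1}{p}s}$, and $p(p-1)\bigl|1+\tfrac sp\bigr|^{p-2}\le p(p-1)e^{\frac{p-2}{p}s}$, i.e. the first alternatives in (i)--(iii).

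Next I would treat the case $s<-p$. Here $1+\tfrac sp<0$, so $\bigl|1+\tfrac sp\bigr|=-1-\tfrac sp>0$. The key observation is the algebraic identity $-1-\tfrac sp=1+\bigl(-\tfrac sp-2\bigr)$, so applying $1+x\le e^x$ with $x=-\tfrac sp-2$ gives $0<\bigl|1+\tfrac sp\bigr|\le e^{-\frac sp-2}$. Raising to the powers $p$, $p-1$, $p-2$ and using $\bigl(e^{-\frac sp-2}\bigr)^p=e^{-(s+2p)}$, $\bigl(e^{-\frac sp-2}\bigr)^{p-1}=e^{-\frac{p-1}{p}(s+2p)}$, $\bigl(e^{-\frac sp-2}\bigr)^{p-2}=e^{-\frac{p-2}{p}(s+2p)}$, then multiplying by $p$ and $p(p-1)$ respectively, one obtains the second alternatives in (i)--(iii).

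The only point requiring care — and it is not really an obstacle — is to verify in each case that both sides of the inequality being exponentiated are nonnegative, so that exponentiation is order-preserving: $1+\tfrac sp\ge0$ follows from $s\ge-p$, $-1-\tfrac sp>0$ follows from $s<-p$, and the exponentials are positive throughout. No further ingredients are needed.
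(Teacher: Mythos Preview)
Your proof is correct and essentially identical to the paper's: the paper invokes ``concavity of the logarithmic function'' (i.e.\ $\ln(1+x)\le x$, equivalently $1+x\le e^x$) together with the reflection $s\mapsto -(s+2p)$ for the case $s<-p$, which is precisely your algebraic identity $|1+\tfrac sp|=1+(-\tfrac sp-2)$. The only cosmetic difference is that the paper derives (ii)--(iii) from (i) via $|\gp'(t)|=p|\gp(t)|^{(p-1)/p}$ and $|\gp''(t)|=p(p-1)|\gp(t)|^{(p-2)/p}$, whereas you raise the base inequality to the powers $p-1$, $p-2$ directly; the computations are the same.
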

\begin{proof}
The proof of (i) follows by concavity of the logarithmic function and reflection properties.
The proof of (ii)--(iii) follow by the identities $\gpp(t)=p|t|^{p-1}=p|\gp(t)|^{\frac{p-1}{p}}$
and $|\gp''(t)|=p(p-1)|t|^{p-2}=p(p-1)|\gp(t)|^{\frac{p-2}{p}}$.
\end{proof}
\begin{lemma}[Taylor expansion]
\label{lem:Taylor}
Let $a(t)$, $b(t)$, $c(t)$, $t>0$, be smooth, real-valued functions 
satisfying 
\beqs
\bal
&\ln\frac{t^\si}{1+t^\tau}-C\le a(t)\le\ln\frac{t^\si}{1+t^\tau}+C
\eal
\eeqs
and
\beqs
|b(t)|+|c(t)|\le C\ln(t+2),
\eeqs 
for some $0\le\si<\tau$ and $C>0$.
Let $\Eap\subset(0,+\infty)$ be defined by
\beqs
\Eap:=\left\{t>0:\ a(t)>-\frac{p}{2}\right\}.
\eeqs
Then, the following expansions hold as $p\to+\infty$, uniformly for $t\in\Eap$:
\[
\tag{i}
\bal
&\left(1+\frac{a(t)}{p}+\frac{b(t)}{p^2}+\frac{c(t)}{p^3}+o(\frac{1}{p^3})\right)^p\\
&\ =e^{a(t)}\left\{1+\frac{b(t)-\vphio(a(t))}{p}
+\frac{c(t)-\vphii(a(t),b(t))}{p^2}+\frac{O(|a(t)|^6+1)}{p^3}+o(\frac{1}{p^2})\right\},
\eal
\]
where $\vphio$, $\vphii$ are defined in \eqref{def:phi}.
\par
Moreover, for any fixed $0<\kappa<p$, there holds
\beqs
\tag{ii}
\bal
&\left(1+\frac{a(t)}{p}+\frac{b(t)}{p^2}+o(\frac{1}{p^2})\right)^{p-\kappa}\\
&\qquad\qquad=e^{a(t)}\left\{1+\frac{1}{p}\Big(b(t)-\vphio(a(t))-\kappa\,a(t)\Big)
+\frac{O(|a(t)|^4+1)}{p^2}+o(\frac{1}{p})\right\},
\eal
\eeqs
uniformly with respect to $t\in\Eap$. 
\end{lemma}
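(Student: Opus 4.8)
The plan is the standard one for such expansions: pass to logarithms, Taylor--expand $\log(1+x)$, re--exponentiate, and collect powers of $1/p$; the real work is to make the remainders uniform on $\Eap$.

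I would first record two consequences of the hypotheses. Since $0\le\si<\tau$, the function $t\mapsto t^\si/(1+t^\tau)$ is bounded above on $(0,+\infty)$, so $a(t)\le C$, and together with $a(t)>-p/2$ on $\Eap$ this gives $|a(t)/p|\le\tfrac12$ for $p$ large, uniformly on $\Eap$. Comparing $\ln(t+2)$ with $\bigl|\ln\tfrac{t^\si}{1+t^\tau}\bigr|$ near $0$, near $+\infty$ and on compacts, the two--sided bound on $a$ also yields $\ln(t+2)\le C(|a(t)|+1)$, hence $|b(t)|+|c(t)|\le C(|a(t)|+1)$ on $\Eap$. A handy by--product is that $|a|^m/p^3\le(|a|^6+1)/p^3$ for $1\le m\le6$, so any remainder which is a polynomial of degree $\le6$ in $|a|$ over $p^3$ is already of the admissible form $O((|a|^6+1)/p^3)$. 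Writing $x=\tfrac{a}{p}+\tfrac{b}{p^2}+\tfrac{c}{p^3}+o(\tfrac{1}{p^3})$, the bounds give $|x|\le\tfrac12+o(1)<\tfrac23$ on $\Eap$, so $\log(1+x)=x-\tfrac{x^2}{2}+\tfrac{x^3}{3}-\tfrac{x^4}{4}+R(x)$ with $p|R(x)|\le Cp|x|^5=O((|a|^4+1)/p^3)$.

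For part~(i), substituting the expansion of $x$ and collecting, one gets
\beqs
p\log(1+x)-a=\frac{W_1}{p}+\frac{W_2}{p^2}+\frac{O(|a|^4+1)}{p^3}+o\Bigl(\frac1{p^2}\Bigr),\qquad W_1:=b-\frac{a^2}{2},\ \ W_2:=c-ab+\frac{a^3}{3},
\eeqs
the bounds on $b,c$ absorbing every cross term. With $(1+x)^p=e^ae^{w}$, $w:=p\log(1+x)-a$, and $e^w=1+w+\tfrac{w^2}{2}+\sum_{n\ge3}\tfrac{w^n}{n!}$, the elementary identities
\beqs
W_1=b-\vphio(a),\qquad W_2+\frac{W_1^2}{2}=c-ab+\frac{a^3}{3}+\frac{b^2}{2}-\frac{a^2b}{2}+\frac{a^4}{8}=c-\vphii(a,b)
\eeqs
(using the definitions of $\vphio$, $\vphii$) produce the coefficients of $1/p$ and $1/p^2$, and the leftover --- the cubic tail $\sum_{n\ge3}w^n/n!$ together with the mismatch between $w^2/2$ and $W_1^2/(2p^2)$ --- is $O(|w|^3+(|a|^5+1)/p^3)=O((|a|^6+1)/p^3)+o(1/p^2)$ provided $|w|$ stays bounded. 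Part~(ii) then follows by factoring $(1+x)^{p-\kappa}=(1+x)^p(1+x)^{-\kappa}$ with $\kappa$ fixed: the first factor is handled by the first--order form of (i) (error $O(|a|^4+1)/p^2$, from $W_1^2/p^2$ and $W_2/p^2$), and $(1+x)^{-\kappa}=1-\tfrac{\kappa a}{p}+O((|a|^2+1)/p^2)$; multiplying and recalling $\vphio(a)=a^2/2$ gives the coefficient $b-\vphio(a)-\kappa a$ of $1/p$.

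The point needing care --- and the main obstacle --- is precisely the uniformity when $|a(t)|$ is large (comparable to $\sqrt p$ or bigger): then $w$ need not be small, and in fact $w\le-\co<0$ for a fixed $\co>0$ (so $e^w$ is bounded, exponentially small when $|a|\sim p$), while the main term $1+\tfrac{W_1}{p}+\tfrac{W_2+W_1^2/2}{p^2}$ has size only $O(|a|^4/p^2)$ and the stated remainder $(|a|^6+1)/p^3$ is bounded below. I would therefore split $\Eap$ at $\{|a(t)|\le\veps\sqrt p\}$ for a small fixed $\veps$: on that set $|w|$ is bounded (by, say, $1$), so the Taylor bookkeeping above applies verbatim; on the complement one checks by hand, using $\log(1+x)\le x$ and the size estimates, that $e^w$, the main term and $(|a|^6+1)/p^3$ are mutually comparable, so the identity holds trivially. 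This is exactly why the weights $|a|^6$, $|a|^4$ --- rather than constants --- have to appear in the statement.
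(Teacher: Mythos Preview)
Your argument is correct, and the algebraic bookkeeping (the identification of $W_1=b-\vphio(a)$ and $W_2+\tfrac{W_1^2}{2}=c-\vphii(a,b)$, the control of $p|R(x)|$, and the absorption on $\{|a|>\veps\sqrt p\}$ via $(|a|^6+1)/p^3\ge\veps^6$) all goes through as you sketch. The route, however, differs from the paper's, which avoids your case splitting entirely. The paper's key observation is that the quantity $w=p\log(1+x)-a$ is bounded \emph{from above} on all of $\Eap$ (this follows from $\log(1+x)\le x$, giving $w\le b/p+c/p^2+o(1/p^2)\le C$), even though $w$ may be very negative. One can then write the Lagrange form $e^{w}=1+w+\tfrac{w^2}{2}+\tfrac{e^{\theta w}}{6}w^3$ with $\theta\in(0,1)$; since $\theta w\le\max(0,w)\le C$, the factor $e^{\theta w}$ is uniformly bounded, and $w^3=O((|a|^6+1)/p^3)$ directly. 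Thus the whole expansion (i) holds on $\Eap$ without any threshold in $|a|$. Your splitting buys an explicit picture of why the weights $|a|^6$, $|a|^4$ are needed; the paper's one--sided bound buys a shorter proof. (One small point: your motivational claim ``$w\le -c_0$'' on the large--$|a|$ set is not quite what you use and is not obviously true uniformly; what you actually need and do use is $e^w=O(1)$, which is exactly the bounded--from--above statement.) For (ii) your factorisation $(1+x)^{p-\kappa}=(1+x)^p(1+x)^{-\kappa}$ is a clean alternative to the paper's direct repetition of the (i)--argument with exponent $p-\kappa$.
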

\begin{proof}
We shall repeatedly use the following properties:
\begin{enumerate}
  \item[]
$C^{-1}e^{-p/(2\si)}\le t\le Ce^{\frac{p}{2(\tau-\si)}}$, for some $C>0$ independent of $t\in\Eap$;
\item[]
$-\frac{p}{2}\le a(t)\le C$, in particular $a(t)=O(p)$ and $|b(t)|+|c(t)|=O(|a(t)|+1)=O(p)$,
uniformly for $t\in\Eap$.
\end{enumerate}
Proof of (i).
Let $\xip=\xip(t)$ be defined by
\[
\xip(t)=\frac{a(t)}{p}+\frac{b(t)}{p^2}+\frac{c(t)+o(1)}{p^3}=\frac{a(t)+O(1)}{p}.
\]
Since $a(t)$ is bounded from above, by taking $p$ sufficiently large we may assume that
$|\xip(t)|\le3/4$ in $\Eap$.
Therefore, by Taylor expansion of the logarithmic function up to the third order, we may write
\beqs
\bal
\log(1+\xip)=&\xip-\frac{\xip^2}{2}+\frac{\xip^3}{3}-\frac{\xip^2}{4(1+\theta_p\xip)^4}\\
=&\frac{a}{p}+\frac{1}{p^2}(b-\frac{a^2}{2})+\frac{1}{p^3}(c-ab+\frac{a^3}{3})-\frac{\xip^2}{4(1+\theta_p\xip)^4}+\frac{O(|a|^3+1)}{p^4}+o(\frac{1}{p^3})
\eal
\eeqs
for some $0\le\theta_p(t)\le1$, uniformly with respect to $p\to+\infty$ and $t\in\Eap$.
It follows that
\beqs
p\log(1+\xip)=a+\frac{1}{p}(b-\frac{a^2}{2})+\frac{1}{p^2}(c-ab+\frac{a^3}{3})-\frac{p\,\xip^2}{4(1+\theta_p\xip)^4}+\frac{O(|a|^3+1)}{p^3}+o(\frac{1}{p^2}),
\eeqs
uniformly with respect to $p\to+\infty$ and $t\in\Eap$.
We set
\beqs
\txip:=\frac{1}{p}(b-\frac{a^2}{2})+\frac{1}{p^2}(c-ab+\frac{a^3}{3})-\frac{p\,\xip^2}{4(1+\theta_p\xip)^4}+\frac{O(|a|^3+1)}{p^3}+o(\frac{1}{p^2}),
\eeqs
and we observe that $\txip(t)\le C$, namely $\txip$ is \emph{uniformly bounded from above}. 
Therefore, by Taylor expansion of the exponential function to the second order we may write
\beqs
e^{\txip}=\txip+\frac{\txip^2}{2}+\frac{e^{\widetilde\theta_p\,\txip}}{6}\txip^3,
\eeqs
for some $0\le\widetilde\theta_p\le1$, so that $e^{\widetilde\theta_p\txip}\le C$, 
namely $e^{\widetilde\theta_p\txip}$ is \emph{bounded uniformly} 
with respect to $p\to+\infty$ and $t\in\Eap$.
Observing that 
\beqs
p\,\xip^4=\frac{O(|a|^4+1)}{p^3},
\eeqs
we obtain the expansions
\beqs
\bal
\txip=&\frac{1}{p}(b-\frac{a^2}{2})+\frac{1}{p^2}(c-ab+\frac{a^3}{3})+\frac{O(|a|^4+1)}{p^3}+o(\frac{1}{p^2}),\\
\txip^2=&\frac{1}{p^2}(b-\frac{a^2}{2})^2+\frac{O(|a|^6+1)}{p^4}+o(\frac{1}{p^2})\\
\txip^3=&\frac{O(|a|^6+1)}{p^3},
\eal
\eeqs
from which we finally derive
\beqs
e^{\txip}=\frac{1}{p}(b-\frac{a^2}{2})+\frac{1}{p^2}(c-ab+\frac{a^3}{3}+\frac{1}{2}(b-\frac{a^2}{2})^2)+o(\frac{1}{p^2})
+\frac{O(|a|^6+1)}{p^3},
\eeqs
\emph{uniformly} with respect to $p\to+\infty$ and $t\in\Eap$.
This established the asserted expansion~(i).
%
\par
Proof of (ii). 
Similarly as above, let
\beqs
\zep:=\frac{a(t)}{p}+\frac{b(t)+o(1)}{p^2}=\frac{a(t)+O(1)}{p}.
\eeqs
By taking $p$ sufficiently large, we may assume that $|\zep(t)|\le3/4$
for all $t\in\Eap$.
Expansion of the logarithmic function to the second oder yields
\beqs
\bal
\log(1+\zep)=&\zep-\frac{\zep^2}{2}+\frac{\zep^3}{3(1+\theta_p\zep)^3}\\
=&\frac{a}{p}+\frac{1}{p^2}(b-\frac{a^2}{2})+\frac{O(|a|^2+1)}{p^3}+\frac{\zep^3}{3(1+\theta_p\zep)^3}+o(\frac{1}{p^2}),
\eal
\eeqs
uniformly with respect to $p\to+\infty$ and $t\in\Eap$.
We deduce that
\beqs
(p-\kappa)\log(1+\zep)=a+\frac{1}{p}(b-\frac{a^2}{2}-\kappa a)+\frac{O(|a|^2+1)}{p^2}+\frac{(p-\kappa)\zep^3}{3(1+\theta_p\zep)^3}
+o(\frac{1}{p}).
\eeqs
We set
\beqs
\tzep(t):=\frac{1}{p}(b-\frac{a^2}{2}-\kappa a)+\frac{O(|a|^2+1)}{p^2}+\frac{(p-\kappa)\zep^3}{3(1+\theta_p\zep)^3}
+o(\frac{1}{p})
\eeqs
and we observe that $\tzep(t)$ is \emph{uniformly bounded from above} with respect
to $p\to+\infty$ and $t\in\Eap$.
Therefore, expansion of the exponential function to the first order yields
\beqs
e^{\tzep(t)}=1+\tzep(t)+\frac{e^{\widetilde\theta_p(t)\tzep(t)}}{2}\tzep^2(t),
\eeqs
for some $0\le\widetilde\theta_p(t)\le1$, so that $e^{\widetilde\theta_p(t)\tzep(t)}\le C$
is \emph{uniformly bounded} with respect
to $p\to+\infty$ and $t\in\Eap$.
We deduce that
\beqs
e^{\tzep(t)}=1+\frac{1}{p}(b-\frac{a^2}{2}-\kappa a)+\frac{O(|a|^4+1)}{p^2}+o(\frac{1}{p}),
\eeqs
and the asserted expansion~(ii) follows.
\end{proof}
Let
\beq
\label{def:phioa}
\phoa(r)=\frac{1-r^\al}{1+r^\al}.
\eeq
By adapting the arguments in \cite{emp1} we have the following result.
\begin{lemma}[Chae-Imanuvilov lemma]
\label{lem:CI}
Fix $\al\ge2$. 
Let $F\in C^1([0,+\infty))$ be such that
\beq
\label{assumpt:F}
|F(t)|\le C\frac{(|\ln t|+1)^4}{t^{\al+2}},
\qquad\hbox{as\ }t\to+\infty.
\eeq
Then, there exists a $C^2$ radial solution $w_F(y)$ to the equation
\beq
\label{eq:CI}
\Delta w+\frac{2\al^2|y|^{\al-2}}{(1+|y|^\al)^2}w=F(|y|)
\qquad in\rr^2
\eeq
satisfying
\beqs
w_F(y)=C_F\ln|y|+O(\frac{1}{|y|})
\qquad\hbox{as }|y|\to+\infty
\eeqs
where
\beqs
C_F=\int_0^{+\infty}t\phoa(t)F(t)\,dt.
\eeqs
\end{lemma}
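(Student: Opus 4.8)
The plan is to reduce the statement to the standard case $\al=2$ by an explicit change of variable, and then to construct $w_F$ by variation of parameters around the known bounded homogeneous solution $\phoa$.

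First I would exploit the substitution already recorded in \eqref{l1}--\eqref{l2}: writing $\widetilde w(y):=w(|y|^{2/\al})$, a radial function $w$ solves \eqref{eq:CI} if and only if $\widetilde w$ solves the linearized Liouville equation at $\al=2$,
\[
\Delta\widetilde w+\frac{8}{(1+|y|^2)^2}\widetilde w=\widetilde F(|y|),\qquad
\widetilde F(\rho):=\frac{4}{\al^2}\,\rho^{\frac{4}{\al}-2}\,F(\rho^{2/\al}).
\]
Under hypothesis \eqref{assumpt:F} one checks at once that $|\widetilde F(\rho)|\le C(|\ln\rho|+1)^4\rho^{-4}$ as $\rho\to+\infty$, i.e. $\widetilde F$ satisfies \eqref{assumpt:F} with $\al=2$, while $\widetilde F$ inherits the mild behaviour of $F$ near the origin. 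Moreover, if $\widetilde w(\rho)=\widetilde C\ln\rho+O(\rho^{-1})$, then $w(r)=\widetilde w(r^{\al/2})=\tfrac{\al}{2}\widetilde C\ln r+O(r^{-\al/2})=\tfrac{\al}{2}\widetilde C\ln r+O(r^{-1})$ since $\al\ge 2$, and the change of variable $t=\rho^{2/\al}$ turns $\widetilde C=\int_0^\infty\rho\,\varphi_{0,2}(\rho)\widetilde F(\rho)\,d\rho$ into $\widetilde C=\tfrac{2}{\al}\int_0^\infty t\,\phoa(t)F(t)\,dt$, so that $\tfrac{\al}{2}\widetilde C=C_F$. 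Hence it suffices to prove the lemma for $\al=2$, where the coefficient $8(1+|y|^2)^{-2}$ is smooth and the origin causes no regularity difficulty.

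For $\al=2$, the homogeneous equation has the explicit bounded radial solution $Z(\rho):=\varphi_{0,2}(\rho)=\frac{1-\rho^2}{1+\rho^2}$, a special case of \eqref{eq:linpb}. By reduction of order I would produce a second radial solution $Y(\rho)=Z(\rho)\int_1^\rho\frac{ds}{s\,Z(s)^2}$, normalised so that the Wronskian $r(ZY'-Z'Y)\equiv 1$; one has that $Y$ grows like $\ln\rho$, up to sign and a bounded term, both as $\rho\to 0^+$ and as $\rho\to+\infty$. Variation of parameters then yields an explicit particular solution $\overline w_F$ — precisely the representation formula already displayed in the evaluation of $\Ia$, with $\phi_F$ as written there — taking the lower limits of integration at $0$; the decay of $F$ near the origin assumed in \eqref{assumpt:F} makes all the relevant integrals convergent, so $\overline w_F$ is a classical solution, regular at the origin.

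It then remains to read off the behaviour at infinity. Using \eqref{assumpt:F}, the integrals $\int_0^\infty t\,Z(t)F(t)\,dt$ and $\int_0^\infty t\,Y(t)F(t)\,dt$ converge absolutely, and the tail integrals $\int_r^\infty$ are $O(r^{-2}(\ln r)^5)=O(r^{-1})$; combining this with $Z(\rho)=-1+O(\rho^{-2})$ and $Y(\rho)=\ln\rho+\kappa_0+O(\rho^{-2})$ at infinity gives $\overline w_F(\rho)=\big(\int_0^\infty tZ(t)F(t)\,dt\big)\ln\rho+\kappa+O(\rho^{-1})$ for some constant $\kappa=\kappa(F)$. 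Finally, since $Z\in C^2(\rr^2)$ and $Z\to -1$ at infinity, replacing $\overline w_F$ by $w_F:=\overline w_F+\kappa\,Z$ removes the constant term while preserving both the equation and the regularity, producing $w_F(y)=C_F\ln|y|+O(|y|^{-1})$ with $C_F=\int_0^\infty t\,\phoa(t)F(t)\,dt$. The only genuinely delicate point is this last asymptotic step: one must keep track of the interplay between the logarithmic growth of $Y$, the $O(\rho^{-2})$ corrections to $Z$ and $Y$, and the exact decay rate \eqref{assumpt:F} of $F$, in order both to bring the remainder down to $O(1/|y|)$ and to see that the spurious constant term is killed by a single multiple of the smooth homogeneous solution $Z$; the reduction to $\al=2$ is what makes the behaviour at the origin unproblematic for non-integer $\al$.
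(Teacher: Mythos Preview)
Your approach is correct and reaches the same conclusion, but by a genuinely different route than the paper. The paper works with general $\al\ge2$ from the start, invoking the Chae--Imanuvilov variation-of-parameters formula directly for the operator with potential $2\al^2|y|^{\al-2}(1+|y|^\al)^{-2}$, and then carries out a careful expansion of $\phoa^{-1}(s)=-1+2/(1-s^\al)$ together with an integration by parts to isolate the coefficient $C_F$ and the residual constant $D_F$; the final step $w_F=w-D_F\phoa$ coincides with your subtraction of a multiple of $Z$. Your reduction to $\al=2$ via $\widetilde w(y)=w(|y|^{2/\al})$ is cleaner conceptually --- the potential becomes smooth and the asymptotic bookkeeping with $Z$, $Y$ is standard --- and your verification that $\tfrac{\al}{2}\widetilde C=C_F$ under the change of variable is correct.

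One point needs tightening: you write that ``$\widetilde F$ inherits the mild behaviour of $F$ near the origin,'' but in fact $\widetilde F(\rho)=\tfrac{4}{\al^2}\rho^{4/\al-2}F(\rho^{2/\al})$ is singular at $\rho=0$ whenever $\al>2$ and $F(0)\neq0$. This does not break your argument, since $\rho\,\widetilde F(\rho)\sim\rho^{4/\al-1}$ is integrable near zero (so the variation-of-parameters integrals converge and $\widetilde w$ is bounded), and after transforming back the function $w(r)=\widetilde w(r^{\al/2})$ is a bounded weak solution of an equation whose coefficients $|y|^{\al-2}e^{\va}$ and $F$ are H\"older continuous on $\rr^2$, hence $w\in C^{2,\beta}$ by Schauder. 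But you should say this explicitly rather than appeal to mildness of $\widetilde F$. Relatedly, your closing remark that the reduction ``makes the behaviour at the origin unproblematic for non-integer $\al$'' is off-target: for radial solutions and $\al\ge2$ the original potential is already continuous, and the second homogeneous solution has the same $\ln r$ behaviour at $0$ regardless of $\al$; non-integrality only matters for the non-radial kernel, which plays no role here.
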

\begin{proof}
In view of Lemma~2.1 in \cite{ChaeImanuvilov} it is known that there esists a $C^2$ radial solution $w(r)$
to \eqref{eq:CI} of the form
\beqs
w(r)=\phoa(r)\left\{\int_0^r\frac{\phi_F(s)-\phi_F(1)}{(1-s)^2}\,ds+\phi_F(1)\frac{r}{1-r}\right\}
\eeqs
with
\beq
\label{def:phiF}
\phi_F(s)=\frac{(1-s)^2}{s\phoa(s)^2}\int_0^st\phoa(t)F(t)\,dt,
\eeq
satisfying
\beqs
|w(r)|\le C(\ln^+r+1)\qquad\hbox{as\ }r\to+\infty,
\eeqs
where $\phiF(1)$ and $w(1)$ are defined as limits of $\phiF(r)$ and $w(r)$ as $r\to1$.
In order to derive the exact logarithmic growth factor, we write for $r\ge2$:
\beqs
\bal
\int_0^r\frac{\phiF(s)-\phiF(1)}{(s-1)^2}\,ds
=&\int_0^2\frac{\phiF(s)-\phiF(1)}{(s-1)^2}\,ds+\int_2^r\frac{\phiF(s)}{(s-1)^2}\,ds
-\phiF(1)\int_2^r\frac{ds}{(s-1)^2}\\
=&\int_2^r\frac{\phiF(s)}{(s-1)^2}\,ds+\int_0^2\frac{\phiF(s)-\phiF(1)}{(s-1)^2}\,ds
-\phiF(1)(1-\frac{1}{r-1})\\
=&\int_2^r\frac{\phiF(s)}{(s-1)^2}\,ds+D_{F,1}+O(\frac{1}{r}),
\eal
\eeqs
where
\beq
\label{def:DF1}
D_{F,1}=\int_0^2\frac{\phiF(s)-\phiF(1)}{(s-1)^2}\,ds-\phiF(1).
\eeq
In turn, we write, recalling \eqref{def:phiF} and the fact $\phoa^{-1}(s)=-1+2/(1-s^\al)$,
\beq
\label{phiFexp}
\bal
\int_2^r\frac{\phiF(s)}{(s-1)^2}\,ds
=&\int_2^r\frac{ds}{s\phoa^2(s)}\int_0^st\phoa(t)F(t)\,dt
=\int_2^r\frac{ds}{s}(-1+\frac{2}{1-s^\al})^2\int_0^st\phoa(t)F(t)\,dt\\
=&\int_2^r\frac{ds}{s}\int_0^st\phoa(t)F(t)\,dt
-4\int_2^r\frac{ds}{s(1-s^\al)}\int_0^st\phoa(t)F(t)\,dt\\
&\qquad\qquad+4\int_2^r\frac{ds}{s(1-s^\al)^2}\int_0^st\phoa(t)F(t)\,dt.
\eal
\eeq
Integration by parts yields
\beqs
\bal
\int_2^r\frac{ds}{s}&\int_0^st\phoa(t)F(t)\,dt
=\ln r\int_0^rt\phoa(t)F(t)\,dt+D_{F,2}
+\int_r^{+\infty}s\ln s\phoa(s)F(s)\,ds\\
=&C_F\ln r+D_{F,2}-\ln r\int_r^{+\infty}t\phoa(t)F(t)\,dt+\int_r^{+\infty}s\ln s\phoa(s)F(s)\,ds
\eal
\eeqs
where 
\beqs
\bal
D_{F,2}=&-\ln2\int_0^2t\phoa(t)F(t)\,dt-\int_2^{+\infty}s\ln s\phoa(s)F(s)\,ds\\
C_F=&\int_0^{+\infty}t\phoa(t)F(t)\,dt.
\eal
\eeqs
It is straightforward to check that in view of \eqref{assumpt:F} there holds:
\beqs
\bal
&|\int_r^{+\infty}t\phoa(t)F(t)\,dt|\le C\int_r^{+\infty}t\frac{(|\ln t|+1)^4}{t^{\al+2}}\,dt
=O(\frac{1}{r^{\al-1/2}})\\
&|\int_r^{+\infty}s\ln s\phoa(s)F(s)\,ds|\le C\int_r^{+\infty}s|\ln s|\frac{(|\ln s|+1)^4}{s^{\al+2}}\,ds
=O(\frac{1}{r^{\al-1}})
\eal
\eeqs
so that the first term on the right hand side of \eqref{phiFexp} takes the form
\beqs
\int_2^r\frac{ds}{s}\int_0^st\phoa(t)F(t)\,dt
=C_F\ln r+D_{F,2}+O(\frac{1}{r^{\al-1}}).
\eeqs
Similarly, we write
\beqs
\bal
\int_2^r\frac{ds}{s(1-s^\al)}\int_0^st\phoa(t)F(t)\,dt
=&D_{F,3}
-\int_r^{+\infty}\frac{ds}{s(1-s^\al)}\int_0^st\phoa(t)F(t)\,dt\\
=&D_{F,3}+O(\frac{1}{r^\al})
\eal
\eeqs
where
\beq
D_{F,3}=\int_2^{+\infty}\frac{ds}{s(1-s^\al)}\int_0^st\phoa(t)F(t)\,dt.
\eeq
Finally,
\beqs
\bal
\int_2^r\frac{ds}{s(1-s^\al)^2}\int_0^st\phoa(t)F(t)\,dt
=&D_{F,4}
-\int_r^{+\infty}\frac{ds}{s(1-s^\al)^2}\int_0^st\phoa(t)F(t)\,dt\\
=&D_{F,4}+O(\frac{1}{r^{2\al}})
\eal
\eeqs
where
\beq
D_{F,4}=\int_2^{+\infty}\frac{ds}{s(1-s^\al)^2}\int_0^st\phoa(t)F(t)\,dt.
\eeq
Observing that e may write $\phoa(r)=-1+O(\frac{1}{r^\al})$, we deduce from the above that
\beqs
w(r)=C_f\ln r+D_F\phoa(r)+O(\frac{1}{r}),
\eeqs
where $D_F=D_{F,1}+D_{F,2}-4D_{F,3}+4D_{F_4}$.
The desired solution is given by 
$w_F(r)=w(r)-D_F\phoa(r)$.
\end{proof}

\bibliography{biblio}
\bibliographystyle{abbrv}
\end{document}